\definecolor{Gray}{gray}{0.80}
\definecolor{LightGray}{gray}{0.90}
\newcommand{\cA}{\mathcal{A}}
\newcommand{\cC}{\mathcal{C}}
\newcommand{\cD}{\mathcal{D}}
\newcommand{\cG}{\mathcal{G}}
\newcommand{\cL}{\mathcal{L}}
\newcommand{\cP}{\mathcal{P}}
\newcommand{\cQ}{\mathcal{Q}}
\newcommand{\bG}{\mathbb{G}}
\newcommand{\bN}{\mathbb{N}}
\newcommand{\bR}{\mathbb{R}}
\newcommand{\bfH}{\mathbf{H}}
\newcommand{\PR}{\mathbb{P}}
\newcommand{\bONE}{\mathbbm{1}}
\newcommand{\dd}{ \mathrm{d}}
\DeclareMathOperator{\arccosh}{arccosh}
\DeclareMathOperator*{\LIM}{LIM} 
\DeclareMathOperator*{\subLIM}{subLIM}
\DeclareMathOperator*{\superLIM}{superLIM}
\renewcommand{\epsilon}{\varepsilon}
\newcommand{\vn}[1]{\left| \! \left| #1\right| \! \right|}
\newcommand{\ip}[2]{\langle #1,#2\rangle}
\numberwithin{equation}{section}
\newtheorem{theorem}{Theorem}[section]
\newtheorem{lemma}[theorem]{Lemma}
\newtheorem{proposition}[theorem]{Proposition}
\newtheorem{corollary}[theorem]{Corollary}
\theoremstyle{definition}
\newtheorem{definition}[theorem]{Definition}
\newtheorem{remark}[theorem]{Remark}
\newtheorem*{remark*}{Remark}
\newtheorem{conjecture}[theorem]{Conjecture}
\newtheorem{assumption}[theorem]{Assumption}
\newtheorem{condition}[theorem]{Condition}
\title{Path-space moderate deviation principles for the random field Curie-Weiss model}
\author{Francesca Collet\thanks{Delft Institute of Applied Mathematics, Delft University of Technology, Mourik van Broekmanweg 6, 2628 XE Delft (The Netherlands). \emph{E-mail address}: f.collet-1@tudelft.nl} \and Richard C. Kraaij\thanks{Fakultät für Mathematik, Ruhr-University of Bochum, Postfach 102148, 44721 Bochum (Germany). \emph{E-mail address}: Richard.Kraaij@rub.de}}
\date{}
\begin{document}

\maketitle

\begin{abstract}
\noindent We analyze the dynamics of moderate fluctuations for macroscopic observables of the random field Curie-Weiss model (i.e., standard Curie-Weiss model embedded in a site-dependent, i.i.d. random environment). We obtain path-space moderate deviation principles via a general analytic approach based on convergence of non-linear generators and uniqueness of viscosity solutions for associated Hamilton–Jacobi equations. The moderate asymptotics depend crucially on the phase we consider and moreover, the space-time scale range for which fluctuations can be proven is restricted by the addition of the disorder. \\

\noindent \emph{Keywords:} moderate deviations $\cdot$  interacting particle systems $\cdot$ mean-field interaction $\cdot$ quenched random environment $\cdot$ Hamilton–Jacobi equation $\cdot$ perturbation theory for Markov processes
\end{abstract}

\addtocontents{toc}{\protect\setcounter{tocdepth}{0}}
\section{Introduction}
\addtocontents{toc}{\protect\setcounter{tocdepth}{3}}

The study of the normalized sum of random variables and its asymptotic behavior plays a central role in probability and statistical mechanics. Whenever the variables are independent and have finite variance, the central limit theorem ensures that the sum with square-root normalization converges to a Gaussian distribution. The generalization of this result to dependent variables is particularly interesting in statistical mechanics where the random variables are correlated through an interaction Hamiltonian. For explicitly solvable models many properties are well understood. In this category fall the so-called Curie-Weiss models for which one can explicitly explain important phenomena such as multiple phases, metastable states and, particularly, how macroscopic observables fluctuate around their mean values when close to or at critical temperatures. Ellis and Newman characterized the distribution of the normalized sum of spins (\emph{empirical magnetization}) for a wide class of mean-field Hamiltonian of Curie-Weiss type \cite{ElNe78a,ElNe78b,ElNeRo80}. They found conditions, in terms of thermodynamic properties, that lead in the infinite volume limit to a Gaussian behavior and those which lead to a higher order exponential probability distribution. Equilibrium large deviation principles have been established in \cite{Ell85}, wheras path-space counterparts have been derived in \cite{Com87}. Static and dynamical moderate deviations have been obtained in \cite{EiLo04,CoKr17} respectively.

We are interested in the fluctuations of the magnetization for the \emph{random field Curie–Weiss model}, which is derived from the standard Curie–Weiss by replacing the constant external magnetic field by local and random fields which interact with each spin of the system. 

The random field Curie-Weiss model has the advantage that, while still being analytically tractable, it has a very rich phase-structure. The phase diagram exhibits interesting critical points: a critical curve where the transition from paramagnetism to ferromagnetism is second-order, a first-order boundary line and moreover, depending on the distribution of the randomness, a tri-critical point may exist \cite{SaWr85}. As a consequence, the model has been used as a playground to test new ideas.

We refer to \cite{AdMPaZa92} for the characterization of infinite volume Gibbs states; \cite{KuLN07} for Gibbs/non-Gibbs transitions; \cite{Kue97,IaKu10,FoKuRe12} for the study of metastates; \cite{MaPi98,FoMaPi00,BiBoIo09} for the metastability analysis; and references therein. From a static viewpoint, the behavior of the fluctuations for this system is clear.  In \cite{AdMPe91}, a central limit theorem is proved and some remarkable new features as compared to the usual non-random model are shown. In particular, depending on temperature, fluctuations may have Gaussian or non-Gaussian limit; in both cases, however, such a limit depends on the realization of the local random external fields, implying that fluctuations are non-self-averaging. Large and moderate deviations with respect to the corresponding (disorder dependent) Gibbs measure have been studied as well. An almost sure large deviation principle can be obtained from \cite{Co89} if the external fields are bounded and from \cite{LoMeTo13} if they are unbounded or dependent. Almost sure moderate deviations are characterized in \cite{LoMe12} under mild assumptions on the randomness.

As already mentioned, all the results recalled so far have been derived at equilibrium; on the contrary, we are interested in describing the time evolution of fluctuations, obtaining non-equilibrium properties. Fluctuations for the random field Curie-Weiss model were studied on the level of a path-space large deviation principle in \cite{DaPdHo96} and on the level of a path-space (standard and non-standard) central limit theorem in \cite{CoDaP12}. The purpose of the present paper is to study dynamical moderate deviations 
of a suitable macroscopic observable. In the random field Curie-Weiss model we are considering, the disorder comes from a site-dependent magnetic field which is $\eta_i = \pm 1$. The single spin-flip dynamics induces a Markovian evolution on a bi-dimensional magnetization. The first component is the usual empirical average of the spin values: $m_n = n^{-1} \sum_{i=1}^n \sigma_i$. The second component is $q_n = n^{-1} \sum_{i=1}^n \sigma_i \eta_i$ and measures the relative alignment between the spins and their local  random fields. The observable we are interested in is therefore the pair $(m_n,q_n)$ and we aim at analyzing its path-space moderate fluctuations.\\
 A moderate deviation principle is technically a large deviation principle and consists in a refinement of a (standard or non-standard) central limit theorem, in the sense that it characterizes the exponential decay of deviations from the average on a smaller scale. We apply the generator convergence approach to  large deviations by Feng-Kurtz \cite{FK06} to characterize the most likely behavior for the trajectories of fluctuations around the stationary solution(s) in the various regimes. Our findings highlight the following distinctive aspects:

\begin{itemize}
\item The moderate asymptotics depend crucially on the phase we are considering. The physical phase transition is reflected at this level via a sudden change in the speed and rate function of the moderate deviation principle. In particular, our findings indicate that fluctuations are Gaussian-like in the sub- and supercritical regimes, while they are not at criticalities. \\
Moreover, if the inverse temperature and the magnetic field intensity are size-dependent and approach a critical threshold, the rate function retains the features of the phases traversed by the sequence of parameters and is a mixture of the rate functions corresponding to the visited regimes. 
\item In the sub- and supercritical regimes, the processes $m_n$ and $q_n$ evolve on the same time-scale and we characterize deviations from the average of the pair $(m_n,q_n)$. For the proof we will refer to the large deviation principle in \cite[Appendix~A]{CoKr17}. On the contrary, at criticality, we have a natural time-scale separation for the evolutions of our processes: $q_n$ is fast and converges exponentially quickly to zero, whereas $m_n$ is slow and its limiting behavior can be determined after suitably ``averaging out'' the dynamics of $q_n$. Corresponding to this observation, we need to prove a path-space large deviation principle for a projected process, in other words for the component $m_n$ only. The projection on a one-dimensional subspace relies on the synergy between the convergence of the Hamiltonians \cite{FK06} and the perturbation theory for Markov processes \cite{PaStVa77}. The method exploits a technique known for (linear) infinitesimal generators in the context of non-linear generators and, to the best of our knowledge, is original. 
Moreover, due to the fact that the perturbed functions we are considering do not allow for a uniform bound for the sequence of Hamiltonians, in the present case we need a more sophisticated notion of convergence of Hamiltonians than the one used in \cite{CoKr17}. To circumvent this unboundedness problem, we relax our definition of limiting operator. More precisely, we follow \cite{FK06} and introduce two Hamiltonians $H_{\dagger}$ and $H_\ddagger$, that are limiting upper and lower bounds for the sequence of Hamiltonians $H_n$, respectively. We then characterize $H$ by matching the upper and lower bound. \\
The same techniques have been recently applied in \cite{CGK} to tackle path-space moderate deviations for a system of interacting particles with {\em unbounded state space}.
\item The fluctuations  are considerably affected by the addition of quenched disorder: the range of space-time scalings for which moderate deviation principles can be proven is restricted by the necessity of controlling the fluctuations of the field. 
\item In \cite{CoDaP12}, at second or higher order criticalities, the contribution to fluctuations coming from the random field is enhanced so as to completely offset the contribution coming from thermal fluctuations. The moderate scaling allows to go beyond this picture and to characterize the thermal fluctuations at the critical line and at the tri-critical point.
\end{itemize}

It is worth to mention that our statements are in agreement with the static results found in \cite{LoMe12}. The paper is organized as follows.

\tableofcontents

\vspace{0.5cm}

Appendix~\ref{appendix:large_deviations_for_projected_processes} is devoted to the derivation of a large deviation principle via solution of Hamilton-Jacobi equation and it is included to make the paper as much self-contained as possible.

\section{Model and main results}\label{sct:CWfi}

\subsection{Notation and definitions}


Before entering the contents of the paper, we introduce some notation. We start with the definition of good rate-function and of large deviation principle for a sequence of random variables. 
	
\begin{definition}
Let $\{X_n\}_{n \geq 1}$ be a sequence of random variables on a Polish space $\mathcal{X}$. Furthermore, consider a function $I : \mathcal{X} \rightarrow [0,\infty]$ and a sequence $\{r_n\}_{n \geq 1}$ of positive numbers such that $r_n \rightarrow \infty$. We say that
\begin{itemize}
\item  
the function $I$ is a \textit{good rate-function} if the set $\{x \, | \, I(x) \leq c\}$ is compact for every $c \geq 0$.
\item 
the sequence $\{X_n\}_{n\geq 1}$ is \textit{exponentially tight} at speed $r_n$ if, for every $a \geq 0$, there exists a compact set $K_a \subseteq \mathcal{X}$ such that $\limsup_n r_n^{-1} \log \, \PR[X_n \notin K_a] \leq - a$.
\item 
the sequence $\{X_n\}_{n\geq 1}$ satisfies the \textit{large deviation principle} with speed $r_n$ and good rate-function $I$, denoted by 
\begin{equation*}
\PR[X_n \approx a] \asymp e^{-r_n I(a)},
\end{equation*}
if, for every closed set $A \subseteq \mathcal{X}$, we have 
\begin{equation*}
\limsup_{n \rightarrow \infty} \, r_n^{-1} \log \PR[X_n \in A] \leq - \inf_{x \in A} I(x),
\end{equation*}
and, for every open set $U \subseteq \mathcal{X}$, 
\begin{equation*}
\liminf_{n \rightarrow \infty} \, r_n^{-1} \log \PR[X_n \in U] \geq - \inf_{x \in U} I(x).
\end{equation*}
\end{itemize}
\end{definition}
	
Throughout the whole paper $\cA\cC$ will denote the set of absolutely continuous curves in $\bR^d$. For the sake of completeness, we recall the definition of absolute continuity.

\begin{definition} 
A curve $\gamma: [0,T] \to \mathbb{R}^d$ is absolutely continuous if there exists a function $g \in L^1([0,T],\bR^d)$ such that for $t \in [0,T]$ we have $\gamma(t) = \gamma(0) + \int_0^t g(s) \dd s$. We write $g = \dot{\gamma}$.\\
A curve $\gamma: \bR^+ \to \mathbb{R}^d$ is absolutely continuous if the restriction to $[0,T]$ is absolutely continuous for every $T \geq 0$. 	
\end{definition}

An important and non-standard definition that we will often use is the notion of $o(1)$ for a sequence of functions.

\begin{definition}\label{def:o(1)}
Let $\{g_n\}_{n \geq 1}$ be a sequence of real functions. We say that 
\[
g_n(x) = g(x) + o(1)
\]
if $\sup_{n \geq 1} \sup_x \vert g_n(x) \vert < \infty$ and $\lim_{n \to \infty} \sup_{x \in K} \vert g_n(x) - g(x) \vert = 0$, for all compact sets $K$.
\end{definition}

To conclude we fix notation for a collection of function-spaces. 
\begin{definition}
Let $k \geq 1$ and $E$ a closed subset of $\mathbb{R}^d$. We will denote by 
\begin{itemize}
\item
$C_l^k(E)$ (resp. $C_u^k(E)$) the set of functions that are bounded from below (resp. above) in $E$ and are $k$ times differentiable on a neighborhood of $E$ in $\mathbb{R}^d$.
\item
$C_c^k(E)$ the set of functions that are constant outside some compact set in $E$ and are $k$ times continuously differentiable on a neighborhood of $E$ in $\mathbb{R}^d$. Finally, we set \mbox{$C_c^\infty(E) := \bigcap_k C_c^k(E)$.}
\end{itemize}
\end{definition}

\subsection{Microscopic and macroscopic description of the model}

Let $\sigma = \left( \sigma_i \right)_{i=1}^n \in \{-1,+1\}^n$ be a configuration of $n$ spins. Moreover, let $\eta = (\eta_i)_{i=1}^n \in \{-1,+1\}^n$ be a sequence of i.i.d. random variables distributed according to $\mu  = \frac{1}{2} \left( \delta_{-1}+\delta_{1} \right) $. \\
For a given realization of $\eta$,~ $\left\{ \sigma(t) \right\}_{t \geq 0}$ evolves as a Markov process on $\{-1,+1\}^n$, with infinitesimal generator
\begin{equation}\label{CWfi:micro:gen}
\mathcal{G}_n f(\varsigma) = \sum_{i=1}^{n} e^{-\beta \varsigma_i ( m_n + B \eta_i)} \left[ f(\varsigma^i) - f(\varsigma) \right],
\end{equation}
where $\varsigma^i$ is the configuration obtained from $\varsigma$ by flipping the $i$-th spin; $\beta$ and $B$ are positive parameters representing the inverse temperature and the coupling strength of the external magnetic field, and $m_n = \frac{1}{n} \sum_{i=1}^n \varsigma_i$.\\
The two terms in the rates of \eqref{CWfi:micro:gen} have different effects: the first one tends to align the spins, while the second one tends to point each of them in the direction of its local field. 

In addition to the usual empirical magnetization, we define also the empirical averages
\[
 q_n (t) := \frac{1}{n} \sum_{i=1}^n \sigma_i (t) \eta_i \quad \mbox{ and } \quad \overline{\eta}_n := \frac{1}{n} \sum_{i=1}^n \eta_i.
\]
Let $E_n$ be the image of $\{-1,1\}^n \times \{-1,1\}^n$ under the map $(\sigma,\eta) \mapsto (m_n,q_n)$.  The Glauber dynamics on the configurations, corresponding to the generator \eqref{CWfi:micro:gen}, induce Markovian dynamics on $E_n$ for the process $\left\{ \left( m_n(t), q_n(t) \right) \right\}_{t \geq 0}$, that in turn evolves with generator
\begin{align}\label{CWfi:micro:gen:m}
 \mathcal{A}_n f (x, y) &= \frac{n(1 + \overline{\eta}_n + x + y)}{4} \, e^{-\beta(x+B)} \left[ f \left( x - \frac{2}{n}, y - \frac{2}{n}\right) - f(x, y)\right] \nonumber \\
&+ \frac{n(1 - \overline{\eta}_n + x - y)}{4} \, e^{-\beta(x-B)} \left[ f \left( x - \frac{2}{n}, y + \frac{2}{n} \right) - f(x, y)\right] \nonumber \\
&+ \frac{n(1 + \overline{\eta}_n - x - y)}{4} \, e^{\beta(x+B)} \left[ f \left( x + \frac{2}{n}, y +\frac{2}{n} \right) - f(x, y)\right] \nonumber \\
&+ \frac{n(1 - \overline{\eta}_n - x + y)}{4} \, e^{\beta(x-B)} \left[ f \left( x + \frac{2}{n}, y - \frac{2}{n} \right) - f(x, y)\right].
\end{align}
For later convenience, let us introduce the functions 
\begin{align}\label{def:G's}
G_{1,\beta,B}^{\pm}(x,y) &= \cosh [\beta (x \pm B)] - (x \pm y) \sinh [\beta (x \pm B)], \nonumber\\[-0.2cm]
&\\[-0.2cm]
G_{2,\beta,B}^{\pm}(x,y) &= \sinh [\beta (x \pm B)] - (x \pm y) \cosh [\beta (x \pm B)]. \nonumber
\end{align}

We start with a large deviation principle for the trajectory of $\left\{ \left( m_n(t), q_n(t) \right) \right\}_{t \geq 0}$. Note that 
\begin{align*}
m_n + q_n & = \frac{1}{n} \sum_i \sigma_i(1 + \eta_i) = \frac{2}{n}\sum_{i: \eta_i = 1} \sigma_i, \\
m_n - q_n & = \frac{1}{n} \sum_{i} \sigma_i(1 -\eta_i) = \frac{2}{n} \sum_{i: \eta_i = -1} \sigma_i,
\end{align*}
which implies that given $\eta$, $(m_n + q_n,m_n - q_n)$ is a pair of 
variables taking their value in discrete subsets of the square $[-1 - \overline{\eta}_n,1+ \overline{\eta}_n]\times [-1 + \overline{\eta}_n,1-\overline{\eta}_n]$. Denote the limiting set by $E_0 := \left\{(x,y) \, \middle| \, (x+y,x-y) \in [-1,1]^2\right\}$.

\begin{proposition}[Large deviations, Theorem 1 in \cite{Kr16b}]
\label{prop:LDP}

Suppose that $(m_n(0),q_n(0))$ satisfies a large deviation principle with speed $n$ on $\bR^2$ with a good rate function $I$ such that $\{(x,y) \, | \, I(x,y) < \infty\} \subseteq E_0$. Then, $\mu$-almost surely, the trajectories $\left\{ \left( m_n(t), q_n(t)\right) \right\}_{t \geq 0}$ satisfy the large deviation principle 
\begin{equation*}
\mathbb{P} \left[ \left\{ \left( m_n(t), q_n(t)\right) \right\}_{t \geq 0} \approx \left\{ \gamma (t) \right\}_{t \geq 0} \right] \asymp e^{-n I(\gamma)} 
\end{equation*}
on $D_{\bR^2}(\bR^+)$, with good rate function $I$ that is finite only for trajectories in $E_0$ and
\begin{equation}\label{rate:fct:LDP}
I(\gamma) = 
\begin{cases}
I_0(\gamma(0)) + \int_0^{+\infty} \cL(\gamma(s),\dot{\gamma}(s))\, \dd s & \text{ if } \gamma \in \mathcal{AC},\\
\infty & \text{ otherwise},
\end{cases}
\end{equation}
where $\cL((x,y),(v_x,v_y)) = \sup_{p \in \bR^2} \left\{ \ip{p}{v} - H((x,y),(p_x,p_y))\right\}$ is the Legendre transform of 
\begin{multline*}
H((x,y),(p_x,p_y)) =  \frac{1}{2} \Big\{ \big[ \cosh (2p_x +2 p_y) -1 \big] G_{1,\beta,B}^{+}(x,y) + \sinh (2p_x +2 p_y) G_{2,\beta,B}^{+}(x,y) \\
 + \big[ \cosh (2p_x - 2 p_y) -1 \big] G_{1,\beta,B}^{-}(x,y) + \sinh (2p_x -2 p_y) G_{2,\beta,B}^{-}(x,y) \Big\}.
\end{multline*}
\end{proposition}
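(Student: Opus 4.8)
The plan is to obtain this path-space large deviation principle as an instance of the general Feng--Kurtz generator-convergence machinery, specifically the version for projected processes recalled in Appendix~\ref{appendix:large_deviations_for_projected_processes}. The starting observation is that the process $\{(m_n(t),q_n(t))\}_{t\geq 0}$ is Markovian with generator $\cA_n$ given in \eqref{CWfi:micro:gen:m}, and the relevant object for large deviations at speed $n$ is the nonlinear generator $H_n f := \frac{1}{n} e^{-nf} \cA_n e^{nf}$. First I would establish exponential tightness: since the state space $E_n$ is contained in a fixed compact square (up to the $O(n^{-1})$ discrepancy coming from $\overline{\eta}_n$) and the jump rates in \eqref{CWfi:micro:gen:m} are uniformly bounded after the $n^{-1}$ scaling, exponential compact containment and the standard Feng--Kurtz tightness criterion apply essentially verbatim; the only subtlety is that one should work $\mu$-almost surely, using that $\overline{\eta}_n \to 0$ a.s. by the strong law of large numbers, which we may assume throughout.

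The core step is the computation of the limiting Hamiltonian. Plugging $f$ into $H_n f = \frac{1}{n} e^{-nf}\cA_n e^{nf}$, each of the four terms in \eqref{CWfi:micro:gen:m} contributes a prefactor (a rate divided by $n$, which converges to an explicit function of $(x,y)$) times a term of the form $e^{n[f(x\pm 2/n, y\pm 2/n)-f(x,y)]}-1$; by Taylor expansion $n[f(x+2a/n,y+2b/n)-f(x,y)] \to 2a\,p_x + 2b\,p_y$ where $p = \nabla f$, uniformly on compacts for $f \in C^1_c$. Collecting the four resulting terms, using that $\overline{\eta}_n \to 0$, and regrouping the $\cosh/\sinh$ contributions of the two ``$+$'' rates and the two ``$-$'' rates separately, one recovers exactly the Hamiltonian $H$ displayed in the statement, with the functions $G^{\pm}_{1,\beta,B}$ and $G^{\pm}_{2,\beta,B}$ from \eqref{def:G's} appearing as the coefficients of $\cosh(2p_x\pm 2p_y)-1$ and $\sinh(2p_x\pm 2p_y)$. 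This is a routine but somewhat lengthy algebraic identity; the $-1$ inside the $\cosh$ bracket is precisely what makes $H((x,y),0)=0$, as it must.

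With convergence $H_n \to H$ in the appropriate sense in hand, the remaining ingredients are (i) that the comparison principle holds for the Hamilton--Jacobi equation $f - \lambda H f = h$ on $E_0$ — i.e.\ uniqueness of viscosity solutions — and (ii) verification of the abstract hypotheses of the Feng--Kurtz theorem (as packaged in the appendix), which then yields the large deviation principle on $D_{\bR^2}(\bR^+)$ with rate function of the form \eqref{rate:fct:LDP}, the Lagrangian $\cL$ being the Legendre transform of $H$ fibre-wise. I expect the main obstacle to be the comparison principle: one must exploit the structure of $H$ (it is convex in $p$, vanishes at $p=0$, and the state space $E_0$ is compact with the vector field pointing inward at the boundary, reflecting that the rates in \eqref{CWfi:micro:gen:m} vanish on $\partial E_0$ so that trajectories cannot leave $E_0$) to run a doubling-of-variables argument with a suitable penalization that handles the boundary behavior. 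Since the statement attributes the result to \cite{Kr16b}, I would either invoke the comparison principle proved there or sketch the containment-function argument showing $\{I<\infty\}\subseteq E_0$ and adapt the standard viscosity-solution uniqueness proof to this compact, degenerate-at-the-boundary setting. Finiteness of $I$ only on absolutely continuous trajectories valued in $E_0$ then follows from superlinear growth of $p \mapsto H((x,y),p)$ in the directions tangent to $E_0$ together with the confinement just described.
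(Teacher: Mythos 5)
Your sketch is correct in spirit but follows a genuinely different route from the paper. The paper's proof is two sentences: it changes coordinates to $(m_n+q_n,\,m_n-q_n)$, which are precisely the (rescaled) empirical magnetizations of the two subpopulations $\{i:\eta_i=+1\}$ and $\{i:\eta_i=-1\}$, observes that in these coordinates the process falls verbatim into the framework of Theorem~1 of \cite{Kr16b} (a general path-space LDP for finite-state Markov jump processes with mean-field interaction, which already contains the Hamiltonian computation, the comparison principle, and the exponential tightness), and then undoes the coordinate transformation to obtain the stated $H$. What you propose instead is to re-derive the \cite{Kr16b} result specialized to the present model: compute $H_n f = n^{-1}e^{-nf}\cA_n e^{nf}$, Taylor-expand the exponentials in the jump increments $\pm 2/n$, pass to the limit using $\overline{\eta}_n \to 0$ a.s., and then tackle the comparison principle for $f-\lambda Hf=h$ on the compact set $E_0$. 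Your identification of the ingredients is accurate — the Hamiltonian limit calculation is routine, exponential compact containment is trivial because $E_n$ sits inside a fixed compact square, and the comparison principle is indeed the technical crux — and you are right that for the comparison principle one ultimately leans on \cite{Kr16b}. What the coordinate change buys the authors is that they need no separate verification at all: after the transformation the state space becomes a product $[-1,1]^2$, the generator decouples into the standard Curie-Weiss single-population form for each coordinate, and Theorem~1 of \cite{Kr16b} applies out of the box; the $H$ in the statement is then obtained simply by composing the Hamiltonian of \cite{Kr16b} with the linear change of variables. So your argument would work, but it reproduces work the paper deliberately avoids, and it misses the structural observation — that $(m_n+q_n,m_n-q_n)$ are non-interacting-in-$\eta$ subpopulation magnetizations — that makes the citation immediate.
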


\begin{proof}
Arguing for the pair $(m_n + q_n, m_n - q_n)$, we can use Theorem 1 in \cite{Kr16b}. We obtain our result by undoing the coordinate transformation. 
\end{proof}

We recall that a large deviation principle in the trajectory space can also be derived via contraction of a large deviation principle for the non-interacting particle system; see \cite{DaPdHo96} for details. Moreover, a static quenched large deviation principle for the empirical magnetization has been proved in \cite{LoMeTo13}. In both the aforementioned papers, the large deviation principle is obtained under assumptions that cover more general disorder than dichotomous.\\

The path-space large deviation principle in Proposition~\ref{prop:LDP} allows to derive the infinite volume dynamics for our model: if $(m_n(0), q_n(0))$ converges weakly to the constant $(m_0,q_0)$, then the empirical process $\left( m_n(t), q_n(t) \right)_{t \geq 0}$ converges weakly, as $n \to \infty$, to the solution of
\begin{equation}\label{MKV:randomCW}
\begin{array}{ccl}
\dot{m}(t) & = & G_{2,\beta,B}^{+}(m(t),q(t)) +  G_{2,\beta,B}^{-}(m(t),q(t)) \\[0.3cm]
\dot{q}(t) & = & G_{2,\beta,B}^{+}(m(t),q(t)) -  G_{2,\beta,B}^{-}(m(t),q(t)) 
\end{array}
\end{equation}
with initial condition $(m_0,q_0)$.\\
The phase portrait of system \eqref{MKV:randomCW} is known; for instance, see \cite{AdMPaZa92,DaPdHo95}. We briefly recall the analysis of equilibria. First of all, observe that any stationary solution of \eqref{MKV:randomCW} is of the form
\begin{equation}\label{eqn:stat:magn}
\begin{array}{ccl}
m &=& \frac{1}{2} \left[ \tanh ( \beta ( m + B ) ) + \tanh ( \beta ( m - B ) ) \right]  \\[0.3cm]
q &=& \frac{1}{2} \left[ \tanh ( \beta ( m + B ) ) - \tanh ( \beta ( m - B ) ) \right]
\end{array} 
\end{equation}
and that $(0,\tanh (\beta B))$ satisfies \eqref{eqn:stat:magn} for all the values of the parameters. Solutions with $m=0$ are called \emph{paramagnetic}, those with $m \neq 0$ \emph{ferromagnetic}. On the phase space $(\beta, B)$ we get the following:
\begin{enumerate}[(I)]
\item 
If $\beta \leq 1$, then $(0,\tanh(\beta B))$ is the unique fixed point for \eqref{MKV:randomCW} and it is globally stable.
\item 
If $\beta > 1$, the situation is more subtle. There exist two functions
\[
g_1(\beta) = \frac{1}{\beta} \arccosh (\sqrt{\beta}) 
\]
and
\[
g_2: [1,+\infty) \to \left[ 0,1\right), \text{ strictly increasing, $g(1)=0$, $g(\beta_n) \uparrow 1$ as $\beta_n \uparrow +\infty$},
\]
satisfying 
\begin{itemize}
\item
$g_1(\beta) \leq g_2(\beta)$ on $[1,+\infty)$, \\
\item
$g_1(\beta)$ and $g_2(\beta)$ coincide for $\beta \in \left[1, \frac{3}{2} \right]$ and separate at the tri-critical point $(\beta_{\mathrm{tc}}, B_{\mathrm{tc}}) = ( \frac{3}{2}, \frac{2}{3} \arccosh ( \sqrt{\frac{3}{2}}))$,
\end{itemize}
such that
\begin{enumerate}[(i)]
\item
if $B \geq g_2(\beta)$ the same result as in $(I)$ holds;
\item
if $B < g_1(\beta)$, then $(0, \tanh(\beta B))$ becomes unstable and two (symmetric) stable ferromagnetic solutions arise;
\item
if $\beta > \frac{3}{2}$ and $B = g_1(\beta)$, then $(0, \tanh(\beta B))$ is neutrally stable and coexists with a pair of stable ferromagnetic solutions;
\item
if $\beta > \frac{3}{2}$ and $g_1(\beta) < B < g_2(\beta)$, then $(0, \tanh(\beta B))$ is stable and, in addition, we have two pairs (one is stable and the other is not) of ferromagnetic solutions. Inside this phase there is a coexistence line, above which the paramagnetic solution is stable and the two stable ferromagnetic solutions are metastable, and below which the reverse is true.
\end{enumerate}
\end{enumerate}
We refer to Figure~\ref{fig:PhD} for a visualization of the previous assertions.

\begin{figure}
\centering
\includegraphics[scale=.8]{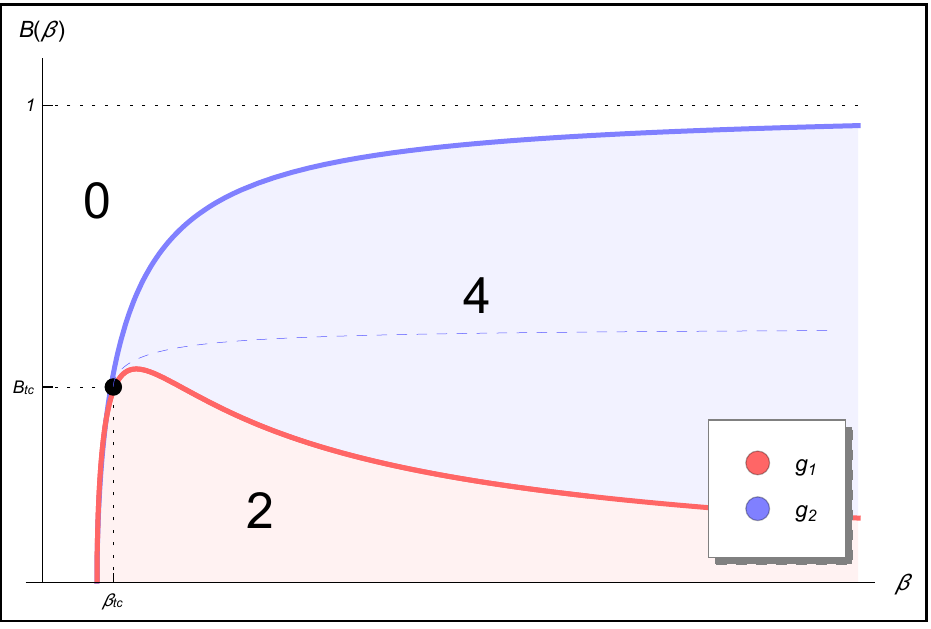}
\caption{Qualitative picture of the phase space $(\beta, B)$ for equation \eqref{MKV:randomCW}. Each colored region represents a phase with as many ferromagnetic solutions of \eqref{eqn:stat:magn} as indicated by the numerical label. The thick red and blue separation lines are $g_1$ and $g_2$ respectively. The thin dashed blue  line is the coexistence line relevant for metastability (cf. II(iv)).}
\label{fig:PhD}
\end{figure}

%

\subsection{Main results}

%
We consider the moderate deviations of the microscopic dynamics \eqref{CWfi:micro:gen:m} around their stationary macroscopic  limit in the various regimes. \\
The first of our statements is mainly of interest in the paramagnetic phase, but is indeed valid for all values of the parameters.

\begin{theorem}[Moderate deviations around $(0,\tanh(\beta B))$]
\label{thm:MD:paramagnetic:subcritical}
Let $\{b_n\}_{n\geq 1}$ be a sequence of positive real numbers such that 
\[
b_n \to \infty \quad \mbox{ and } \quad b_n^2 n^{-1} \log \log n \to 0.
\] 
Suppose that $\left( b_n m_n(0), b_n (q_n(0)-\tanh(\beta B)) \right)$ satisfies a large deviation principle with speed $nb_n^{-2}$ on $\mathbb{R}^2$ and rate function $I_0$. Then, $\mu$-almost surely, the trajectories 
\[
\left\{ \left( b_n m_n(t), b_n (q_n(t) - \tanh (\beta B)) \right) \right\}_{t \geq 0}
\] 
satisfy the large deviation principle 
\[
\mathbb{P} \left[ \left\{ \left( b_n m_n(t), b_n (q_n(t) - \tanh (\beta B)) \right) \right\}_{t \geq 0} \approx \left\{ \gamma (t) \right\}_{t \geq 0} \right] \asymp e^{-nb_n^{-2} I(\gamma)} 
\]
on $D_{\mathbb{R} \times \mathbb{R}}(\mathbb{R}^+)$, with good rate function
\begin{equation}\label{rate:fct:subcritical}
I(\gamma) = 
\left\{
\begin{array}{ll}
I_0(\gamma(0)) + \int_0^{+\infty} \, \mathcal{L} (\gamma(s), \dot{\gamma}(s)) \, \dd s & \text{ if } \gamma \in \mathcal{AC},\\ 
\infty & \text{ otherwise},
\end{array}
\right.
\end{equation}
where
\begin{equation}\label{eqn:Lagrangian:paramagnetic_MD:subcritical}
\cL(\mathbf{x}, \mathbf{v}) := \frac{\cosh(\beta B)}{8} \left\vert \mathbf{v} - 2 \begin{pmatrix} \frac{\beta - \cosh^2(\beta B)}{\cosh(\beta B)} & 0 \\ 0 & -\cosh(\beta B) \end{pmatrix} \mathbf{x}\right\vert^2.
\end{equation}
\end{theorem}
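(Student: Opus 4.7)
The plan is to apply the Feng–Kurtz generator-convergence method via viscosity solutions of Hamilton–Jacobi equations, exactly in the form packaged in Appendix~\ref{appendix:large_deviations_for_projected_processes}. After passing to the moderate coordinates $\tilde x = b_n x$, $\tilde y = b_n(y - \tanh(\beta B))$, I conjugate the generator of the rescaled process by $\exp(n b_n^{-2} f)$ and introduce the non-linear generators
\[
H_n f(\tilde{\mathbf x}) = \frac{b_n^2}{n}\, e^{-(n/b_n^2) f(\tilde{\mathbf x})}\, \widetilde{\mathcal A}_n\, e^{(n/b_n^2) f}(\tilde{\mathbf x}).
\]
The abstract theorem of the appendix will deliver the MDP at speed $n b_n^{-2}$ once I verify: (i) $H_n f \to H f$ for every $f \in C_c^\infty(\mathbb{R}^2)$ in the sense of Definition~\ref{def:o(1)}; (ii) exponential tightness at that speed; (iii) the comparison principle for $u - \lambda H u = h$ on $\mathbb{R}^2$.

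To identify the limit $H$, I expand both the exponentials and the rates from \eqref{CWfi:micro:gen:m} to second order. In the $\tilde{\mathbf x}$-variables the jumps have size $O(b_n/n)$, so each exponent in the conjugated generator is $O(1/b_n) \to 0$, legitimising a uniform second-order Taylor expansion. Taylor-expanding the rates around $(0, \tanh(\beta B))$, the zeroth-order drift contributions cancel because $G_{2,\beta,B}^{\pm}(0, \tanh(\beta B)) = 0$, i.e.\ $(0, \tanh(\beta B))$ is a stationary point of \eqref{MKV:randomCW}. The surviving terms combine into a linear drift $A\tilde{\mathbf x}$ with $A = 2\,\mathrm{diag}\bigl((\beta - \cosh^2(\beta B))/\cosh(\beta B),\, -\cosh(\beta B)\bigr)$, coming from the product of the first-order rate correction and the first-order Taylor of the exponential, together with a constant quadratic in $\mathbf p = \nabla f$ with diffusion matrix $(4/\cosh(\beta B))\,\mathrm{Id}$, coming from the second-order Taylor of the exponential evaluated at the equilibrium rates. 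Thus $H(\tilde{\mathbf x}, \mathbf p) = \langle A\tilde{\mathbf x}, \mathbf p\rangle + (2/\cosh(\beta B))|\mathbf p|^2$, whose Legendre transform is precisely \eqref{eqn:Lagrangian:paramagnetic_MD:subcritical}.

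The sequence $H_n$ is random through the $\overline\eta_n$-dependence of the rates. After the expansion above, the leading $\overline\eta_n$-dependent contribution to $H_n f$ is of order $b_n \overline\eta_n$ on the support of $\nabla f$. By the law of the iterated logarithm, $|\overline\eta_n| = O(\sqrt{\log\log n / n})$ $\mu$-a.s., so the hypothesis $b_n^2 n^{-1} \log\log n \to 0$ gives $b_n \overline\eta_n \to 0$ $\mu$-a.s. Hence along $\mu$-a.e.\ realization of $\eta$, $H_n f \to H f$ in the sense of Definition~\ref{def:o(1)}, with uniform boundedness automatic for $f \in C_c^\infty(\mathbb{R}^2)$; higher-order disorder contributions are negligible by the same mechanism. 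Exponential compact containment at speed $n b_n^{-2}$ follows from the Lyapunov function $|\tilde{\mathbf x}|^2$, on which $H_n$ acts as a coercive quadratic modulo lower-order errors, and the comparison principle for $u - \lambda H u = h$ on $\mathbb{R}^2$ follows from a standard Crandall–Ishii doubling-of-variables argument since $H$ is smooth, strictly convex and coercive in $\mathbf p$ with Lipschitz-on-compacts (in fact linear) drift, in direct analogy with \cite{CoKr17}.

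The main obstacle is not any individual step but the interaction between the moderate scaling and the disorder: the precise window $b_n^2 n^{-1} \log\log n \to 0$ is forced by matching the $b_n \overline\eta_n$ error in the Taylor expansion with the LIL rate, and one must ensure that along $\mu$-a.e.\ realization the convergence $H_n f \to H f$ is genuinely to a deterministic limit, with the $\overline\eta_n$-dependent remainder absorbed into the $o(1)$-framework of Definition~\ref{def:o(1)} uniformly on compacts of $\mathbb{R}^2$ — this is where the moderate scaling range is restricted by the disorder.
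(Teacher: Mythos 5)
Your proposal is correct and follows essentially the same route as the paper: Taylor-expand the conjugated generator around the stationary point $(0,\tanh(\beta B))$ at moderate scale $\nu=0$, use the law of the iterated logarithm under $b_n^2 n^{-1}\log\log n\to 0$ to kill the $b_n\overline{\eta}_n$ remainder $\mu$-a.s., identify the quadratic Hamiltonian $H(\mathbf{x},\mathbf{p})=\langle A\mathbf{x},\mathbf{p}\rangle + \tfrac{2}{\cosh(\beta B)}|\mathbf{p}|^2$ with $A=2\,\mathrm{diag}\bigl(\tfrac{\beta-\cosh^2(\beta B)}{\cosh(\beta B)},-\cosh(\beta B)\bigr)$, and delegate exponential tightness, the comparison principle and the abstract LDP to the Feng--Kurtz machinery as packaged in \cite{CoKr17}. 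Your drift matrix and the diffusion part $\bG_1(0,\tanh(\beta B))=\tfrac{2}{\cosh(\beta B)}\,\mathrm{Id}$ both agree with the paper's computation.
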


Observe that the growth condition $b_n^2 n^{-1} \log \log n \to 0$ is necessary to ensure that $b_n \overline{\eta}_n$ (re-scaled empirical average of the local fields) converges to zero almost surely as $n \to +\infty$. A similar effect is also known in moderate deviation principles for the overlap in the Hopfield model, see \cite{EiLo04}. The peculiar scaling is prescribed by the \emph{law of iterated logarithm}, that provides the scaling factor where the limits of the weak and strong law of large numbers become different, cf. \cite[Corollary 14.8]{Ka02}. Analogous requirements will appear also in the following statements.\\

Our next result considers moderate deviations around ferromagnetic solutions of \eqref{eqn:stat:magn}. To shorten notation and not to clutter the statement, let us introduce the following matrices
\begin{equation}\label{def:G_matrix}
\bG_{1,\beta,B}(x,y) = \begin{pmatrix}  G_{1,\beta,B}^+(x,y)  + G_{1,\beta,B}^-(x,y)  & G_{1,\beta,B}^+(x,y)  - G_{1,\beta,B}^-(x,y)  \\ G_{1,\beta,B}^+(x,y)  - G_{1,\beta,B}^-(x,y)  & G_{1,\beta,B}^+(x,y)  + G_{1,\beta,B}^-(x,y)  \end{pmatrix},
\end{equation}
\begin{equation}\label{def:Ghat_matrix}
\hat{\bG}_{1,\beta,B} (x,y)=
\begin{pmatrix}
G_{1,\beta,B}^+(x,y) + G_{1,\beta,B}^-(x,y) & 0 \\ 
G_{1,\beta,B}^+(x,y) - G_{1,\beta,B}^-(x,y) & 0  
\end{pmatrix}
\end{equation}
and
\begin{equation}\label{def:B_matrix}
\mathbb{B}(x) = 
\begin{pmatrix}\
\cosh(\beta x) \cosh(\beta B) & \sinh(\beta x) \sinh(\beta B) \\ 
\sinh(\beta x) \sinh(\beta B) & \cosh(\beta x) \cosh(\beta B) 
\end{pmatrix}.
\end{equation}
We get the following.

\begin{theorem}[Moderate deviations: super-critical regime $\beta > 1$, $B < g_2(\beta)$]
\label{thm:MD:ferromagnetic:supercritical}
Let $(m,q)$ be a solution of \eqref{eqn:stat:magn} with $m \neq 0$. Moreover, let $\{b_n\}_{n\geq 1}$ be a sequence of positive real numbers such that 
\[
b_n \to \infty \quad \mbox{ and } \quad b_n^2 n^{-1} \log \log n  \to 0.
\] 
Suppose that $\left( b_n (m_n(0)-m), b_n (q_n(0)-q) \right)$ satisfies a large deviation principle with speed $nb_n^{-2}$ on $\mathbb{R}^2$ and rate function $I_0$. Then, $\mu$-almost surely, the trajectories 
\[
\left\{ \left( b_n (m_n(t)-m), b_n (q_n(t) - q) \right) \right\}_{t \geq 0}
\] 
satisfy the large deviation principle 
\[
\mathbb{P} \left[ \left\{ \left( b_n (m_n(t)-m), b_n (q_n(t) - q) \right) \right\}_{t \geq 0} \approx \left\{ \gamma (t) \right\}_{t \geq 0} \right] \asymp e^{-nb_n^{-2} I(\gamma)} 
\]
on $D_{\mathbb{R} \times \mathbb{R}}(\mathbb{R}^+)$, with good rate function
\begin{equation}\label{rate:fct:supercritical}
I(\gamma) = 
\left\{
\begin{array}{ll}
I_0(\gamma(0)) + \int_0^{+\infty} \, \mathcal{L} (\gamma(s), \dot{\gamma}(s)) \, \dd s & \text{ if } \gamma \in \mathcal{AC},\\ 
\infty & \text{ otherwise},
\end{array}
\right.
\end{equation}
where 
\[
\cL(\mathbf{x}, \mathbf{v}) := \frac{1}{4}\ip{\bG_{1,\beta,B}^{-1}(m,q) [\mathbf{v} - (\beta\hat{\bG}_{1,\beta,B}(m,q) -2\mathbb{B}(m)) \mathbf{x}]}{\mathbf{v} - (\beta\hat{\bG}_{1,\beta,B}(m,q) -2\mathbb{B}(m)) \mathbf{x}}.
\]
\end{theorem}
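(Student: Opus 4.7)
We follow the Feng--Kurtz generator convergence method combined with the abstract path-space large deviation principle of \cite[Appendix~A]{CoKr17}, exactly as for Theorem~\ref{thm:MD:paramagnetic:subcritical}, but now centred at the ferromagnetic equilibrium $(m,q)$. Introduce the translated and rescaled process
\[
\mathbf{X}_n(t):=\bigl(b_n(m_n(t)-m),\,b_n(q_n(t)-q)\bigr),
\]
whose generator $\tilde{\cA}_n$ is obtained from \eqref{CWfi:micro:gen:m} by the change of variables $x=m+u/b_n$, $y=q+v/b_n$; in the new coordinates the four jumps have magnitude $2b_n/n$ and the four prefactors are those of \eqref{CWfi:micro:gen:m} evaluated at $(m+u/b_n,\,q+v/b_n)$. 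Form the non-linear Hamiltonian
\[
H_n f(\mathbf{x}):=\frac{b_n^2}{n}\,e^{-nb_n^{-2} f(\mathbf{x})}\,\tilde{\cA}_n\, e^{nb_n^{-2} f}(\mathbf{x}),
\]
and the plan is to prove $H_n f\to Hf$ uniformly on compacts for test functions in a suitable core, identify $H$, and invoke the general LDP.

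The convergence $H_n f\to Hf$ is a Taylor expansion. For each of the four channels in \eqref{CWfi:micro:gen:m}, we expand $nb_n^{-2}[f(\mathbf{x}+\Delta)-f(\mathbf{x})]$ to two orders in the increment $\Delta=O(b_n/n)$; the first-order piece contributes a term linear in $\nabla f(\mathbf{x})$, while the second-order piece, together with the quadratic correction coming from expanding $\exp$, contributes a term quadratic in $\nabla f(\mathbf{x})$. Multiplying by the rate prefactors and Taylor-expanding them around $(m,q)$ is where the structure is exploited: the stationarity identities $G_{2,\beta,B}^{\pm}(m,q)=0$ (equivalent to \eqref{eqn:stat:magn}) make the otherwise divergent $O(b_n)$ drift vanish, and the next-order term produces the linear drift $K\mathbf{x}$ with $K:=\beta\hat{\bG}_{1,\beta,B}(m,q)-2\mathbb{B}(m)$, which is in fact the Jacobian of the right-hand side of \eqref{MKV:randomCW} at $(m,q)$; the symmetric combinations of the four channels collapse into the quadratic form $\ip{\nabla f}{\bG_{1,\beta,B}(m,q)\nabla f}$. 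Hence
\[
H(\mathbf{x},\mathbf{p})=\ip{\mathbf{p}}{K\mathbf{x}}+\ip{\mathbf{p}}{\bG_{1,\beta,B}(m,q)\,\mathbf{p}},
\]
whose Legendre transform is the $\cL$ in the statement.

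Handling the quenched disorder is the point at which the scaling assumption is used. After the translation, $\overline{\eta}_n$ enters the prefactors multiplied by powers of $b_n$ in the drift expansion; the law of the iterated logarithm gives $b_n\overline{\eta}_n\to 0$ $\mu$-almost surely under $b_n^2 n^{-1}\log\log n\to 0$, which is exactly the margin needed to absorb the disorder into a remainder vanishing uniformly on compacts. The abstract theorem then delivers the LDP once one verifies (a) exponential tightness at speed $nb_n^{-2}$, obtained from a quadratic Lyapunov/containment function exploiting that $K$ is Hurwitz at a supercritical stable ferromagnetic equilibrium, and (b) the comparison principle for $f-\lambda Hf=h$, which is classical for Hamiltonians quadratic in $\mathbf{p}$ with linear drift and constant positive-definite diffusion matrix.

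The main obstacle will be the algebraic bookkeeping in the second step: tracking how the cross-terms of the four reaction channels combine exactly into the matrices $\hat{\bG}_{1,\beta,B}$, $\bG_{1,\beta,B}$ and $\mathbb{B}$, with all the cancellations forced by \eqref{eqn:stat:magn}. A secondary, but essential, point is the quenched control of $\overline{\eta}_n$: the disorder-dependent remainders must vanish uniformly on compacts $\mu$-almost surely, which is precisely what the iterated-logarithm scaling provides and which is the source of the restriction $b_n^2 n^{-1}\log\log n\to 0$.
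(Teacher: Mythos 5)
Your proposal follows the same route as the paper: write the Hamiltonian $H_n$ for the recentred and rescaled process at speed $n b_n^{-2}$, Taylor-expand, use the stationarity identities $G_{2,\beta,B}^{\pm}(m,q)=0$ to kill the $O(b_n)$ drift, read off the linear drift $K=\beta\hat{\bG}_{1,\beta,B}(m,q)-2\mathbb{B}(m)$ (which, as you correctly note, is the Jacobian $D\cG_2(m,q)$ of the McKean--Vlasov vector field at $(m,q)$, cf.\ Lemma~\ref{lemma:DkG2_expressions}(b) with $k=1$) and the quadratic form $\ip{\bG_{1,\beta,B}(m,q)\nabla f}{\nabla f}$, control $\overline{\eta}_n$ via the law of the iterated logarithm, and invoke the abstract LDP from \cite[Appendix~A]{CoKr17}. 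This is precisely what the paper does by specialising Proposition~\ref{proposition:compact_expression_for_H_n} to $\nu=0$ and applying Lemma~\ref{lemma:DkG2_expressions}(b).

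One caveat in your sketch of step (a): you justify exponential compact containment by ``exploiting that $K$ is Hurwitz at a supercritical stable ferromagnetic equilibrium.'' The theorem statement only requires $(m,q)$ to solve \eqref{eqn:stat:magn} with $m\neq 0$; in the region $\beta>\tfrac32$, $g_1(\beta)<B<g_2(\beta)$ there is a pair of \emph{unstable} ferromagnetic solutions, for which $K$ is \emph{not} Hurwitz, yet the theorem is asserted for them as well. Compact containment for a linear drift with bounded (here constant) diffusion matrix does not actually need Hurwitz stability: over a fixed finite time horizon $T$ the drift produces at most exponential-in-$T$ (but $n$-independent) deterministic growth, and the jump sizes are $O(b_n/n)$, so a Lyapunov argument with a containment function such as $(x,y)\mapsto\log(1+x^2+y^2)$ gives the required bound regardless of the sign of the eigenvalues of $K$; this is also how the cited result in \cite{CoKr17} handles it. You should therefore drop the Hurwitz assumption from the containment argument; otherwise the proposal is correct and matches the paper's proof.
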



We see that the Lagrangian \eqref{eqn:Lagrangian:paramagnetic_MD:subcritical} trivializes in the $x$ coordinate if $\beta = \cosh^2(\beta B)$. The latter equation corresponds to $(\beta,B)$ lying on the critical curve $B = g_1(\beta)$. This fact can be seen as the dynamical counterpart of the bifurcation occurring at the stationary point as $B$ varies for fixed $\beta$: $(0,\tanh(\beta B))$ is turning unstable from being a stable equilibrium. 

\smallskip

At the critical curve, the fluctuations of $m_n(t)$ behave homogeneously in the distance from the stationary point, whereas the fluctuations of $q_n(t)$ are confined around $0$. To further study the fluctuations of $m_n(t)$, we speed up time to capture higher order effects of the microscopic dynamics. Speeding up time implies that the probability of deviations from $q_n(t)$ decays faster than exponentially. 

\begin{theorem}[Moderate deviations: critical curve $1 < \beta \leq \frac{3}{2}$, $B=g_1(\beta)$]
\label{thm:MD:paramagnetic:critical}
Let $\{b_n\}_{n\geq 1}$ be a sequence of positive real numbers such that 
\[
b_n \to \infty \quad \mbox{ and } \quad b_n^6 n^{-1} \log \log n \to 0.
\] 
Suppose that $b_n m_n(0)$ satisfies a large deviation principle with speed $nb_n^{-4}$ on $\mathbb{R}$ and rate function $I_0$. Then, $\mu$-almost surely, the trajectories $\left\{ b_n m_n(b_n^2 t) \right\}_{t \geq 0}$ satisfy the large deviation principle 
\[
\mathbb{P} \left[ \left\{b_n m_n(b_n^2t)\right\}_{t \geq 0} \approx \left\{ \gamma (t) \right\}_{t \geq 0} \right] \asymp e^{-nb_n^{-4} I(\gamma)} 
\]
on $D_{\mathbb{R}}(\mathbb{R}^+)$, with good rate function
\begin{equation}\label{rate:fct:critical}
I(\gamma) = 
\left\{
\begin{array}{ll}
I_0(\gamma(0)) + \int_0^{+\infty} \, \mathcal{L} (\gamma(s), \dot{\gamma}(s)) \, \dd s & \text{ if } \gamma \in \mathcal{AC},\\ 
\infty & \text{ otherwise},
\end{array}
\right.
\end{equation}
where 
\[
\mathcal{L}(x,v) = \frac{\cosh(\beta B)}{8} \left\vert v - \frac{2}{3} \beta (2\beta-3)\cosh(\beta B) x^3 \right\vert^2.
\]
\end{theorem}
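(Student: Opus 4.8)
The plan is to follow the Feng–Kurtz generator-convergence scheme for large deviations, exactly as set up in Appendix~\ref{appendix:large_deviations_for_projected_processes}, but now in the time-scale-separated regime where $q_n$ is a fast variable and $m_n$ the slow one. First I would write down the generator $\mathcal{A}_n^{\mathrm{MD}}$ obtained from \eqref{CWfi:micro:gen:m} after (i) centering around the stationary point $(0,\tanh(\beta B))$, (ii) rescaling space by $b_n$ and time by $b_n^2$, and (iii) passing to the exponential (nonlinear) generator $H_n f = b_n^{-4} e^{-n b_n^{-4} f} \mathcal{A}_n^{\mathrm{MD}} e^{n b_n^{-4} f}$ acting on the rescaled coordinates $(\tilde x, \tilde y) = (b_n m_n, b_n q_n)$. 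The key structural point is that on the critical curve $B = g_1(\beta)$ one has $\beta = \cosh^2(\beta B)$, so the linear drift term in the $\tilde x$-direction vanishes (this is the ``trivialization'' noted after Theorem~\ref{thm:MD:ferromagnetic:supercritical}); the leading $\tilde x$-drift is then cubic, of order $b_n^{-2}$ after the space rescaling, which is precisely why the time speed-up $b_n^2$ is the right one. In the $\tilde y$-direction, by contrast, the linear drift $-2\cosh(\beta B)\,\tilde y$ survives and, under the extra time speed-up, becomes a fast restoring force of order $b_n^2$: this is the time-scale separation. I would also record that $b_n \overline\eta_n \to 0$ $\mu$-a.s. under $b_n^6 n^{-1}\log\log n \to 0$ via the law of the iterated logarithm, exactly as in the remark after Theorem~\ref{thm:MD:paramagnetic:subcritical}, so that the disorder contributes only negligible corrections of the type handled by Definition~\ref{def:o(1)}.

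The heart of the argument is the projection onto the slow variable. Because $q_n$ is fast and pulled exponentially quickly to $0$, we expect the limiting Hamiltonian to depend only on $x$ and $p_x$, obtained by ``averaging out'' $y$ at its equilibrium value $y = 0$. Following \cite{FK06} and the perturbation technique of \cite{PaStVa77} adapted to nonlinear generators (as advertised in the introduction), I would test $H_n$ against perturbed functions of the form $f_n(\tilde x,\tilde y) = f(\tilde x) + b_n^{-2} \phi(\tilde x,\tilde y)$ (and similar with $b_n^{-2}$ replaced by a small parameter, to produce the upper/lower bounds), where $f \in C_c^\infty(\mathbb{R})$ and $\phi$ is a correction chosen to kill the divergent $O(b_n^2)$ terms coming from the fast $\tilde y$-dynamics. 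Because these perturbed functions do not give a uniform bound on $H_n f_n$, I would not try to prove convergence in the strong sense used in \cite{CoKr17}; instead I would establish the existence of the two operators $H_\dagger$ and $H_\ddagger$ of \cite{FK06} — limiting upper and lower bounds for $H_n$ — and verify that they match, yielding the single limiting Hamiltonian
\[
H(x,p) = \frac{2}{3}\beta(2\beta-3)\cosh(\beta B)\, x^3 \, p + \frac{2}{\cosh(\beta B)}\, p^2 .
\]
The quadratic-in-$p$ diffusion coefficient $2/\cosh(\beta B)$ comes from the symmetrized jump rates evaluated at $(x,0)$ (i.e. from $\tfrac12[G_{1,\beta,B}^+ + G_{1,\beta,B}^-]$ at the stationary point, which equals $\cosh(\beta B)$), and the cubic drift coefficient comes from the third-order Taylor expansion of $G_{2,\beta,B}^+ + G_{2,\beta,B}^-$ at $(0,\tanh(\beta B))$ on the critical curve. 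A short computation then identifies the Legendre transform of this $H$ with the stated Lagrangian
\[
\mathcal{L}(x,v) = \frac{\cosh(\beta B)}{8}\left| v - \tfrac{2}{3}\beta(2\beta-3)\cosh(\beta B)\, x^3 \right|^2 .
\]

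To conclude the large deviation principle I would invoke the machinery of Appendix~\ref{appendix:large_deviations_for_projected_processes}: the comparison principle (uniqueness of viscosity solutions) for the Hamilton–Jacobi equation $u - \lambda H u = h$ associated with this one-dimensional $H$, together with exponential tightness of $\{b_n m_n(b_n^2 \cdot)\}$ on $D_{\mathbb{R}}(\mathbb{R}^+)$. Exponential tightness I would get from a containment/Lyapunov function argument — either from an explicit coercive function $\Upsilon$ satisfying $H\Upsilon \le C$, or by pulling back the exponential tightness already available for the un-speeded-up pair (Proposition~\ref{prop:LDP}, appropriately localized) — and the comparison principle follows because $H$ is a polynomial Hamiltonian of the type covered by the appendix (Lipschitz drift on compacts with controlled growth, convex and coercive in $p$). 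I anticipate the main obstacle to be the projection step: choosing the perturbation $\phi$ correctly so that the $O(b_n^2)$ terms cancel while the surviving $O(1)$ terms reproduce exactly the averaged Hamiltonian, and simultaneously controlling the error terms — in particular showing that the remaining fast fluctuations of $\tilde y$ do not contribute to the rate function and that the a.s. disorder corrections $b_n\overline\eta_n$, $b_n^3\overline\eta_n$ etc. are absorbed into $o(1)$. Verifying the matching $H_\dagger = H_\ddagger$ rigorously, rather than merely formally, is where the bulk of the technical work will lie.
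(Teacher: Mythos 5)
Your overall strategy is the paper's: expand the nonlinear generator, exploit the $y$-fast / $x$-slow time-scale separation via a perturbation of $\psi\in C_c^\infty(\mathbb{R})$, use the weaker $ex\text{-}\subLIM/ex\text{-}\superLIM$ notion with upper and lower Hamiltonians because $\|H_nf_n\|$ cannot be bounded uniformly, control $\overline\eta_n$ through the law of the iterated logarithm, and close with exponential compact containment and the comparison principle. That is indeed the architecture of Sections 3, 4 and Appendix A. However, two of your concrete claims would break the computation.

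First, the perturbation ansatz $f_n(\tilde x,\tilde y)=f(\tilde x)+b_n^{-2}\phi(\tilde x,\tilde y)$ is one order too weak, and you have also misidentified the order of the divergence. For $\nu=2$, the expansion in Proposition~\ref{proposition:compact_expression_for_H_n} applied to a function depending on $x$ alone has its $k=1$ term $b_n^2\,Q_1f\equiv 0$ (since $Q_1=Q_1^1$ annihilates $V_0$); the divergent term is the $k=2$ term $b_n\,Q_2^+f=-2\beta\sinh(\beta B)\,b_n\,xy\,f_x$, which is $O(b_n)$, not $O(b_n^2)$. A correction of size $b_n^{-2}$ feeds into the fast dynamics as $b_n^2\cdot b_n^{-2}\,Q_1\phi=O(1)$ and therefore cannot cancel an $O(b_n)$ term. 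The paper's construction (equations~\eqref{eqn:def:perturbation}, \eqref{def:recursion:psi:phi}, Lemma~\ref{lemma:convergence_of_functions_in_domain}) uses a two-stage correction
\[
F_{n,\psi}(x,y)=\psi(x)+b_n^{-1}\psi[1](x,y)+b_n^{-2}\psi[2](x,y),\qquad \psi[1]\in V_1,\ \psi[2]\in V_2,
\]
with $\psi[1]=P\,\mathcal{Q}_2^+\psi$ chosen so that $b_n\bigl(Q_2^+\psi+Q_1\psi[1]\bigr)=0$, and $\psi[2]$ chosen so that the $O(1)$ remainder lands in $V_0$. You need both correctors; as written, your $f_n$ makes $H_nf_n$ blow up on any compact set where $xy f_x\neq 0$.

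Second, and as a direct consequence, your identification of the drift is wrong: it does not come from ``the third-order Taylor expansion of $G_{2,\beta,B}^++G_{2,\beta,B}^-$'' alone. The direct third-order term is $Q_3^0\psi(x)=-\tfrac{2}{3}\beta^2\cosh(\beta B)\,x^3\psi'(x)$, whereas the theorem requires $\tfrac{2}{3}\beta(2\beta-3)\cosh(\beta B)\,x^3$; these agree only at $\beta=1$. The missing piece is the homogenization correction produced by the first corrector, namely $Q_2^-P\,Q_2^+\psi=2\beta^2\sinh(\beta B)\tanh(\beta B)\,x^3\psi'(x)$ (Lemma~\ref{lemma:combo-P+}). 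Adding it and using $\beta=\cosh^2(\beta B)$ gives $Q_3^0\psi+Q_2^-PQ_2^+\psi=\tfrac{2}{3}\beta(2\beta-3)\cosh(\beta B)\,x^3\psi_x$, exactly as in Lemma~\ref{lemma:limiting_drifts}. Naive averaging ``at $y=0$'' misses this term because the relevant feedback is through the $O(b_n^{-1})$-amplitude relaxation of $y$, not its equilibrium value. (Separately, note that $\tfrac12[G_1^++G_1^-]$ at $(0,\tanh(\beta B))$ equals $1/\cosh(\beta B)$, not $\cosh(\beta B)$; your final formula $\tfrac{2}{\cosh(\beta B)}p^2$ is nonetheless right.) With these two corrections your plan coincides with the paper's proof, including the cut-off/Lyapunov device $\chi_n(F_{n,\psi}\pm\varepsilon\log(1+x^2+y^2))$ used in Definition~\ref{definition:limiting_upper_lower_hamiltonians} and Proposition~\ref{proposition:limiting_hamiltonians} to realize $H_\dagger$ and $H_\ddagger$ and to deduce compact containment.
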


At the tri-critical point, again the Lagrangian trivializes, and a further speed-up of time is possible.

\begin{theorem}[Moderate deviations: tri-critical point $\beta = \frac{3}{2}$ and $B = g_1(\frac{3}{2})$]
\label{thm:MD:paramagnetic:tricritical}
Let $\{b_n\}_{n\geq 1}$ be a sequence of positive real numbers such that 
\[
b_n \to \infty \quad \mbox{ and } \quad b_n^{10} n^{-1} \log \log n \to 0.
\] 
Suppose that $b_n m_n(0)$ satisfies a large deviation principle with speed $nb_n^{-6}$ on $\mathbb{R}$ and rate function $I_0$. Then, $\mu$-almost surely, the trajectories $\left\{ b_n m_n(b_n^4 t) \right\}_{t \geq 0}$ satisfy the large deviation principle 
\[
\mathbb{P} \left[ \left\{b_n m_n(b_n^4t)\right\}_{t \geq 0} \approx \left\{ \gamma (t) \right\}_{t \geq 0} \right] \asymp e^{-nb_n^{-6} I(\gamma)} 
\]
on $D_{\mathbb{R}}(\mathbb{R}^+)$, with good rate function
\begin{equation}\label{rate:fct:tricritical}
I(\gamma) = 
\left\{
\begin{array}{ll}
I_0(\gamma(0)) + \int_0^{+\infty} \, \mathcal{L} (\gamma(s), \dot{\gamma}(s)) \, \dd s & \text{ if } \gamma \in \mathcal{AC},\\ 
\infty & \text{ otherwise},
\end{array}
\right.
\end{equation}
where 
\[
\mathcal{L}(x,v) = \frac{1}{8} \sqrt{\frac{3}{2}} \left\vert v  + \frac{9}{10} \sqrt{\frac{3}{2}} \, x^5 \right\vert^2.
\]
\end{theorem}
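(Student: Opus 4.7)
The proof follows the generator convergence approach of Feng--Kurtz \cite{FK06}, using the projection-by-perturbation variant already employed in the proof of Theorem~\ref{thm:MD:paramagnetic:critical} for the critical curve, but carried out to a higher order in order to resolve the additional degeneracy at the tri-critical point. With $r_n := n b_n^{-6}$ and $\mathcal{A}_n$ the generator in \eqref{CWfi:micro:gen:m}, the relevant nonlinear generator for the time-sped-up, rescaled process is
\[
H_n F(m,q) = \frac{b_n^{10}}{n}\, e^{-r_n F(m,q)}\, \mathcal{A}_n e^{r_n F}(m,q),
\]
where the factor $b_n^{4}$ encodes the time change $t \mapsto b_n^{4} t$ and $r_n$ is the target LDP speed. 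I would apply $H_n$ to functions depending on $(m,q)$ through the rescaled magnetisation $x := b_n m$; the $q$-variable is an auxiliary fast coordinate that will be projected out.

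For $f \in C_c^\infty(\bR)$, the naive lift $F(m,q) = f(x)$ does not produce a convergent Hamiltonian: Taylor expansion of $\mathcal{A}_n$ around the stationary point $(0, \tanh(\beta B))$ leaves terms of order $b_n^{4}$ and $b_n^{2}$. Following \cite{FK06}, I would introduce perturbations
\[
F_n^{\dagger}(m,q) = f(x) + b_n^{-2} g_1(x,q) + b_n^{-4} g_2(x,q) + b_n^{-6} g_3^{\dagger}(x,q),
\]
together with a symmetric $F_n^{\ddagger}$, designed so that $H_n F_n^{\dagger}$ and $H_n F_n^{\ddagger}$ sandwich the limiting Hamiltonian $H$ from above and below. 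At each order $b_n^{-2k}$, the divergent part of the expansion takes the form $L_0 g_k + \text{(source depending on $f, g_1, \ldots, g_{k-1}$)}$, where $L_0$ is the linearised transport along the exponentially contracting fast dynamics of $q$ around $\tanh(\beta B)$. This $L_0$ is invertible on a suitable class of functions, and the $g_k$ are obtained by solving the resulting Poisson-type equations recursively.

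The core algebraic step is to verify that, after all these cancellations,
\[
H(x,p) := \lim_{n \to \infty} H_n F_n^{\dagger}(m,q) = \frac{2}{\sqrt{3/2}}\, p^2 - \frac{9\sqrt{3/2}}{10}\, p\, x^5, \qquad p = f'(x),
\]
with the matching lower bound from $F_n^{\ddagger}$, and that the Legendre transform of $H$ coincides with the Lagrangian in \eqref{rate:fct:tricritical}. Two algebraic identities specific to the tri-critical parameters drive every cancellation: $\cosh(\beta B) = \sqrt{\beta}$, which comes from $B = g_1(\beta)$ and kills the linear-in-$x$ drift after averaging over the fast $q$-dynamics, and $\beta = 3/2$, which kills the cubic-in-$x$ drift, so that the quintic is the first surviving term. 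The convergence of Hamiltonians then feeds into the abstract result of Appendix~\ref{appendix:large_deviations_for_projected_processes}, together with: (i)~exponential tightness at speed $r_n$, obtained from a containment function of the form $\Upsilon(x) = \log(1+x^2)$ as in the earlier proofs, and (ii)~the comparison principle for $(1 - \lambda H)f = h$ on $\bR$, which holds by the same argument used in Theorem~\ref{thm:MD:paramagnetic:critical} since $H$ is again quadratic in $p$ with a polynomial drift. Finally, the growth condition $b_n^{10} n^{-1} \log\log n \to 0$ enters only to control the disorder: by the law of the iterated logarithm \cite[Corollary~14.8]{Ka02}, $b_n^{5}\overline{\eta}_n \to 0$ $\mu$-almost surely, which is exactly the largest power of $\overline{\eta}_n$ appearing in the expansion of $H_n F_n^{\dagger}$ once the rescaling is performed.

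The hard part will be the algebraic bookkeeping in the three-step perturbative expansion: one has to push the Taylor expansion of $\mathcal{A}_n$ at $(0, \tanh(\beta B))$ to fifth order in $x$ and verify that every polynomial contribution of degree less than five cancels as a direct consequence of the two tri-critical identities above. This is the analogue here of the cubic cancellation at the ordinary critical curve used in Theorem~\ref{thm:MD:paramagnetic:critical}, and it is where the tri-critical structure of the phase diagram makes itself felt analytically.
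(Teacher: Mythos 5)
Your overall architecture matches the paper's: Feng--Kurtz generator convergence, projection of the fast coordinate by perturbing the test function, sandwiching Hamiltonians $H_\dagger$, $H_\ddagger$, a $\log$-type Lyapunov/containment function, the comparison principle, and the law of the iterated logarithm controlling $b_n^5 \overline{\eta}_n$. You also identify the correct limiting Hamiltonian $H(x,p) = 2\sqrt{2/3}\,p^2 - \tfrac{9}{10}\sqrt{3/2}\,x^5 p$ and the two tri-critical identities $\cosh(\beta B) = \sqrt{\beta}$ and $\beta = \tfrac32$ that kill the linear and cubic drifts.

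The gap is in the perturbative ansatz, and it is not merely notational. You write $F_n^\dagger = f(x) + b_n^{-2}g_1(x,q) + b_n^{-4}g_2(x,q) + b_n^{-6}g_3^\dagger(x,q)$ with $n$-independent $g_k$ depending on the unrescaled $q$; this cannot close the expansion. The fast variable must be rescaled to $y = b_n(q - \tanh(\beta B))$ and, crucially, the perturbation must contain every integer order $b_n^{-1},\dots,b_n^{-4}$: the paper's $F_{n,\psi}=\sum_{l=0}^{4} b_n^{-l}\psi[l](x,y)$ (see \eqref{eqn:def:perturbation}, \eqref{def:recursion:psi:phi}) has odd-order corrections $\psi[1],\psi[3]$ that are indispensable. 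Concretely, the leading divergence of $H_n\psi$ for $\psi=\psi(x)$ is $b_n^3 Q_2^+\psi = -2\beta\sinh(\beta B)\,b_n^3\,xy\,\psi'(x)$, cancelled by $\mathcal{Q}_1^1$ acting on $b_n^{-1}\psi[1]$, with $\psi[1](x,y)=-\beta\tanh(\beta B)\,xy\,\psi'(x)$. Translated into your coordinates, $b_n^{-1}\psi[1]=-\beta\tanh(\beta B)(q-\tanh(\beta B))\,x\psi'(x)$ is an $O(1)$-in-$q$ correction that your ansatz omits; without it and $\psi[3]$, the orders $b_n^3$ and $b_n$ survive and $H_nF_n^\dagger$ diverges. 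A second point you underplay: even with the correct $F_{n,\psi}$ one has $\sup_n\|H_nF_{n,\psi}\|=\infty$ because the $y$-range grows like $b_n$. The containment function $\log(1+x^2)$ alone does not fix this; the paper applies the cut-off $\chi_n$ of \eqref{eqn:cut-off} to the \emph{entire} function $F_{n,\psi}(x,y)\pm\varepsilon\log(1+x^2+y^2)$ (see \eqref{eqn:def:psi_n_varepsilon_plus_minus}) to land in $\cD(H_n)$, and then proves the one-sided estimate $H_n\psi_n^{\varepsilon,+}\le H\psi+c(\varepsilon)+o(1)$ by carefully completing squares in the fast variable (Proposition \ref{proposition:limiting_hamiltonians}); that estimate, together with its mirror image, is where most of the technical work lies.
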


We want to conclude the analysis by considering moderate deviations for volume-dependent temperature and magnetic field approaching the critical curve first and the tri-critical point afterwards. In the sequel let $\{m_n^{\beta, B}(t)\}_{t \geq 0}$ denote the process evolving at temperature $\beta$ and subject to a random field of strength $B$.

\begin{theorem}[Moderate deviations: critical curve $1 < \beta \leq \frac{3}{2}$, $B = g_1(\beta)$, temperature and field rescaling] \label{theorem:moderate_deviations_CW_critical_curve_rescaling}

Let $\{b_n\}_{n\geq 1}$ be a sequence of positive real numbers such that 
\[
b_n \to \infty \quad \mbox{ and } \quad b_n^6 n^{-1} \log \log n \to 0.
\]
Let $\{\kappa_n\}_{n \geq 1}$, $\{\theta_n\}_{n \geq 1}$ be two sequences of real numbers such that 
\[
\kappa_n b_n^{2} \rightarrow \kappa \quad \mbox{ and } \quad \theta_n b_n^{2} \rightarrow \theta.
\]
Set $\beta_n := \beta + \kappa_n$ and $B_n := B + \theta_n$, where $B=g_1(\beta)$, with $1 < \beta \leq \frac{3}{2}$.  Suppose that $b_n m_n^{\beta_n, B_n}(0)$ satisfies the large deviation principle with speed $n b_n^{-4}$ on $\bR$ with rate function $I_0$. Then, $\mu$-almost surely, the trajectories $\left\{b_n m_n^{\beta_n, B_n}(b_n^{2} t) \right\}_{t \geq 0}$ satisfy the  large deviation principle on $D_\bR(\bR^+)$:
\begin{equation*}
\PR\left[\left\{b_n m_n^{\beta_n, B_n}(b_n^2t)\right\}_{t \geq 0} \approx \{\gamma(t)\}_{t \geq 0}  \right] \asymp e^{-n b_n^{-4}I(\gamma)},
\end{equation*}
where $I$ is the good rate function
\begin{equation}\label{CW:criticalMD:temp_resc:RF}
I(\gamma) = \begin{cases}
I_0(\gamma(0)) + \int_0^\infty \mathcal{L}(\gamma(s),\dot{\gamma}(s))\dd s & \text{if } \gamma \in \cA\cC, \\
\infty & \text{otherwise},
\end{cases}
\end{equation}
and 
\begin{multline*}
\mathcal{L}(x,v) = \frac{\cosh(\beta B)}{8} \left\vert v -2 \left[ \frac{1- 2 \beta B \tanh (\beta B)}{\cosh(\beta B)} \, \kappa - 2 \beta \sinh (\beta B) \, \theta\right] x \right.\\
\left.  - \frac{2}{3} \beta (2\beta-3)\cosh(\beta B) x^3 \right\vert^2.
\end{multline*}
\end{theorem}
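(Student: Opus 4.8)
The plan is to follow the Feng--Kurtz generator-convergence route already used for Theorems~\ref{thm:MD:paramagnetic:critical}--\ref{thm:MD:paramagnetic:tricritical}, reducing everything to the abstract large deviation result in Appendix~\ref{appendix:large_deviations_for_projected_processes} (verification of operator convergence plus uniqueness of viscosity solutions of the associated Hamilton--Jacobi equation), while tracking the extra first-order-in-$x$ term produced by the parameter shift $\beta_n=\beta+\kappa_n$, $B_n=B+\theta_n$. First I would fix the relevant scales: write $z_n(t):=b_n m_n^{\beta_n,B_n}(b_n^2 t)$ and compute the rescaled generator $H_n f := b_n^{-4}\,e^{-b_n^4 f}\,\cA_n^{\beta_n,B_n}\,e^{b_n^4 f}$ acting on $f\in C_c^\infty(\bR)$, after the change of variables that makes $m$ the slow variable and $q$ the fast one (exactly as in the critical-curve proof). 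Since $q_n$ is fast and contracts exponentially to $0$, one uses perturbed test functions $f_n(x,y)=f(x)+b_n^{-2}F_1(x,y)+b_n^{-4}F_2(x,y)$ and matches orders; the correction $F_1,F_2$ are chosen to kill the $O(b_n^{2})$ and $O(1)$ terms coming from the $q$-direction, so that the surviving limit is a Hamiltonian in $(x,p)$ only. This is the place where the boundedness issue forces the $H_\dagger/H_\ddagger$ formulation of Appendix~\ref{appendix:large_deviations_for_projected_processes}; I would reuse that machinery verbatim.

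The new feature compared with Theorem~\ref{thm:MD:paramagnetic:critical} is simply a Taylor expansion of the rates in the parameters. Because $B=g_1(\beta)$ puts us exactly on the critical curve, the linear-in-$x$ part of the drift vanishes at $\kappa_n=\theta_n=0$, and the cubic term $-\tfrac{2}{3}\beta(2\beta-3)\cosh(\beta B)\,x^3$ survives at the chosen time scale $b_n^2$ and space scale $b_n^{-1}$; this is precisely the content already proved. Perturbing $\beta\mapsto\beta+\kappa_n$, $B\mapsto B+g_1'(\beta)\cdot 0+\theta_n$ reintroduces a linear term of size $\kappa_n+\theta_n\sim b_n^{-2}$, which at the moderate scale $x\sim b_n^{-1}$ and time scale $b_n^2$ contributes at exactly the same order as the cubic term. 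Concretely I would expand the macroscopic drift $\partial_m[G_{2,\beta,B}^+ + G_{2,\beta,B}^-]$ in $(\kappa,\theta)$ around the critical point; using $B=g_1(\beta)$ i.e. $\cosh^2(\beta B)=\beta$, the coefficient of the linear term collapses to $2\bigl[\tfrac{1-2\beta B\tanh(\beta B)}{\cosh(\beta B)}\kappa - 2\beta\sinh(\beta B)\theta\bigr]$, which is the coefficient displayed in the Lagrangian. The diffusive (quadratic-in-$p$) coefficient is unaffected to leading order since it is evaluated at the stationary point $(0,\tanh(\beta B))$ and the parameter shifts are $o(1)$; hence the prefactor $\cosh(\beta B)/8$ is the same as in Theorem~\ref{thm:MD:paramagnetic:critical}. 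Matching $H_\dagger$ and $H_\ddagger$ then gives
\[
H(x,p) = \frac{2}{\cosh(\beta B)}\,p^2 + \Bigl(2\bigl[\tfrac{1-2\beta B\tanh(\beta B)}{\cosh(\beta B)}\kappa - 2\beta\sinh(\beta B)\theta\bigr]x + \tfrac{2}{3}\beta(2\beta-3)\cosh(\beta B)x^3\Bigr)p,
\]
whose Legendre transform is the stated $\mathcal{L}$.

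After the Hamiltonian is identified, the remaining steps are: (a) exponential tightness of $\{z_n\}$, obtained as in the unperturbed case from the containment-function argument (using $b_n^6 n^{-1}\log\log n\to 0$ to control the field $b_n^3\overline{\eta}_n$, since the effective field fluctuation at this time scale is amplified by the extra factor $b_n^2$ relative to Theorem~\ref{thm:MD:paramagnetic:subcritical}); (b) the comparison principle for $f - \lambda Hf = h$ with this $H$, which holds because $H$ is of the form treated in Appendix~\ref{appendix:large_deviations_for_projected_processes} — a quadratic-plus-polynomial-drift Hamiltonian in one space dimension — so the standard doubling-of-variables argument applies once one checks the drift is one-sided Lipschitz on sublevel sets; (c) the quenched reduction, i.e.\ showing that on a set of $\mu$-probability one the disorder term is negligible at the relevant order, which again is exactly the law-of-iterated-logarithm estimate used throughout the paper, now with exponent $6$ instead of $2$.

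I expect the main obstacle to be bookkeeping in step (a)/(c): one must verify that with the time speed-up by $b_n^2$ and the space scale $b_n^{-1}$, the disorder contribution entering the generator is of order $b_n^3\overline{\eta}_n$ (the $b_n$ from space scaling times the $b_n^2$ from time scaling), so that $b_n^6 n^{-1}\log\log n\to 0$ is precisely the condition making $b_n^3\overline{\eta}_n\to 0$ $\mu$-a.s.\ via the law of the iterated logarithm $\overline{\eta}_n = O(\sqrt{n^{-1}\log\log n})$; one then needs to confirm that this vanishing is fast enough to leave both the linear and cubic drift terms and the diffusion term untouched in the limit, and that the perturbed test functions $f_n$ still satisfy the $o(1)$-convergence of Definition~\ref{def:o(1)} uniformly in the disorder. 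The Hamilton--Jacobi uniqueness itself should be routine given the one-dimensional state space and the explicit polynomial structure of $H$.
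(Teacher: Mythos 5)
Your proposal follows essentially the same route as the paper: Feng--Kurtz generator convergence via perturbed test functions to ``average out'' the fast variable $q$, the $H_\dagger/H_\ddagger$ relaxation of Appendix~\ref{appendix:large_deviations_for_projected_processes} to handle the unbounded Hamiltonian, the law of iterated logarithm to control $b_n^{3}\overline{\eta}_n$ (so $b_n^6n^{-1}\log\log n\to 0$ is indeed the right condition), and the observation that on the critical curve the linear drift vanishes so the new $\kappa,\theta$-linear term enters at the same order as the cubic. Your limiting Hamiltonian $H(x,p)=\tfrac{2}{\cosh(\beta B)}p^2+\bigl(2\bigl[\tfrac{1-2\beta B\tanh(\beta B)}{\cosh(\beta B)}\kappa-2\beta\sinh(\beta B)\theta\bigr]x+\tfrac{2}{3}\beta(2\beta-3)\cosh(\beta B)x^3\bigr)p$ agrees with equation \eqref{eqn:varying_parameter_case_Hamiltonian_nu=2}, and its Legendre transform gives the stated $\mathcal{L}$.

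There is, however, a concrete misstep in the perturbation ansatz. You propose $f_n(x,y)=f(x)+b_n^{-2}F_1(x,y)+b_n^{-4}F_2(x,y)$, but with speed $nb_n^{-4}$ (so $\delta=\nu+2$ with $\nu=2$) the drift operators in the expanded Hamiltonian \eqref{eqn:compact_exp_G_2} come with pre-factors $b_n^{\nu+1-k}$, i.e.\ $b_n^2, b_n^1, b_n^0,\ldots$ The recursion of Section~\ref{section:formal_calculus_of_operators} therefore requires a perturbation at orders $b_n^{-l}$ for $l=0,1,\ldots,\nu$, i.e.\
\[
F_{n,\psi}(x,y)=\psi(x)+b_n^{-1}\psi[1](x,y)+b_n^{-2}\psi[2](x,y),
\]
not $b_n^{-2},b_n^{-4}$: with your powers, $b_n^{-2}F_1$ enters the $\cQ_1$-term of $\cQ^{(n)}$ only at order $b_n^{0}$ and cannot cancel the $O(b_n^{1})$ contribution $\cQ_2\psi$, so the telescoping fails. (Similarly, your Hamiltonian normalization should read $H_nf = b_n^4n^{-1}e^{-nb_n^{-4}f}\cA_ne^{nb_n^{-4}f}$.) Once the orders are fixed, your sketch lines up with the paper, which handles the parameter variation more systematically through the extended operator family $\cQ_{k,m}^{z}$ with $m<k$ (Section~\ref{sect:formal_calculus:extended}); the $\kappa,\theta$-derivatives lower the $x$-power of the drift operator exactly because $\kappa_n,\theta_n=O(b_n^{-2})$, and the relevant new operator $Q_{3,1}^0$ \eqref{eqn:Q31_0} is the one whose coefficient you recover by Taylor-expanding the drift. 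One further point you wave away a bit quickly: for the $ex\text{-}\subLIM$ verification one must still dominate the extra $\kappa,\theta$-dependent contribution $\Theta_n$ appearing in the uniform upper bound of $H_n\psi_n^{\varepsilon,+}$; for $\nu=2$ this is straightforward because $\Xi_n\le 0$ and $(1+x^2+y^2)^{-1}\Theta_n$ is bounded, but it still needs to be stated.
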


For approximations of the tri-critical point, we consider two scenarios. The first considers an approximation along the critical curve, whereas the second scenario considers approximation from an arbitrary direction.

\begin{theorem}[Moderate deviations: tri-critical point $\beta=\frac{3}{2}$, $B = g_1(\frac{3}{2})$, temperature and field rescaling on the critical curve] \label{theorem:moderate_deviations_CW_tri-critical_point_rescaling}

Let $\{b_n\}_{n\geq 1}$ be a sequence of positive real numbers such that 
\[
b_n \to \infty \quad \mbox{ and } \quad b_n^{10} n^{-1} \log \log n \to 0.
\] 
Let $\{\kappa_n\}_{n \geq 1}$, $\{\theta_n\}_{n \geq 1}$ be two sequences of real numbers such that 
\[
\kappa_n b_n^{2} \rightarrow \kappa \quad \mbox{ and } \quad \theta_n b_n^{2} \rightarrow \theta.
\]
Set $\beta_n := \beta_{\mathrm{tc}} + \kappa_n$ and $B_n := B_{\mathrm{tc}} + \theta_n$, where $(\beta_{\mathrm{tc}}, B_{\mathrm{tc}}) = (\frac{3}{2}, \frac{2}{3} \arccosh (\sqrt{\frac{3}{2}}))$. Assume that $\beta_n = \cosh^2 (\beta_n B_n)$ for all $n \in \mathbb{N}$. Moreover, suppose that $b_n m_n^{\beta_n, B_n}(0)$ satisfies the large deviation principle with speed $n b_n^{-6}$ on $\bR$ with rate function $I_0$. Then, $\mu$-almost surely, the trajectories $\left\{b_n m_n^{\beta_n, B_n}(b_n^{4} t) \right\}_{t \geq 0}$ satisfy the  large deviation principle on $D_\bR(\bR^+)$:
\begin{equation*}
\PR\left[\left\{b_n m_n^{\beta_n, B_n}(b_n^4t)\right\}_{t \geq 0} \approx \{\gamma(t)\}_{t \geq 0}  \right] \asymp e^{-n b_n^{-6}I(\gamma)},
\end{equation*}
where $I$ is the good rate function
\begin{equation}\label{CW:tri_criticalMD:temp_resc:RF}
I(\gamma) = \begin{cases}
I_0(\gamma(0)) + \int_0^\infty \mathcal{L}(\gamma(s),\dot{\gamma}(s))\dd s & \text{if } \gamma \in \cA\cC, \\
\infty & \text{otherwise},
\end{cases}
\end{equation}
and
\begin{equation*}
\mathcal{L}(x,v) = \frac{1}{8} \sqrt{\frac{3}{2}} \left\vert v  - \left[2  \sqrt{2} \arccosh \left(\sqrt{\frac{3}{2}}\right) \kappa +\frac{9}{\sqrt{2}} \, \theta \right] x^3+ \frac{9}{10} \sqrt{\frac{3}{2}} \, x^5 \right\vert^2.
\end{equation*}

\end{theorem}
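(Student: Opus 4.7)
The plan is to mimic the proof of Theorem~\ref{thm:MD:paramagnetic:tricritical} with $\beta, B$ replaced by the $n$-dependent parameters $\beta_n, B_n$, and to track carefully the additional contributions generated by the perturbations $\kappa_n, \theta_n$ of size $O(b_n^{-2})$. Since the constraint $\beta_n = \cosh^2(\beta_n B_n)$ places $(\beta_n, B_n)$ on the critical curve for every $n$, the paramagnetic stationary point $(0, \tanh(\beta_n B_n))$ remains a (degenerate) equilibrium of the macroscopic dynamics, and the natural observable is the pair $(b_n m_n^{\beta_n,B_n}(b_n^4 t),\,b_n [q_n^{\beta_n,B_n}(b_n^4 t)-\tanh(\beta_n B_n)])$. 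The requirement $b_n^{10} n^{-1} \log\log n \to 0$, as in Theorem~\ref{thm:MD:paramagnetic:tricritical}, is exactly what the law of the iterated logarithm demands to ensure that the random-field average $\overline{\eta}_n$ is negligible on the scale $b_n^{-5}$, so that the quenched dynamics can be replaced by its annealed counterpart $\mu$-almost surely.

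Following the Feng--Kurtz approach, I would introduce the nonlinear generators
\begin{equation*}
H_n f(x,y) \,=\, \frac{b_n^{4}}{n b_n^{-6}} e^{-n b_n^{-6} f_n(x,y)} \, \mathcal{A}_n^{\beta_n,B_n} \, e^{n b_n^{-6} f_n(x,y)},
\end{equation*}
where $f_n(x,y) = f(x) + b_n^{-2}\phi_1(x,y) + b_n^{-4}\phi_2(x,y)$ is a suitable perturbation of a test function $f \in C_c^\infty(\mathbb{R})$ of the slow variable alone. As in the proof of Theorem~\ref{thm:MD:paramagnetic:tricritical}, the correctors $\phi_1, \phi_2$ are chosen to cancel the $y$-dependent terms of orders $b_n^{2}$ and $b_n^{0}$ produced by the fast, exponentially attractive, dynamics of $q_n$ toward $\tanh(\beta_n B_n)$; this is the concrete implementation of the perturbation theory for Markov processes of~\cite{PaStVa77} in the nonlinear setting. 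Because the correctors are unbounded in $y$, I would define limiting upper/lower Hamiltonians $H_\dagger$ and $H_\ddagger$ via sequences of test perturbations, exactly as in Theorem~\ref{thm:MD:paramagnetic:tricritical}, and verify they coincide with a common limit $H(x,p)$ on $C_c^\infty(\mathbb{R})$.

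The heart of the argument is a Taylor expansion at the tri-critical point. Writing $\beta_n = \beta_{\mathrm{tc}} + \kappa_n$, $B_n = B_{\mathrm{tc}} + \theta_n$, I would expand the rate functions $G^{\pm}_{1,\beta_n,B_n}, G^{\pm}_{2,\beta_n,B_n}$ around $(x,y) = (0, \tanh(\beta_n B_n))$ up to order $5$ in the spatial variables and to first order in $(\kappa_n, \theta_n)$. The vanishing of the coefficients of $x$ and $x^3$ at the unperturbed tri-critical point (which is what made the $b_n^2$ time rescaling natural in Theorem~\ref{thm:MD:paramagnetic:tricritical} insufficient there) together with $\kappa_n b_n^{2} \to \kappa$, $\theta_n b_n^2 \to \theta$ transforms the otherwise degenerate $x$ and $x^3$ contributions into finite limits. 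After averaging out $y$ through the correctors, this produces the effective drift
\begin{equation*}
b_\star(x) \,=\, \left[2\sqrt{2}\,\arccosh\!\Bigl(\!\sqrt{\tfrac{3}{2}}\Bigr)\kappa + \tfrac{9}{\sqrt{2}}\,\theta\right] x^3 \,-\, \tfrac{9}{10}\sqrt{\tfrac{3}{2}}\,x^5
\end{equation*}
and the diffusion coefficient $2\sqrt{3/2}$, so that $H(x,p) = b_\star(x) p + 2\sqrt{3/2}\, p^2$, whose Legendre transform is exactly the announced Lagrangian.

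To conclude, I would verify the comparison principle for the Hamilton--Jacobi equation $f - \lambda H f = h$ (by the containment-function argument of~\cite{CoKr17}, since $H$ is polynomial in $x$ with the same leading behavior as in Theorem~\ref{thm:MD:paramagnetic:tricritical}), establish exponential tightness of $\{b_n m_n^{\beta_n,B_n}(b_n^4 \cdot)\}_{n \geq 1}$, and invoke the projected large-deviation framework of Appendix~\ref{appendix:large_deviations_for_projected_processes}. The main obstacle I anticipate is the bookkeeping of the Taylor expansion: one must carry it to fifth order in $x$ \emph{and} first order in the two-parameter perturbation simultaneously, and then verify that all cross terms (e.g.\ contributions of order $\kappa_n x^5$ or $\kappa_n y^2$) vanish in the $n \to \infty$ limit under the prescribed scaling $b_n^{10} n^{-1}\log\log n \to 0$.
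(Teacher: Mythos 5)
Your overall strategy is the paper's: Feng--Kurtz convergence of nonlinear generators, a perturbation of the test function to average out the fast $q_n$-dynamics, two-sided approximating Hamiltonians $H_\dagger, H_\ddagger$, and a double Taylor expansion (in the spatial variables and in $\kappa_n,\theta_n$). You also correctly observe that the constraint $\beta_n = \cosh^2(\beta_n B_n)$ keeps $(\beta_n,B_n)$ on the critical curve, which is why the would-be linear-in-$x$ drift vanishes rather than being ``revived'' by the perturbation; only the cubic term survives with a finite coefficient.

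The genuine gap is the perturbation ansatz $f_n(x,y)=f(x)+b_n^{-2}\phi_1(x,y)+b_n^{-4}\phi_2(x,y)$, which is structurally too coarse. Under the time rescaling $b_n^{4}t$ (i.e.\ $\nu=4$ in the paper's notation), the drift part of $H_n$ carries prefactors $b_n^{4},b_n^{3},b_n^{2},b_n^{1},b_n^{0}$, cf.\ \eqref{eqn:compact_exp_G_2}. Your two even-order correctors, once hit by $b_n^{4}Q_1$, produce cancellations only at orders $b_n^{2}$ and $b_n^{0}$. But the $y$-dependent term at order $b_n^{3}$, namely $b_n^{3}\,Q_2 f = -2\,b_n^{3}\,\beta\sinh(\beta B)\,xy\,f'(x)$, has no counterpart in your ansatz (and $\sinh(\beta_{\rm tc}B_{\rm tc})\neq 0$, so it does not vanish); the same applies to the order-$b_n^{1}$ term coming from $Q_4 f$. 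Consequently $H_n f_n$ diverges. The paper's construction uses the full ansatz $F_{n,\psi}=\sum_{l=0}^{4}b_n^{-l}\psi[l]$ of \eqref{eqn:def:perturbation} with the recursion \eqref{def:recursion:psi:phi}; in particular $\psi[1]\propto xy\,\psi'(x)$ at order $b_n^{-1}$ and $\psi[3]$ at order $b_n^{-3}$ are nontrivial. By Lemma~\ref{lemma:fk_in_even_or_odd} these odd-order correctors lie in $V_{\mathrm{odd}}$, so they are invisible after projection by $P_0$ and contribute nothing to the limiting drift, but they are indispensable for cancelling the divergent intermediate orders $b_n^{3}$ and $b_n^{1}$ (Proposition~\ref{proposition:perturbation_abstract_Q}, Lemma~\ref{lemma:convergence_of_functions_in_domain:extended}). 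A secondary slip: the diffusion coefficient in $H(x,p)$ must be $2/\cosh(\beta_{\rm tc}B_{\rm tc})=2\sqrt{2/3}$, not $2\sqrt{3/2}$; with your value the Legendre transform would not reproduce the prefactor $\tfrac{1}{8}\sqrt{3/2}$ in the announced Lagrangian.
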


\begin{remark*}
To ensure that $(\beta_n,B_n)$ approximate $(\beta_{\mathrm{tc}},B_{\mathrm{tc}})$ over the critical curve, $\kappa$ and $\theta$ must satisfy
\begin{equation}\label{eqn:theta_kappa_relationship}
\frac{\theta}{\kappa} = - \frac{4}{9} \arccosh\left(\sqrt{\frac{3}{2}}\right) + \frac{2}{3} \sqrt{\frac{1}{3}}.
\end{equation}
\end{remark*}

\begin{theorem}[Moderate deviations: tri-critical point $\beta=\frac{3}{2}$, $B = g_1(\frac{3}{2})$, temperature and field rescaling] \label{theorem:moderate_deviations_CW_tri-critical_point_rescaling:2}

Let $\{b_n\}_{n\geq 1}$ be a sequence of positive real numbers such that 
\[
b_n \to \infty \quad \mbox{ and } \quad b_n^{10} n^{-1} \log \log n \to 0.
\] 
Let $\{\kappa_n\}_{n \geq 1}$, $\{\theta_n\}_{n \geq 1}$ be two sequences of real numbers such that 
\[
\kappa_n b_n^{4} \rightarrow \kappa \quad \mbox{ and } \quad \theta_n b_n^{4} \rightarrow \theta.
\]
Set $\beta_n := \beta_{\mathrm{tc}} + \kappa_n$ and $B_n := B_{\mathrm{tc}} + \theta_n$, where $(\beta_{\mathrm{tc}}, B_{\mathrm{tc}}) = (\frac{3}{2}, \frac{2}{3} \arccosh (\sqrt{\frac{3}{2}}))$. Suppose that $b_n m_n^{\beta_n, B_n}(0)$ satisfies the large deviation principle with speed $n b_n^{-6}$ on $\bR$ with rate function $I_0$. Then, $\mu$-almost surely, the trajectories $\left\{b_n m_n^{\beta_n, B_n}(b_n^{4} t) \right\}_{t \geq 0}$ satisfy the  large deviation principle on $D_\bR(\bR^+)$:
\begin{equation*}
\PR\left[\left\{b_n m_n^{\beta_n, B_n}(b_n^4t)\right\}_{t \geq 0} \approx \{\gamma(t)\}_{t \geq 0}  \right] \asymp e^{-n b_n^{-6}I(\gamma)},
\end{equation*}
where $I$ is the good rate function
\begin{equation}\label{CW:tri_criticalMD:temp_resc:RF:2}
I(\gamma) = \begin{cases}
I_0(\gamma(0)) + \int_0^\infty \mathcal{L}(\gamma(s),\dot{\gamma}(s))\dd s & \text{if } \gamma \in \cA\cC, \\
\infty & \text{otherwise},
\end{cases}
\end{equation}
and
\begin{equation*}
\mathcal{L}(x,v) = \frac{1}{8} \sqrt{\frac{3}{2}} \left\vert v  - \left[ \frac{2}{3} \left( \sqrt{6} - 2 \sqrt{2} \arccosh \left( \sqrt{\frac{3}{2}} \right)\right)   \kappa - 3 \sqrt{2} \, \theta \right] x + \frac{9}{10} \sqrt{\frac{3}{2}} \, x^5 \right\vert^2.
\end{equation*}
\end{theorem}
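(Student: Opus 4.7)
The plan is to follow the strategy of the proof of Theorem~\ref{thm:MD:paramagnetic:tricritical} and adapt the singular perturbation argument to the finer scaling $\kappa_n b_n^4 \to \kappa$, $\theta_n b_n^4 \to \theta$; what changes is the book-keeping of the $(\kappa_n,\theta_n)$-corrections, which now enter at the same order $b_n^{-4}$ as the leading non-trivial contribution from the tri-critical expansion. Concretely, I would set $\tilde q_n := \tanh(\beta_n B_n)$ and consider the rescaled pair $X_n(t) := b_n m_n^{\beta_n, B_n}(b_n^4 t)$, $Y_n(t) := b_n\bigl(q_n^{\beta_n, B_n}(b_n^4 t) - \tilde q_n\bigr)$, whose associated non-linear generator
\[
H_n f(x,y) := \frac{b_n^6}{n}\, e^{-n b_n^{-6} f}\, \bigl(b_n^4 \mathcal{A}_n^{\beta_n, B_n}\bigr)\, e^{n b_n^{-6} f}\bigl(b_n^{-1} x,\, \tilde q_n + b_n^{-1} y\bigr)
\]
is then developed asymptotically in $b_n^{-1}$ using Taylor expansions of the functions $G_{i,\beta_n, B_n}^{\pm}$ from \eqref{def:G's} about $(0,\tilde q_n)$, together with the expansion of $(\beta_n, B_n)$ about $(\beta_{\mathrm{tc}}, B_{\mathrm{tc}})$.

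Since $q_n$ relaxes on the fast scale while $m_n$ is slow at the tri-critical point, I would then project onto the $m$-component by the non-linear perturbation method of \cite{CoKr17, CGK}, using the ansatz $f_n(x,y) := f(x) + b_n^{-2} F_1(x,y) + b_n^{-4} F_2(x,y)$, with correctors $F_1, F_2$ polynomial in $y$ and smooth in $x$, chosen so that the $y$-dependent contributions at the first two orders of $H_n f_n$ cancel. The tri-critical degeneracy $\beta_{\mathrm{tc}} = \cosh^2(\beta_{\mathrm{tc}} B_{\mathrm{tc}})$ together with the vanishing of the next-order curvature ensures that, after projection, the residual drift in $x$ enters at order $b_n^{-4}$ from exactly two sources: the $O(\kappa_n,\theta_n)$ perturbation of the critical parameters produces a linear-in-$x$ term with coefficient $\tfrac{2}{3}\bigl(\sqrt{6}-2\sqrt{2}\arccosh\sqrt{3/2}\bigr)\kappa - 3\sqrt{2}\,\theta$, while the pure tri-critical expansion produces the quintic term $-\tfrac{9}{10}\sqrt{3/2}\,x^5$. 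The limiting Hamiltonian is therefore
\[
H(x,p) = \Bigl[\tfrac{2}{3}\bigl(\sqrt{6} - 2\sqrt{2}\arccosh\sqrt{\tfrac{3}{2}}\bigr)\kappa - 3\sqrt{2}\,\theta\Bigr] x\,p - \tfrac{9}{10}\sqrt{\tfrac{3}{2}}\, x^5 p + 2\sqrt{\tfrac{2}{3}}\, p^2,
\]
whose Legendre transform coincides with the Lagrangian in \eqref{CW:tri_criticalMD:temp_resc:RF:2}.

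Because the correctors $F_1, F_2$ are polynomial in $y$, the perturbed test functions $f_n$ are not uniformly bounded in $n$; as in the proof of Theorem~\ref{thm:MD:paramagnetic:tricritical}, I would then invoke the relaxed notion of generator convergence of \cite{FK06} by constructing matching upper and lower Hamiltonians $H_\dagger$ and $H_\ddagger$. Together with the comparison principle for the Hamilton--Jacobi equation $f - \lambda H f = h$ supplied by Appendix~\ref{appendix:large_deviations_for_projected_processes}, exponential tightness via a suitable containment function, and the $\mu$-almost-sure vanishing of $b_n^5 \overline{\eta}_n$ (granted by the law of the iterated logarithm together with the hypothesis $b_n^{10} n^{-1}\log\log n \to 0$), the Feng--Kurtz framework yields the claimed large deviation principle with speed $n b_n^{-6}$ and rate function \eqref{CW:tri_criticalMD:temp_resc:RF:2}.

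The main obstacle is algebraic rather than conceptual: one has to verify that the cubic-in-$x$ drift, which was the leading non-trivial drift at the coarser scaling $\kappa_n b_n^2 \to \kappa$ of Theorem~\ref{theorem:moderate_deviations_CW_tri-critical_point_rescaling}, indeed cancels out at the present finer scaling, and that the linear-in-$x$ coefficient collapses into exactly the stated expression. This requires tracking several independent sources of $O(\kappa_n,\theta_n)$-corrections---from the coefficients $G_{2,\beta_n,B_n}^{\pm}$, from the shift of the center $\tilde q_n = \tanh(\beta_n B_n)$, and from the solvability conditions that determine $F_1$ and $F_2$---and combining them consistently. A useful sanity check is to specialise $(\kappa,\theta)$ to the critical-curve tangent \eqref{eqn:theta_kappa_relationship}: at the $b_n^4$-scale the motion along the critical curve is of lower order than $b_n^{-2}$, so the linear-in-$x$ coefficient must vanish and Theorem~\ref{theorem:moderate_deviations_CW_tri-critical_point_rescaling:2} must reduce to Theorem~\ref{thm:MD:paramagnetic:tricritical}.
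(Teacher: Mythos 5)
Your overall strategy matches the paper's: expand the non-linear generator, project onto the slow $m$-coordinate by a singular-perturbation argument, and use the $H_\dagger$/$H_\ddagger$ machinery of \cite{FK06} together with the comparison principle. The law-of-iterated-logarithm constraint, the form of the limiting Hamiltonian, and the sanity check specialising $(\kappa,\theta)$ to the critical-curve tangent are all correct. The paper's route uses the extended formal operator calculus of Section~\ref{sect:formal_calculus:extended}: it identifies the additional operator $Q_{5,1}^0$ (entering at order $b_n^0$ through one $(\kappa,\theta)$-derivative of $D^1\mathcal{G}_2$), and Lemma~\ref{lemma:limiting_drifts:extended_arbitrary_direction_to_tc} then gives the drift coefficients. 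Your approach of centering the fast coordinate at $\tilde q_n=\tanh(\beta_nB_n)$ rather than at $\tanh(\beta_{\mathrm{tc}}B_{\mathrm{tc}})$, and then Taylor-expanding, is a cosmetic reorganisation of the same book-keeping.

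However, the perturbation ansatz $f_n(x,y) := f(x) + b_n^{-2} F_1(x,y) + b_n^{-4} F_2(x,y)$ is incorrect: it omits the odd-order correctors. For $\nu=4$ the drift part of $H_n$ is $\sum_{k=1}^{5} b_n^{5-k}\,\mathcal{Q}_k$, and the leading non-trivial contribution from an unperturbed $f(x)$ is $b_n^3\,\mathcal{Q}_2 f \sim b_n^3\,x\,y\,f'(x) \in V_1$, which does not vanish for non-constant $f$ and lives at order $b_n^3$. Your ansatz has no term at order $b_n^{-1}$ to absorb it: the lowest-order corrector $b_n^{-2}F_1$ can only hit order $b_n^3$ after acting with $\mathcal{Q}_5$, which is bounded, so the cancellation fails and $H_n f_n$ diverges like $b_n^3$. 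As the paper's Lemma~\ref{lemma:fk_in_even_or_odd} shows, the recursion \eqref{def:recursion:psi:phi} forces correctors at \emph{every} order $b_n^{-1},\dots,b_n^{-\nu}$, with $\psi[1],\psi[3]\in V_{\mathrm{odd}}$ (odd in $y$) and $\psi[2],\psi[4]\in V_{\mathrm{even}\setminus\{0\}}$; it is precisely the odd-order $\psi[1]$ that, acted on by $\mathcal{Q}_1=\mathcal{Q}_1^1$, cancels $b_n^3\,\mathcal{Q}_2 f$. So you need the full four-term perturbation $F_{n,\psi}(x,y)=\sum_{l=0}^{4}b_n^{-l}\psi[l](x,y)$ of Proposition~\ref{proposition:perturbation_abstract_Q_extended}. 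This is a localised but genuine gap in the argument; once the ansatz is repaired, the rest of your outline (cut-off functions, $\varepsilon$-penalisation, exponential compact containment, comparison principle) goes through as in the proofs of Theorems~\ref{thm:MD:paramagnetic:tricritical} and~\ref{theorem:moderate_deviations_CW_tri-critical_point_rescaling}.
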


By choosing the sequence $b_n=n^{\alpha}$, with $\alpha > 0$, we can rephrase Theorems~\ref{thm:MD:paramagnetic:subcritical}--\ref{theorem:moderate_deviations_CW_tri-critical_point_rescaling:2} in terms of more familiar moderate scalings involving powers of the system-size. We therefore get estimates for the probability of a typical trajectory on a scale that is between a law of large numbers and a central limit theorem. These results, together with the central limit theorem and the study of fluctuations at $\beta = \cosh^2(\beta B)$ in \cite[Prop.~2.7 and Thm.~2.12]{CoDaP12}, give a clear picture of the behaviour of fluctuations for the random field Curie-Weiss model. We summarize these facts in Tables~\ref{tab:RFCW:deviations}~and~\ref{tab:RFCW:deviations:critical}. The displayed conclusions are drawn under the assumption that in each case either the initial condition satisfies a large deviation principle at the correct speed or the initial measure converges weakly.
Observe that \emph{not all} scales can be covered. Indeed, to control disorder fluctuations and avoid they are too large, the range of allowed spatial scalings becomes quite limited. \\ 
To conclude, it is worth to mention that our results are consistent with the moderate deviation principles obtained in \cite{LoMe12} for the random field Curie-Weiss model at equilibrium. Indeed, as prescribed by Thm.~5.4.3 in \cite{FW98}, in each of the cases above, the rate function of the stationary measure satisfies $H(x,S'(x))=0$, where $H: \mathbb{R} \times \mathbb{R} \to \mathbb{R}$ is the Legendre transform of $\mathcal{L}$. 

\begin{table}[h!]
\centering
\begin{tabular}{|c|c|c|c|}
\hline
\parbox[c][1cm][c]{1.5cm}{\scriptsize \centering \bf SCALING EXPONENT} & \parbox[c][1cm][c]{1.5cm}{\scriptsize \centering \bf REGIME} & \parbox[c][1cm][c]{4cm}{ \scriptsize \centering \bf RESCALED PROCESS} & \parbox[c][1cm][c]{3cm}{\scriptsize \centering \bf LIMITING THEOREM}\\
\hline \hline
$\alpha = 0$ & \parbox[c][1cm][c]{1.5cm}{\centering \footnotesize all $\beta$\\ all $B$} & {\footnotesize $\left(m_n(t),q_n(t)\right)$} & \parbox[c][1.3cm][c]{3cm}{\scriptsize \centering LDP at speed $n$ with rate function \eqref{rate:fct:LDP}}\\
\hline
\multirow{3}{*}{$\alpha \in \left( 0, \frac{1}{2} \right)$} & \parbox[c][1cm][c]{1.5cm}{\centering \footnotesize all $\beta$ \\ all $B$} & {\footnotesize $\left( n^\alpha m_n(t), n^{\alpha} (q_n(t)-\tanh(\beta B))\right)$} & \parbox[c][1.5cm][c]{3cm}{\scriptsize \centering LDP at speed $n^{1-2\alpha}$ with rate function \eqref{rate:fct:subcritical}}\\
 & \parbox[c][1cm][c]{1.5cm}{\centering \footnotesize $\beta > 1$\\ $B < g_2(\beta)$} & {\footnotesize $\left( n^\alpha (m_n(t) - m), n^{\alpha} (q_n(t)-q)\right)$} &  \parbox[c][1.5cm][c]{3cm}{\scriptsize \centering LDP at speed $n^{1-2\alpha}$ with rate function \eqref{rate:fct:supercritical}} \\
 \hline
 \multirow{4}{*}{$\alpha = \frac{1}{2}$} & \parbox[c][1cm][c]{1.5cm}{\centering \footnotesize all $\beta$\\ $B > g_1(\beta)$} & {\footnotesize $\left( n^{1/2} m_n(t), n^{1/2}(q_n(t)-\tanh(\beta B))\right)$} & \multirow{4}{*}{\parbox{3cm}{\scriptsize \centering CLT \\ weak convergence to the unique solution of a linear diffusion equation \\ (see \cite[Prop.~2.7]{CoDaP12})}} \\
 &&&\\
 & \parbox[c][1cm][c]{1.5cm}{\centering \footnotesize $\beta > 1$\\ $B < g_2(\beta)$} & {\footnotesize $\left( n^{1/2} (m_n(t)-m), n^{1/2} (q_n(t)-q)\right)$} &  \\ 
 \hline 
 \end{tabular}
\caption{Non-critical fluctuations for the order parameter of the random field \mbox{Curie-Weiss} spin-flip dynamics}
 \label{tab:RFCW:deviations}
 \end{table}

\begin{table}[h!]
\centering
\begin{tabular}{|c|c|c|c|}
\hline
\parbox[c][1cm][c]{1.5cm}{\bf \scriptsize \centering SCALING EXPONENT} & \parbox[c][1cm][c]{4.5cm}{\bf \scriptsize \centering REGIME} & \parbox[c][1cm][c]{2cm}{\bf \scriptsize \centering RESCALED PROCESS} & \parbox[c][1cm][c]{3cm}{\bf \scriptsize \centering LIMITING THEOREM}\\
\hline \hline
$\alpha = \frac{1}{4}$ & \parbox[c][1cm][c]{4.5cm}{\centering \footnotesize $\beta > 1$, $B=g_1(\beta)$} & {\footnotesize $n^{1/4} m_n\left( n^{1/4}t \right)$} & \parbox[c][2.6cm][c]{3cm}{\scriptsize \centering weak convergence to the process\\[0.1cm] $Y(t) = 2 X t$\\[0.1cm] with \\[0.1cm] $X \sim N(0,\sinh^2 (\beta B))$ \\[0.1cm] (see \cite[Thm.~2.12]{CoDaP12})}\\
\hline
\multirow{3}{*}{\parbox[c][2.5cm][c]{1.7cm}{$\alpha \in \left( 0, \frac{1}{6} \right)$}} & \parbox[c][0.5cm][c]{4.5cm}{\centering \footnotesize $1 < \beta \leq \frac{3}{2}$, $B = g_1(\beta)$} & {\footnotesize $n^\alpha m_n \left( n^{2\alpha}t \right)$} & \parbox[c][1.5cm][c]{3cm}{\scriptsize \centering LDP at speed $n^{1-4\alpha}$ with rate function \eqref{rate:fct:critical}}\\
&&&\\
& \parbox[c][2.5cm][c]{4.5cm}{\centering \footnotesize $\beta_n = \beta + \kappa_n$, $B_n = B + \theta_n$\\ where\\[0.1cm] $B = g_1(\beta)$, $1 < \beta \leq \frac{3}{2}$ \\[0.1cm] $\kappa_n n^{2\alpha} \to \kappa$, $\theta_n n^{2\alpha} \to \theta$} & {\footnotesize $n^\alpha m_n \left( n^{2\alpha}t \right)$} & \parbox[c][2.5cm][c]{3cm}{\scriptsize \centering LDP at speed $n^{1-4\alpha}$ with rate function \eqref{CW:criticalMD:temp_resc:RF}}\\
\hline
\multirow{5}{*}{\parbox[c][6cm][c]{1.7cm}{\centering $\alpha \in \left( 0, \frac{1}{10} \right)$}} & \parbox[c][0.5cm][c]{4.5cm}{\centering \footnotesize $\beta = \frac{3}{2}$, $B = g_1(\beta)$} & {\footnotesize $n^\alpha m_n \left( n^{4\alpha}t \right)$} &  \parbox[c][1.5cm][c]{3cm}{\scriptsize \centering LDP at speed $n^{1-6\alpha}$ with rate function \eqref{rate:fct:tricritical}} \\
&&&\\
& \parbox[c][2.5cm][c]{4.5cm}{\centering \footnotesize $\beta_n = \frac{3}{2} + \kappa_n$, $B_n = g_1(\frac{3}{2}) + \theta_n$\\ where \\[0.1cm]  $\beta_n = \cosh^2(\beta_n B_n)$, $\forall n \in \mathbb{N}$ \\[0.1cm] $\kappa_n n^{2\alpha} \to \kappa$, $\theta_n n^{2\alpha} \to \theta$} & {\footnotesize $n^\alpha m_n \left( n^{4\alpha}t \right)$} & \parbox[c][2.5cm][c]{3cm}{\scriptsize \centering LDP at speed $n^{1-6\alpha}$ with rate function \eqref{CW:tri_criticalMD:temp_resc:RF}}\\
&&&\\
& \parbox[c][2.5cm][c]{4.5cm}{\centering \footnotesize $\beta_n = \frac{3}{2} + \kappa_n$, $B_n = g_1(\frac{3}{2}) + \theta_n$\\ where \\[0.1cm]  $\kappa_n n^{4\alpha} \to \kappa$, $\theta_n n^{4\alpha} \to \theta$} & {\footnotesize $n^\alpha m_n \left( n^{4\alpha}t \right)$} & \parbox[c][2.5cm][c]{3cm}{\scriptsize \centering LDP at speed $n^{1-6\alpha}$ with rate function \eqref{CW:tri_criticalMD:temp_resc:RF:2}}\\
 \hline 
 \end{tabular}
 \caption{Critical fluctuations for the order parameter of the random field Curie-Weiss spin-flip dynamics}
 \label{tab:RFCW:deviations:critical}
 \end{table}
 %


\section{Expansion of the Hamiltonian and moderate deviations in the sub- and supercritical regimes}
\label{sct:proofs}


Following the methods of \cite{FK06}, the authors have studied large and moderate deviations for the Curie-Weiss model based on the convergence of Hamiltonians and well-posedness of a class of Hamilton-Jacobi equations corresponding to a limiting Hamiltonian in \cite{Kr16b,CoKr17}. For the results in Theorems~\ref{thm:MD:paramagnetic:subcritical} and \ref{thm:MD:ferromagnetic:supercritical}, considering moderate deviations for the pair $(m_n(t),q_n(t))$, we will follow a similar strategy and we will refer to the large deviation principle in \cite[Appendix~A]{CoKr17}. For the results in Theorems~\ref{thm:MD:paramagnetic:critical}--\ref{theorem:moderate_deviations_CW_tri-critical_point_rescaling:2} stated for the process $m_n(t)$ only, we need a more sophisticated large deviation result, which is based on the abstract framework introduced in \cite{FK06}. We will recall the notions needed for these results in Appendix \ref{appendix:large_deviations_for_projected_processes}.

In both settings, however, a main ingredient is the convergence of Hamiltonians. Therefore, we start by deriving an expansion for the Hamiltonian associated to a \emph{generic} time-space scaling of the fluctuation process. We will then use such an expansion to obtain the results stated in Theorems~\ref{thm:MD:paramagnetic:subcritical} and \ref{thm:MD:ferromagnetic:supercritical}. For Theorems \ref{thm:MD:paramagnetic:critical}--\ref{theorem:moderate_deviations_CW_tri-critical_point_rescaling:2}, we need additional methods to obtain a limiting Hamiltonian, that will be introduced in Sections \ref{section:projection_fixed_betaB} and \ref{section:projection_varying_betaB} below. 



\subsection{Expansion of the  Hamiltonian}\label{subsct:expansion:Hamiltonian} 

Let $(m,q)$ be a stationary solution of equation \eqref{MKV:randomCW}. We consider the fluctuation process $\big(b_n \left(m_n(b_{n}^{\nu} t) - m\right), b_n \left(q_n(b_{n}^{\nu} t) - q \right) \big)$. Its generator $A_n$ can be deduced from \eqref{CWfi:micro:gen:m} and is given by
\begin{allowdisplaybreaks}
\begin{align*}
A_n f(x,y) & =  \frac{b_{n}^{\nu} n}{4} \, \left[ 1 + \overline{\eta}_n + m + q  + (x+y)b_{n}^{-1} \right] e^{-\beta(xb_{n}^{-1} +m+B)} \times\\
& \hspace{3cm} \times \left[f\left(x - 2 b_n n^{-1}, y - 2 b_n n^{-1}\right) - f(x,y) \right] \\
& +  \frac{b_{n}^{\nu} n}{4} \, \left[ 1 + \overline{\eta}_n - m - q - (x+y)b_{n}^{-1} \right] e^{\beta(xb_{n}^{-1} +m+B)} \times\\
& \hspace{3cm} \times \left[f\left(x + 2 b_n n^{-1}, y + 2 b_n n^{-1}\right) - f(x,y) \right] \\
& +  \frac{b_{n}^{\nu} n}{4} \, \left[ 1- \overline{\eta}_n+ m - q  + (x-y)b_{n}^{-1} \right] e^{-\beta(xb_{n}^{-1} +m -B)} \times\\
& \hspace{3cm} \times \left[f\left(x - 2 b_n n^{-1}, y + 2 b_n n^{-1}\right) - f(x,y) \right] \\
& +  \frac{b_{n}^{\nu} n}{4} \, \left[ 1- \overline{\eta}_n- m + q - (x-y)b_{n}^{-1} \right]e^{\beta(xb_{n}^{-1} +m -B)} \times\\
& \hspace{3cm} \times \left[f\left(x + 2 b_n n^{-1}, y - 2 b_n n^{-1}\right) - f(x,y) \right].
\end{align*}
\end{allowdisplaybreaks}
Therefore, at speed $n b_n^{-\delta}$, the Hamiltonian
\begin{equation}\label{eqn:generic:Hamiltonian}
H_n f = b_n^{\delta} n^{-1} e^{-n b_n^{-\delta} f} A_n e^{n b_n^{-\delta} f}
\end{equation}
results in
\begin{allowdisplaybreaks}
\begin{align*}
H_n f(x,y) & =  \frac{b_{n}^{\nu+\delta}}{4} \, \left[ 1 + \overline{\eta}_n + m + q  + (x+y)b_{n}^{-1} \right] e^{-\beta(xb_{n}^{-1} +m+B)} \times\\
& \hspace{3cm} \times \left[ e^{n b_n^{-\delta} \left[f\left(x - 2 b_n n^{-1}, y - 2b_n n^{-1} \right) - f(x,y) \right]} - 1 \right] \\
& +  \frac{b_{n}^{\nu+\delta}}{4} \, \left[ 1 + \overline{\eta}_n - m - q - (x+y)b_{n}^{-1} \right] e^{\beta(xb_{n}^{-1} +m+B)} \times\\
& \hspace{3cm} \times \left[  e^{n b_n^{-\delta} \left[f\left(x + 2 b_n n^{-1}, y + 2b_n n^{-1} \right) - f(x,y) \right]} - 1 \right] \\
& +  \frac{b_{n}^{\nu+\delta}}{4} \, \left[ 1- \overline{\eta}_n+ m - q  + (x-y)b_{n}^{-1} \right] e^{-\beta(xb_{n}^{-1} +m-B)} \times\\
& \hspace{3cm} \times \left[  e^{n b_n^{-\delta} \left[f\left(x - 2 b_n n^{-1}, y + 2b_n n^{-1} \right) - f(x,y) \right]} - 1 \right] \\
& +  \frac{b_{n}^{\nu+\delta}}{4} \, \left[ 1- \overline{\eta}_n- m + q - (x-y)b_{n}^{-1} \right]e^{\beta(xb_{n}^{-1} +m-B)} \times\\
& \hspace{3cm} \times \left[ e^{n b_n^{-\delta} \left[f\left(x + 2 b_n n^{-1}, y - 2b_n n^{-1} \right) - f(x,y) \right]} - 1\right].
\end{align*}
\end{allowdisplaybreaks}

Let $\nabla f(x,y) = (f_x(x,y), f_y(x,y))^{\intercal}$ be the gradient of $f$. Moreover, denote
\[
\bONE_{\pm} = \begin{pmatrix} 1 & \pm 1 \\ \pm 1 & 1 \end{pmatrix}
\quad \mbox{ and } \quad
\mathbf{e}_{\pm} = \begin{pmatrix} 1 \\ \pm 1 \end{pmatrix}.
\]
We Taylor expand the exponential functions containing $f$ up to second order:
\begin{align*}
& \exp \left\{n b_n^{-\delta} \left[f\left(x \pm 2 b_n n^{-1}, y \pm 2b_n n^{-1} \right) - f(x,y) \right] \right\} - 1 \\
& \quad = \pm 2b_{n}^{-\delta+1} \ip{\mathbf{e}_+}{\nabla f(x,y)} + 2b_{n}^{-2\delta+2} \ip{\bONE_+ \nabla f(x,y)}{\nabla f(x,y)} + o \left( b_{n}^{-2\delta+2} \right)
\end{align*}
and
\begin{align*}
& \exp \left\{n b_n^{-\delta} \left[f\left(x \pm 2 b_n n^{-1}, y \mp 2b_n n^{-1} \right) - f(x,y) \right] \right\} - 1 \\
& \quad = \pm 2b_{n}^{-\delta+1} \ip{\mathbf{e}_-}{\nabla f(x,y)} + 2b_{n}^{-2\delta+2} \ip{\bONE_- \nabla f(x,y)}{\nabla f(x,y)} + o \left( b_{n}^{-2\delta+2} \right).
\end{align*}

To write down the intermediate result after Taylor-expansion, we introduce the functions 
\begin{equation}\label{def:Gn's}
\begin{aligned}
G_{n,1,\beta,B}^{\pm} (x,y) &= (1 \pm \overline{\eta}_n) \cosh [\beta (x \pm B)] - (x \pm y) \sinh [\beta(x \pm B)]\\[0.2cm]
G_{n,2,\beta,B}^{\pm} (x,y) &= (1 \pm \overline{\eta}_n) \sinh [\beta (x \pm B)] - (x \pm y) \cosh [\beta(x \pm B)]
\end{aligned}
\end{equation}
and the matrix 
\begin{equation*}
\bG_{n,1,\beta,B}(x,y) = \begin{pmatrix}  G_{n,1,\beta,B}^+(x,y)  + G_{n,1,\beta,B}^-(x,y)  & G_{n,1,\beta,B}^+(x,y)  - G_{n,1,\beta,B}^-(x,y)  \\ G_{n,1,\beta,B}^+(x,y)  - G_{n,1,\beta,B}^-(x,y)  & G_{n,1,\beta,B}^+(x,y)  + G_{n,1,\beta,B}^-(x,y)  \end{pmatrix},
\end{equation*}
which are the finite-volume analogues of \eqref{def:G's} and \eqref{def:G_matrix}. In what follows, not to clutter our formulas we will drop subscripts highlighting the dependence on the inverse temperature $\beta$ and the magnetic field $B$. A tedious but straightforward calculation yields
\begin{align*}
H_n f(x,y) & = b_n^{\nu+1} \ip{\begin{pmatrix} G_{n,2}^{+} \left(xb_{n}^{-1} + m,yb_{n}^{-1} + q \right) + G_{n,2}^{-} \left( xb_{n}^{-1} + m,yb_{n}^{-1} + q \right) \\[0.1cm] G_{n,2}^{+} \left( xb_{n}^{-1} + m,yb_{n}^{-1} + q \right) - G_{n,2}^{-} \left( xb_{n}^{-1} + m,yb_{n}^{-1} + q \right)\end{pmatrix}}{\nabla f(x,y)} \\
&+ b_{n}^{\nu -\delta +2} \, \ip{\bG_{n,1}(xb_n^{-1} +m,yb_n^{-1} + q) \nabla f(x,y)}{\nabla f(x,y)} \\
&+  o \left( b_{n}^{\nu -\delta +2} \right).
\end{align*}
To have an interesting reminder in the limit, we need $\delta = \nu + 2$.  This gives	
\begin{multline}
\!\!\!\!\!\! H_n f(x,y) = b_n^{\nu+1} \ip{\begin{pmatrix} G_{n,2}^{+} \left(xb_{n}^{-1} + m,yb_{n}^{-1} + q \right) + G_{n,2}^{-} \left( xb_{n}^{-1} + m,yb_{n}^{-1} + q \right) \\[0.1cm] G_{n,2}^{+} \left( xb_{n}^{-1} + m,yb_{n}^{-1} + q \right) - G_{n,2}^{-} \left( xb_{n}^{-1} + m,yb_{n}^{-1} + q \right)\end{pmatrix}}{\nabla f(x,y)} \\[0.2cm]
+ \ip{\bG_{n,1}(xb_n^{-1} +m,yb_n^{-1} + q) \nabla f(x,y)}{\nabla f(x,y)} +  o \left( 1 \right). \label{short:Ham}
\end{multline}

In the sequel we will Taylor expand $G_{n,1}^{\pm}$ and $G_{n,2}^{\pm}$ around $(m,q)$. Therefore we need the derivatives of the $G$'s. By direct computation, we get the following lemma.

\begin{lemma}\label{lmm:derivatives}
Let $G_{n,1}^{\pm}$ and $G_{n,2}^{\pm}$ be defined by \eqref{def:Gn's}. Then, we obtain
\begin{multline*}
\frac{\partial^k}{\partial x^k}\left( G_{n,2}^{+} +  G_{n,2}^{-} \right)(x,y) \\
= \begin{cases}
\beta^k\left(G_{2}^{+} +  G_{2}^{-}\right)(x,y) - 2k\beta^{k-1} \sinh(\beta x) \cosh (\beta B) &\\
\hspace{3.5cm} + 2\beta^{k} \, \overline{\eta}_n \cosh(\beta x) \sinh (\beta B) & \text{if $k$ is even}, \\[0.3cm]
\beta^k\left(G_{1}^{+} +  G_{1}^{-}\right)(x,y) - 2 k\beta^{k-1} \cosh(\beta x) \cosh (\beta B) &\\
\hspace{3.5cm} + 2 \beta^{k} \, \overline{\eta}_n \sinh(\beta x) \sinh (\beta B) & \text{if $k$ is odd},
\end{cases}
\end{multline*}

	\begin{multline*}
	\frac{\partial^k}{\partial x^k} \left( G_{n,2}^{+} -  G_{n,2}^{-} \right)(x,y) \\
	= \begin{cases}
	\beta^k\left(G_{2}^{+} - G_{2}^{-}\right)(x,y) - 2k\beta^{k-1} \cosh(\beta x) \sinh (\beta B) &\\
	\hspace{3.5cm} + 2\beta^{k} \, \overline{\eta}_n \sinh(\beta x) \cosh (\beta B) & \text{if $k$ is even}, \\[0.3cm]
	\beta^k\left(G_{1}^{+} - G_{1}^{-}\right)(x,y) - 2k\beta^{k-1} \sinh(\beta x) \sinh (\beta B) &\\
	\hspace{3.5cm}  + 2\beta^{k} \, \overline{\eta}_n \cosh(\beta x) \cosh (\beta B) & \text{if $k$ is odd},
	\end{cases}
	\end{multline*}

\[
\frac{\partial}{\partial y} G_{n,1}^{\pm}(x,y) = \mp \sinh(\beta(x \pm B)), \quad \frac{\partial}{\partial y} G_{n,2}^{\pm}(x,y) = \mp \cosh(\beta(x \pm B)), 
\]
\[
\frac{\partial^{k+1}}{\partial x^k \partial y}\left( G_{n,2}^{+} +  G_{n,2}^{-} \right)(x,y) 
= \begin{cases}
-2 \beta^k \sinh(\beta x) \sinh (\beta B) & \text{if $k$ is even}, \\[0.3cm]
-2 \beta^k \cosh(\beta x) \sinh (\beta B) & \text{if $k$ is odd},
\end{cases}
\]
and
\[
\frac{\partial^{k+1}}{\partial x^k \partial y} \left( G_{n,2}^{+} -  G_{n,2}^{-} \right)(x,y) 
= \begin{cases}
-2 \beta^k \cosh(\beta x) \cosh (\beta B) & \text{if $k$ is even}, \\[0.3cm]
-2 \beta^k \sinh(\beta x) \cosh (\beta B) & \text{if $k$ is odd}.
\end{cases}
\]
\end{lemma}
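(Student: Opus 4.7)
The statement consists of explicit formulas for partial derivatives of the functions $G_{n,1}^{\pm}$ and $G_{n,2}^{\pm}$ introduced in \eqref{def:Gn's}, together with the derivatives of their sums and differences. Since these functions are given by elementary closed-form expressions, the proof is entirely by direct computation, and my plan is just to organize that computation so that the pattern on the right-hand side becomes transparent.

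\smallskip

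My first step will be to use the hyperbolic addition formulas
\[
\sinh[\beta(x\pm B)] = \sinh(\beta x)\cosh(\beta B) \pm \cosh(\beta x)\sinh(\beta B),
\]
\[
\cosh[\beta(x\pm B)] = \cosh(\beta x)\cosh(\beta B) \pm \sinh(\beta x)\sinh(\beta B),
\]
to expand $G_{n,2}^{+}\pm G_{n,2}^{-}$ (and similarly $G_{n,1}^{+}\pm G_{n,1}^{-}$) in terms of products of a polynomial of degree at most $1$ in $(x,y)$ with one of $\sinh(\beta x)$ or $\cosh(\beta x)$. For instance, summing $G_{n,2}^{+}+G_{n,2}^{-}$ gives
\[
2\sinh(\beta x)\cosh(\beta B) + 2\overline{\eta}_n\cosh(\beta x)\sinh(\beta B) - 2x\cosh(\beta x)\cosh(\beta B) - 2y\sinh(\beta x)\sinh(\beta B),
\]
and analogous expressions hold for the other three combinations.

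\smallskip

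My second step is to apply $\partial_x^k$ term by term. On each product of the form $(ax+by+c)\,\phi(\beta x)$ with $\phi\in\{\sinh,\cosh\}$, Leibniz yields only two non-vanishing contributions, since the linear factor is annihilated after two differentiations:
\[
\partial_x^k\bigl[(ax+by+c)\,\phi(\beta x)\bigr] = (ax+by+c)\,\partial_x^k\phi(\beta x) + k\,a\,\partial_x^{k-1}\phi(\beta x).
\]
Combining this with $\partial_x^k\sinh(\beta x) = \beta^k\sinh(\beta x)$ or $\beta^k\cosh(\beta x)$ according to the parity of $k$ (and analogously for $\cosh$), I collect terms and check that the pieces not containing a factor of $k$ reassemble into $\beta^k(G_2^{+}\pm G_2^{-})(x,y)$ for even $k$ and into $\beta^k(G_1^{+}\pm G_1^{-})(x,y)$ for odd $k$ (the hyperbolic type being swapped by the $k$-th derivative), while the remaining two terms produce the $-2k\beta^{k-1}(\cdot)$ correction and the $2\beta^k\overline{\eta}_n(\cdot)$ correction in the stated form.

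\smallskip

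The third and final step handles the $y$-derivatives. Since $G_{n,j}^{\pm}$ depends on $y$ only through the linear factor $-(x\pm y)$ multiplied by a hyperbolic function of $x$, the formulas $\partial_y G_{n,1}^{\pm} = \mp\sinh(\beta(x\pm B))$ and $\partial_y G_{n,2}^{\pm} = \mp\cosh(\beta(x\pm B))$ are immediate. For the mixed derivatives $\partial_x^k\partial_y(G_{n,2}^{+}\pm G_{n,2}^{-})$, I again apply the addition formulas to the $\pm$ sum/difference and then differentiate $k$ times in $x$ using $\partial_x^k\sinh$ and $\partial_x^k\cosh$ as above; the linear prefactor is now absent so no correction term appears, giving the compact expressions in the statement. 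The only obstacle is bookkeeping of signs and of the parity of $k$; there is no analytic content beyond the addition formulas and Leibniz.
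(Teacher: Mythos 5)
Your proposal is correct and is exactly the computation the paper has in mind — the paper's own proof is just the one-line remark ``by direct computation,'' and your three-step organization (expand via hyperbolic addition formulas, apply Leibniz noting the linear prefactor is annihilated after two $x$-derivatives, and handle the trivial $y$-derivatives separately) is the natural way to carry it out. I spot-checked the even and odd $k$ cases for $\partial_x^k(G_{n,2}^+ + G_{n,2}^-)$ and $\partial_x^k(G_{n,2}^+ - G_{n,2}^-)$ and they reassemble into $\beta^k(G_2^\pm\pm G_2^\mp)$ or $\beta^k(G_1^\pm\pm G_1^\mp)$ plus the $-2k\beta^{k-1}(\cdot)$ and $2\beta^k\overline{\eta}_n(\cdot)$ corrections as stated.
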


Note that the functions $G$, whenever differentiated twice in the $y$ direction, equal zero. For the sake of readability, we again put the terms of the Taylor expansions of $G_{n,1}^\pm$ and $G_{n,2}^\pm$ in matrix form. For $k \in \mathbb{N}$, $k \geq 1$, let us denote
\begin{align*}
D^k \cG_{n,2}(x,y) & := \begin{pmatrix} \frac{\partial^k}{\partial x^k} (G_{n,2}^+ + G_{n,2}^-)(x,y) & \frac{\partial^k}{\partial x^{k-1} \partial y} (G_{n,2}^+ + G_{n,2}^-)(x,y) \\
\frac{\partial^k}{\partial x^k} (G_{n,2}^+ - G_{n,2}^-)(x,y) & \frac{\partial^k}{\partial x^{k-1} \partial y} (G_{n,2}^+ - G_{n,2}^-)(x,y)\end{pmatrix} \\
\intertext{and}
D^k \cG_{2}(x,y) & := \begin{pmatrix} \frac{\partial^k}{\partial x^k} (G_{2}^+ + G_{2}^-)(x,y) & \frac{\partial^k}{\partial x^{k-1} \partial y} (G_{2}^+ + G_{2}^-)(x,y) \\
\frac{\partial^k}{\partial x^k} (G_{2}^+ - G_{2}^-)(x,y) & \frac{\partial^k}{\partial x^{k-1} \partial y} (G_{2}^+ - G_{2}^-)(x,y)\end{pmatrix},
\end{align*}
where $G_{n,2}^{\pm}$ are defined in \eqref{def:Gn's} and $G_{2}^{\pm}$ in \eqref{def:G's}. Moreover, set
\[
N_k  := \begin{pmatrix} \frac{\partial^k}{\partial x^k} \cosh(\beta x) \sinh(\beta B) & 0 \\
\frac{\partial^k}{\partial x^k} \sinh(\beta x) \cosh(\beta B) & 0\end{pmatrix}.
\]
By Lemma~\ref{lmm:derivatives}, it follows that $D^k \cG_{n,2}(x,y) = D^k \cG_2(x,y) + 2\overline{\eta}_n N^k$,  for all $k \geq 1$. We obtain the following expansion.

\begin{lemma} \label{lemma:expansion_of_H_half_way_form}
For $f \in C^3_c(\mathbb{R}^2)$, we have 
\begin{align}
H_n f(x,y) & = b_n^{\nu+1} \ip{\begin{pmatrix}G_{2}^+(m,q) + G_{2}^-(m,q) \\ G_{2}^+(m,q) - G_{2}^-(m,q) \end{pmatrix}}{\nabla f(x,y)} \label{eqn:prelim_compact_exp_G_1} \\
& + 2 b_n^{\nu+1} \overline{\eta}_n \ip{\begin{pmatrix}\cosh(\beta m) \sinh(\beta B) \\ \sinh(\beta m) \cosh(\beta B) \end{pmatrix}}{\nabla f(x,y)} \label{eqn:prelim_compact_exp_G_2}  \\
& + \sum_{k=1}^5 \frac{b_n^{\nu+ 1-k}}{k!}\ip{D^k \cG_{2}(m,q) \begin{pmatrix} x^k \\ k x^{k-1}y \end{pmatrix}}{\nabla f(x,y)} \label{eqn:prelim_compact_exp_G_3}  \\ 
& + 2 \overline{\eta}_n  b_n^{\nu+1} \sum_{k=1}^5 \frac{b_n^{-k}}{k!}\ip{N_k(m,q) \begin{pmatrix} x^k \\ k x^{k-1}y \end{pmatrix}}{\nabla f(x,y)} + o(b_n^{\nu -4}) \label{eqn:prelim_compact_exp_G_4}  \\
& + \ip{\bG_{1}(m,q) \nabla f(x,y)}{\nabla f(x,y)} \label{eqn:prelim_compact_exp_G_5}\\
& +  o \left( 1 \right) \label{eqn:prelim_compact_exp_G_6},
\end{align}
where the remainder terms converge to zero uniformly on compact sets.
\end{lemma}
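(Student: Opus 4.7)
The plan is to start from the already-established intermediate form (the display labeled \eqref{short:Ham}) and Taylor-expand the coefficient functions $G_{n,2}^{\pm}(xb_n^{-1}+m,\, yb_n^{-1}+q)$ and $\bG_{n,1}(xb_n^{-1}+m,\, yb_n^{-1}+q)$ around the stationary point $(m,q)$, to order five and order zero respectively.

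The key simplification for the linear-in-gradient term comes from Lemma~\ref{lmm:derivatives}: all second (and higher) $y$-derivatives of $G_{n,j}^{\pm}$ vanish identically. Consequently, each order-$k$ piece of the Taylor expansion of $G_{n,2}^{\pm}(xb_n^{-1}+m,\,yb_n^{-1}+q)$ involves only the monomials $x^k$ and $k x^{k-1}y$, which are exactly the entries of the column vector $(x^k,\,k x^{k-1}y)^\intercal$ used in the statement. Bundling the relevant partial derivatives into the $2\times 2$ matrix $D^k\mathcal{G}_{n,2}(m,q)$ puts the Taylor tail in the compact matrix form appearing in \eqref{eqn:prelim_compact_exp_G_3}--\eqref{eqn:prelim_compact_exp_G_4}.

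Next I would invoke the identity $D^k\mathcal{G}_{n,2}(m,q) = D^k\mathcal{G}_2(m,q) + 2\overline{\eta}_n N_k(m,q)$, which follows directly by reading off Lemma~\ref{lmm:derivatives}: the dependence on $\overline{\eta}_n$ is linear and appears only in the pure $\partial_x^k$ derivatives, which explains the vanishing second column of $N_k$. Splitting the Taylor sum along this decomposition yields the two sums \eqref{eqn:prelim_compact_exp_G_3} and \eqref{eqn:prelim_compact_exp_G_4}. The $k=0$ terms are handled analogously: the identity $G_{n,2}^{\pm}(m,q)=G_{2}^{\pm}(m,q) \pm \overline{\eta}_n \sinh(\beta(m\pm B))$ gives, after taking sum and difference in $\pm$, precisely the vectors appearing in \eqref{eqn:prelim_compact_exp_G_1} and \eqref{eqn:prelim_compact_exp_G_2} (the hyperbolic identities $\sinh(m+B)\pm\sinh(m-B)$ produce the required $\cosh(\beta m)\sinh(\beta B)$ and $\sinh(\beta m)\cosh(\beta B)$).

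For the quadratic-in-gradient piece, I would simply use joint continuity: since $\overline{\eta}_n\to 0$ almost surely and $(xb_n^{-1}+m,\,yb_n^{-1}+q)\to(m,q)$ uniformly on compacts, $\bG_{n,1}(xb_n^{-1}+m,\,yb_n^{-1}+q)=\bG_{1}(m,q)+o(1)$ on any compact set. Because $f\in C_c^3(\mathbb{R}^2)$ means $\nabla f$ has compact support and is bounded, the difference contributes only an $o(1)$ to \eqref{eqn:prelim_compact_exp_G_5}. All remainders are controlled on the compact support of $\nabla f$: the order-5 Taylor remainder of $G_{n,2}^{\pm}$ is $O(b_n^{-6})$ uniformly on that set, and multiplying by the prefactor $b_n^{\nu+1}$ gives $O(b_n^{\nu-5})=o(b_n^{\nu-4})$ as claimed. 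The only real bookkeeping obstacle is to separate the two error regimes — the errors in $\bG_{n,1}$, which come without a diverging prefactor, versus the errors in the linear-in-$\nabla f$ term, which carry the growing factor $b_n^{\nu+1}$ — and to verify that the $\overline{\eta}_n$ appearing inside the fifth-order remainder is harmless since $|\overline{\eta}_n|\le 1$ deterministically.
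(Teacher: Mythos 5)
Your proposal is correct and follows essentially the same route as the paper: start from \eqref{short:Ham}, Taylor-expand the $G_{n,2}^{\pm}$ factors to fifth order around $(m,q)$, split the expansion via $D^k\cG_{n,2}(m,q)=D^k\cG_2(m,q)+2\overline{\eta}_n N_k$, and observe that only the zeroth-order term of $\bG_{n,1}$ survives in the quadratic part. You fill in a few details the paper leaves implicit (the hyperbolic identities producing \eqref{eqn:prelim_compact_exp_G_1}--\eqref{eqn:prelim_compact_exp_G_2} from the $k=0$ term, the uniform control of the Lagrange remainder using $|\overline{\eta}_n|\le 1$, and the compact-support argument for the $\bG_{n,1}$ piece), but the underlying argument is the same.
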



\begin{proof}
Consider \eqref{short:Ham}. We Taylor expand up to fifth order the terms involving $G_{n,2}^{\pm}$. This yields
\begin{align*}
& \ip{\begin{pmatrix} G_{n,2}^{+} \left(xb_{n}^{-1} + m,yb_{n}^{-1} + q \right) + G_{n,2}^{-} \left( xb_{n}^{-1} + m,yb_{n}^{-1} + q \right) \\[0.1cm] G_{n,2}^{+} \left( xb_{n}^{-1} + m,yb_{n}^{-1} + q \right) - G_{n,2}^{-} \left( xb_{n}^{-1} + m,yb_{n}^{-1} + q \right)\end{pmatrix}}{\nabla f(x,y)} \notag \\
& = \ip{\begin{pmatrix}G_{n,2}^+(m,q) + G_{n,2}^-(m,q) \\ G_{n,2}^+(m,q) - G_{n,2}^-(m,q) \end{pmatrix}}{\nabla f(x,y)}  \\
& \qquad + \sum_{k=1}^5 \frac{b_n^{-k}}{k!}\ip{D^k \cG_{n,2}(m,q) \begin{pmatrix} x^k \\ k x^{k-1}y \end{pmatrix}}{\nabla f(x,y)} + o(b_n^{-5})  \\
& =  \ip{\begin{pmatrix}G_{2}^+(m,q) + G_{2}^-(m,q) \\ G_{2}^+(m,q) - G_{2}^-(m,q) \end{pmatrix}}{\nabla f(x,y)} + 2 \overline{\eta}_n \ip{\begin{pmatrix}\cosh(\beta m) \sinh(\beta B) \\ \sinh(\beta m) \cosh(\beta B) \end{pmatrix}}{\nabla f(x,y)}   \\
& \qquad + \sum_{k=1}^5 \frac{b_n^{-k}}{k!}\ip{D^k \cG_{2}(m,q) \begin{pmatrix} x^k \\ k x^{k-1}y \end{pmatrix}}{\nabla f(x,y)} \notag \\ 
& \qquad + 2 \overline{\eta}_n \sum_{k=1}^5 \frac{b_n^{-k}}{k!}\ip{N_k(m,q) \begin{pmatrix} x^k \\ k x^{k-1}y \end{pmatrix}}{\nabla f(x,y)} + o(b_n^{-5}).   
\end{align*}
Multiplying by $b_n^{\nu+1}$ we obtain \eqref{eqn:prelim_compact_exp_G_1}-\eqref{eqn:prelim_compact_exp_G_4}. Finally, an expansion of the $\bG_{n,1}$ matrix shows that only the zero-th order term remains, giving \eqref{eqn:prelim_compact_exp_G_5}.
\end{proof}

Observe that $o(1)+o(b_n^{\nu-4})$ (cf. lines \eqref{eqn:prelim_compact_exp_G_4} and \eqref{eqn:prelim_compact_exp_G_6}) includes all the remainder terms coming from first Taylor expanding the exponentials, and then the functions $G_1^{\pm},G_2^{\pm}$. For any $f \in C_c^3(\mathbb{R}^2)$, let us denote by $R_{n,f}^{\mathrm{exp}}$ and $R_{n,f}^G$ these two contributions.  In what follows we will need a more accurate control on these remainders.  For this reason we state the following lemma.  
\begin{lemma}\label{lemma:remainder_control}
Let $f \in C_c^3(\bR^2)$ and let $\nu \in \{0,2,4\}$. Set $K_{n,0} = [-\log^{1/2} b_n,\log^{1/2} b_n ]^2$. There exists a positive constant $C$ (dependent on the sup-norms of the first to third order partial derivatives of $f$, but not on $n$) such that we have
 \begin{equation}\label{eqn:remainder_control}
 \sup_{(x,y) \in K_{n,0}} \left\vert R_{n,f}^{\mathrm{exp}}(x,y) + R_{n,f}^G(x,y) \right\vert \leq C \left( n^{-1} b_n^{\nu+2} + b_n^{\nu-5} \log^{3} b_n \right).
 \end{equation}
\end{lemma}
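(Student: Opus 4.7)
I would split the remainder as $R_{n,f}^{\mathrm{exp}} + R_{n,f}^G$, where $R_{n,f}^G$ collects the Taylor errors from expanding the coefficient functions $G_{n,j}^{\pm}(xb_n^{-1}+m, yb_n^{-1}+q)$ around $(m,q)$ beyond fifth order, and $R_{n,f}^{\mathrm{exp}}$ collects the errors from Taylor-expanding the four exponentials $e^{u_{s_1,s_2}}$ (with $u_{s_1,s_2}:=nb_n^{-\delta}[f(x+s_1\cdot 2b_n/n, y+s_2\cdot 2b_n/n)-f(x,y)]$ and $\delta=\nu+2$) beyond second order. Each piece is to be bounded via Taylor's theorem with integral remainder, carefully tracking the Hamiltonian prefactor $b_n^{\nu+\delta}/4$.

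The piece $R_{n,f}^G$ is the cleaner of the two. Since the functions $G_{n,j}^{\pm}$ in \eqref{def:Gn's} are real-analytic in $(x,y)$, built from $\sinh$ and $\cosh$ of linear expressions, all their partial derivatives are bounded on a neighborhood of $(m,q)$ by constants depending only on $\beta,B$. The integral-remainder form of Taylor's theorem yields
\[
\bigl|\, b_n^{\nu+1}\cdot\text{(sixth-order remainder of $G_{n,j}^\pm$)}\,\bigr| \le C_{\beta,B}\, b_n^{\nu+1-6}\,\max_{a+b=6}|x|^a|y|^b,
\]
and on $K_{n,0}$ one has $|x|^a|y|^b\le \log^3 b_n$, producing the second term $C b_n^{\nu-5}\log^3 b_n$ of the stated bound.

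For $R_{n,f}^{\mathrm{exp}}$ two errors combine. First, because $f\in C_c^3(\bR^2)$, Taylor-expanding $f$ to second order gives $u_{s_1,s_2}= \pm 2 b_n^{1-\delta}(\partial_x\pm\partial_y)f(x,y) + O(b_n^{2-\delta}/n) + O(b_n^{3-\delta}/n^2)$ with constants controlled by $\|\partial^2 f\|_\infty$ and $\|\partial^3 f\|_\infty$; after multiplication by the prefactor $b_n^{\nu+\delta}$ the secondary part contributes $O(n^{-1}b_n^{\nu+2})$, matching the first term of the stated bound. Second, $|e^u-1-u-u^2/2|\le\tfrac{1}{6}|u|^3 e^{|u|}$, and on $K_{n,0}$ one has $|u_{s_1,s_2}|\le Cb_n^{1-\delta}$ uniformly, so $e^{|u|}=O(1)$ and the cubic-in-$u$ truncation error for a single direction is at most $Cb_n^{3(1-\delta)}=Cb_n^{-3\nu-3}$.

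The main obstacle is precisely this cubic-in-$u$ term, which after multiplying by the prefactor naively contributes $O(b_n^{-\nu-1})$ per direction---not obviously inside the claimed bound. To handle it I would sum the four directional contributions before estimating. The identity $(s_1\partial_x+s_2\partial_y)^3= s_1\partial_x(\partial_x^2+3\partial_y^2)+s_2\partial_y(3\partial_x^2+\partial_y^2)$ reduces the summed coefficient at $(x,y)=(0,0)$ to $2(G_{n,2}^++G_{n,2}^-)(m,q)$ and $2(G_{n,2}^+-G_{n,2}^-)(m,q)$; since $(m,q)$ is a stationary solution of \eqref{MKV:randomCW}, both of these vanish up to an $O(\overline{\eta}_n)$ disorder correction, while a first-order Taylor expansion of the rates in $(x,y)$ produces an additional factor of $O((|x|+|y|)/b_n)=O(\log^{1/2}b_n/b_n)$ on $K_{n,0}$. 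Combined with the almost-sure estimate on $\overline{\eta}_n$ supplied by the law of the iterated logarithm under the standing hypotheses on $b_n$, this cancellation is what makes the cubic remainder compatible with $C(n^{-1}b_n^{\nu+2}+b_n^{\nu-5}\log^3 b_n)$; the higher-order terms $u^k/k!$ with $k\ge 4$ are even smaller and handled analogously.
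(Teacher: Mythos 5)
Your decomposition of the remainder, the bound on $R_{n,f}^{G}$ via a sixth-order Taylor remainder producing $b_n^{\nu-5}\log^3 b_n$ on $K_{n,0}$, and the identification of the $O(n^{-1}b_n^{\nu+2})$ contribution from the second- and third-order Taylor terms of $f$ all match what the paper does. The crucial departure is your treatment of the cubic-in-$u$ truncation error, where you go looking for a cancellation across the four jump directions. This detour is both unnecessary and, as written, not sound.

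It is unnecessary because the order $b_n^{-(\nu+1)}$ that the cubic term produces after multiplying by the prefactor is already inside the stated bound for the cases where the lemma is actually invoked: for $\nu=2$ one needs $b_n^{-3}\le C\,b_n^{-3}\log^3 b_n$, true once $\log^3 b_n\ge 1$, and for $\nu=4$ one needs $b_n^{-5}\le C\,b_n^{-1}\log^3 b_n$, also immediate. (For $\nu=0$ the inequality $b_n^{-1}\le C\,b_n^{-5}\log^3 b_n$ fails, but the lemma is only ever applied in the proofs of Theorems \ref{thm:MD:paramagnetic:critical}--\ref{theorem:moderate_deviations_CW_tri-critical_point_rescaling:2}, i.e.\ with $\nu\in\{2,4\}$.) The paper's proof does precisely this: it writes the Lagrange remainder, controls the exponential factor $\exp\{nb_n^{-(\nu+2)}[f(\boldsymbol{\xi})-f(\mathbf{x})]\}$ with the mean value theorem, arrives at the crude bound $c_1 n^{-1}b_n^{\nu+2}+c_2 b_n^{-(\nu+1)}$, and observes this is dominated by the right-hand side of \eqref{eqn:remainder_control}.

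The cancellation argument itself also has a structural flaw. You first invoke the estimate $|e^u-1-u-\tfrac{1}{2}u^2|\le \tfrac{1}{6}|u|^3e^{|u|}$, which is an absolute-value bound and therefore destroys the sign information you would need to exhibit cancellation when summing over $(s_1,s_2)$; to argue as you do, you would instead have to keep an explicit remainder (Lagrange or integral form) for each direction, evaluated at direction-dependent intermediate points, and show the sum is small -- which is a genuinely different and more delicate argument than the one the paper uses. Moreover, your cancellation argument hinges on $\overline{\eta}_n$ being small, which brings in the law of iterated logarithm and the growth hypothesis on $b_n$; Lemma \ref{lemma:remainder_control} itself carries no such hypothesis, and its proof should not depend on one. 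The simplest fix for your write-up is to delete the cancellation step entirely, record the bound $c_2 b_n^{-(\nu+1)}$ for the cubic (and higher) truncation errors, and note that it is absorbed by $C b_n^{\nu-5}\log^3 b_n$ for the relevant $\nu\in\{2,4\}$.
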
  
\begin{proof}
We study the Taylor expansion of the exponential functions first. We treat explicitly only the case of 
\[
\frac{b_n^{2\nu+2}}{4}  \left[ \exp \left\{nb_n^{-(\nu+2)} [f(x+2b_n n^{-1},y+2b_n n^{-1}) - f(x,y)] \right\}-1\right],
\]
the others being analogous. We denote by $R_{n,f}^{\mathrm{exp},+}$ the remainder terms coming from Taylor expanding such a function. To shorten our next formula, we set $\mathbf{x} = (x,y)^{\intercal}$ and $\boldsymbol{\xi}=(\xi_1,\xi_2)^{\intercal}$. By Lagrange's form of the Taylor expansion, there is some $\boldsymbol{\xi} \in \mathbb{R}^2$ with $\xi_1 \in \, (x,x+2b_n n^{-1})$ and $\xi_2 \in \, (y, y+2b_n n^{-1})$ and


%
\begin{multline*}
R_{n,f}^{\mathrm{exp},+}(\mathbf{x}) = \frac{n^{-1} b_n^{\nu+2}}{2} \, \ip{D^2 f(\mathbf{x}) \mathbf{e}_+}{\mathbf{e}_+} \\
+ \exp \left\{ nb_n^{-(\nu+2)} [f(\boldsymbol{\xi}) - f(\mathbf{x})] \right\} \bigg\{ 2b_n^{-(\nu+1)} \sum_{j_1+j_2=3} \frac{(\partial_1f (\boldsymbol{\xi}))^{j_1}(\partial_2 f (\boldsymbol{\xi}))^{j_2}}{j_1! j_2!} \\
+ b_n n^{-1} \ip{\nabla f(\boldsymbol{\xi})}{\mathbf{e}_+} \ip{D^2 f(\boldsymbol{\xi}) \mathbf{e}_+}{\mathbf{e}_+} + 2 b_n^{\nu+3} n^{-2} \sum_{j_1+j_2=3} \frac{\partial_1^{j_1} \partial_2^{j_2} f (\boldsymbol{\xi})}{j_1! j_2!} \bigg\}.
\end{multline*}
Observe that, by the mean-value theorem, we can control the exponential. Indeed, there exists a point $\mathbf{z}$, on the line-segment connecting $\boldsymbol{\xi}$ and $\mathbf{x}$, for which we have \mbox{$f(\boldsymbol{\xi}) - f(\mathbf{x}) = \ip{\nabla f(\mathbf{z})}{\boldsymbol{\xi} - \mathbf{x}}$}. Since $\boldsymbol{\xi} - \mathbf{x} \in \,  (0, 2 b_n n^{-1})^2$, we can estimate $|f(\boldsymbol{\xi}) - f(\mathbf{x})| \leq 4 (\|\partial_1 f \| \vee \|\partial_2 f \|) b_n n^{-1}$ and, in turn, we get 
\begin{align*}
\exp \left\{ nb_n^{-(\nu+2)} [f(\boldsymbol{\xi}) - f(\mathbf{x})] \right\} &\leq \exp \left\{ 4 b_n^{-(\nu+1)} (\|\partial_1 f \| \vee \|\partial_2 f \|) \right\} \\
&\leq \exp \left\{ 4 (\|\partial_1 f \| \vee \|\partial_2 f \|)\right\}.
\end{align*}
Therefore, we can find positive constants $c_1$ and $c_2$ (depending on sup-norms of the first, second and third order partial derivatives of $f$, but not on $n$), such that 
\[
\sup_{(x,y) \in \mathbb{R}^2} \left\vert R_{n,f}^{\mathrm{exp},+}(x,y) \right\vert \leq c_1 \,  n^{-1} b_n^{\nu+2} + c_2 \, b_n^{-(\nu+1)}.
\]
Analogously, we get the same control for the other three exponential terms. We conclude
\[
\sup_{(x,y) \in \mathbb{R}^2} \left\vert R_{n,f}^{\mathrm{exp}}(x,y) \right\vert \leq 4 \left[c_1 \,  n^{-1} b_n^{\nu+2} + c_2 \, b_n^{-(\nu+1)} \right].
\]
We focus now on the remainder terms relative to the expansion of the $G$'s function. We have
\[
R_{n,f}^G(x,y) = \frac{b_n^{\nu-5}}{6!} \ip{D^6 \cG_{2}(\zeta_1,\zeta_2) \begin{pmatrix} x^6 \\ 6 x^{5-1}y \end{pmatrix}}{\nabla f(x,y)},
\] 
with $\zeta_1 \in \, (m,m+xb_n^{-1})$ and $\zeta_2 \in \, (q,q+yb_n^{-1})$. We easily derive the following bound
\[
\sup_{(x,y) \in K_{0,n}} \left\vert R_{n,f}^G(x,y) \right\vert \leq c_3 \, b_n^{\nu-5} \log^{3} b_n,
\]
where $c_3=c_3(\|\partial_1 f\|,\|\partial_2 f\|)$ is a suitable positive constant, independent of $n$. The conclusion then follows. 
\end{proof}

Turning back to the expansion of $H_n$ in Lemma~\ref{lemma:expansion_of_H_half_way_form}, we analyze now the terms containing $\overline{\eta}_n$ that appear in \eqref{eqn:prelim_compact_exp_G_2} and \eqref{eqn:prelim_compact_exp_G_4}. As $\cosh$ is a positive function and $b_n \rightarrow \infty$, any contribution by $\overline{\eta}_n$ is dominated by the one in \eqref{eqn:prelim_compact_exp_G_2}. To make sure that this term vanishes, we apply the Law of Iterated Logarithm.
 
\begin{theorem}[Law of Iterated Logarithm, {\cite[Corollary 14.8]{Ka02}}] \label{theorem:LIL}
We have 
\begin{equation*}
\limsup_{n \rightarrow \infty} \frac{\overline{\eta}_n \sqrt{n}}{\sqrt{\log \log n}} = \sqrt{2}
\quad \text{ and } \quad \liminf_{n \rightarrow \infty} \frac{\overline{\eta}_n \sqrt{n}}{\sqrt{\log \log n}} = -\sqrt{2} \quad \text{ $\mu$-a.s.}.
\end{equation*}
\end{theorem}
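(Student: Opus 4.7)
The statement is the Hartman–Wintner law of the iterated logarithm specialized to the i.i.d.\ symmetric Rademacher sequence $(\eta_i)$; since $\eta_i \stackrel{d}{=} -\eta_i$, the $\liminf$ assertion is equivalent to the $\limsup$ assertion applied to $(-\eta_i)$, so I would only prove the $\limsup$ half. Writing $S_n := \sum_{i=1}^n \eta_i = n\overline{\eta}_n$, the target is
\[
\limsup_{n\to\infty} \frac{S_n}{\sqrt{2n\log\log n}} = 1 \qquad \text{a.s.}
\]
The standard strategy is to prove a matching upper and lower bound along a geometric subsequence $n_k := \lfloor \theta^k\rfloor$ and then fill in the gaps using a maximal inequality and independence of blocks.

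For the upper bound, the key input is the Chernoff estimate $\mathbb{P}(S_n \geq \lambda) \leq e^{-\lambda^2/(2n)}$ for $\lambda>0$, which follows from $\mathbb{E}[e^{t\eta_i}] = \cosh t \leq e^{t^2/2}$. Fixing $\epsilon>0$ and $\theta>1$, and setting $\lambda_k := (1+\epsilon)\sqrt{2 n_k \log\log n_k}$, the resulting probability decays like $(\log n_k)^{-(1+\epsilon)^2}$, which is summable in $k$. Borel--Cantelli then gives $S_{n_k} < \lambda_k$ eventually. To handle $n$ in between $n_k$ and $n_{k+1}$, I would invoke L\'evy's maximal inequality, which works because the summands are symmetric: it yields $\mathbb{P}(\max_{n_k \leq n \leq n_{k+1}} S_n \geq \lambda) \leq 2\mathbb{P}(S_{n_{k+1}} \geq \lambda)$, and the same Chernoff argument controls this. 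Sending $\epsilon \downarrow 0$ and $\theta \downarrow 1$ yields $\limsup \leq 1$.

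For the lower bound, I would exploit the independence of blocks $D_k := S_{n_{k+1}} - S_{n_k}$ for $\theta$ chosen sufficiently large that $n_{k+1}-n_k \sim (1-\theta^{-1})n_{k+1}$. Using the CLT together with the standard Gaussian tail asymptotic $\mathbb{P}(\mathcal{N}(0,1) \geq x) \sim (x\sqrt{2\pi})^{-1}e^{-x^2/2}$ (quantified by Berry--Esseen), one obtains, for every small $\epsilon>0$,
\[
\mathbb{P}\!\left(D_k \geq (1-\epsilon)\sqrt{2(n_{k+1}-n_k)\log\log n_{k+1}}\right) \,\gtrsim\, \frac{1}{(\log n_{k+1})^{(1-\epsilon)^2}},
\]
which is \emph{not} summable in $k$. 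By the second Borel--Cantelli lemma (legitimate because the $D_k$ are independent) this event occurs infinitely often. Combining with $|S_{n_k}| \leq (1+\epsilon)\sqrt{2n_k\log\log n_k}$ from the upper bound, and choosing $\theta$ so large that the $S_{n_k}$ correction is absorbed, one concludes $S_{n_{k+1}} \geq (1-2\epsilon)\sqrt{2n_{k+1}\log\log n_{k+1}}$ infinitely often; letting $\epsilon \downarrow 0$ finishes the proof.

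The conceptually hard step is the lower bound: Chernoff is too crude for $\liminf$ direction on the iterated-logarithm scale, and one genuinely needs a two-sided CLT/local-CLT estimate together with a careful choice of the block sizes to guarantee that $n_{k+1}-n_k$ is comparable to $n_{k+1}$; the upper bound, by contrast, reduces to exponential moments and Borel--Cantelli.
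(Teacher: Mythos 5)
Your proof is correct, but note that the paper does not prove Theorem~3.4 at all---it is quoted verbatim as Corollary~14.8 of Kallenberg \cite{Ka02}, where the Hartman--Wintner LIL is derived via Skorokhod embedding from the iterated-logarithm law for Brownian motion. What you give is the classical direct argument (Kolmogorov's original one, specialized to bounded symmetric increments): exponential moment bounds and L\'evy's reflection inequality plus first Borel--Cantelli for the upper bound; independent geometric blocks, a CLT/Berry--Esseen tail estimate, and second Borel--Cantelli for the lower bound. For Rademacher increments, with an explicit moment generating function $\cosh t \le e^{t^2/2}$, this route is self-contained and more elementary than going through strong approximation, which is what the direct proof ``buys''. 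Two small points to make explicit if you were to write this out in full: (i) after L\'evy's inequality the comparison of the threshold $(1+\epsilon)\sqrt{2 n_{k+1}\log\log n_{k+1}}$ with $\sqrt{2n\log\log n}$ for $n \in [n_k, n_{k+1}]$ produces a multiplicative factor $\approx \sqrt{\theta}$, and killing it by letting $\theta \downarrow 1$ is exactly why the geometric grid is used---worth spelling out rather than folding into the phrase ``sending $\theta\downarrow 1$''; (ii) Berry--Esseen really does suffice for the lower bound because its additive error $O(m_k^{-1/2})$ is exponentially small in $k$, whereas the Gaussian tail $1-\Phi\bigl((1-\epsilon)\sqrt{2\log\log n_{k+1}}\bigr) \asymp (\log n_{k+1})^{-(1-\epsilon)^2}/\sqrt{\log\log n_{k+1}}$ is only polynomially small in $k$, so it dominates the error uniformly along the subsequence.
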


As an immediate corollary, we obtain conditions ensuring that $b_n^{\nu+1} \overline{\eta}_n$ converges to zero almost surely.  

\begin{corollary} 
Let $\nu \in \bN$. If $\{b_n\}_{n \geq 1}$ is a sequence such that
\begin{equation}\label{eqn:growth:condition}
b_n^{2\nu + 2} \, n^{-1} \, \log \log n \rightarrow 0,
\end{equation}
then $b_n^{\nu+1} \overline{\eta}_n \rightarrow 0$ $\mu$-almost surely.
\end{corollary}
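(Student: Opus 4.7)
The plan is to apply the Law of Iterated Logarithm (Theorem above) directly and then balance the two powers of $b_n$ against the almost sure envelope for $\overline{\eta}_n$. Since the theorem statement only asserts almost sure convergence, no uniformity in $n$ or refinement of the LIL is needed; the argument reduces to bookkeeping under the square root.

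First, from the Law of Iterated Logarithm, $\mu$-almost surely,
\begin{equation*}
\limsup_{n \to \infty} \frac{|\overline{\eta}_n|\sqrt{n}}{\sqrt{\log \log n}} = \sqrt{2}.
\end{equation*}
Hence, on an event of full $\mu$-measure, there exists a (random) $n_0$ and a constant $C > 0$ such that for all $n \geq n_0$,
\begin{equation*}
|\overline{\eta}_n| \leq C \sqrt{\frac{\log \log n}{n}}.
\end{equation*}

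Multiplying by $b_n^{\nu+1}$ and squaring, we obtain
\begin{equation*}
\left( b_n^{\nu+1} |\overline{\eta}_n| \right)^2 \leq C^2 \, \frac{b_n^{2\nu+2} \log \log n}{n}.
\end{equation*}
By hypothesis \eqref{eqn:growth:condition}, the right-hand side tends to zero, whence $b_n^{\nu+1} \overline{\eta}_n \to 0$ $\mu$-almost surely.

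I expect no real obstacle: the corollary is essentially a restatement of the LIL combined with the growth assumption on $\{b_n\}$, which has precisely been designed so that $b_n^{\nu+1}$ does not overpower the LIL scale $\sqrt{\log\log n / n}$. The only point worth emphasizing in the write-up is that the exceptional null set on which the LIL bound fails does not depend on $\nu$, so a single almost sure statement suffices.
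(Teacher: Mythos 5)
Your proof is correct and takes exactly the approach the paper intends: the paper labels the statement ``an immediate corollary'' of the Law of Iterated Logarithm and gives no further argument, and your bookkeeping (pass to an almost-sure eventual bound $|\overline{\eta}_n|\leq C\sqrt{\log\log n/n}$, multiply by $b_n^{\nu+1}$, invoke the growth hypothesis) is precisely the intended deduction. The squaring step is harmless but unnecessary; one can equally well bound $b_n^{\nu+1}|\overline{\eta}_n|\leq C\bigl(b_n^{2\nu+2}n^{-1}\log\log n\bigr)^{1/2}\to 0$ directly.
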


Note that condition \eqref{eqn:growth:condition} corresponds to the growth assumption in Theorems \ref{thm:MD:paramagnetic:subcritical} and \ref{thm:MD:ferromagnetic:supercritical} for $\nu = 0$, in Theorems \ref{thm:MD:paramagnetic:critical} and \ref{theorem:moderate_deviations_CW_critical_curve_rescaling} for $\nu = 2$ and in Theorems \ref{thm:MD:paramagnetic:tricritical}, \ref{theorem:moderate_deviations_CW_tri-critical_point_rescaling} and \ref{theorem:moderate_deviations_CW_tri-critical_point_rescaling:2} for $\nu = 4$. The result of Lemma \ref{lemma:expansion_of_H_half_way_form}, combined with the corollary, yields a preliminary expansion for
\[
H_n f = b_n^{\nu+2}n^{-1} e^{-nb_n^{-\nu-2}f}A_n e^{nb_n^{-\nu-2}f},
\]
which is obtained from the generic Hamiltonian \eqref{eqn:generic:Hamiltonian} after the choice $\delta = \nu +2$ we made to get a non-trivial expansion with controlled remainder.

\begin{proposition} \label{proposition:compact_expression_for_H_n}
Let $f \in C^3_c(\bR^2)$ and $\nu \in \mathbb{N}$. Moreover, let $\{b_n\}_{n \geq 1}$ be a sequence such that 
\begin{equation*}
b_n \rightarrow \infty, \qquad b_n^{2\nu + 2} \, n^{-1} \, \log \log n \rightarrow 0.
\end{equation*}
Then, $\mu$-almost surely, we have
\begin{align}
H_n f(x,y) & = b_n^{\nu + 1} \ip{\begin{pmatrix}G_{2}^+(m,q) + G_{2}^-(m,q) \\ G_{2}^+(m,q) - G_{2}^-(m,q) \end{pmatrix}}{\nabla f(x,y)} + o (1)+ o(b_n^{\nu - 4}) \label{eqn:compact_exp_G_1}  \\
& + \sum_{k=1}^5 \frac{b_n^{\nu + 1 -k}}{k!}\ip{D^k \cG_{2}(m,q) \begin{pmatrix} x^k \\ k x^{k-1}y \end{pmatrix}}{\nabla f(x,y)}  \label{eqn:compact_exp_G_2} \\
& + \ip{\bG_{1}(m,q) \nabla f(x,y)}{\nabla f(x,y)}.  \label{eqn:compact_exp_G_3}
\end{align}
\end{proposition}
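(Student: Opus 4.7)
My plan is to read Proposition~\ref{proposition:compact_expression_for_H_n} as an essentially immediate consequence of Lemma~\ref{lemma:expansion_of_H_half_way_form} together with the corollary to the law of iterated logarithm. The generic Hamiltonian \eqref{eqn:generic:Hamiltonian} with $\delta = \nu + 2$ coincides with the $H_n$ considered in the proposition, so the six-line expansion of Lemma~\ref{lemma:expansion_of_H_half_way_form} is directly available. Comparing that expansion with the target three-line expansion, one sees that lines \eqref{eqn:prelim_compact_exp_G_1}, \eqref{eqn:prelim_compact_exp_G_3}, \eqref{eqn:prelim_compact_exp_G_5} together with the remainder $o(1) + o(b_n^{\nu-4})$ already match the claim; the only task is to show that the two $\overline{\eta}_n$-terms, namely line \eqref{eqn:prelim_compact_exp_G_2} and the finite sum in \eqref{eqn:prelim_compact_exp_G_4}, are absorbed into the $o(1)$ remainder.

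Under the hypothesis $b_n^{2\nu+2} n^{-1} \log\log n \to 0$ the corollary to the LIL yields $b_n^{\nu+1}\overline{\eta}_n \to 0$ $\mu$-almost surely, and in particular $\sup_n |b_n^{\nu+1}\overline{\eta}_n| < \infty$ almost surely. Since $f \in C_c^3(\bR^2)$, the gradient $\nabla f$ is bounded and of compact support, so line \eqref{eqn:prelim_compact_exp_G_2} equals $b_n^{\nu+1}\overline{\eta}_n$ multiplied by a fixed, globally bounded function of $(x,y)$: both clauses of Definition~\ref{def:o(1)} are met $\mu$-almost surely, giving $o(1)$. For the sum in \eqref{eqn:prelim_compact_exp_G_4} I would rewrite the $k$-th summand as
\[
b_n^{-k}\bigl(b_n^{\nu+1}\overline{\eta}_n\bigr) \cdot \tfrac{2}{k!}\ip{N_k(m,q)\bigl(x^k, k x^{k-1} y\bigr)^{\intercal}}{\nabla f(x,y)}.
\]
Because $\nabla f$ has compact support, the inner product is a globally bounded function of $(x,y)$ (the polynomial blow-up is killed by the cut-off coming from $f$), and on any compact set it is of course deterministically bounded; the prefactor $b_n^{-k}\bigl(b_n^{\nu+1}\overline{\eta}_n\bigr)$ tends to zero for every $k \geq 1$. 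Hence each summand is $o(1)$ in the sense of Definition~\ref{def:o(1)}, and so is their finite sum.

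Combining these two observations with Lemma~\ref{lemma:expansion_of_H_half_way_form} and regrouping yields exactly \eqref{eqn:compact_exp_G_1}--\eqref{eqn:compact_exp_G_3}. I do not expect any genuine obstacle: the proof is bookkeeping. The only point demanding mild care is verifying the uniform-boundedness clause of Definition~\ref{def:o(1)} for the polynomial summands in \eqref{eqn:prelim_compact_exp_G_4}, which relies crucially on the compact support of $f$ and on the almost-sure boundedness of $b_n^{\nu+1}\overline{\eta}_n$ provided by the LIL corollary; the remainder $o(b_n^{\nu-4})$ already present in \eqref{eqn:prelim_compact_exp_G_4} is simply inherited into the statement.
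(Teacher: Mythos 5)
Your proof is correct and takes essentially the same route the paper implicitly relies on: the Proposition is stated immediately after Lemma~\ref{lemma:expansion_of_H_half_way_form} and the LIL corollary precisely because it follows by combining them, noting that $b_n^{\nu+1}\overline{\eta}_n\to 0$ $\mu$-a.s.\ makes the $\overline{\eta}_n$-terms in \eqref{eqn:prelim_compact_exp_G_2} and \eqref{eqn:prelim_compact_exp_G_4} vanish in the sense of Definition~\ref{def:o(1)}, while the compact support of $\nabla f$ keeps the polynomial factors uniformly bounded. Your bookkeeping, including the observation that a convergent sequence is bounded so the uniformity clause of Definition~\ref{def:o(1)} is met, is exactly what is needed.
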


In the setting of our main theorems $\nu \in \{0,2,4\}$ and $(m,q)$ is a stationary point. This implies that all contributions on the right hand side of \eqref{eqn:compact_exp_G_1} vanish almost surely and uniformly on compact sets as $n \rightarrow \infty$. Furthermore, the expression in \eqref{eqn:compact_exp_G_3} is constant and we do not need to consider this expression any further.\\
Thus, the analysis for our main results focuses on the expressions in \eqref{eqn:compact_exp_G_2}. The next lemma gives expressions for the matrices $D^k \cG_2(m,q)$.

\begin{lemma} \label{lemma:DkG2_expressions}
Let $k \in \mathbb{N}$, $k \geq 1$. 
\begin{enumerate}[(a)]
\item 
If $(m,q)$ is a generic point, then
\begin{equation*}
D^k \cG_2(m,q) = 
\begin{cases}
\beta^k \begin{pmatrix} G_2^+(m,q) + G_2^-(m,q) & 0 \\ G_2^+(m,q) - G_2^-(m,q) & 0 \end{pmatrix} \\
	-2 \beta^{k-1} \begin{pmatrix} k \sinh(\beta m) \cosh(\beta B) & \cosh(\beta m) \sinh(\beta B) \\ k \cosh(\beta m)\sinh(\beta B) & \sinh(\beta m) \cosh(\beta B) \end{pmatrix} & \text{if $k$ is even},\\[0.7cm]
	\beta^k \begin{pmatrix} G_1^+(m,q) + G_1^-(m,q) & 0 \\ G_1^+(m,q) - G_1^-(m,q) & 0 \end{pmatrix} \\
	 -2\beta^{k-1} \begin{pmatrix}k \cosh(\beta m)\cosh(\beta B) & \sinh(\beta m) \sinh(\beta B) \\ k \sinh(\beta m) \sinh(\beta B) & \cosh(\beta m) \cosh(\beta B) \end{pmatrix} & \text{if $k$ is odd}.
	\end{cases}
	\end{equation*}	
\item 
If $(m,q)$ is a stationary point, then
	\begin{equation*}
	D^k \cG_2(m,q) = \begin{cases}
	-2 \beta^{k-1} \begin{pmatrix} k \sinh(\beta m) \cosh(\beta B) & \cosh(\beta m) \sinh(\beta B) \\ k \cosh(\beta m)\sinh(\beta B) & \sinh(\beta m) \cosh(\beta B) \end{pmatrix} & \text{if $k$ is even},\\[0.7cm]
	\beta^k \begin{pmatrix} G_1^+(m,q) + G_1^-(m,q) & 0 \\ G_1^+(m,q) - G_1^-(m,q) & 0 \end{pmatrix} \\ 
	-2\beta^{k-1} \begin{pmatrix}k \cosh(\beta m)\cosh(\beta B) & \sinh(\beta m) \sinh(\beta B) \\ k \sinh(\beta m) \sinh(\beta B) & \cosh(\beta m) \cosh(\beta B) \end{pmatrix} & \text{if $k$ is odd}.
	\end{cases}
	\end{equation*}	
\item 
If $(m,q) = (0,\tanh(\beta B))$, we additionally obtain $G_1^+(0,\tanh(\beta B)) = G_1^-(0,\tanh(\beta B))$ and then
	\begin{equation*}
	D^k \cG_2(m,q) = \begin{cases}
	-2 \beta^{k-1}  \sinh(\beta B)  \begin{pmatrix} 0 &  1 \\ k & 0 \end{pmatrix} & \text{if $k$ is even},\\[0.5cm]
	 \dfrac{2 \beta^k}{\cosh(\beta B)} \begin{pmatrix} 1 & 0 \\ 0 & 0 \end{pmatrix} -2\beta^{k-1}\cosh(\beta B) \begin{pmatrix}k  & 0 \\ 0 & 1 \end{pmatrix} & \text{if $k$ is odd}.
	\end{cases}
	\end{equation*}	
\item 
If  $(m,q) = (0,\tanh(\beta B))$ and $\beta = \cosh^2(\beta B)$, then
	\begin{equation*}
	D^k \cG_2(m,q) = \begin{cases}
	-2 \beta^{k-1}\sinh(\beta B) \begin{pmatrix} 0 &  1 \\ k  & 0 \end{pmatrix} & \text{if $k$ is even},\\[0.5cm]
	-2\beta^{k-1} \cosh(\beta B)\begin{pmatrix} k-1 & 0 \\ 0 & 1 \end{pmatrix} & \text{if $k$ is odd}.
	\end{cases}	
	\end{equation*}		
\item 
If $(m,q) = (0,\tanh(\beta B))$, $(\beta,B) = (\frac{3}{2}, \frac{2}{3} \arccosh(\sqrt{\frac{3}{2}}))$ and $k = 3$, then
	\begin{equation*}
	D^3 \cG_2(m,q) =  -2 \beta^2 \cosh(\beta B)\begin{pmatrix} 2  & 0 \\ 0 & 1\end{pmatrix}.
	\end{equation*}
\end{enumerate}
\end{lemma}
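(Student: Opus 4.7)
The statement decomposes into five increasingly special cases, and the whole argument is a direct computation: parts (b)--(e) are obtained from part (a) by successively imposing the stationarity equation, the vanishing $m=0$, the criticality condition $\beta = \cosh^2(\beta B)$, and finally the specific numerical values at the tri-critical point. So my plan is to establish (a) carefully and then reduce each subsequent item to an algebraic simplification.

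\textbf{Part (a).} I will apply Lemma~\ref{lmm:derivatives} directly, noting that $D^k\cG_2$ is by definition the $\overline{\eta}_n$-free version of $D^k\cG_{n,2}$ (since $G_2^\pm$ corresponds to $G_{n,2}^\pm$ with $\overline{\eta}_n = 0$). The four entries of the matrix are then:
\begin{itemize}
\item The $x$-derivatives in the first column are given by the first two displays of Lemma~\ref{lmm:derivatives} (dropping the $\overline{\eta}_n$ terms), producing the $\beta^k(G_2^+\pm G_2^-)$ or $\beta^k(G_1^+\pm G_1^-)$ contributions together with the $-2k\beta^{k-1}$ terms, depending on the parity of $k$.
\item The mixed derivatives in the second column come from the last two displays of Lemma~\ref{lmm:derivatives}, yielding the $-2\beta^{k-1}$ terms with $\sinh/\cosh$ factors.
\end{itemize}
Collecting these four entries and separating even-$k$ and odd-$k$ parity gives the stated matrix form of (a).

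\textbf{Part (b).} A stationary point satisfies \eqref{eqn:stat:magn}; adding and subtracting the two equations yields $m+q = \tanh[\beta(m+B)]$ and $m-q = \tanh[\beta(m-B)]$. Substituting into \eqref{def:G's} gives $G_2^\pm(m,q) = \sinh[\beta(m\pm B)] - \tanh[\beta(m\pm B)]\cosh[\beta(m\pm B)] = 0$. Therefore the $\beta^k(G_2^+ \pm G_2^-)$ block in the even-$k$ formula of (a) vanishes, leaving exactly the matrix claimed in (b). The odd-$k$ case is unchanged since $G_1^\pm$ need not vanish.

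\textbf{Parts (c)--(e).} These are pure substitutions into (b). In (c) I set $m=0$, so $\sinh(\beta m)=0$ and $\cosh(\beta m)=1$, which kills two entries in the even-$k$ matrix and simplifies the $-2\beta^{k-1}$ term in the odd-$k$ matrix. Then I evaluate $G_1^\pm(0,\tanh(\beta B)) = \cosh(\beta B) - \tanh(\beta B)\sinh(\beta B) = 1/\cosh(\beta B)$, which gives $G_1^+ + G_1^- = 2/\cosh(\beta B)$ and $G_1^+ - G_1^- = 0$; plugging these in yields the $\frac{2\beta^k}{\cosh(\beta B)}\bigl(\begin{smallmatrix}1&0\\0&0\end{smallmatrix}\bigr)$ term. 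For (d), the relation $\beta = \cosh^2(\beta B)$ converts $\frac{2\beta^k}{\cosh(\beta B)} = 2\beta^{k-1}\cosh(\beta B)$, and combining with the adjacent diagonal term reduces the odd-$k$ matrix to $-2\beta^{k-1}\cosh(\beta B)\,\mathrm{diag}(k-1,1)$. For (e), I just evaluate the odd-$k$ formula of (d) at $k=3$, $\beta = 3/2$, $B = \frac{2}{3}\arccosh(\sqrt{3/2})$ (for which $\beta = \cosh^2(\beta B)$ is automatic), giving the stated result.

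There is no real obstacle here; the computation in (a) is the only nontrivial step and it is an immediate consequence of Lemma~\ref{lmm:derivatives}. The only point requiring mild care is bookkeeping the parities of $k$ and matching the rows of $D^k\cG_2$ to the correct combination $G_2^+ \pm G_2^-$ or $G_1^+ \pm G_1^-$.
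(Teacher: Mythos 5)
Your proposal is correct, and it follows precisely the route the paper implicitly takes: Lemma~\ref{lemma:DkG2_expressions} is stated without proof and is meant to be read as a direct computation from Lemma~\ref{lmm:derivatives} (with $\overline{\eta}_n = 0$ to pass from $G_{n,\cdot}^\pm$ to $G_\cdot^\pm$) followed by successive specialization via the stationarity identities $m\pm q = \tanh[\beta(m\pm B)]$, then $m = 0$, then $\beta = \cosh^2(\beta B)$, then the tri-critical values. Your bookkeeping of the parity of $k$ (in particular, that the second column uses $\partial_x^{k-1}\partial_y$, so the mixed-derivative formula from Lemma~\ref{lmm:derivatives} must be invoked with $k-1$, flipping parity) and the evaluation $G_1^\pm(0,\tanh(\beta B)) = 1/\cosh(\beta B)$ are both right.
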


We are now ready to prove Theorems \ref{thm:MD:paramagnetic:subcritical} and \ref{thm:MD:ferromagnetic:supercritical}. The large deviation principles follow from the abstract results in \cite[Appendix A]{CoKr17}.\\

\subsection{Proof of Theorems \ref{thm:MD:paramagnetic:subcritical} and \ref{thm:MD:ferromagnetic:supercritical}}

The setting of Theorems \ref{thm:MD:paramagnetic:subcritical} and \ref{thm:MD:ferromagnetic:supercritical} corresponds to that of Proposition \ref{proposition:compact_expression_for_H_n} with $\nu = 0$. Having chosen a stationary point $(m,q)$ and applying Lemma \ref{lemma:DkG2_expressions}(b), we find
\begin{align*}
H_nf(\mathbf{x}) & = \ip{D \cG_2(m,q) \mathbf{x}}{ \nabla f(\mathbf{x})} + \ip{\bG_1(m,q) \nabla f(\mathbf{x})}{\nabla f(\mathbf{x})} + o(1) \\
& =  \ip{(\beta \hat{\bG}_1(m,q) - 2 \mathbb{B}(m)) \mathbf{x}}{\nabla f(\mathbf{x})} + \ip{\bG_1(m,q)\nabla f(\mathbf{x})}{\nabla f(\mathbf{x})} + o(1),
\end{align*}
where the matrices $\hat{\bG}_1$ and $\mathbb{B}$ are defined in \eqref{def:Ghat_matrix} and \eqref{def:B_matrix} respectively. The remainder $o(1)$ is uniform on compact sets. Therefore, for $f \in C_c^2(\mathbb{R}^2)$, $H_nf$ converges uniformly to $Hf(\mathbf{x}) = H(\mathbf{x},\nabla f(\mathbf{x}))$, where
\[
H(\mathbf{x}, \mathbf{p}) = \ip{(\beta \hat{\bG}_1(m,q) - 2 \mathbb{B}(m)) \mathbf{x}}{\mathbf{p}} + \ip{\bG_1(m,q)\mathbf{p}}{\mathbf{p}}.
\]
The large deviation results follow by Theorem A.14, Lemma 3.4 and Proposition 3.5 in \cite{CoKr17}. The Lagrangian is found by taking the Legendre-Fenchel transform of $H$ and is given by
\begin{equation*}
\cL(\mathbf{x}, \mathbf{v}) := \frac{1}{4}\ip{\bG_1^{-1}(m,q) [\mathbf{v} - (\beta\hat{\bG}_1(m,q) -2\mathbb{B}(m)) \mathbf{x}]}{\mathbf{v} - (\beta\hat{\bG}_1(m,q) -2\mathbb{B}(m)) \mathbf{x}}.
\end{equation*}
Observe that, in the case when $(m,q) = (0,\tanh(\beta B))$, we get
\[
\cL(\mathbf{x}, \mathbf{v}) := \frac{\cosh(\beta B)}{8} \left\vert \mathbf{v} - 2 \begin{pmatrix} \frac{\beta - \cosh^2(\beta B)}{\cosh(\beta B)} & 0 \\ 0 & -\cosh(\beta B) \end{pmatrix} \mathbf{x}\right\vert^2.
\]
This concludes the proof.

\section{Projection on a one-dimension subspace and moderate deviations at criticality}
 \label{section:projection_fixed_betaB}

For the proofs of Theorems \ref{thm:MD:paramagnetic:critical} and \ref{thm:MD:paramagnetic:tricritical}, we consider the stationary point $(m,q) = (0,\tanh(\beta B))$. Recall that, given the correct assumptions on the sequence $\{b_n\}_{n \geq 1}$, the expression for the Hamiltonian in Proposition \ref{proposition:compact_expression_for_H_n} reduces $\mu$-a.s. to
\begin{multline}\label{eqn:Hn:compact_form:generic:paramagnetic}
H_n f(x,y) =  \sum_{k=1}^5 \frac{b_n^{\nu + 1 -k}}{k!}\ip{D^k \cG_{2}(0,\tanh(\beta B)) \begin{pmatrix} x^k \\ k x^{k-1}y \end{pmatrix}}{\nabla f(x,y)}   \\
+ \ip{\bG_{1}(0,\tanh(\beta B)) \nabla f(x,y)}{\nabla f(x,y)} + o (1)+ o(b_n^{\nu - 4}). 
\end{multline}

If $\nu \in \{2,4\}$, the term corresponding to $D^1 \mathcal{G}_2 (0,\tanh (\beta B))$ is diverging and, more precisely, is diverging through a term containing the $y$ variable (see Lemma \ref{lemma:DkG2_expressions}(d)). We have a natural time-scale separation for the evolutions of our variables: $y$ is fast and converges very quickly to zero, whereas $x$ is slow and its limiting behavior can be characterized after suitably ``averaging out'' the dynamics of $y$. Corresponding to this observation, 
%
our aim is to prove that the sequence $\{H_n\}_{n \geq 1}$ admits a limiting operator $H$ and, additionally, the graph of this limit depends only on the $x$ variable. In other words, we want to prove a path-space large deviation principle for a projected process. \\

The projection on a one-dimensional subspace relies on the formal and recursive calculus explained in the next section (an analogous approach will be implemented also in Section~\ref{sect:formal_calculus:extended} to achieve the large deviation principles of Theorems~\ref{theorem:moderate_deviations_CW_critical_curve_rescaling}--\ref{theorem:moderate_deviations_CW_tri-critical_point_rescaling:2}). We want to mention that the results presented in Sections~\ref{section:formal_calculus_of_operators} and \ref{sect:formal_calculus:extended} take inspiration from the perturbation theory for Markov processes introduced in \cite{PaStVa77}.

\subsection{Formal calculus with operators and a recursive structure} \label{section:formal_calculus_of_operators}

We start by introducing a formal structure allowing to write the drift component in \eqref{eqn:Hn:compact_form:generic:paramagnetic} in abstract form. Afterwards, we introduce a method based on this abstract structure to perturb a function $\psi$ depending on the only variable $x$ to a function $F_{n,\psi}$ 
depending on $(x,y)$, so that the perturbation exactly cancels out the contributions of the drift operators to the $y$ variable.
	
\smallskip

Consider  the vector spaces of functions
\[
V := \left\{ \psi: \mathbb{R}^2 \rightarrow \mathbb{R} \, \left\vert \, \psi \text{ is of the type } \sum_{i=0}^r y^i \, \psi_i(x), \text{ with } \psi_i \in C^\infty_c(\bR) \right.\right\}
\]
and $V_i := \left\{ \psi: \mathbb{R}^2 \rightarrow \mathbb{R} \, \left\vert \, \psi \text{ is of the type } y^i \, \psi_i(x), \text{ with } \psi_i \in C^\infty_c(\bR) \right.\right\}$, for $i \in \mathbb{N}$. Moreover, for notational convenience, we will denote 
\begin{equation*}
V_{\mathrm{odd}} := \bigcup_{i \text{ odd}} V_{i}, \qquad V_{\mathrm{even}} := \bigcup_{i \text{ even}} V_{i}, \qquad V_{\mathrm{even}\setminus\{0\}} := \bigcup_{i \text{ even}, i \neq 0} V_{i}
\end{equation*} 
and 
\begin{equation*}
V_{\leq j} := \bigcup_{i \leq j} V_{i}.
\end{equation*}
Next we define a collection of operators on $V$. Let $a \in \mathbb{R}$ and $g: \mathbb{R}^2 \rightarrow \mathbb{R}$ a differentiable function. We consider the operators
\begin{equation}\label{def:calE:even}
\left.
\begin{array}{l}
\mathcal{Q}_k^+[a] g(x,y) := a x^{k-1}y g_x(x,y) \\[0.2cm]
\mathcal{Q}_k^-[a] g(x,y) := a x^k g_y(x,y)
\end{array}
\quad
\right]
\text{ for even $k$}
\end{equation}
and
\begin{equation}\label{def:calE:odd}
\left.
\begin{array}{l}
\mathcal{Q}_k^0[a] g(x,y) := a x^k g_x(x,y) \\[0.2cm]
\mathcal{Q}_k^1[a] g(x,y) := a x^{k-1} y g_y(x,y)
\end{array}
\quad\,
\right]
\text{ for odd $k$.}
\end{equation}
Note that the drift 
component in \eqref{eqn:Hn:compact_form:generic:paramagnetic} can be rewritten in terms of operators of the form \eqref{def:calE:even} and \eqref{def:calE:odd}. The result of the following lemma is immediate. 

\begin{lemma} \label{lemma:mappings_wrt_Vindex}
For all $a \in \bR$ and $i \in \bN$, we have 
\[
\mathcal{Q}_k^+[a] : V_i \rightarrow V_{i+1} \text{ and } \mathcal{Q}_k^-[a] : V_i \rightarrow V_{i-1},
\text{ for even $k$}
\]
and
\[
\mathcal{Q}_k^0[a], \mathcal{Q}_k^1[a] : V_i \rightarrow V_i, \text{ for odd $k$}.
\]
\end{lemma}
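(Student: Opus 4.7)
The proof is essentially a direct verification, so my plan is to fix $g \in V_i$, write it explicitly as $g(x,y) = y^i\,\psi_i(x)$ with $\psi_i \in C_c^\infty(\bR)$, and then apply each of the four operators in turn to read off the index of the space containing the image. There is no real mathematical obstacle here; the only issue is making sure the bookkeeping on the powers of $y$ is right, and noting how the degenerate case $i=0$ is handled under $\mathcal{Q}_k^-[a]$.

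First I would record the two elementary partial derivatives
\[
g_x(x,y) = y^i \,\psi_i'(x), \qquad g_y(x,y) = i\, y^{i-1}\,\psi_i(x),
\]
with the convention that $g_y \equiv 0$ when $i=0$, and then plug these into \eqref{def:calE:even} and \eqref{def:calE:odd}. For even $k$ this yields
\[
\mathcal{Q}_k^+[a] g(x,y) = a\,x^{k-1}\,y^{i+1}\,\psi_i'(x), \qquad
\mathcal{Q}_k^-[a] g(x,y) = a\,i\,x^{k}\,y^{i-1}\,\psi_i(x),
\]
and since $\psi_i' \in C_c^\infty(\bR)$ and $x^{k-1}\psi_i'(x), x^k \psi_i(x) \in C_c^\infty(\bR)$, the first expression lies in $V_{i+1}$ and the second in $V_{i-1}$, as required. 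For odd $k$ the same substitution gives
\[
\mathcal{Q}_k^0[a] g(x,y) = a\,x^{k}\,y^{i}\,\psi_i'(x), \qquad
\mathcal{Q}_k^1[a] g(x,y) = a\,i\,x^{k-1}\,y^{i}\,\psi_i(x),
\]
both of which are manifestly elements of $V_i$.

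The only subtlety is the case $i=0$ for $\mathcal{Q}_k^-[a]$ with $k$ even: here $g_y \equiv 0$, so $\mathcal{Q}_k^-[a] g = 0$. Since the zero function belongs to $V_j$ for every $j$, we may still regard it as an element of $V_{-1}$ (interpreted as the trivial space $\{0\}$), so the statement remains correct. All the above computations are pointwise and preserve smoothness and compact support of the underlying $\psi_i$, so the images lie in the specified subspaces of $V$. This completes the proof plan; no further arguments are required.
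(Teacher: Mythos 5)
Your direct computation is exactly what the paper means by calling the lemma \emph{immediate}; there is no separate proof to compare against, and your bookkeeping of the powers of $y$ (including the degenerate case $i=0$ for $\mathcal{Q}_k^-[a]$) is correct. One caveat worth flagging, which is really an imprecision in the paper's own conventions rather than a flaw in your argument: since the paper defines $C_c^\infty(\bR)$ as \emph{constant} (not necessarily zero) outside a compact set, the claim $x^k\psi_i(x)\in C_c^\infty(\bR)$ is not automatic for arbitrary $\psi_i\in C_c^\infty(\bR)$ when $k\ge 1$ and $i\ge 1$; it does hold, however, whenever $\psi_i$ vanishes outside a compact set, which is the case for every coefficient with $i\ge 1$ that actually appears in the paper's recursive construction, because each such coefficient is obtained by applying an $x$-derivative to some earlier coefficient of a $\psi[0]\in V_0$.
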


In particular, note that all operators map $V$ into $V$. Furthermore, the operators $\mathcal{Q}_k^1$, with odd $k$, have $V_0$ as a kernel. We will see that $Q_1^1$ plays a special role. \\

\begin{assumption} \label{assumption:abstract_sequences}
Assume there exist real constants $(a_k^+)_{k \geq 1}$, $(a_k^-)_{k \geq 1}$ if $k$ is even and $a_1^0=0$, $(a_k^0)_{k >1}$, $(a_k^1)_{k \geq 1}$ if $k$ is odd, for which, given a continuously differentiable function $g: \mathbb{R}^2 \rightarrow \mathbb{R}$, we can write
\begin{itemize}
\item
for even $k$,
\[
\mathcal{Q}_k g = \mathcal{Q}_k^+ g + \mathcal{Q}_k^- g
\quad \text{ with } \quad
\left\{
\begin{array}{l}
\mathcal{Q}_k^+ g (x,y) := \mathcal{Q}_k^+[a_k^+] g (x,y),\\[0.2cm]
\mathcal{Q}_k^- g (x,y) := \mathcal{Q}_k^-[a_k^-] g (x,y) ,
\end{array}
\right.
\]
\item
for odd $k$,
\[
\mathcal{Q}_k g = \mathcal{Q}_k^0 g + \mathcal{Q}_k^1 g
\quad \text{ with } \quad
\left\{
\begin{array}{l}
\mathcal{Q}_k^0 g (x,y) := \mathcal{Q}_k^0[a_k^0] g (x,y), \\[0.2cm]
\mathcal{Q}_k^1 g (x,y) := \mathcal{Q}_k^1[a_k^1] g (x,y). 
\end{array}
\right.
\]
\end{itemize}
\end{assumption}

Observe that the drift term in \eqref{eqn:Hn:compact_form:generic:paramagnetic} is of the form
\begin{equation*}
\cQ^{(n)} \psi(x,y) := \sum_{k=1}^{\nu+1} b_n^{\nu+1 -k} \cQ_k \psi(x,y).
\end{equation*}\
We aim at abstractly showing that, for any function $\psi \in V_0$ and sequence $b_n \rightarrow \infty$, we can find a perturbation $F_{n,\psi} \in V$ of $\psi$ for which there exists $\tilde{\psi} \in V_0$ such that
\begin{equation*}
\tilde{\psi}(x) - \cQ^{(n)} F_{n,\psi} (x,y) = o(1). 
\end{equation*}
We will construct the perturbation in an inductive fashion. We start by motivating the first step of the construction. Let $\psi \in V_0$, i.e. a function only depending on $x$. Then:
\begin{enumerate}[(1)]
	\item $\mathcal{Q}_1 \psi = 0$, but $\mathcal{Q}_2 \psi \neq 0$ and moreover $\mathcal{Q}_2 \psi \in V_1$ because of the action of $\mathcal{Q}_2^+$.
	\item The leading order term in $\mathcal{Q}^{(n)} \psi$ is given by $b_n^{\nu - 1} \mathcal{Q}_2 \psi$.
\end{enumerate}
Next, we consider a perturbation $\psi + b_n^{-1} \psi[1]$ of $\psi$, with $\psi[1]$ and the order $b_n^{-1}$ chosen in the following way:
\begin{enumerate}[(1)]\setcounter{enumi}{2}
	\item The action of $\mathcal{Q}^{(n)}$ on $b_n^{-1} \psi[1]$ yields a leading order term $b_n^{\nu - 1} \mathcal{Q}_1 \psi[1]$, which matches the order of $b_n^{\nu - 1} \mathcal{Q}_2 \psi$ in step (2) above.
	\item We choose $\psi[1]$ so that $\mathcal{Q}_2 \psi + \mathcal{Q}_1 \psi[1] = 0$. 
\end{enumerate}
At this point, the leading order term of $\mathcal{Q}^{(n)}(\psi + b_n^{-1} \psi[1])$ equals $b_n^{\nu-2}\left(\mathcal{Q}_3 \psi + \mathcal{Q}_2 \psi[1]\right)$ and the construction proceeds by considering $\psi + b_n^{-1} \psi[1] + b_n^{-2} \psi[2]$, where $\psi[2]$ is chosen so that
\begin{enumerate}[(1)]\setcounter{enumi}{4}
	\item $\mathcal{Q}_3 \psi + \mathcal{Q}_2 \psi[1] + \mathcal{Q}_1 \psi[2] \in V_0$. 
\end{enumerate}
Note that we can only assure that the sum is in $V_0$. This is due to the specific structure of the operators that we will discuss in Lemma \ref{lemma:fk_in_even_or_odd}.

Now there are two possibilities
\begin{enumerate}[(a)]
	\item $\mathcal{Q}_3 \psi + \mathcal{Q}_2 \psi[1] + \mathcal{Q}_1 \psi[2] \neq 0$. In this case $\nu =2$ is the maximal $\nu$ that we can use for this particular problem. In addition, the outcome of this sum will be in the form $c x^3 \psi'(x)$ and hence determine the limiting drift in the Hamiltonian.
	\item $\mathcal{Q}_3 \psi + \mathcal{Q}_2 \psi[1] + \mathcal{Q}_1 \psi[2] = 0$. In this case, $\nu = 2$ is a possible option. However, we can use a larger $\nu$ and proceed with perturbing $\psi$ with even higher order terms.
\end{enumerate}
As a final outcome, our perturbation of $\psi \in V_0$ will be of the form
\begin{equation}\label{eqn:def:perturbation}
F_{n,\psi}(x,y) := \sum_{l=0}^\nu b_n^{-l} \psi[l](x,y),
\end{equation}
where we write $\psi[0] = \psi$ for notational convenience. Our next step is to introduce the procedure that tells us how to choose $\psi[r+1]$ if we know $\psi$ and $\psi[1],\dots,\psi[r]$.

\begin{lemma} \label{lemma:kernel_operator_decomp}
	Let $\psi \in V$. Define the maps
	\[
	P_0 \colon V \rightarrow V_0, \quad \text{ with }  \quad P_0(\psi)(x,y) := \psi_0(x), 
	\]
	and
	\[
	P \colon V \rightarrow \bigcup_{i \geq 1} V_i, \quad \text{ with } \quad P(\psi)(x,y) := - \sum_{i=1}^r y^i \dfrac{\psi_i(x)}{i a_1^1}.
	\]
	Then, we have $\psi(x,y) + \mathcal{Q}_1^1 P(\psi)(x,y) = \psi_0(x)$.
\end{lemma}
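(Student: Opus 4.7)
The plan is a direct verification. Since $\psi \in V$ means $\psi(x,y) = \sum_{i=0}^{r} y^i \psi_i(x)$ with $\psi_i \in C_c^\infty(\mathbb{R})$, the claim $\psi + \mathcal{Q}_1^1 P(\psi) = \psi_0$ is equivalent to showing that $\mathcal{Q}_1^1 P(\psi)$ cancels exactly the $y$-dependent part $\sum_{i=1}^r y^i \psi_i(x)$ of $\psi$. The observation that makes this work is that the factor $i$ produced when differentiating $y^i$ in $y$ is precisely the factor $i$ placed in the denominator in the definition of $P$.

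To execute this, I would first recall from the definition \eqref{def:calE:odd} (applied with $k=1$) together with Assumption~\ref{assumption:abstract_sequences} that $\mathcal{Q}_1^1 g(x,y) = a_1^1 \, y \, g_y(x,y)$ for any differentiable $g$. Then, differentiating
\begin{equation*}
P(\psi)(x,y) = -\sum_{i=1}^r \frac{y^i \psi_i(x)}{i \, a_1^1}
\end{equation*}
in $y$ gives
\begin{equation*}
\partial_y P(\psi)(x,y) = -\frac{1}{a_1^1} \sum_{i=1}^r y^{i-1} \psi_i(x),
\end{equation*}
where the $i$ coming from $\partial_y y^i$ cancels the $i$ in the denominator. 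Multiplying by $a_1^1 y$ yields
\begin{equation*}
\mathcal{Q}_1^1 P(\psi)(x,y) = -\sum_{i=1}^r y^i \psi_i(x),
\end{equation*}
and adding $\psi(x,y) = \psi_0(x) + \sum_{i=1}^r y^i \psi_i(x)$ produces the identity $\psi(x,y) + \mathcal{Q}_1^1 P(\psi)(x,y) = \psi_0(x) = P_0(\psi)(x,y)$.

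There is no genuine obstacle here; the content of the lemma is bookkeeping. What it records, however, is the algebraic backbone of the perturbation scheme that follows: $P$ is a right-inverse to $-\mathcal{Q}_1^1$ on $\bigcup_{i \geq 1} V_i$, so that every element of $V$ decomposes uniquely into its $V_0$-projection $P_0(\psi)$ and a piece in the range of $\mathcal{Q}_1^1$. This is precisely the mechanism exploited in the recursive definition of $\psi[1], \psi[2], \dots$ and the perturbation $F_{n,\psi}$ in \eqref{eqn:def:perturbation}: at each step one eliminates the $y$-dependent remainder by inverting $\mathcal{Q}_1^1$ via $P$, leaving only a purely $x$-dependent residue in $V_0$.
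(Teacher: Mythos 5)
Your proof is correct and follows the same direct computation as the paper: differentiate $P(\psi)$ in $y$, multiply by $a_1^1 y$, observe the factor $i$ from $\partial_y y^i$ cancels the $i$ in the denominator, and add $\psi$. The paper's proof is just a compressed version of this same verification.
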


\begin{proof}
	By direct computation, we get
	\[
	\mathcal{Q}_1^1 P(\psi)(x,y) = a_1^1 y \partial_y [P(\psi)(x,y)] = - \sum_{i=1}^r y^i \psi_i(x),
	\]
	from which the conclusion follows.
\end{proof}

Starting from $\psi = \psi[0] \in V_0$, we define recursively 
\begin{equation}\label{def:recursion:psi:phi}
\psi[r] =  P\left(\sum_{l=0}^{r-1} \mathcal{Q}_{r+1-l} \psi[l] \right)
\text{ and } 
\phi[r] = \sum_{l=0}^{r-1} \mathcal{Q}_{r+1-l} \psi[l], 
\end{equation}
for all $1 \leq r \leq \nu$. 

\begin{remark}\label{rmk:properties:psi:phi}
	For all $l \geq 1$, $\psi[l] = P \phi[l]$ and, by Lemma~\ref{lemma:kernel_operator_decomp}, $\phi[l] + \mathcal{Q}_1^1 \psi[l] = P_0 \phi[l]$, which is exactly the result that we aimed to find in steps (4) and (5) above.
\end{remark}

Next, we evaluate the action of $\mathcal{Q}^{(n)}$ applied to our perturbation of $\psi$.

\begin{proposition} \label{proposition:perturbation_abstract_Q}
	Fix $\nu \geq 2$ an even natural number and suppose that Assumption \ref{assumption:abstract_sequences} holds true for this $\nu$. Consider the operator
	\begin{equation}\label{eqn:operator:calQ}
	\cQ^{(n)} \psi(x,y) := \sum_{k=1}^{\nu+1} b_n^{\nu+1 -k} \cQ_k \psi(x,y)
	\end{equation}
	and, for $\psi = \psi[0] \in V_0$, define $F_{n,\psi}(x,y) := \sum_{l=0}^\nu b_n^{-l} \psi[l](x,y)$. We have
	\begin{equation*}
	\cQ^{(n)} F_{n,\psi}(x,y) = \sum_{i = 1}^\nu b_n^{\nu - i} \, P_0 \phi[i](x) + o(1),
	\end{equation*}
	where $o(1)$ is meant according to Definition~\ref{def:o(1)}.
\end{proposition}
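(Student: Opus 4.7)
The plan is to substitute the definitions, expand, and then group by powers of $b_n$. Writing
\begin{equation*}
\cQ^{(n)} F_{n,\psi}(x,y) = \sum_{k=1}^{\nu+1}\sum_{l=0}^{\nu} b_n^{(\nu+1-k)-l}\,\cQ_k \psi[l](x,y),
\end{equation*}
I will reorganise the double sum according to the exponent $\nu-i$ of $b_n$, where $i := k+l-1$ ranges over $\{0,1,\dots,2\nu\}$. The coefficient at order $b_n^{\nu-i}$ is then
\begin{equation*}
A_i := \sum_{\substack{k+l=i+1\\[1pt] 1\leq k\leq \nu+1,\;0\leq l\leq \nu}} \cQ_k \psi[l].
\end{equation*}

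Three cases will have to be treated. First, for $i=0$ the only admissible pair is $(k,l)=(1,0)$, so $A_0=\cQ_1\psi$. Since $\psi\in V_0$ depends only on $x$, the part $\cQ_1^1\psi=a_1^1 y\,\psi_y$ vanishes, and Assumption~\ref{assumption:abstract_sequences} sets $a_1^0=0$, so the part $\cQ_1^0\psi=a_1^0 x\,\psi_x$ vanishes too; hence $A_0 \equiv 0$. Secondly, for $1\leq i\leq \nu$ the admissible pairs are exactly $(k,l)=(i+1-l,l)$ for $l=0,\dots,i$. Isolating the term $l=i$ and again using $a_1^0=0$,
\begin{equation*}
A_i \;=\; \sum_{l=0}^{i-1} \cQ_{i+1-l}\psi[l] \;+\; \cQ_1^1\psi[i] \;=\; \phi[i] + \cQ_1^1 P\phi[i] \;=\; P_0\phi[i],
\end{equation*}
by the recursion~\eqref{def:recursion:psi:phi} and Lemma~\ref{lemma:kernel_operator_decomp}. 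An easy induction using Lemma~\ref{lemma:mappings_wrt_Vindex} justifies that each $\phi[l]\in V$, so that $P\phi[l]$ is well-defined and the recursion makes sense.

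Thirdly, for $\nu+1\leq i\leq 2\nu$ the exponent $\nu-i$ is strictly negative, while $A_i$ is an $n$-independent element of $V$ (a polynomial in $y$ with coefficients in $C_c^\infty(\bR)$). Consequently $b_n^{\nu-i}A_i$ converges to zero uniformly on each compact subset of $\bR^2$, and is a bounded function there for all $n$, so these finitely many terms collectively yield an $o(1)$ remainder in the sense of Definition~\ref{def:o(1)}. Assembling the three cases, $\cQ^{(n)} F_{n,\psi} = \sum_{i=1}^{\nu} b_n^{\nu-i} P_0\phi[i] + o(1)$, which is the claim. The only real obstacle is bookkeeping — correctly identifying the index pairs contributing to each power of $b_n$ and recognising the clean collapse $\phi[i]+\cQ_1^1 P\phi[i]=P_0\phi[i]$, for which the recursion~\eqref{def:recursion:psi:phi} has been tailor-made.
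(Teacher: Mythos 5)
Your argument follows the paper's route exactly: expand the double sum, reindex by $i=k+l-1$ (the paper calls it $r$), and identify three regimes. The treatment of the $i=0$ term is in fact a little more detailed than the paper's, which just asserts $\cQ_1\psi[0]=0$; your explicit invocation of $a_1^0=0$ and $\psi_y=0$ is a welcome expansion. For $1\leq i\leq\nu$ the isolation of $l=i$, the use of $\cQ_1=\cQ_1^1$, $\psi[i]=P\phi[i]$ and Lemma~\ref{lemma:kernel_operator_decomp} reproduce the paper's chain of identities.

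The one place where you should be more careful is the $o(1)$ claim for the tail $\nu+1\leq i\leq 2\nu$. Definition~\ref{def:o(1)} has two parts: uniform convergence to $0$ on compacts, which your argument gives, and the global uniform bound $\sup_n\sup_x\vert g_n(x)\vert<\infty$, which boundedness ``there'' (on compacts) does not provide. Each $A_i\in V$ is a genuine polynomial in $y$ and is therefore unbounded on $\bR^2$, so that first condition cannot be checked pointwise on compacts; one must use the geometry of the domain. The paper's proof invokes precisely this: $b_n^{-l}\psi[l]$ is uniformly bounded on $E_n$ because $E_n\subseteq\bR\times[-2b_n,2b_n]$ and $\psi[l]\in V_{\leq l}$ (see the proof of Lemma~\ref{lemma:convergence_of_functions_in_domain}), so that the $b_n$-powers in the prefactor offset the polynomial growth in $y$. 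You should replace the ``bounded on compacts'' remark by this state-space bound; without it the first clause of Definition~\ref{def:o(1)} is left unverified.
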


\begin{proof}
	We aim at determining the leading order term of
	\begin{align*}
	\cQ^{(n)} F_{n,\psi}(x,y) &= \sum_{k=1}^{\nu+1} b_n^{\nu+1-k} \mathcal{Q}_k F_{n,\psi}(x,y) \\
	&= \sum_{k=1}^{\nu+1} \sum_{l=0}^\nu b_n^{\nu+1-k-l} \, \mathcal{Q}_k \psi[l](x,y) + o(1).
	\end{align*}
	The remainder $o(1)$ contains lower order terms in the expansion and it is small 
	%
	as $b_n^{-l} \psi[l]$ is uniformly bounded on the state space $E_n$ for any $n \in \mathbb{N}$ (see Lemma~\ref{lemma:convergence_of_functions_in_domain}). We re-arrange the first sum by changing indices $r = k+l-1$. It yields
	\begin{equation*}
	\cQ^{(n)} F_{n,\psi}(x,y) = \sum_{r=0}^{\nu} b_n^{\nu-r} \sum_{l=0}^{r} \mathcal{Q}_{r-l+1} \psi[l](x,y) + o(1).
	\end{equation*} 
	Observe that the term corresponding to $r=0$ vanishes as $\mathcal{Q}_1 \psi[0] = 0$. By \eqref{def:recursion:psi:phi} and the properties stated in Remark~\ref{rmk:properties:psi:phi}, we get
	%
	\begin{align*}
	\cQ^{(n)} F_{n,\psi}(x,y) & = \sum_{r=1}^{\nu} b_n^{\nu-r} \left[\mathcal{Q}_{1} \psi[r](x,y) + \sum_{l=0}^{r-1} \mathcal{Q}_{r-l+1} \psi[l](x,y)\right] + o(1) \\
	& = \sum_{r=1}^{\nu} b_n^{\nu-r} \big[ \mathcal{Q}_1 \psi[r](x,y) + \phi[r](x,y) \big]+ o(1)  \\
	& = \sum_{r=1}^{\nu} b_n^{\nu-r} \, P_0 \phi[r](x) + o(1).
	\end{align*}

	%
\end{proof}

For the cases we are interested in, we will use $\nu \in \{2,4\}$. Thus, to conclude, we need to consider the action of $P_0$ on the functions $\phi[r]$, for $r = 1, \dots, 4$. This is the content of the next two statements.

The functions $\psi[r]$, $\phi[r]$ belong to the vector spaces $V_i$ according to the classification given in the next lemma. 

\begin{lemma} \label{lemma:fk_in_even_or_odd}
	If $\psi = \psi[0] = \phi[0] \in V_0$, then
	\begin{equation*}
	\psi[r] \in \begin{cases}
	V_{\leq r} \cap V_{\mathrm{even} \setminus\{0\}}  & \text{if } r \text{ is even}, \\
	V_{\leq r} \cap V_{\mathrm{odd}} & \text{if } r \text{ is odd}
	\end{cases}
	\end{equation*}
	and
	\begin{equation*}
	\phi[l] \in \begin{cases}
	V_{\leq r} \cap V_{\mathrm{even}}  & \text{if } r \text{ is even}, \\
	V_{\leq r} \cap V_{\mathrm{odd}} & \text{if } r \text{ is odd}.
	\end{cases}
	\end{equation*}
\end{lemma}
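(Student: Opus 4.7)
The plan is to prove both statements simultaneously by induction on $r$, exploiting the fact that the operators $\mathcal{Q}_k^\pm$ and $\mathcal{Q}_k^{0,1}$ have very simple actions on the grading of $V = \bigoplus_i V_i$. The base case is immediate: $\psi[0] = \phi[0] \in V_0 \subseteq V_{\leq 0} \cap V_{\mathrm{even}}$ (for $r=0$ itself the claim for $\psi[r]$ of course reduces to the $V_0$ containment, with the ``$\setminus \{0\}$'' clause only becoming relevant for $r \geq 1$ once one application of $P$ has removed the $V_0$ component).

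The key structural observations, read directly off \eqref{def:calE:even}--\eqref{def:calE:odd} together with Lemma~\ref{lemma:mappings_wrt_Vindex}, are the following two parity rules. First, for even $k$, both $\mathcal{Q}_k^+$ and $\mathcal{Q}_k^-$ shift the $y$-degree by $\pm 1$, so $\mathcal{Q}_k$ maps $V_{\mathrm{even}}$ into $V_{\mathrm{odd}}$ and vice-versa; for odd $k$, $\mathcal{Q}_k$ preserves the $y$-degree exactly, hence preserves parity. Second, inspection of the definition of $P$ shows that $P$ acts diagonally on the $y$-monomials (up to killing the $V_0$ component and scaling), so $P$ preserves parity, and in particular maps $V_{\mathrm{even}}$ into $V_{\mathrm{even}\setminus\{0\}}$ and $V_{\mathrm{odd}}$ into $V_{\mathrm{odd}}$. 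Finally, $P$ does not raise the maximal $y$-degree, and $\mathcal{Q}_k^+$ raises it by exactly $1$ while the other pieces do not raise it at all.

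For the inductive step, assume the claim holds for all $l < r$ and consider
\begin{equation*}
\phi[r] = \sum_{l=0}^{r-1} \mathcal{Q}_{r+1-l}\,\psi[l].
\end{equation*}
The parity of $\mathcal{Q}_{r+1-l}\psi[l]$ equals the parity of $\psi[l]$ precisely when $r+1-l$ is odd, i.e.\ when $l$ has the same parity as $r$; when $l$ and $r$ have opposite parities, $r+1-l$ is even and $\mathcal{Q}_{r+1-l}$ flips the parity of $\psi[l]$, which by induction already has the opposite parity from $r$. In either case the summand has the parity of $r$, so $\phi[r] \in V_{\mathrm{even}}$ when $r$ is even and $\phi[r] \in V_{\mathrm{odd}}$ when $r$ is odd. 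For the degree bound, $\psi[l] \in V_{\leq l}$ by the induction hypothesis and the maximal degree increase per application of $\mathcal{Q}_{r+1-l}$ is $+1$, giving a summand in $V_{\leq l+1} \subseteq V_{\leq r}$ (since $l \leq r-1$). Hence $\phi[r] \in V_{\leq r}$ with the asserted parity. Applying $P$ and using that it preserves parity, preserves the $V_{\leq r}$ bound, and sends its image into $\bigcup_{i \geq 1} V_i$, we conclude $\psi[r] = P\phi[r] \in V_{\leq r} \cap V_{\mathrm{even}\setminus\{0\}}$ for even $r$ and $\psi[r] \in V_{\leq r} \cap V_{\mathrm{odd}}$ for odd $r$.

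I do not expect a real obstacle here: the lemma is purely a book-keeping statement about how the decomposition \eqref{def:calE:even}--\eqref{def:calE:odd} interacts with the grading. The only mildly delicate point is being careful that $P$ annihilates $V_0$ (so the ``$\setminus \{0\}$'' in the even case is genuine), which is why the even case for $\psi[r]$ strengthens to $V_{\mathrm{even}\setminus\{0\}}$ while the corresponding statement for $\phi[r]$ merely reads $V_{\mathrm{even}}$ (as a sum of an operator applied to $\psi[l]$'s, $\phi[r]$ may well have a nonzero $V_0$-component—indeed this is exactly the content of $P_0\phi[r]$ that drives Proposition~\ref{proposition:perturbation_abstract_Q}).
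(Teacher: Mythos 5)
Your proof is correct and follows essentially the same route as the paper's: induction on $r$, tracking how the operators $\mathcal{Q}_k$ shift parity (even $k$ flips the $y$-degree parity, odd $k$ preserves it) via Lemma~\ref{lemma:mappings_wrt_Vindex}, using the degree bound that each $\mathcal{Q}_k$ raises the $V_i$-index by at most one, and invoking that $P$ kills $V_0$ to upgrade $V_{\mathrm{even}}$ to $V_{\mathrm{even}\setminus\{0\}}$ for $\psi[r]$. The only cosmetic difference is that the paper reduces immediately to showing $\psi[2r] \in V_{\mathrm{even}}$ and $\psi[2r+1] \in V_{\mathrm{odd}}$ (treating the degree bound and the $\setminus\{0\}$ refinement as routine consequences of the mapping properties), whereas you spell those pieces out explicitly and are more careful about the $r=0$ corner case — which is a reasonable bit of extra care, since literally read the lemma's even case fails for $\psi[0]=\psi\neq 0$.
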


\begin{proof}
	As all operators $\mathcal{Q}_k$ map $V_{i \leq k}$ into $V_{i \leq k+1}$ and the projection $P$  maps $V_0$ to~$\{0\}$, it suffices to prove that, for any $r \in \mathbb{N}$, $\psi[2r] \in V_{\mathrm{even}}$ and $\psi[2r+1] \in V_{\mathrm{odd}}$. \\
	We proceed by induction. Clearly the result holds true for $r = 0$. We are left to show the inductive step. Suppose the claim is valid for all positive integers less than $r$, we must prove that, if $r$ is odd (resp. even) and $\psi[r] \in V_{\mathrm{odd}}$ (resp. $V_{\mathrm{even}}$), then $\psi[r+1] \in V_{\mathrm{even}}$ (resp. $V_{\mathrm{odd}}$). We stick on the odd $r$ case, the other being similar.  By definition, we have
	\begin{equation*}
	\psi[r+1] =  P\left(\sum_{l=0}^{r} \mathcal{Q}_{r+2-l} \psi[l] \right).
	\end{equation*}
	Let us analyze the sum on the right-hand side of the previous formula. It is composed of terms of two types: either $l$ is even or it is odd. 
	\begin{itemize}
		\item If $l$ even, then by inductive hypothesis $\psi[l] \in V_{\mathrm{even}}$. Additionally, $r+2 - l$ is odd, so that by Lemma~\ref{lemma:mappings_wrt_Vindex}, the operator $\mathcal{Q}_k$ maps $V_{\mathrm{even}}$  to  $V_{\mathrm{even}}$ and therefore, $\mathcal{Q}_{r+2-l} \psi[l] \in V_{\mathrm{even}}$.
		\item If $l$ odd, then by inductive hypothesis $\psi[l] \in V_{\mathrm{odd}}$. Additionally, $r+2 - l$ is even, so that by Lemma~\ref{lemma:mappings_wrt_Vindex}, the operator $\mathcal{Q}_k$ maps $V_{\mathrm{odd}}$  to  $V_{\mathrm{even}}$ and therefore, $\mathcal{Q}_{r+2-l} \psi[l] \in V_{\mathrm{even}}$.
	\end{itemize}
	This yields that $\psi[r+1] \in V_{\mathrm{even}}$, giving the induction step.
\end{proof}

To evaluate the limiting drift from the expression in Proposition \ref{proposition:perturbation_abstract_Q}, we need to evaluate $P_0$ in the functions $\phi[i]$, with $i \in \mathbb{N}$, $i \leq \nu$. From Lemma~\ref{lemma:fk_in_even_or_odd}, we have $\phi[2i+1] \in V_{\mathrm{odd}}$ which is in the kernel of $P_0$. An explicit calculation for the even $i$ is done in the next lemma.

\begin{lemma} \label{lemma:evaluation_P0_phi_l}
Consider the setting of Proposition \ref{proposition:perturbation_abstract_Q}. For $\psi = \psi[0] \in V_0$, we have $P_0 \phi[l] = 0$ if $l$ is odd and 
\begin{equation}\label{lemma:projection_on_kernel}
P_0 \phi[l] = \begin{cases}
\mathcal{Q}_3^0 \psi + \mathcal{Q}_2^- P \mathcal{Q}_2^+ \psi  & \text{if } l =2, \\[0.3cm]
\mathcal{Q}_5^0 \psi + \mathcal{Q}^-_2 P \mathcal{Q}_4^+ \psi + \mathcal{Q}_4^- P \mathcal{Q}_2^+ \psi + \mathcal{Q}_2^- P\mathcal{Q}_3^1P \mathcal{Q}^+_2 \psi  \\
\qquad  + \mathcal{Q}_2^- P(\mathcal{Q}_3^0 + \mathcal{Q}_2^- P \mathcal{Q}_2^+)P \mathcal{Q}^+_2 \psi & \text{if } l =4.
\end{cases}
\end{equation}
\end{lemma}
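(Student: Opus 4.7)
The case of odd $l$ is immediate: by Lemma \ref{lemma:fk_in_even_or_odd}, $\phi[l] \in V_{\mathrm{odd}}$ has no $V_0$-component, and $P_0$ keeps only the $V_0$-component. So $P_0 \phi[l] = 0$.

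For the two even cases, the plan is to unfold the recursion \eqref{def:recursion:psi:phi} explicitly and track the $V_i$-grading step by step. The bookkeeping rests on three elementary observations following from Assumption \ref{assumption:abstract_sequences} and Lemmas \ref{lemma:mappings_wrt_Vindex}, \ref{lemma:kernel_operator_decomp}: (i) for $g \in V_0$ one has $\mathcal{Q}_k^1 g = 0$ for odd $k$ and $\mathcal{Q}_k^- g = 0$ for even $k$, since $g_y = 0$; (ii) $P$ kills the $V_0$-component of its argument while $P_0$ kills everything else; (iii) the only way for an operator $\mathcal{Q}_k$ to produce a $V_0$-contribution is via $\mathcal{Q}_k^0$ (odd $k$) acting on $V_0$ or via $\mathcal{Q}_k^-$ (even $k$) acting on $V_1$. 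Using (i), the first step is $\psi[1] = P\mathcal{Q}_2 \psi = P\mathcal{Q}_2^+ \psi$.

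For $l=2$, I would expand
\[
\phi[2] \;=\; \mathcal{Q}_3\psi + \mathcal{Q}_2\psi[1] \;=\; \mathcal{Q}_3^0\psi \;+\; \mathcal{Q}_2^+ P\mathcal{Q}_2^+\psi \;+\; \mathcal{Q}_2^- P\mathcal{Q}_2^+\psi,
\]
where $\mathcal{Q}_3^1\psi$ drops out by (i). Of the three remaining summands, $\mathcal{Q}_2^+ P\mathcal{Q}_2^+\psi \in V_2$ is killed by $P_0$, while $\mathcal{Q}_3^0\psi$ and $\mathcal{Q}_2^- P\mathcal{Q}_2^+\psi$ already lie in $V_0$; this is the claimed identity. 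For $l=4$, I would iterate once more: since $P$ annihilates the two $V_0$-pieces of $\phi[2]$, $\psi[2] = P\mathcal{Q}_2^+ P\mathcal{Q}_2^+\psi \in V_2$, and then
\[
\phi[3] \;=\; \mathcal{Q}_4^+\psi + \mathcal{Q}_3^0 P\mathcal{Q}_2^+\psi + \mathcal{Q}_3^1 P\mathcal{Q}_2^+\psi + \mathcal{Q}_2^+\psi[2] + \mathcal{Q}_2^- P\mathcal{Q}_2^+ P\mathcal{Q}_2^+\psi,
\]
whose $V_1$-part is everything except $\mathcal{Q}_2^+\psi[2] \in V_3$. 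Finally, expanding $\phi[4] = \mathcal{Q}_5\psi + \mathcal{Q}_4\psi[1] + \mathcal{Q}_3\psi[2] + \mathcal{Q}_2\psi[3]$ and using (iii), the $V_0$-contributions are $\mathcal{Q}_5^0\psi$, $\mathcal{Q}_4^- P\mathcal{Q}_2^+\psi$, and $\mathcal{Q}_2^-$ applied to the $V_1$-component of $\psi[3]$, which by (ii) is $P$ applied to the $V_1$-component of $\phi[3]$. Collecting these and distributing gives exactly the stated formula.

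The only real difficulty is combinatorial: one must not misplace any of the intermediate terms when applying $P$ and $P_0$ through four levels of the recursion. There is no analytic content beyond the grading lemmas already at hand, so the proof reduces to a careful, direct computation.
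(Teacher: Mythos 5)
Your proposal is correct and follows the same route as the paper: unfold the recursion \eqref{def:recursion:psi:phi} level by level, use Lemma \ref{lemma:mappings_wrt_Vindex} to track the $V_i$-grading, and discard everything outside $V_0$. The paper's proof simply identifies the three $V_0$-producing summands of $\phi[4]$ directly (namely $\mathcal{Q}_5\psi$, $\mathcal{Q}_4^-\psi[1]$, $\mathcal{Q}_2^-\psi[3]$) and then substitutes for $\psi[3]$ in one step, whereas you first spell out $\phi[3]$ in full and then apply $P$; this is the same computation with a slightly more verbose bookkeeping.
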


\begin{proof}
By Lemma \ref{lemma:fk_in_even_or_odd}, if $l$ is odd, then $\phi[l] \in V_{\text{odd}}$ and, as a consequence, $P_0 \phi[l] = 0$. We are left to understand how the even terms contribute to $V_0$. We exploit the recursive structure of the functions $\psi[l]$ and $\phi[l]$.\\
For $k=2$, we find $\mathcal{Q}_3 \psi + \mathcal{Q}_2^- \psi[1]$, as $\mathcal{Q}_2^+$ always maps into the kernel of $P_0$.
For $k = 4$ we find $\mathcal{Q}_5 \psi + \mathcal{Q}_4^- \psi[1] + \mathcal{Q}_2^- \psi[3]$, as $\psi[2]$ has no $V_0$ component and $\mathcal{Q}_3$ maps $V_i$ into $V_i$ for all $i$. Thus, we obtain
\begin{multline*}
\mathcal{Q}_5^0 \psi + \mathcal{Q}_4^- P(\mathcal{Q}^+_2 \psi) + \mathcal{Q}_2^- P\left(\mathcal{Q}_4 \psi + \mathcal{Q}_3 \psi[1] + \mathcal{Q}_2^- \psi[2] \right) \\
= \mathcal{Q}_5^0 \psi + \mathcal{Q}_4^- P(\mathcal{Q}^+_2 \psi) + \mathcal{Q}_2^- P\left[\mathcal{Q}_4^+ \psi + \mathcal{Q}_3 P(\mathcal{Q}_2^+\psi) + \mathcal{Q}_2^- P(\mathcal{Q}^+_2 P(\mathcal{Q}^+_2 \psi))  \right]. 
\end{multline*}
\end{proof}

A straightforward computation yields the following result that will be useful for the computation of the constants involved in the operators in the previous lemma.

\begin{lemma} \label{lemma:combo-P+}
Given $\psi \in V$, it holds
\begin{align*}
\mathcal{Q}_k^- P \mathcal{Q}_j^+ \psi(x,y) & = - \frac{a_k^- a_j^+}{a_1^1} x^{k+j-1} \psi_x(x,y), \\
\mathcal{Q}_2^- P\mathcal{Q}_3^1P \mathcal{Q}^+_2 \psi(x,y) & = \frac{a_2^- a_3^1 a_2^+}{(a_1^1)^2} x^5 \psi_x(x,y).
\end{align*}	
\end{lemma}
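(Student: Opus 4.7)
My plan is to prove both identities by direct bookkeeping, after extracting one structural lemma that does all the real work. The key observation is that the composition $P$ with a subsequent $\partial_y$ collapses to a degree-independent operation: if $\phi = \sum_{i\geq 1} y^i \phi_i(x)$ has no $V_0$ component, then
\begin{equation*}
\mathcal{Q}_k^-\, P\, \phi(x,y) \;=\; a_k^- x^k\,\partial_y\!\left(-\sum_{i\geq 1}\frac{y^i \phi_i(x)}{i\, a_1^1}\right) \;=\; -\frac{a_k^-}{a_1^1}\, x^k\sum_{i\geq 1} y^{i-1}\phi_i(x),
\end{equation*}
because the factor $i$ produced by differentiating $y^i$ is cancelled exactly by the $1/i$ in the definition of $P$. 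Thus $\mathcal{Q}_k^- P$ acts as a uniform ``shift $y^i\mapsto -\tfrac{a_k^-}{a_1^1}x^k y^{i-1}$'' on each monomial. An analogous remark holds for $P\,\mathcal{Q}_j^1$.

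For the first identity, write $\psi=\sum_i y^i\psi_i(x)\in V$ and apply $\mathcal{Q}_j^+$ to obtain
\begin{equation*}
\mathcal{Q}_j^+\psi(x,y) \;=\; a_j^+ x^{j-1} y\, \psi_x(x,y) \;=\; \sum_{i\geq 0} a_j^+ x^{j-1} y^{i+1}\psi_i'(x),
\end{equation*}
which lies in $V_{\geq 1}$. Feeding this into the structural identity above and re-indexing $m=i$ gives
\begin{equation*}
\mathcal{Q}_k^- P\,\mathcal{Q}_j^+\psi(x,y) \;=\; -\frac{a_k^- a_j^+}{a_1^1}\, x^{k+j-1} \sum_{m\geq 0} y^{m}\psi_m'(x) \;=\; -\frac{a_k^- a_j^+}{a_1^1}\, x^{k+j-1}\psi_x(x,y),
\end{equation*}
which is exactly the claim.

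For the second identity I simply iterate the same principle. Starting from $P\mathcal{Q}_2^+\psi = -\tfrac{a_2^+}{a_1^1}\, x\, y^{i+1}\psi_i'(x)$ summed over $i$, applying $\mathcal{Q}_3^1 = a_3^1 x^2 y\,\partial_y$ again produces a factor $(i+1)$ that is then cancelled by the next $P$; a short computation yields
\begin{equation*}
P\,\mathcal{Q}_3^1\, P\,\mathcal{Q}_2^+\psi(x,y) \;=\; \frac{a_3^1 a_2^+}{(a_1^1)^2}\, x^3 \sum_{i\geq 0}\frac{y^{i+1}}{i+1}\,\psi_i'(x).
\end{equation*}
Finally, $\mathcal{Q}_2^- = a_2^- x^2\,\partial_y$ produces a factor $(i+1)$ that cancels the denominator, leaving $\tfrac{a_2^- a_3^1 a_2^+}{(a_1^1)^2}\, x^5\,\psi_x(x,y)$, as claimed.

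There is no substantive obstacle here; the only thing to be careful about is tracking which $y$-degree each intermediate expression lives in, so that $P$ is applied to something with no $V_0$ component. Once the cancellation pattern between $\partial_y$ and $P$ is spelled out once, both formulas follow with essentially no further arithmetic.
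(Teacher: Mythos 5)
Your approach is the right one and essentially the only one available: the paper itself gives no proof, remarking only that ``a straightforward computation yields the following result,'' so a direct bookkeeping argument is exactly what is intended. Your key structural observation is correct and is indeed what makes the computation painless: for $\phi = \sum_{i\geq 1} y^i\phi_i(x) \in V$ with no $V_0$ component, one has
\[
\mathcal{Q}_k^- P\,\phi(x,y) = -\frac{a_k^-}{a_1^1}\, x^k \sum_{i\geq 1} y^{i-1}\phi_i(x),
\]
because the factor of $i$ produced by $\partial_y$ cancels the $1/i$ in $P$; combined with the fact that $\mathcal{Q}_j^+\psi$ always lands in $\bigcup_{i\geq 1}V_i$, this gives the first identity directly, and iterating gives the second. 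Both of your displayed intermediate formulas — for $\mathcal{Q}_k^- P\,\mathcal{Q}_j^+\psi$ and for $P\,\mathcal{Q}_3^1 P\,\mathcal{Q}_2^+\psi$ — are correct, and the final answers match the lemma.

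One small slip in the prose of the second computation: you write $P\mathcal{Q}_2^+\psi = -\tfrac{a_2^+}{a_1^1}\,x\,y^{i+1}\psi_i'(x)$ summed over $i$, omitting the $\tfrac{1}{i+1}$ that $P$ introduces, and the sentence ``$\mathcal{Q}_3^1$ produces a factor $(i+1)$ that is then cancelled by the next $P$'' garbles the order of the cancellations. The correct pattern is: the first $P$ introduces $1/(i+1)$; the $\partial_y$ inside $\mathcal{Q}_3^1$ produces $(i+1)$ cancelling it, and the $y$-prefactor in $\mathcal{Q}_3^1$ restores the $y$-degree to $i+1$; the second $P$ reintroduces $1/(i+1)$; and finally the $\partial_y$ in $\mathcal{Q}_2^-$ cancels that last $1/(i+1)$. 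Your displayed formula for $P\mathcal{Q}_3^1 P\mathcal{Q}_2^+\psi$ already reflects the correct bookkeeping, so this is a harmless narration issue rather than a gap, but it is worth tightening so the text matches the formulas.
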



We see in \eqref{lemma:projection_on_kernel} that $P_0 \phi[4]$ contains a part resembling $P_0 \phi[2]$. On the one hand, if $P_0 \phi[2] = 0$, then $P_0\phi[4]$ has a much simpler structure. On the other, whenever $P_0 \phi[2] \neq 0$, $P_0\phi[4]$ is not needed as in \eqref{eqn:compact_exp_G_2} there are terms of higher order that dominate. As a consequence, we will only ever work with the simplified result for $P_0 \phi[4]$. Similar observations involving higher order recursions can be made for any arbitrary $P_0 \phi[2l]$ with $l \in \mathbb{N}$, $l \geq 3$. By combining these remarks with the type of calculations made for getting the expressions presented in Lemma~\ref{lemma:combo-P+}, we conjecture the following general structure.

\begin{conjecture} \label{conjecture:form_of_drift}
Let Assumption \ref{assumption:abstract_sequences} be satisfied with $\nu$ even. Suppose that $P_0\phi[2l] = 0$ for all $l \in \mathbb{N}$ with $2l < \nu$.
Then
{\small
\begin{multline}\label{eqn:conjecture:drift}
P_0 \psi[\nu](x,y)  \\
= \left[a_{\nu+1}^0 + \sum_{n \geq 2} \sum_{\substack{i_1 + \dots + i_n = \nu + n \\ i_1, i_n \text{ even} \\ i_j \text{ odd and } \neq 1 \text{ for } j \notin \{1,n\}}} (-1)^{n-1}\frac{a_{i_1}^- \left(\prod_{j = 2}^{n-1} a_{i_j}^1 \right)a_{i_n}^+}{(a_1^1)^{n-1}} \right] x^{\nu +1} \psi_x(x) + o(1).
\end{multline}
}
\end{conjecture}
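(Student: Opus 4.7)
The plan is to unroll the two-level recursion defining $\phi[\nu]$ into a sum of operator compositions, then use the projection $P_0$ together with the assumption $P_0\phi[2l]=0$ for $2l<\nu$ to isolate the surviving contributions. Iterating $\psi[l]=P\phi[l]$ for $l\geq 1$ inside $\phi[\nu]=\sum_{l=0}^{\nu-1}\mathcal{Q}_{\nu+1-l}\psi[l]$ until only $\psi[0]=\psi$ remains, one obtains by induction on $\nu$
\[
\phi[\nu] = \sum_{n\geq 1}\sum_{\substack{k_1,\dots,k_n\geq 2 \\ k_1+\dots+k_n=\nu+n}} \mathcal{Q}_{k_1}\,P\,\mathcal{Q}_{k_2}\,P\,\cdots\,P\,\mathcal{Q}_{k_n}\,\psi,
\]
the constraint $\sum k_j=\nu+n$ reflecting that each expansion step decreases the recursion level by $k_j-1$ and one must reach $\psi[0]$ after exactly $n$ steps.

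Next I would apply $P_0$ and track the $V_i$-index of the partial compositions via Lemma~\ref{lemma:mappings_wrt_Vindex}. Because $\psi\in V_0$ and every $P$ kills the $V_0$-component, a composition survives $P_0$ only if (a) the innermost $\mathcal{Q}_{k_n}$ raises the index to $1$, forcing $k_n$ even and selecting the $\mathcal{Q}_{k_n}^+$ piece; (b) the outermost $\mathcal{Q}_{k_1}$ lowers the index back to zero, forcing $k_1$ even and selecting the $\mathcal{Q}_{k_1}^-$ piece; and (c) every intermediate partial result has $y$-degree $\geq 1$. Call a composition \emph{elementary} if each middle $k_j$ is odd with $k_j\neq 1$ (compatible with $a_1^0=0$) and the selected middle piece is always $\mathcal{Q}_{k_j}^1$; then the degree profile is constantly $1$ in the interior, and a direct calculation generalizing the second identity of Lemma~\ref{lemma:combo-P+} gives the contribution
\[
(-1)^{n-1}\,\frac{a_{k_1}^-\bigl(\prod_{j=2}^{n-1}a_{k_j}^1\bigr)a_{k_n}^+}{(a_1^1)^{n-1}}\,x^{\nu+1}\psi_x(x),
\]
where the sign arises from the $n-1$ factors $-1/a_1^1$ produced by the $P$'s acting on $V_1$-functions, and the exponent $\nu+1$ from $\sum k_j-(n-1)=\nu+1$.

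The hard step is showing that all \emph{non-elementary} survivors — compositions with at least one middle $\mathcal{Q}_{k_j}^0$ insertion and/or a degree excursion into $V_{\geq 2}$ — sum to zero under the hypothesis. The proposal is a \emph{peel-and-factor} argument: group such compositions by their maximal non-trivial sub-blocks of total index-budget $2l\leq\nu$, i.e.\ contiguous sub-profiles that start at level $1$, contain a $\mathcal{Q}^0$ insertion or rise into $V_{\geq 2}$, and return to level $1$. Each such group can be rewritten as $\mathcal{Q}^-P\cdots P\,\mathcal{O}_{2l}\,P\cdots P\mathcal{Q}^+\psi$, where the operator $\mathcal{O}_{2l}$ applied to the intermediate $V_1$-function is — up to the substitution $\psi\mapsto$ intermediate argument — exactly the expression for $P_0\phi[2l]$. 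This generalizes the pattern visible for $\nu=4$ in Lemma~\ref{lemma:evaluation_P0_phi_l}, where the bracket $(\mathcal{Q}_3^0+\mathcal{Q}_2^-P\mathcal{Q}_2^+)$ is precisely the $P_0\phi[2]$-operator, and $P_0\phi[2]=0$ collapses the corresponding non-elementary terms (including both the $\mathcal{Q}_3^0$-insertion and the $V_2$-excursion $\mathcal{Q}_2^-P\mathcal{Q}_2^-P\mathcal{Q}_2^+P\mathcal{Q}_2^+$) against each other. Under the hypothesis all these blocks vanish, leaving only the elementary contributions together with the $n=1$ term $P_0\mathcal{Q}_{\nu+1}\psi=a_{\nu+1}^0\,x^{\nu+1}\psi_x$, which yields the conjectured formula. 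The main obstacle is to make the peel-and-factor grouping precise: one must exhibit a combinatorial bijection between non-elementary compositions of total length $\nu$ and pairs (outer elementary skeleton of length $\nu-2l$, interior $P_0\phi[2l]$-block) with matching constants and with the extra $\psi_{xx},\psi_{xxx},\dots$ contributions cancelling exactly. A strong induction on $\nu$ that proves the elementary formula and the vanishing of non-elementary blocks in tandem appears to be the natural framework.
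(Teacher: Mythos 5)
Note first that this statement is a \emph{conjecture} in the paper, and the authors explicitly write immediately after it that ``We neither prove nor use \eqref{eqn:conjecture:drift}.'' There is thus no paper proof to compare against, and what you have supplied is a sketch of something the paper deliberately leaves open. Your preparatory work is sound: the unrolling formula
\begin{equation*}
\phi[\nu] = \sum_{n\geq 1}\;\sum_{\substack{k_1,\dots,k_n\geq 2 \\ k_1+\dots+k_n=\nu+n}} \mathcal{Q}_{k_1}\,P\,\mathcal{Q}_{k_2}\,P\,\cdots\,P\,\mathcal{Q}_{k_n}\,\psi
\end{equation*}
is correct and follows from a short induction on $\nu$ using \eqref{def:recursion:psi:phi} together with $\mathcal{Q}_1\psi[0]=0$. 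Likewise, the observation that a chain contributes to $P_0\phi[\nu]$ only if its $V_i$-index traces a lattice path from $0$ back to $0$ that stays strictly positive in the interior is the right combinatorial lens, and your evaluation of the elementary (level-$1$) chains via Lemma~\ref{lemma:combo-P+} is correct.

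The genuine gap is the ``peel-and-factor'' cancellation, which you yourself flag as the main obstacle but which carries almost the entire weight of the argument. Two specific issues are not addressed. First, $P(y^i h)=-y^i h/(i\,a_1^1)$, so the constant introduced by $P$ depends on the level $i$; an excursion into $V_{\geq 2}$ does not produce the same factors as a sequence of level-$1$ hops, and the claim that an excursion block reproduces the $P_0\phi[2l]$-operator ``up to the substitution $\psi\mapsto$ intermediate argument'' has to reconcile these $i$-dependent constants, which you do not do. Second, the components $\mathcal{Q}^0$ and $\mathcal{Q}^+$ act by $x$-differentiation, so non-elementary survivors generically produce $\psi_{xx},\psi_{xxx},\dots$ terms, while the conjectured right-hand side contains only $\psi_x$; these higher derivatives must vanish identically, and your sketch merely asserts they do. For $\nu=4$ one can verify by direct computation that the offending block factors as
\begin{equation*}
\mathcal{Q}_2^-\,P\bigl(\mathcal{Q}_3^0+\mathcal{Q}_2^-P\mathcal{Q}_2^+\bigr)P\mathcal{Q}_2^+\psi
  =\frac{a_2^-a_2^+}{(a_1^1)^2}\Bigl(a_3^0-\frac{a_2^-a_2^+}{a_1^1}\Bigr)\,x^5\bigl(\psi_x+x\psi_{xx}\bigr),
\end{equation*}
so the $\psi_{xx}$-term is killed precisely because the operator $\mathcal{Q}_3^0+\mathcal{Q}_2^-P\mathcal{Q}_2^+$ is the zero operator when $P_0\phi[2]=0$; but this is a hand-check of one case, not a general bijection. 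Until you exhibit the claimed grouping of all non-elementary compositions, with matching level-dependent constants and with the higher-derivative bookkeeping done in full, this remains — as the paper labels it — a conjecture, not a theorem.
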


We neither prove nor use \eqref{eqn:conjecture:drift}, but we believe it is of interest from a structural point of view and deserves to be stated.

\subsection{Proofs of Theorems \ref{thm:MD:paramagnetic:critical} and \ref{thm:MD:paramagnetic:tricritical}}
	
The formal calculus we developed in Section~\ref{section:formal_calculus_of_operators} is used to formally identify the limiting operator $H$ of the sequence $H_n$ given in~\eqref{eqn:Hn:compact_form:generic:paramagnetic}. However, it is not possible to show directly $H \subseteq \LIM_n H_n$ as in the proof of Theorems~\ref{thm:MD:paramagnetic:subcritical} and~\ref{thm:MD:ferromagnetic:supercritical}, since the most functions $\psi \in C_c^{\infty}(\mathbb{R})$ cause $\sup_n \|H_n F_{n,\psi}\| \nless \infty $ and thus we can not prove $\LIM_n H_n F_{n,\psi} = H\psi$.

To circumvent the problem, we relax our definition of limiting operator. In particular, we introduce two limiting Hamiltonians $H_{\dagger}$ and $H_\ddagger$, approximating $H$ from above and below respectively, and then we characterize $H$ by matching  upper and lower bound. \\
We summarize the notions needed for our result and the abstract machinery used for the proof of a large deviation principle via well-posedness of Hamilton-Jacobi equations in Appendix \ref{appendix:large_deviations_for_projected_processes}. We rely on Theorem \ref{theorem:Abstract_LDP} for which we must check the following conditions:

\begin{enumerate}[(a)]
\item \label{item:LDP_list:1}
The processes $\left( \left( b_n m_n(b_n^{\nu}t), b_n \left( q_n(b_n^{\nu} t) - \tanh(\beta B) \right) \right), t \geq 0 \right)$  satisfy an appropriate exponential compact containment condition. 
\item \label{item:LDP_list:2}
There exist two Hamiltonians $H_\dagger \subseteq C_l(\mathbb{R}^2) \times C_b(\mathbb{R}^2)$ and $H_\ddagger \subseteq C_u(\mathbb{R}^2) \times C_b(\mathbb{R}^2)$ such that $H_\dagger \subseteq ex-\subLIM_n H_n$ and $H_\ddagger \subseteq ex-\superLIM_n H_n$. 
\item  \label{item:LDP_list:3}
There is an operator $H \subseteq C_b(\bR) \times C_b(\bR)$ such that every viscosity subsolution to $f- \lambda H_\dagger f = h$ is a viscosity subsolution to $f - \lambda Hf = h$ and such that every supersolution to $f - \lambda H_\ddagger f = h$ is a viscosity supersolution to $f - \lambda H f$. The operators $H_\dagger$ and $H_\ddagger$ should be thought of as upper and lower bounds for the ``true'' limiting $H$ of the sequence $H_n$. 
\item \label{item:LDP_list:4}
The comparison principle holds for the Hamilton-Jacobi equation $f - \lambda H f = h$ for all $h \in C_b(\mathbb{R})$ and all $\lambda>0$. The proof of this statement is immediate, since the operator $H$ we will be dealing with is of the type considered in \cite{CoKr17}.
\end{enumerate}

We will start with the verification of \eqref{item:LDP_list:2}+\eqref{item:LDP_list:3}, which are based on the expansion in Proposition \ref{proposition:compact_expression_for_H_n} and the formal calculus in Section \ref{section:formal_calculus_of_operators}. Afterwards, we proceed with the verification of \eqref{item:LDP_list:1}, for which we will use the result of \eqref{item:LDP_list:2}. Finally, the form of the operator $H$ is of the type  considered in e.g. \cite{CoKr17} or \cite[Section 10.3.3]{FK06} and thus, the establishment of \eqref{item:LDP_list:4} is immediate. \\

Consider the statement of Proposition \ref{proposition:compact_expression_for_H_n}. We want to extract the limiting behavior of the operators $H_n$ presented there. If $(m,q) = (0,\tanh(\beta B))$ the term in \eqref{eqn:compact_exp_G_1} vanishes, whereas the term in \eqref{eqn:compact_exp_G_3} converges if $\nabla f_n(x,y) \xrightarrow[]{\: n \to \infty \:} \nabla f(x)$ uniformly on compact sets (see Lemma~\ref{lemma:convergence_of_functions_in_domain} below). For the term in \eqref{eqn:compact_exp_G_2}, we use the results from Section~\ref{section:formal_calculus_of_operators}.

For $k \in \{1,\dots,5\}$,  denote by $Q_k : C^2(\bR^2) \rightarrow C^1(\bR^2)$ the operator 
\begin{equation*}
(Q_k g)(x,y) := \frac{1}{k!}\ip{D^k \cG_{2}(0,\tanh(\beta B)) \begin{pmatrix} x^k \\ k x^{k-1}y \end{pmatrix}}{\nabla g(x,y)}.
\end{equation*}
Note that, by the diagonal structure of $D^k \cG_{2}(0,\tanh(\beta B))$ established in Lemma \ref{lemma:DkG2_expressions}, we find
\begin{allowdisplaybreaks}%
\begin{align}
Q_1 g(x,y) & =  Q_1^1 g(x,y) = -2 \cosh(\beta B) y g_y(x,y), \label{def:Q1}\\[0.5cm]
Q_2 g (x,y) &= Q_2^+ g(x,y) + Q_2^- g(x,y), \nonumber\\[0.2cm]
&\quad \text{ with } 
\left\{
\begin{array}{l}
Q_2^+ g (x,y) = -2 \beta \sinh(\beta B) xy g_x(x,y)\\[0.2cm]
Q_2^- g (x,y) = - 2 \beta \sinh(\beta B) x^2 g_y(x,y), 
\end{array}
\right. \label{def:Q2}\\[0.5cm]
Q_3 g (x,y) &= Q_3^0 g(x,y) + Q_3^1 g(x,y), \nonumber\\[0.2cm]
& \quad \text{ with } 
\left\{
\begin{array}{l}
Q_3^0 g (x,y) = - \frac{2}{3} \beta^2 \cosh(\beta B) x^3 g_x(x,y)\\[0.2cm]
Q_3^1 (x,y) =- \beta^2 \cosh(\beta B) x^2 y g_y(x,y), 
\end{array}
\right. \label{def:Q3}\\[0.5cm]
Q_4 g (x,y) &= Q_4^+ g(x,y) + Q_4^- g(x,y), \nonumber\\[0.2cm]
& \quad \text{ with } 
\left\{
\begin{array}{l}
Q_4^+ g (x,y) = -\frac{1}{3} \beta^3 \sinh(\beta B) x^3y g_x(x,y)\\[0.2cm]
Q_4^- g (x,y) = - \frac{1}{3} \beta^3 \sinh(\beta B) x^4 g_y(x,y), 
\end{array}
\right. \label{def:Q4}\\[0.5cm]
Q_5 g (x,y) &= Q_5^0 g(x,y) + Q_5^1 g(x,y), \nonumber\\[0.2cm]
& \quad \text{ with }
\left\{
\begin{array}{l}
Q_5^0 g (x,y) = - \frac{1}{15} \beta^4 \cosh(\beta B) x^5 g_x(x,y)\\[0.2cm]
Q_5^1 (x,y) =- \frac{1}{12} \beta^4 \cosh(\beta B) x^4 y g_y(x,y). 
\end{array}
\right. \label{def:Q5}
\end{align}%
\end{allowdisplaybreaks}%
The operators $Q_k^z$, with $z \in \{+,-,0,1\}$, are of the type \eqref{def:calE:even} and \eqref{def:calE:odd} for particular choices of the constant $a \in \mathbb{R}$.

Observe that the term \eqref{eqn:compact_exp_G_2} has the form \eqref{eqn:operator:calQ}  with $\mathcal{Q}_k = Q_k$ given by \eqref{def:Q1}--\eqref{def:Q5}. Moreover, Assumption~\ref{assumption:abstract_sequences} is satisfied by the $Q_k$'s ($k=1, \dots, 5$). For $\psi \in C_c^\infty(\bR)$, we follow Proposition \ref{proposition:perturbation_abstract_Q} and 
%
define approximating functions $F_{n,\psi}$ thanks to which we can determine the linear part of the limiting Hamiltonian $H$. Recall that the quadratic part of $H$ comes from \eqref{eqn:compact_exp_G_3} after showing uniform convergence for the gradient. \\

The next lemma proves uniform convergence for the sequence of perturbation functions $F_{n,\psi}$ and for the sequence of the gradients.

\begin{lemma} \label{lemma:convergence_of_functions_in_domain}
Suppose we are either in the setting of Theorem \ref{thm:MD:paramagnetic:critical} and $\nu =2$ or in the setting of Theorem \ref{thm:MD:paramagnetic:tricritical} and $\nu = 4$. For $\psi \in C_c^\infty(\bR)$, define the approximation
\begin{equation}\label{eqn:perturbation:LDP_statements}
F_{n,\psi}(x,y) := \sum_{l=0}^\nu b_n^{-l} \psi[l](x,y),
\end{equation}
where $\psi[\cdot]$ are defined recursively according to \eqref{def:recursion:psi:phi}. Moreover, let $R := [a,b] \times [c,d]$, with $a < b$ and $c < d$, be a rectangle in $\mathbb{R}^2$. Then $F_{n,\psi} \in C_c^\infty(\bR^2)$, $\LIM F_{n,\psi} = \psi$ and 
\begin{equation} \label{eqn:convergence_gradients_Psi}
\sup_{(x,y) \in R \cap E_n} |\nabla F_{n,\psi}(x,y) - \nabla \psi(x)| = 0
\end{equation}
for all rectangles $R \subseteq \bR^2$.
\end{lemma}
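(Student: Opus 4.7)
The proof is essentially a bookkeeping exercise exploiting the explicit recursive structure of the functions $\psi[l]$. By Lemma~\ref{lemma:fk_in_even_or_odd}, each $\psi[l]$ belongs to $V_{\leq l}$, and may therefore be written as $\psi[l](x,y) = \sum_{i=0}^l y^i \psi_{l,i}(x)$ with $\psi_{l,i} \in C_c^\infty(\mathbb{R})$. A straightforward induction on $l$ using the recursion \eqref{def:recursion:psi:phi} establishes that $\operatorname{supp}(\psi_{l,i}) \subseteq \operatorname{supp}(\psi)$ and that every $\psi_{l,i}$, together with any finite number of its $x$-derivatives, admits a uniform bound depending only on $\psi$, $\beta$, $B$ and $l$. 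Indeed, each operator $\cQ_k^\bullet$ acts in the $x$-variable by multiplication with a monomial and at most one differentiation, while the projection $P$ divides $y$-coefficients by nonzero scalars, so the whole recursion preserves compact $x$-support and produces only finitely many $x$-derivatives of $\psi$.

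A minor technicality arises from the fact that, due to its polynomial dependence on $y$, $F_{n,\psi}$ is strictly speaking not compactly supported on $\mathbb{R}^2$. Since, however, the rescaled state space of $\bigl(b_n m_n(b_n^\nu t), b_n(q_n(b_n^\nu t)-\tanh(\beta B))\bigr)$ is contained in $[-b_n,b_n]^2$, we may replace $F_{n,\psi}$ by $\chi_n F_{n,\psi}$, where $\chi_n(x,y) := \chi(x/(2b_n), y/(2b_n))$ for some fixed $\chi \in C_c^\infty(\mathbb{R}^2)$ equal to $1$ on $[-1,1]^2$. The resulting function lies in $C_c^\infty(\mathbb{R}^2)$, agrees with $F_{n,\psi}$ on the state space, and is uniformly bounded in $n$: on the cut-off region one has $b_n^{-l}|y|^i \leq 4^l$ for $i\leq l$, whence the polynomial-in-$y$ bound on $\psi[l]$ yields $\sup_n \sup_{\mathbb{R}^2}|\chi_n F_{n,\psi}| < \infty$.

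With these preparations, the statement $\LIM F_{n,\psi} = \psi$ (in the sense of Definition~\ref{def:o(1)}) reduces to verifying uniform convergence on compacts. Given a compact set $K \subseteq \mathbb{R}^2$, for $n$ so large that $\chi_n \equiv 1$ on $K$, we have
\begin{equation*}
\sup_{K} |F_{n,\psi} - \psi| \;\leq\; \sum_{l=1}^\nu b_n^{-l} \sup_K |\psi[l]| \;\leq\; \sum_{l=1}^\nu C_l(K,\psi,\beta,B)\, b_n^{-l} \;\longrightarrow\; 0,
\end{equation*}
where finiteness of $\sup_K|\psi[l]|$ follows from the compact $x$-support of the $\psi_{l,i}$ together with boundedness of $K$ in $y$.

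The gradient estimate \eqref{eqn:convergence_gradients_Psi} follows by exactly the same argument applied componentwise: identifying $\nabla \psi(x)$ with $(\psi'(x),0)$, one has
\begin{equation*}
\nabla F_{n,\psi}(x,y) - \nabla \psi(x) = \sum_{l=1}^\nu b_n^{-l}\, \nabla \psi[l](x,y),
\end{equation*}
and, since each component of $\nabla \psi[l]$ is again a polynomial in $y$ of degree at most $l$ with $C_c^\infty(\mathbb{R})$ coefficients, its supremum over any rectangle $R$ is a finite constant depending only on $R$, $\psi$, $\beta$ and $B$, so the sum tends to zero. The only genuinely delicate step in the whole argument is the inductive control of the $\psi_{l,i}$ and their derivatives together with their supports; once that is in place, the limiting and gradient statements reduce to trivial geometric-series-type estimates.
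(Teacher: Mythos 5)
Your argument for the $\LIM$ and gradient assertions is the paper's: decompose $\psi[l]$ via Lemma~\ref{lemma:fk_in_even_or_odd}, observe that the $x$-coefficients of $\psi[l]$ for $l\geq 1$ have compact support, estimate $\sum_{l\geq 1}b_n^{-l}\psi[l]$ on rectangles, and use $E_n \subseteq \mathbb{R}\times[-2b_n,2b_n]$ for the uniform bound that Definition~\ref{def:LIM} requires. These steps are identical to the paper's proof.

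Where you deviate is in the $C_c^\infty(\mathbb{R}^2)$ clause, and your scepticism there is warranted: with the paper's convention (constant outside a compact subset of $\mathbb{R}^2$), $F_{n,\psi}$ does not literally qualify, since on the vertical slab over $\operatorname{supp}(\psi')$ the terms $\psi[l]$ with $l\geq 1$ grow polynomially in $y$. The paper's proof never discusses this clause; in practice the issue is moot because the object that actually enters $\cD(H_n)$ downstream is the additively cut-off function $\psi_n^{\varepsilon,\pm}$ of \eqref{eqn:def:psi_n_varepsilon_plus_minus}, whose constancy outside a compact set is Lemma~\ref{lmm:cutoff_functions}\eqref{item_2:lmm:cutoff_functions}. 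Your multiplicative cut-off $\chi_n F_{n,\psi}$ at scale $b_n$ is a legitimate alternative repair and agrees with $F_{n,\psi}$ on $E_n$, so it costs nothing for this lemma, but note that it proves membership for a modified function and is not the repair the paper ultimately uses. One small slip: $\operatorname{supp}(\psi_{l,i})\subseteq\operatorname{supp}(\psi)$ is vacuous when $\psi$ is a nonzero constant at infinity (then $\operatorname{supp}(\psi)=\mathbb{R}$); the inclusion your induction actually delivers, and the one you need, is $\operatorname{supp}(\psi_{l,i})\subseteq\operatorname{supp}(\psi')$ for $l\geq 1$, since every operator $\cQ_k^\bullet$ entering the recursion \eqref{def:recursion:psi:phi} involves a derivative.
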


\begin{proof}
By Lemma \ref{lemma:fk_in_even_or_odd} we find that $\psi[l] \in V_{\leq l}$, i.e. it is of the form $\psi[l](x,y) = \sum_{i = 0}^l y^i \psi[l]_i(x)$ with $\psi[l]_i \in C_c^\infty(\bR)$. Thus, as $b_n \rightarrow \infty$, we find that 
	\begin{equation*}
	\lim_{n \to \infty}  \sup_{(x,y) \in R \cap E_n} \left|F_{n,\psi}(x,y) - \psi(x)\right| + \left|\begin{bmatrix}
	\partial_x F_{n,\psi}(x,y) \\ \partial_y F_{n,\psi}(x,y) 
	\end{bmatrix} - \begin{bmatrix}
	\psi'(x) \\ 0 
	\end{bmatrix}\right| = 0
	\end{equation*}
	for all rectangles $R \subseteq \bR^2$. The second part of this limiting statement establishes \eqref{eqn:convergence_gradients_Psi}. To show $\LIM F_{n,\psi} = \psi$, we need the first part of this limiting statement and uniform boundedness of the sequence $F_{n,\psi}$. This final property follows since  $E_n \subseteq \bR \times [-2b_n,2b_n]$, implying that $b_n^{-l}\psi[l]$ is bounded for each $l$. 
\end{proof}

We start by calculating the terms in \eqref{eqn:compact_exp_G_2} that contribute to the limit 
via Proposition~\ref{proposition:perturbation_abstract_Q} and Lemma~\ref{lemma:evaluation_P0_phi_l}. 

\begin{lemma} \label{lemma:limiting_drifts}
Let $(\beta, B)$ satisfy $\beta = \cosh^2 (\beta B)$. For $f \in V$, we have
\begin{equation}\label{eqn:drift:critical_curve}
\left( Q_3^0 + Q_2^- P Q_2^+ \right) f(x,y) = \frac{2}{3} \beta (2 \beta - 3) \cosh (\beta B) x^3 f_x(x,y).
\end{equation}
Moreover, at the tri-critical point $(\beta_{\mathrm{tc}},B_{\mathrm{tc}})$ we obtain $(Q_3^0 + Q_2^- P Q_2^+)f = 0$ and
\begin{equation}\label{eqn:drift:tri-critical_point}
\left( Q_5^0 + Q_2^- P Q_4^+ + Q_4^- P Q_2^+ + Q_2^- P Q_3^1 P Q_2^+ \right) f(x,y) = - \frac{9}{10} \sqrt{\frac{3}{2}} \, x^5 f_x (x,y).
\end{equation}
\end{lemma}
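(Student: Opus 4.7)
The proof is essentially a tabulation exercise: apply Lemma \ref{lemma:combo-P+} to each composed operator, read off the constants $a_k^z$ from the explicit expressions \eqref{def:Q1}--\eqref{def:Q5}, and then simplify using the critical-curve constraint $\beta = \cosh^2(\beta B)$ (respectively the tri-critical values). From \eqref{def:Q1}--\eqref{def:Q5} I read
\begin{equation*}
a_1^1 = -2\cosh(\beta B),\quad a_2^\pm = -2\beta\sinh(\beta B),\quad a_3^0 = -\tfrac{2}{3}\beta^2\cosh(\beta B),\quad a_3^1 = -\beta^2\cosh(\beta B),
\end{equation*}
\begin{equation*}
a_4^\pm = -\tfrac{1}{3}\beta^3\sinh(\beta B),\qquad a_5^0 = -\tfrac{1}{15}\beta^4\cosh(\beta B).
\end{equation*}

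For the first identity, applying Lemma \ref{lemma:combo-P+} gives
\begin{equation*}
(Q_3^0 + Q_2^- P Q_2^+) f(x,y) = \Bigl[a_3^0 - \tfrac{a_2^- a_2^+}{a_1^1}\Bigr] x^3 f_x(x,y) = \tfrac{2\beta^2}{\cosh(\beta B)}\Bigl[\tfrac{2}{3}\cosh^2(\beta B) - 1\Bigr] x^3 f_x(x,y),
\end{equation*}
after using $\sinh^2 = \cosh^2 - 1$. Substituting $\cosh^2(\beta B) = \beta$ turns the bracket into $\tfrac{1}{3}(2\beta-3)$, and rewriting $\beta/\cosh(\beta B) = \cosh(\beta B)$ yields the claimed coefficient $\tfrac{2}{3}\beta(2\beta-3)\cosh(\beta B)$. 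At the tri-critical point $\beta_{\mathrm{tc}} = 3/2$ the factor $2\beta-3$ vanishes, so the intermediate identity $(Q_3^0 + Q_2^- P Q_2^+)f = 0$ is immediate.

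For the second identity, the same lemma gives
\begin{equation*}
\bigl(Q_5^0 + Q_2^- P Q_4^+ + Q_4^- P Q_2^+ + Q_2^- P Q_3^1 P Q_2^+\bigr) f = \Bigl[a_5^0 - \tfrac{a_2^- a_4^+ + a_4^- a_2^+}{a_1^1} + \tfrac{a_2^- a_3^1 a_2^+}{(a_1^1)^2}\Bigr] x^5 f_x.
\end{equation*}
A short computation collapses the bracket to $-\tfrac{\beta^4}{15\cosh(\beta B)}\bigl[6\cosh^2(\beta B) - 5\bigr]$, again via $\sinh^2 = \cosh^2 -1$. Plugging in $\beta_{\mathrm{tc}} = 3/2$ and $\cosh^2(\beta_{\mathrm{tc}} B_{\mathrm{tc}}) = 3/2$ gives $-\tfrac{(3/2)^4}{15\sqrt{3/2}}\cdot 4 = -\tfrac{9}{10}\sqrt{3/2}$, as required.

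The main ``obstacle'' is really just bookkeeping: there is no conceptual difficulty beyond matching the factors of $\cosh$ and $\sinh$ and exploiting the critical-curve relation to re-express mixed powers of $\cosh$ and $\beta$. I would present the proof as two short displays with a parenthetical note that the tri-critical case of the first identity follows by inspection from $2\beta_{\mathrm{tc}}-3 = 0$, so that one only needs to execute the second (longer) computation at the tri-critical values.
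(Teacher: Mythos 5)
Your proof is correct and follows essentially the same route as the paper's: both read off the constants $a_k^z$ from the explicit operators \eqref{def:Q1}--\eqref{def:Q5} and then apply Lemma~\ref{lemma:combo-P+} to evaluate the nested compositions $Q_k^- P Q_j^+$ and $Q_2^- P Q_3^1 P Q_2^+$. The only cosmetic difference is that you normalize intermediate expressions in terms of $\cosh^2(\beta B)$ rather than $\tanh^2(\beta B)$ as the paper does, but the two simplifications agree and lead to the same final coefficients.
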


\begin{proof}
	It suffices to prove the statement for $f$ of the form $y^i g(x)$, for some function $g \in C^2(\mathbb{R})$. Preparing for the use of Lemma \ref{lemma:combo-P+}, we list the relevant constants:
	\[
	a_1^1 = - 2 \cosh(\beta B), \quad a_2^{\pm}  = - 2 \beta \sinh(\beta B), \quad
	a_3^1 = - \beta^2 \cosh(\beta B), \quad  a_4^\pm  = - \frac{1}{3} \beta^3 \sinh(\beta B).
	\]
	
	We prove our first claim. The term $Q_3^0 f$ is given in \eqref{def:Q3}, whereas Lemma \ref{lemma:combo-P+} yields
	\begin{equation*}
	Q_2^- P Q_2^+ f(x,y) = 2 \beta^2 \sinh(\beta B) \tanh(\beta B) x^3 y^i g_x(x).
	\end{equation*}
	Combining these two results, we get
	\begin{equation*}
	\left( Q_3^0 + Q_2^- P Q_2^+ \right) f(x,y) = \frac{2}{3} \beta^2 \cosh (\beta B) \big[ 3 \tanh^2 (\beta B) - 1\big] x^3 f_x (x,y).
	\end{equation*}
	By using the fundamental identity $\cosh^2 - \sinh^2 = 1$ for hyperbolic functions and the fact we are on the critical curve, simple algebraic manipulations lead to the first conclusion.
	
	\smallskip
	
	We proceed with proving \eqref{eqn:drift:tri-critical_point}. The term $Q_5^0$ is defined in \eqref{def:Q5}. By Lemma~\ref{lemma:combo-P+} we find
	\[
	Q_2^- P Q_4^+ f(x,y) = Q_4^- P Q_2^+ f(x,y) = \frac{1}{3} \beta^4 \tanh(\beta B) \sinh(\beta B) x^5 f_x (x,y)
	\]
	and
	\[
	Q_2^- P Q_3^1 P Q_2^+ f(x,y) = - \beta^4 \tanh(\beta B) \sinh(\beta B) x^5 f_x(x,y).
	\]
	Adding the contributions above gives
	\begin{multline*}
	\left( Q_5^0 + Q_2^- P Q_4^+ + Q_4^- P Q_2^+ + Q_2^- P Q_3^1 P Q_2^+ \right) f(x,y) \\
	= - \frac{1}{15} \beta^4 \cosh(\beta B) \big[ 5 \tanh^2(\beta B) + 1 \big] x^5 f_x(x,y).
	\end{multline*}
	Plugging the value $\beta = \beta_{\mathrm{tc}} = \frac{3}{2}$ yields the result.
\end{proof}

{\em Approximating Hamiltonians and domain extensions.} The natural perturbations of our functions $\psi$ do not allow for uniform bounds of $\|H_n F_{n,\psi}\|$. We repair this lack by cutting off the functions. To this purpose, we introduce a collection of smooth increasing functions $\chi_{n} : \bR \rightarrow \bR$ such that  $0 \leq \chi_n' \leq 1$ and
\begin{equation}\label{eqn:cut-off}
\chi_n(z) = 
\begin{cases}
\log \log b_n + 1 & \text{ if } z \leq -  \log \log b_n \\[0.2cm]
z & \text{ if } - \log \log b_n + 2 \leq z \leq \log \log b_n - 2 \\[0.2cm]
\log \log b_n - 1  & \text{ if } z \geq \log \log b_n.
\end{cases}
\end{equation}

\begin{lemma}\label{lmm:cutoff_functions} 
Suppose we are either in the setting of Theorem~\ref{thm:MD:paramagnetic:critical} with $\nu=2$ or in the setting of Theorem~\ref{thm:MD:paramagnetic:tricritical} with $\nu=4$. Let $\varepsilon \in (0,1)$ and $\psi \in C_c^\infty(\mathbb{R})$. Consider the cut-off \eqref{eqn:cut-off} and  define the functions
\begin{equation}\label{def:cutoff_perturbation+Lyapunov_functions}
\chi_n \Big( F_{n,\psi}(x,y) \pm \varepsilon \log(1+x^2+y^2) \Big) 
\end{equation}
with $F_{n,\psi}$ as in \eqref{eqn:def:perturbation}+\eqref{def:recursion:psi:phi}. Then, 
\begin{enumerate}[(a)]
\item \label{item_1:lmm:cutoff_functions}
For any $C > 0$ there is an $N = N(C)$ such that, for any $n \geq N$, we have $\chi_n \equiv \mathrm{id}$ on the compact set $K_1 = K_1(C):=\left\{(x,y) \in \bR^2 \, \middle| \, \varepsilon \log(1+x^2+y^2) \leq C \right\}$.
\item \label{item_2:lmm:cutoff_functions}
Let $\overline{C}$ be a positive constant providing a uniform bound for the sequence $(F_{n,\psi})_{n \geq 1}$ (cf. Lemma~\ref{lemma:convergence_of_functions_in_domain}) and consider the compact set
\[
K_{2,n} := \left\{ (x,y) \in \mathbb{R}^2 \, \left\vert \, \frac{\varepsilon}{2} \log (1+x^2+y^2) \leq \max \{\overline{C}, \, 2 \log \log b_n \} \right.\right\}.
\]
The function \eqref{def:cutoff_perturbation+Lyapunov_functions} is constant outside $K_{2,n}$.
\end{enumerate}
\end{lemma}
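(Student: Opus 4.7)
The plan is to deduce both claims directly from the three-zone structure of $\chi_n$ specified in \eqref{eqn:cut-off} combined with the uniform bound $|F_{n,\psi}(x,y)| \leq \overline{C}$ supplied by Lemma~\ref{lemma:convergence_of_functions_in_domain}. The whole proof reduces to locating the inner argument $F_{n,\psi}(x,y) \pm \varepsilon \log(1+x^2+y^2)$ with respect to the identity region $[-\log\log b_n + 2, \log\log b_n - 2]$ and the plateau regions of $\chi_n$, which is a matter of comparing $\overline{C}$, $C$, and the diverging margin $\log\log b_n$.

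For part~(a), I would observe that on the compact set $K_1$ we have $\varepsilon\log(1+x^2+y^2) \leq C$, hence by the triangle inequality
\[
|F_{n,\psi}(x,y) \pm \varepsilon\log(1+x^2+y^2)| \leq \overline{C} + C.
\]
Since $b_n \to \infty$, one can choose $N = N(C)$ so that $\log\log b_n - 2 \geq \overline{C} + C$ for all $n \geq N$; for such $n$ the argument of $\chi_n$ lies in the identity window, and (a) follows.

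For part~(b), outside $K_{2,n}$ one has $\varepsilon\log(1+x^2+y^2) > 2\max\{\overline{C}, 2\log\log b_n\}$. A short case split on whether $2\log\log b_n \geq \overline{C}$ or not shows that in the $+$ variant the argument of $\chi_n$ exceeds $\log\log b_n$, while in the $-$ variant it falls below $-\log\log b_n$; in both regimes $\chi_n$ is constant. Since $K_{2,n}$ is a closed disk centered at the origin, its complement is connected, so \eqref{def:cutoff_perturbation+Lyapunov_functions} takes a single constant value on it, namely $\log\log b_n - 1$ in the $+$ case and $-\log\log b_n + 1$ in the $-$ case.

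There is no real obstacle here: the only subtle point is that the uniform bound $\overline{C}$ on $F_{n,\psi}$ is independent of $n$, so it is eventually dominated by the slowly diverging margins of $\chi_n$. The factor $2$ inside $\max\{\overline{C}, 2\log\log b_n\}$ in the definition of $K_{2,n}$ is precisely what absorbs the contribution of $F_{n,\psi}$ and pushes the argument of $\chi_n$ safely into the plateau zones, making the whole computation a matter of bookkeeping rather than of analytic difficulty.
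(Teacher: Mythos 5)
Your argument is correct and follows essentially the same route as the paper: part (a) places the inner argument inside the identity window of $\chi_n$ using the uniform bound on $F_{n,\psi}$ and the divergence of $\log\log b_n$, while part (b) shows the inner argument exceeds $\log\log b_n$ (resp.\ falls below $-\log\log b_n$) on the complement of $K_{2,n}$. The only cosmetic difference is that the paper avoids your case split in (b) by writing $\varepsilon\log(1+x^2+y^2)=\tfrac{\varepsilon}{2}\log(1+x^2+y^2)+\tfrac{\varepsilon}{2}\log(1+x^2+y^2)$ and using the two properties of $\max\{\overline{C},2\log\log b_n\}$ separately on each half; your connectedness remark is also unnecessary, since the argument already lands in a single plateau zone of $\chi_n$.
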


\begin{proof}
We start by proving \eqref{item_1:lmm:cutoff_functions}. The function $F_{n,\psi}$ is uniformly bounded by Lemma~\ref{lemma:convergence_of_functions_in_domain}. Consider an arbitrary $C > 0$. The mapping $(x,y) \mapsto F_{n,\psi}(x,y) \pm \varepsilon \log(1+x^2+y^2)$ is thus bounded, uniformly in $n$, on the set $K_1$. To conclude, simply observe that, since the cut-off is moving to infinity, for sufficiently large $n$, we obtain $\chi_n \equiv \mathrm{id}$ on $K_1$.

We proceed with the proof of (b). For any $(x,y) \notin K_{2,n}$, we obtain 
\begin{align*}
F_{n,\psi}(x,y) + \varepsilon \log(1+x^2+y^2) &\geq  -\overline{C} + \frac{\varepsilon}{2} \log(1+x^2+y^2) + \frac{\varepsilon}{2} \log(1+x^2+y^2)\\
&\geq  \frac{\varepsilon}{2} \log(1+x^2+y^2) \\
&> \log \log b_n.
\end{align*}
The definition \eqref{eqn:cut-off} of the cut-off leads then to the conclusion. The proof for the function $F_{n,\psi}(x,y) - \varepsilon \log(1+x^2+y^2)$ follows similarly.
\end{proof}

\begin{lemma}\label{lemma:properties_fepsilon}
Suppose we are either in the setting of Theorem~\ref{thm:MD:paramagnetic:critical} with $\nu=2$ or in the setting of Theorem~\ref{thm:MD:paramagnetic:tricritical} with $\nu=4$. Let $\varepsilon \in (0,1)$ and $\psi \in C_c^\infty(\mathbb{R})$. Consider the cut-off \eqref{eqn:cut-off} and  define the functions
\begin{equation}\label{eqn:def:psi_n_varepsilon_plus_minus}
\psi_n^{\varepsilon,\pm} (x,y) := \chi_n \Big( F_{n,\psi}(x,y) \pm \varepsilon \log(1+x^2+y^2)\Big)  
\end{equation}
and
\[
\psi^{\varepsilon,\pm} (x,y) := \psi(x) \pm \varepsilon \log(1+x^2+y^2) ,
\]
with $F_{n,\psi}$ as in \eqref{eqn:def:perturbation}+\eqref{def:recursion:psi:phi}. Then, for every $\varepsilon \in (0,1)$, the following properties are satisfied:
\begin{enumerate}[(a)]
\item \label{lemma:item:fepsilon_to_a}
$\psi_n^{\varepsilon,\pm} \in \cD(H_n)$.
\item \label{lemma:item:fepsilon_to_b}
$\psi^{\varepsilon,+} \in C_l(\bR^2)$ and $\psi^{\varepsilon,-} \in C_u(\bR^2)$.
\item \label{lemma:item:fepsilon_to_c}
We have 
\[
\inf_n  \inf_{(x,y) \in E_n} \psi_n^{\varepsilon,+}(x,y) > - \infty \quad \mbox{ and } \quad \sup_n \sup_{(x,y) \in E_n} \psi_n ^{\varepsilon,-}(x,y) < \infty.
\]
\item \label{lemma:item:fepsilon_to_d}
For every compact set $K \subseteq \bR^2$, there exists a positive integer $N = N(K)$ such that, for $n \geq N$ and $(x,y) \in K$, we have 
\[
\psi_n^{\varepsilon,\pm}(x,y) =  F_{n,\psi}(x,y) \pm \varepsilon \log(1+x^2+y^2).
\]
\item \label{lemma:item:fepsilon_to_e} 
For every $c \in \bR$, we have
\[
\LIM_{n \uparrow \infty}  \psi_n^{\varepsilon,+} \wedge c  = \psi^{\varepsilon,+} \wedge c \quad \mbox{ and } \quad \LIM_{n \uparrow \infty}  \psi_n^{\varepsilon,-} \vee c = \psi^{\varepsilon,-} \vee c.
\]
\end{enumerate}
Moreover, it holds 
{\begin{enumerate}[(a)]\setcounter{enumi}{5}
\item \label{lemma:item:fepsilon_to_f} 
For every $c \in \bR$, we have 
\begin{equation*}
\lim_{\varepsilon \downarrow 0} \vn{\psi^{\varepsilon,+} \wedge c - \psi \wedge c} + \vn{\psi^{\varepsilon,-} \vee c - \psi \vee c} = 0.
\end{equation*} 
\end{enumerate}}
\end{lemma}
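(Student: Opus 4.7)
The statements (a)--(e) follow by combining the smoothness and uniform-on-compacts convergence $F_{n,\psi}\to\psi$ established in Lemma \ref{lemma:convergence_of_functions_in_domain} with the structural properties of the cut-off $\chi_n$ from Lemma \ref{lmm:cutoff_functions}. Only (f) requires a genuine argument, and it is the main obstacle.

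\emph{Bookkeeping steps (a)--(e).} For (a), $F_{n,\psi}=\sum_{l=0}^\nu b_n^{-l}\psi[l]$ is smooth because each $\psi[l]\in V$ is polynomial in $y$ with coefficients in $C_c^\infty(\mathbb{R})$, and composition with the smooth functions $\chi_n$ and $(x,y)\mapsto\log(1+x^2+y^2)$ preserves smoothness; Lemma \ref{lmm:cutoff_functions}(b) shows that $\psi_n^{\varepsilon,\pm}$ is constant outside a compact set, so $\psi_n^{\varepsilon,\pm}\in C_c^\infty(\mathbb{R}^2)\subseteq\mathcal{D}(H_n)$. For (b), $\psi$ is bounded and $\varepsilon\log(1+x^2+y^2)\geq 0$, whence $\psi^{\varepsilon,+}\geq \inf\psi$ and $\psi^{\varepsilon,-}\leq\sup\psi$. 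For (c), using $E_n\subseteq\mathbb{R}\times[-2b_n,2b_n]$ one has $|b_n^{-l}y^i|\leq 2^l$ for $i\leq l$, so $\overline{C}:=\sup_n\|F_{n,\psi}\|_{L^\infty(E_n)}<\infty$; for $n$ large enough the argument of $\chi_n$ lies in its identity region and $\psi_n^{\varepsilon,+}\geq \chi_n(-\overline{C})=-\overline{C}$, with finitely many smaller $n$ handled individually. For (d), on any compact $K\subseteq\mathbb{R}^2$ both $F_{n,\psi}$ and $\varepsilon\log(1+x^2+y^2)$ are uniformly bounded in $n$ (each $b_n^{-l}\psi[l]$ being bounded on $K$, with norm $\to 0$ for $l\geq 1$), so Lemma \ref{lmm:cutoff_functions}(a) gives $\chi_n\equiv\mathrm{id}$ on this range for $n$ large. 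For (e), (d) combined with the uniform-on-compacts convergence $F_{n,\psi}\to\psi$ yields $\psi_n^{\varepsilon,+}\to\psi^{\varepsilon,+}$ uniformly on compacts; the 1-Lipschitz truncation $t\mapsto t\wedge c$ preserves this, and uniform boundedness of $\psi_n^{\varepsilon,+}\wedge c$ on $E_n$ follows from (c) and the upper bound $c$, matching Definition \ref{def:o(1)} for $\LIM$. The corresponding arguments for $\psi_n^{\varepsilon,-}$ and $\psi^{\varepsilon,-}$ are symmetric.

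\emph{The main obstacle, claim (f).} Pointwise $\psi^{\varepsilon,\pm}(x,y)\to\psi(x)$ as $\varepsilon\downarrow 0$, and wherever $\psi(x)\geq c$ (resp. $\psi(x)\leq c$) the difference $\psi^{\varepsilon,+}\wedge c - \psi\wedge c$ (resp. $\psi^{\varepsilon,-}\vee c - \psi\vee c$) vanishes identically since both terms equal $c$ there. On the remaining set a direct computation yields
\[
(\psi^{\varepsilon,+}\wedge c - \psi\wedge c)(x,y) = \min\bigl(\varepsilon\log(1+x^2+y^2),\, c-\psi(x)\bigr),
\]
which tends to $0$ pointwise. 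Promoting this to the norm convergence asserted by (f) is the technical heart of the lemma: one must exploit the compact support of $\psi$, so that outside a bounded $x$-interval the saturation value $c-\psi(x)$ reduces to a fixed constant, together with the choice of $c$ dictated by the downstream viscosity-solution applications, in order to control the region where $\varepsilon\log(1+x^2+y^2)$ is no longer small. The treatment of $\psi^{\varepsilon,-}\vee c - \psi\vee c$ is analogous.
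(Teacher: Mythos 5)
Your treatment of (a)--(e) matches the paper's own proof: smoothness of $\chi_n$ together with Lemma \ref{lmm:cutoff_functions}(b) gives (a), (b) is immediate, (c) uses the uniform bound on $F_{n,\psi}$ from Lemma \ref{lemma:convergence_of_functions_in_domain}, (d) is Lemma \ref{lmm:cutoff_functions}(a), and (e) is the uniform lower bound from (c) plus the trivial upper bound by $c$ plus the local identification from (d) combined with the uniform-on-compacts convergence from Lemma \ref{lemma:convergence_of_functions_in_domain}.

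For (f), however, you correctly derive the pointwise identity $(\psi^{\varepsilon,+}\wedge c-\psi\wedge c)(x,y)=\min\bigl(\varepsilon\log(1+x^2+y^2),\,c-\psi(x)\bigr)$ on $\{\psi<c\}$, but then leave a genuine gap, and the route you sketch would not close it. Exploiting that $\psi$ is constant (not zero) outside a compact set cannot help: writing $\psi_\infty$ for its value at infinity and taking any $c>\psi_\infty$, for every $\varepsilon>0$ one has $\sup_{(x,y)\in\mathbb{R}^2}\min\bigl(\varepsilon\log(1+x^2+y^2),\,c-\psi_\infty\bigr)=c-\psi_\infty>0$ (achieved for $x^2+y^2$ large), so the genuine sup-norm over $\mathbb{R}^2$ of $\psi^{\varepsilon,+}\wedge c-\psi\wedge c$ does \emph{not} tend to $0$; nor is there latitude in $c$, since (f) is asserted for every $c\in\mathbb{R}$. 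The paper's one-line proof of (f) — ``this follows similarly as in the proof of (e)'' — signals that the intended reading of $\vn{\cdot}$ here is the same bounded-plus-uniform-on-compacts notion that underlies $\LIM$ and was used in (e): one observes that $\psi^{\varepsilon,+}\wedge c$ is bounded between $\inf\psi$ and $c$ uniformly in $\varepsilon\in(0,1)$, and that on every compact $K$ the perturbation $\varepsilon\log(1+x^2+y^2)$ tends to $0$ uniformly on $K$ as $\varepsilon\downarrow 0$, whence $\psi^{\varepsilon,+}\wedge c\to\psi\wedge c$ uniformly on $K$ by $1$-Lipschitz continuity of $t\mapsto t\wedge c$; this is exactly what the sub/super-extension criterion of Lemma \ref{lemma:extension_results} requires. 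So the missing step is not a delicate tail estimate but simply the observation that (f) is the $\varepsilon\downarrow 0$ analogue of (e), proved by the same two-part (uniform bound $+$ uniform-on-compacts) argument.
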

	
\begin{proof}
We prove all the properties for the `$+$' superscript case, the other being similar. 
\begin{enumerate}[(a)]
\item
%
As the cut-off \eqref{eqn:cut-off} is smooth, it yields $\psi_n^{\varepsilon,\pm} \in C^\infty(\bR^2)$. In addition, the location of the cut-off and Lemma~\ref{lmm:cutoff_functions}\eqref{item_2:lmm:cutoff_functions} make sure that $\psi_n^{\varepsilon,\pm}$ is constant outside a compact set $K \subset E_n$, implying $\psi_n^{\varepsilon,\pm} \in \cD(H_n)$. 
\item
This is immediate from the definitions of $\psi^{\varepsilon,\pm}$.
\item
Let $c >0$. From the uniform boundedness of $F_{n,\psi}$, we deduce
\[
\inf_{(x,y) \in \mathbb{R}^2} F_{n,\psi}(x,y) + \varepsilon \log(1+x^2+y^2) \geq - c + \varepsilon \log(1+x^2+y^2),
\]
which is bounded from below uniformly in $n$.
\item
This follows immediately by Lemma~\ref{lmm:cutoff_functions}\eqref{item_1:lmm:cutoff_functions}.
\item
Fix $\varepsilon > 0$ and $c \in \bR$. By \eqref{lemma:item:fepsilon_to_c}, the sequence $\left( \psi_n^{\varepsilon,+} \wedge c \right)_{n \in \mathbb{N}^*}$ is uniformly bounded from below and then, we obviously get $\sup_{n \in \mathbb{N}^*} \vn{\psi_{n}^{\varepsilon,+} \wedge c} < \infty$. Thus, it suffices to prove uniform convergence on compact sets. Let us consider an arbitrary sequence $(x_n,y_n)$ converging to $(x,y)$ and prove $\lim_n \psi_n^{\varepsilon,+}(x_n,y_n) = \psi^{\varepsilon,+}(x,y)$. As a converging sequence is bounded, it follows from \eqref{lemma:item:fepsilon_to_d} that, for sufficiently large $n$, we have 
\begin{equation*}
\psi_n^{\varepsilon,+}(x_n,y_n) = F_{n,\psi} (x_n,y_n) + \varepsilon \log(1+x_n^2+y_n^2),
\end{equation*}
which indeed converges to $\psi^{\varepsilon,+}(x,y)$ as $n \uparrow \infty$. See Lemma~\ref{lemma:convergence_of_functions_in_domain}. 
\item
This follows similarly as in the proof of \eqref{lemma:item:fepsilon_to_e}.
\end{enumerate}		
\end{proof}

\begin{definition}\label{definition:limiting_upper_lower_hamiltonians}
Suppose we are either in the setting of Theorem~\ref{thm:MD:paramagnetic:critical} with $\nu=2$ or in the setting of Theorem~\ref{thm:MD:paramagnetic:tricritical} with $\nu=4$. Let $H \subseteq C_b(\bR) \times C_b(\bR)$, with domain $\cD(H) = C_c^\infty(\bR)$, be defined as 
\begin{itemize}
\item
in the setting of Theorem~\ref{thm:MD:paramagnetic:critical} with $\nu=2$:
\begin{equation}\label{eqn:fixed_parameter_case_Hamiltonian_nu=2}
H(x,p) = \frac{2}{\cosh(\beta B)} p^2 + \frac{2}{3} \beta (2 \beta - 3) \cosh (\beta B) x^3 p;
\end{equation}

\item
in the setting of Theorem~\ref{thm:MD:paramagnetic:tricritical} with $\nu=4$:
\begin{equation}\label{eqn:fixed_parameter_case_Hamiltonian_nu=4}
H(x,p) = 2  \sqrt{\frac{2}{3}} p^2 - \frac{9}{10} \sqrt{\frac{3}{2}} \, x^5 p.
\end{equation}
\end{itemize}
We define the approximating Hamiltonians $H_\dagger \subseteq C_l(\bR^2) \times C_b(\bR^2)$ and $H_\ddagger \subseteq C_u(\bR^2) \times C_b(\bR^2)$ as 
\begin{align*}
H_\dagger & := \left\{ \left(  \psi(x) + \varepsilon \log(1+x^2+y^2), \, H\psi(x) + c(\varepsilon) \right) \, \middle| \, \psi \in C_c^\infty(\mathbb{R}), \varepsilon \in (0,1) \right\}, \\[0.1cm]
H_\ddagger & := \left\{ \left(  \psi(x) - \varepsilon \log(1+x^2+y^2), \, H\psi(x) - c(\varepsilon) \right) \, \middle| \, \psi \in C_c^\infty(\mathbb{R}), \varepsilon \in (0,1) \right\},
\end{align*}
with $c(\varepsilon) := 8 \left( \frac{\varepsilon}{2} \|\psi'\| +  \varepsilon^2 \right)$.
\end{definition}	
	
\begin{proposition} \label{proposition:limiting_hamiltonians}
Suppose we are either in the setting of Theorem~\ref{thm:MD:paramagnetic:critical} with $\nu=2$ or in the setting of Theorem~\ref{thm:MD:paramagnetic:tricritical} with $\nu=4$. Consider notation as in Definition~\ref{definition:limiting_upper_lower_hamiltonians}. We have $H_\dagger \subseteq ex-\subLIM_n  H_n$ and $H_\ddagger \subseteq ex-\superLIM_n  H_n$.
\end{proposition}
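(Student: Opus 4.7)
The plan is to verify the defining conditions of extended sub- and super-LIM for each element of $H_\dagger$ and $H_\ddagger$ by taking as approximating sequences the Lyapunov-perturbed cut-offs $\psi_n^{\varepsilon,\pm}$ from \eqref{eqn:def:psi_n_varepsilon_plus_minus}. We focus on the sub-LIM statement for $H_\dagger$; the super-LIM case is entirely symmetric. Fix $(\psi^{\varepsilon,+}, H\psi + c(\varepsilon)) \in H_\dagger$ and take $f_n := \psi_n^{\varepsilon,+}$. Parts (a), (c), and (e) of Lemma~\ref{lemma:properties_fepsilon} supply $f_n \in \mathcal{D}(H_n)$, the uniform lower bound $\inf_n \inf_{E_n} f_n > -\infty$, and the truncated convergence $\LIM f_n \wedge c = \psi^{\varepsilon,+} \wedge c$ for every $c \in \mathbb{R}$; only the one-sided asymptotic estimate on $H_n f_n$ remains.

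Fix a compact set $K \subset \mathbb{R}^2$. By Lemma~\ref{lemma:properties_fepsilon}(d) and Lemma~\ref{lmm:cutoff_functions}\eqref{item_1:lmm:cutoff_functions}, for all sufficiently large $n$ we have $\psi_n^{\varepsilon,+}(x,y) = F_{n,\psi}(x,y) + \varepsilon L(x,y)$ on $K$, with $L(x,y) := \log(1+x^2+y^2)$. Plugging this decomposition into Proposition~\ref{proposition:compact_expression_for_H_n} splits $H_n f_n$ into (i) a drift part $\mathcal{Q}^{(n)}(F_{n,\psi} + \varepsilon L) = \sum_{k=1}^{\nu+1} b_n^{\nu+1-k} Q_k(F_{n,\psi} + \varepsilon L)$, (ii) the quadratic form $\langle \bG_{1} \nabla f_n, \nabla f_n\rangle$ with $\bG_{1}(0,\tanh\beta B) = \tfrac{2}{\cosh(\beta B)} I$, and (iii) remainders of order $o(1)$ governed by Lemma~\ref{lemma:remainder_control}. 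The drift of $F_{n,\psi}$ alone converges uniformly on $K$ to exactly the polynomial drift coefficient of $H$: combining Proposition~\ref{proposition:perturbation_abstract_Q}, Lemma~\ref{lemma:evaluation_P0_phi_l} and Lemma~\ref{lemma:limiting_drifts} produces $\tfrac{2}{3}\beta(2\beta-3)\cosh(\beta B)\, x^3 \psi'(x)$ in the setting of Theorem~\ref{thm:MD:paramagnetic:critical} and $-\tfrac{9}{10}\sqrt{3/2}\, x^5 \psi'(x)$ in the setting of Theorem~\ref{thm:MD:paramagnetic:tricritical}, matching \eqref{eqn:fixed_parameter_case_Hamiltonian_nu=2} and \eqref{eqn:fixed_parameter_case_Hamiltonian_nu=4}.

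The technical crux is controlling $\mathcal{Q}^{(n)}(\varepsilon L)$ from above, since individual summands carry diverging prefactors. The leading term
\[
b_n^\nu \varepsilon Q_1 L(x,y) = -4\cosh(\beta B)\, \varepsilon\, b_n^\nu\, \frac{y^2}{1+x^2+y^2} \le 0
\]
is non-positive, and this sign choice is precisely why $H_\dagger$ carries $+\varepsilon L$. Each lower-order summand $b_n^{\nu+1-k}\varepsilon Q_k L$ with $k \ge 2$ is a bounded rational function on all of $\mathbb{R}^2$; by Young's inequality its positive part can be absorbed into an arbitrarily small multiple of $b_n^\nu y^2/(1+x^2+y^2)$ at the cost of an additive constant, and after summation one obtains $\mathcal{Q}^{(n)}(\varepsilon L) \le o(1)$ uniformly on $K$. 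For the quadratic form, expanding $\nabla f_n = \nabla F_{n,\psi} + \varepsilon \nabla L$ and using $|\nabla L| \le 2$ together with Lemma~\ref{lemma:convergence_of_functions_in_domain} yields
\[
\langle \bG_{1} \nabla f_n, \nabla f_n\rangle = \tfrac{2}{\cosh(\beta B)} \psi'(x)^2 + R_n,
\qquad
\sup_K |R_n| \le \tfrac{8\varepsilon \|\psi'\|}{\cosh(\beta B)} + \tfrac{8\varepsilon^2}{\cosh(\beta B)} + o(1),
\]
matching the two summands that define $c(\varepsilon)$.

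Assembling these estimates yields $H_n f_n \le H\psi + c(\varepsilon) + o(1)$ uniformly on $K$, which is the required sub-LIM bound. The super-LIM statement for $H_\ddagger$ follows identically by replacing $+\varepsilon L$ with $-\varepsilon L$: the leading Lyapunov drift $b_n^\nu Q_1(-\varepsilon L)$ is then non-negative, providing a one-sided \emph{lower} bound $H_n \psi_n^{\varepsilon,-} \ge H\psi - c(\varepsilon) - o(1)$. The main obstacle is exactly this sign-domination step: the intermediate-order contributions $b_n^{\nu+1-k} Q_k L$ for $k \ge 2$ individually diverge, and only their subordination to the sign-definite leading term makes the bound work; this is also what forces the use of a \emph{logarithmic} Lyapunov function, since a polynomial Lyapunov function would have an unbounded gradient and would not fit into the Young-inequality absorption scheme.
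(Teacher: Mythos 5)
Your overall framework---the choice $f_n := \psi_n^{\varepsilon,\pm}$ built from the perturbation $F_{n,\psi}$ and the Lyapunov term $\pm\varepsilon\log(1+x^2+y^2)$, Lemma~\ref{lemma:properties_fepsilon} for the truncated convergence, and Proposition~\ref{proposition:perturbation_abstract_Q} together with Lemma~\ref{lemma:limiting_drifts} to identify the polynomial drift of $H$---matches the paper. The gap is in the step you yourself flag as the technical crux: controlling the action of $\mathcal{Q}^{(n)}$ on the Lyapunov term. The claim that $b_n^{\nu+1-k}\varepsilon Q_k L$ with $k\geq 2$ is a bounded rational function on all of $\mathbb{R}^2$ is simply false: from \eqref{def:Q2} one has $Q_2 L(x,y)=-8\beta\sinh(\beta B)\,x^2 y/(1+x^2+y^2)$, which grows linearly in $|y|$ along $|x|=|y|$; $Q_4 L$ is likewise unbounded. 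Even if these functions were bounded, the diverging prefactors $b_n^{\nu+1-k}$ (with $k\leq\nu$) would make the individual summands blow up. Nor does the Young-inequality absorption repair this: dominating $b_n^{\nu-1}\,x^2 y/(1+x^2+y^2)$ by $\delta\, b_n^\nu y^2/(1+x^2+y^2)$ leaves a residual proportional to $\delta^{-1}b_n^{\nu-2}x^4/(1+x^2+y^2)$, which is again an unbounded function carrying a diverging prefactor whenever $\nu\geq 2$, so the ``cost of an additive constant'' never materializes. The paper does not use Young's inequality here. It writes out $\Xi_n$ explicitly and completes squares with the \emph{exact} coefficients (see \eqref{eqn:square_completion:1}--\eqref{eqn:square_completion:2}); the crucial point is that the leftover $x^4$ and $x^6$ terms carry coefficients $\propto(\tfrac13-\tanh^2(\beta B))$ and $\propto(2-5\tanh^2(\beta B))$, both non-negative because $\beta\leq\tfrac32$ on and below the tri-critical point. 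This is a sign-exact algebraic cancellation tied to the regime of parameters, not a generic absorption estimate, and it is indispensable.

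A second omission: you verify the one-sided limit only on a fixed compact set $K$, but Definition~\ref{definition:subLIM_superLIM} requires you to verify condition \eqref{eqn:convergence_condition_sublim_uniform_gn}, i.e.\ the \emph{global} bound $\sup_n\sup_{(x,y)\in\mathbb{R}^2}H_n\psi_n^{\varepsilon,+}(x,y)<\infty$ and the sub-exponential estimate $\sup_n v_n^{-1}\log\|H_n\psi_n^{\varepsilon,+}\|<\infty$. The paper obtains these by a dichotomy over the cut-off: where $\chi_n$ is constant one has $H_n\psi_n^{\varepsilon,+}\equiv 0$; on the complementary region $|x|,|y|$ are of order at most $\log^{1/2}b_n$, so Lemma~\ref{lemma:remainder_control} controls the Taylor remainders and the square-completion argument gives the global uniform upper bound, while a crude estimate yields $\|H_n\psi_n^{\varepsilon,+}\|\lesssim b_n^\nu\log b_n$ so that $v_n:=b_n$ works. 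Your proposal addresses neither half of \eqref{eqn:convergence_condition_sublim_uniform_gn}.
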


\begin{proof}
We prove only the first statement, i.e. $H_\dagger \subseteq ex-\subLIM_n  H_n$. Fix $\varepsilon > 0$ and $\psi \in C_c^\infty(\bR)$. We apply Lemma~\ref{lemma:properties_fepsilon} with $f_n := \psi_n^{\varepsilon,+}$. We show that $(\psi(x) + \varepsilon \log(1+x^2+y^2), H\psi(x)+c(\varepsilon))$ is approximated by $(f_n,H_nf_n)$ as in Definition \ref{definition:subLIM_superLIM}(a). Since \eqref{eqn:convergence_condition_sublim_constants} was proved in Lemma~\ref{lemma:properties_fepsilon}\eqref{lemma:item:fepsilon_to_e}, we are left to check conditions \eqref{eqn:convergence_condition_sublim_uniform_gn} and \eqref{eqn:sublim_generators_upperbound}.
\begin{itemize}
\item[{\footnotesize \eqref{eqn:convergence_condition_sublim_uniform_gn}}]
We start by showing that we can get a uniform (in $n$) upper bound for the function $H_n \psi_n^{\varepsilon,+}$. 
\begin{itemize}
\item
If $\vert F_{n,\psi}(x,y) +\varepsilon \log(1+x^2+y^2) \vert \geq \log \log b_n$, then the function $\psi_n^{\varepsilon,+}$ is constant and therefore $H_n \psi_n^{\varepsilon,+} \equiv 0$. \\
\item
If $\vert F_{n,\psi}(x,y) +\varepsilon  \log(1+x^2+y^2) \vert < \log \log b_n$, the variables $x$ and $y$ are at most of order $\log^{1/2} b_n$ and we can characterize $H_n \psi_n^{\varepsilon,+}$ by means of \eqref{eqn:Hn:compact_form:generic:paramagnetic}, since we can control the remainder term. Indeed, 
\begin{itemize}
\item 
the first, second and third order partial derivatives of $\psi \in C_c^{\infty}(\mathbb{R}^2)$ and $\log(1+x^2+y^2)$ are bounded, therefore by means of \eqref{eqn:remainder_control} we get control of the remainder up to order $\log^{1/2} b_n$ variables $x$ and $y$;\\
\item 
the function $\psi$ is constant outside a compact set and thus has zero derivatives outside such a compact set;\\
\item by smoothness of the cut-off \eqref{eqn:cut-off}, the derivative $\chi_n'$ is bounded between~$0$~and~$1$. 
\end{itemize}
We find 
\begin{multline}\label{eqn:Hamiltonian_uniform_estimate:fixed_parameter_case:1}
H_n \psi_n^{\varepsilon,+}(x,y) = \bigg\{-\frac{1}{15} \beta^3 \cosh(\beta B) (6 \beta -5) b_n^{\nu-4} x^5 \psi'(x) \\[0.1cm]
-\frac{2}{3} \beta \cosh(\beta B) (3-2\beta) b_n^{\nu-2} x^3 \psi'(x) + \frac{\varepsilon \, \Xi_n(x,y)}{30(1+x^2+y^2)} \bigg\} \chi_n'(-) \\[0.1cm]
+ \frac{2}{\cosh(\beta B)} \left[ \left( \psi'(x) \right)^2 + \frac{4 \varepsilon x \psi'(x)}{1+x^2+y^2}+ \frac{4\varepsilon^2(x^2+y^2)}{(1+x^2+y^2)^2}\right] \left( \chi_n'(-) \right)^2\\[0.1cm]
+ o(1) + o(b_n^{\nu-4}),
\end{multline}
with 
\begin{align*}
\Xi_n(x,y) =& -5 \beta^4 \cosh(\beta B) b_n^{\nu-4} x^4 y^2 -60 \beta^2 \cosh(\beta B) b_n^{\nu-2} x^2 y^2 \nonumber\\
& -120 \cosh(\beta B) b_n^{\nu} y^2 -40 \beta^3 \sinh(\beta B) b_n^{\nu-3} x^4 y \nonumber\\
& -240 \beta \sinh(\beta B) b_n^{\nu-1} x^2 y -4 \beta^4 \cosh(\beta B) b_n^{\nu-4} x^6 \nonumber\\
& -40 \beta^2 \cosh(\beta B) b_n^{\nu-2}x^4. 
\end{align*}
We want to show that \eqref{eqn:Hamiltonian_uniform_estimate:fixed_parameter_case:1} is uniformly bounded from above. We start by analyzing the terms in $\Xi_n(x,y)$. By completing the square, we can write
\begin{multline}\label{eqn:square_completion:1}
-120 \cosh(\beta B) b_n^{\nu} y^2 -240 \beta \sinh(\beta B) b_n^{\nu-1} x^2 y -40 \beta^2 \cosh(\beta B) b_n^{\nu-2}x^4 \\
= -120 \cosh(\beta B) b_n^{\nu-2} \left( b_n y + \beta \tanh(\beta B)x^2 \right)^2 \\
-120 \beta^2 \cosh(\beta B) \left( \tfrac{1}{3} - \tanh^2(\beta B)\right) b_n^{\nu-2} x^4.
\end{multline}
%
We take out $-40 \beta^2 \cosh(\beta B) b_n^{\nu-2} x^2 y^2$ from $-60 \beta^2 \cosh(\beta B) b_n^{\nu-2} x^2 y^2$ and, by the same trick as above, we also get
\begin{multline}\label{eqn:square_completion:2}
-40 \beta^2 \cosh(\beta B) b_n^{\nu-2} x^2 y^2 -40 \beta^3 \sinh(\beta B) b_n^{\nu-3} x^4 y -4 \beta^4 \cosh(\beta B) b_n^{\nu-4} x^6\\
= -40 \beta^2 \cosh (\beta B) b_n^{\nu-4} x^2 \left( b_n y + \tfrac{1}{2} \beta \tanh(\beta B) x^2 \right)^2  \\
 -2 \beta^4 \cosh (\beta B) \left(2 - 5\tanh^2(\beta B) \right) b_n^{\nu-4} x^6.
\end{multline}
Observe that both quantities \eqref{eqn:square_completion:1} and \eqref{eqn:square_completion:2} are non-positive, since $\beta \leq \frac{3}{2}$ implies $\frac{1}{3} - \tanh^2(\beta B) \geq 0$ and $2 - 5 \tanh^2(\beta B) \geq 0$. Putting all together yields
\begin{align}
\Xi_n(x,y) =& -5 \beta^4 \cosh(\beta B) b_n^{\nu-4} x^4 y^2 -20 \beta^2 \cosh(\beta B) b_n^{\nu-2} x^2 y^2 \nonumber\\
&-120 \cosh(\beta B) b_n^{\nu-2} \left( b_n y + \beta \tanh(\beta B)x^2 \right)^2 \nonumber\\
&-120 \beta^2 \cosh(\beta B) \left( \tfrac{1}{3} - \tanh^2(\beta B)\right) b_n^{\nu-2} x^4 \nonumber\\
&-40 \beta^2 \cosh (\beta B) b_n^{\nu-4} x^2 \left( b_n y + \tfrac{1}{2} \beta \tanh(\beta B) x^2 \right)^2  \nonumber\\
&-2 \beta^4 \cosh (\beta B) \left(2 - 5\tanh^2(\beta B) \right) b_n^{\nu-4} x^6, \label{eqn:Xi_fixed_parameter_case:after_manipulations}
\end{align}
which is then overall non-positive. Using also that $2x(1+x^2+y^2)^{-1} \leq 1$ and $(x^2+y^2)(1+x^2+y^2)^{-2} \leq 1$, we have
\begin{equation}\label{eqn:Hamiltonian_uniform_estimate:fixed_parameter_case:2}
H_n \psi_n^{\varepsilon,+}(x,y) \leq H\psi(x) + 8 \left( \frac{\varepsilon}{2} \| \psi' \| + \varepsilon^2\right) + o(1) + o(b_n^{\nu-4}),
\end{equation}
with $H$ as in \eqref{eqn:fixed_parameter_case_Hamiltonian_nu=2} if $\nu=2$ and as in \eqref{eqn:fixed_parameter_case_Hamiltonian_nu=4} if $\nu=4$ and $(\beta,B)=(\beta_{\mathrm{tc}},B_{\mathrm{tc}})$. In particular, as $\psi \in C_c^{\infty}(\mathbb{R})$ and we can control the remainder, there exists a positive constant $c_0$, independent of $n$ and $\varepsilon$, such that $H_n \psi_n^{\varepsilon,+}(x,y) \leq c_0$.
\end{itemize}
To conclude, observe that, since there exist positive constants $c_1$ and $c_2$ (independent of $n$) such that $\|H_n \psi_n^{\varepsilon,+}\| \leq c_1 b_n^{\nu} \log b_n + c_2$ (cf. equation \eqref{eqn:Hamiltonian_uniform_estimate:fixed_parameter_case:1}), choosing the sequence $v_n := b_n$ leads to $\sup_n v_n^{-1} \log \|H_n \psi_n^{\varepsilon,+}\| < +\infty.$

\item[{\footnotesize \eqref{eqn:sublim_generators_upperbound}}]
Let $K$ be a compact set. Consider an arbitrary converging sequence $(x_n, y_n) \in K$ and let $(x,y) \in K$ be its limit. We want to show $\limsup_n H_n \psi_n^{\varepsilon,+}(x_n,y_n) \leq H\psi(x)$.

As a converging sequence is bounded, by Lemma~\ref{lemma:properties_fepsilon}\eqref{lemma:item:fepsilon_to_d} we can find a sufficiently large $N=N(K) \in \mathbb{N}$ such that, for all $n \geq N$, we have 
\[
\psi_n^{\varepsilon,+} (x_n,y_n) = F_{n,\psi}(x_n,y_n) + \varepsilon \log(1+x_n^2+y_n^2).
\]
Thus, for any $n \geq N$, equation~\eqref{eqn:Hamiltonian_uniform_estimate:fixed_parameter_case:2} yields 
\[
H_n \psi_n^{\varepsilon,+}(x_n,y_n) \leq  H\psi(x) + 8 \left( \frac{\varepsilon}{2} \|\psi'\| +  \varepsilon^2 \right) + o(1) + o(b_n^{\nu-4}), 
\]
where the remainder terms  converge to zero uniformly on compact sets. Since $b_n \uparrow \infty$, the conclusion follows. 
\end{itemize}
\end{proof}

To conclude this section we obtain the Hamiltonian extensions.

\begin{proposition} \label{proposition:subsuper_extensions}
Consider notation as in Definition~\ref{definition:limiting_upper_lower_hamiltonians}. Moreover, set $\hat{H}_\dagger := H_\dagger \cup H$ and $\hat{H}_\ddagger := H_\ddagger \cup H$. Then $\hat{H}_\dagger$ is a sub-extension of $H_\dagger$ and $\hat{H}_\ddagger$ is a super-extension of $H_\ddagger$.
\end{proposition}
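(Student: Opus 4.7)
The two claims are symmetric, so the plan is to treat the sub-extension assertion in detail and to indicate that the super-extension statement follows by replacing $\psi^{\varepsilon,+}$ with $\psi^{\varepsilon,-}$, maxima with minima, and subsolutions with supersolutions. Since $H_\dagger \subseteq \hat{H}_\dagger$ by construction, the only content to verify is that every viscosity subsolution $u$ of $f - \lambda H_\dagger f = h$ also satisfies the subsolution inequality for pairs $(\psi, H\psi) \in H$, where $\psi \in C_c^\infty(\mathbb{R})$ is regarded as a function on $\mathbb{R}^2$ that does not depend on $y$.

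The approach is the standard penalization argument. Fix $\psi \in C_c^\infty(\mathbb{R})$ and assume that $u - \psi$ attains its global maximum at some point $(x_0, y_0)$. For each $\varepsilon \in (0,1)$, consider
\begin{equation*}
\psi^{\varepsilon,+}(x,y) := \psi(x) + \varepsilon \log(1 + x^2 + y^2),
\end{equation*}
which by Definition~\ref{definition:limiting_upper_lower_hamiltonians} lies in $\mathcal{D}(H_\dagger)$ with image $H\psi + c(\varepsilon)$. Since $u$ is bounded from above and $\psi^{\varepsilon,+}(x,y) \to +\infty$ as $|(x,y)| \to \infty$, the upper semicontinuous function $u - \psi^{\varepsilon,+}$ attains a maximum at some point $(x_\varepsilon, y_\varepsilon)$. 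The subsolution property of $u$ applied with this test function yields
\begin{equation*}
u(x_\varepsilon, y_\varepsilon) - \lambda\left[H\psi(x_\varepsilon) + c(\varepsilon)\right] \leq h(x_\varepsilon, y_\varepsilon).
\end{equation*}

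The next step is to pass to the limit $\varepsilon \downarrow 0$. First I would show that the family $\{(x_\varepsilon, y_\varepsilon)\}_{\varepsilon \in (0,1)}$ is pre-compact: the optimality inequality
\begin{equation*}
u(x_\varepsilon, y_\varepsilon) - \psi(x_\varepsilon) - \varepsilon \log(1 + x_\varepsilon^2 + y_\varepsilon^2) \geq u(x_0, y_0) - \psi(x_0)
\end{equation*}
together with boundedness of $u$ and $\psi$ forces $\varepsilon \log(1 + x_\varepsilon^2 + y_\varepsilon^2)$ to stay bounded uniformly, which confines $(x_\varepsilon, y_\varepsilon)$ to a compact set. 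Along a convergent subsequence $(x_{\varepsilon_k}, y_{\varepsilon_k}) \to (x_*, y_*)$, the standard variational argument shows that $(x_*, y_*)$ is itself a maximizer of $u - \psi$, that $u(x_{\varepsilon_k}, y_{\varepsilon_k}) \to u(x_*, y_*)$, and that $\varepsilon_k \log(1 + x_{\varepsilon_k}^2 + y_{\varepsilon_k}^2) \to 0$. Since $c(\varepsilon) \to 0$ by construction and $H\psi$, $h$ are continuous, passing to the limit in the displayed inequality produces
\begin{equation*}
u(x_*, y_*) - \lambda H\psi(x_*) \leq h(x_*, y_*),
\end{equation*}
which is the subsolution inequality required at the maximizer $(x_*, y_*)$ of $u - \psi$.

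The main technical obstacle is establishing the pre-compactness of the approximating maximizers $(x_\varepsilon, y_\varepsilon)$ as $\varepsilon \downarrow 0$. This step simultaneously exploits the coercivity of $\log(1+x^2+y^2)$, the compact support of $\psi$ (which effectively confines $x_\varepsilon$ to a neighborhood where $\psi$ varies), and the boundedness of $u$; once it is in hand, the remaining closure of the argument is a formal consequence of the continuity of $H\psi$ and $h$ together with the vanishing of $c(\varepsilon)$. The super-extension statement for $\hat{H}_\ddagger$ mirrors this plan, using $\psi^{\varepsilon,-}(x,y) = \psi(x) - \varepsilon \log(1+x^2+y^2)$, the boundedness from below of viscosity supersolutions, and the fact that $v - \psi^{\varepsilon,-}$ attains a minimum by coercivity of $-\psi^{\varepsilon,-}$ at infinity.
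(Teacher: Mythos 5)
Your proposal takes a genuinely different route from the paper. The paper invokes Lemma~\ref{lemma:extension_results} (Lemma 7.6 of \cite{FK06}) and checks its hypotheses in a few lines: take $\psi_n(x,y) = \psi(x) + n^{-1}\log(1+x^2+y^2)$, note that $\|\psi_n \wedge c - \psi \wedge c\| \to 0$ by Lemma~\ref{lemma:properties_fepsilon}\eqref{lemma:item:fepsilon_to_f} and that $\|H_\dagger\psi_n - H\psi\| = c(n^{-1}) \to 0$; the lemma absorbs all the viscosity-solution technicalities. You instead attempt a direct penalization argument from the definitions, essentially re-proving the relevant case of Lemma 7.6. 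The mechanics (coercive test functions $\psi^{\varepsilon,+}$, penalized maximizers $(x_\varepsilon,y_\varepsilon)$, passing to $\varepsilon \downarrow 0$ using $c(\varepsilon)\to 0$) are the right ones, but as written the argument has a genuine gap.

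The viscosity subsolution notion used in this paper is sequence-based: one must establish the inequality along \emph{every} sequence $(x_n,y_n)$ with $u(x_n,y_n) - \psi(x_n) \to \sup(u - \psi)$, not merely at a maximizer of $u - \psi$. You explicitly ``assume that $u - \psi$ attains its global maximum at some point $(x_0,y_0)$.'' But $u$ is only bounded and upper semicontinuous on the non-compact space $\mathbb{R}^2$ and $\psi \in C_c^\infty(\mathbb{R})$ is constant outside a compact set, so $\sup(u - \psi)$ need not be attained. In the non-attainment case the penalized maximizers escape to infinity as $\varepsilon \downarrow 0$ and your conclusion at an interior point no longer yields the required inequality along the given approximating sequence. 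This is exactly the situation that Lemma~\ref{lemma:extension_results} is designed to handle cleanly, and it is why the paper uses it rather than arguing directly. There is also a minor slip in the compactness step: a uniform bound on $\varepsilon\log(1+x_\varepsilon^2+y_\varepsilon^2)$ does not by itself confine $(x_\varepsilon,y_\varepsilon)$ as $\varepsilon \downarrow 0$; what actually holds (under your attainment assumption) is the stronger conclusion $x_\varepsilon^2 + y_\varepsilon^2 \leq x_0^2 + y_0^2$, obtained by comparing $u - \psi^{\varepsilon,+}$ at $(x_\varepsilon,y_\varepsilon)$ and at $(x_0,y_0)$ together with $u(x_\varepsilon,y_\varepsilon)-\psi(x_\varepsilon) \leq u(x_0,y_0)-\psi(x_0)$, and it is this bound that gives a compact set independent of $\varepsilon$.
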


\begin{proof}
We prove only that $\hat{H}_\dagger$ is a sub-extension of $H_\dagger$. We use the first statement of Lemma \ref{lemma:extension_results}. Let $\psi \in \cD(H)$. We must show that $(\psi,H\psi)$ is appropriately approximated  by elements in the graph of $H_\dagger$.\\ 
For any $n \in \mathbb{N}^*$, set $\varepsilon(n) = n^{-1}$ and consider the function $\psi_n (x,y) = \psi(x) + \varepsilon(n) \log(1+x^2+y^2)$, with $H_\dagger \psi_n = H\psi + c(\varepsilon(n))$. From Lemma \ref{lemma:properties_fepsilon}\eqref{lemma:item:fepsilon_to_f}  we obtain that $\vn{\psi_n \wedge c - \psi \wedge c} \rightarrow 0$, for every $c \in \mathbb{R}$. In addition, as $H\psi \in C_b(\bR)$, we have $\vn{H_\dagger \psi_n - H\psi} \rightarrow 0$. This concludes the proof.
\end{proof}

\medskip

{\em Exponential compact containment.} The last open question we must address consists in verifying exponential compact containment for the fluctuation process. The validity of the compactness condition will be shown in Proposition~\ref{proposition:exponential_compact_containment}. We start by proving an auxiliary lemma.

\begin{lemma}\label{lemma:compact_containment_in_n}
Suppose we are either in the setting of Theorem \ref{thm:MD:paramagnetic:critical} and $\nu =2$ or in the setting of Theorem \ref{thm:MD:paramagnetic:tricritical} and $\nu = 4$. Let $G \subseteq \mathbb{R}^2$ be a relatively compact open set. Consider the cut-off introduced in \eqref{eqn:cut-off} and define
\[
\Upsilon_n (x,y) = \chi_n \left( \frac{1}{2}  \log(1+x^2+y^2) \right).
\]
Then, there exists a positive constant $c$ (independent of $n$) such that
\[
\limsup_{n \uparrow \infty} \sup_{(x,y) \in G \cap E_n} H_n \Upsilon_n (x,y) \leq \frac{2}{\cosh(\beta B)}.
\]
\end{lemma}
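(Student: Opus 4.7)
On the relatively compact set $G$, the function $\tfrac{1}{2}\log(1+x^2+y^2)$ is bounded by some constant $M$, and since the growth assumption $b_n^{2\nu+2}n^{-1}\log\log n\to 0$ forces $b_n/n\to 0$, the shifted points $(x\pm 2b_n/n, y\pm 2b_n/n)$ entering the jump part of $A_n$ stay in a bounded neighborhood of $G$ where $\tfrac{1}{2}\log(1+x^2+y^2)$ is again uniformly bounded. Because $\chi_n\equiv\mathrm{id}$ on $[-\log\log b_n+2,\log\log b_n-2]$ and $\log\log b_n\to\infty$, for $n$ large enough the cut-off is inactive on all function values relevant to evaluating $H_n\Upsilon_n(x,y)$ for $(x,y)\in G\cap E_n$. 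Thus, for large $n$, one has $H_n\Upsilon_n(x,y)=H_nf(x,y)$ with $f(x,y):=\tfrac{1}{2}\log(1+x^2+y^2)$, a function whose derivatives of all orders are bounded on $\mathbb{R}^2$. This puts us in position to apply Proposition~\ref{proposition:compact_expression_for_H_n} and Lemma~\ref{lemma:remainder_control}, with remainders of order $n^{-1}b_n^{\nu+2}+b_n^{\nu-5}\log^3 b_n$ that vanish under our growth assumption.

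\textbf{Quadratic part.} At the stationary point $(m,q)=(0,\tanh(\beta B))$ the leading drift \eqref{eqn:compact_exp_G_1} is zero, and a direct computation gives $G_{1,\beta,B}^{\pm}(0,\tanh(\beta B))=\cosh(\beta B)-\tanh(\beta B)\sinh(\beta B)=1/\cosh(\beta B)$, hence $\bG_1(0,\tanh(\beta B))=\tfrac{2}{\cosh(\beta B)}\,I$. Since $\nabla f(x,y)=(x,y)/(1+x^2+y^2)$, the quadratic term \eqref{eqn:compact_exp_G_3} equals
\[
\ip{\bG_1\nabla f}{\nabla f}(x,y)=\frac{2}{\cosh(\beta B)}\cdot\frac{x^2+y^2}{(1+x^2+y^2)^2}\leq\frac{2}{\cosh(\beta B)},
\]
which is exactly the quantity appearing on the right-hand side of the statement.

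\textbf{Drift part.} It remains to argue that the polynomial drift sum $\sum_{k=1}^{5}\tfrac{b_n^{\nu+1-k}}{k!}\ip{D^k\cG_2(0,\tanh(\beta B))\binom{x^k}{kx^{k-1}y}}{\nabla f(x,y)}$ contributes asymptotically at most $o(1)$. Because we sit on the critical curve (or at the tri-critical point), Lemma~\ref{lemma:DkG2_expressions}(d) applies: the odd-$k$ summands are manifestly non-positive ($k=1$ produces $-2\cosh(\beta B)\,y^2/S\cdot b_n^{\nu}$, $k=3$ produces $-\tfrac{\beta^2\cosh(\beta B)(2x^4+3x^2y^2)}{3S}\cdot b_n^{\nu-2}$ with $S:=1+x^2+y^2$, and $k=5$ is analogous), while the even-$k$ summands are sign-indefinite. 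The plan is to absorb $(k=2)$ into $(k=1)$ by completing the square exactly as in equations~\eqref{eqn:square_completion:1}--\eqref{eqn:Xi_fixed_parameter_case:after_manipulations} using the substitution $u:=b_ny+\beta\tanh(\beta B)\,x^2$; this leaves a non-positive square $-\tfrac{2\cosh(\beta B)b_n^{\nu-2}u^2}{S}$ and an $x^4$-residual whose sum with $(k=3)$'s $x^4$-coefficient is $2\beta^2\cosh(\beta B)[\tanh^2(\beta B)-\tfrac{1}{3}]\,b_n^{\nu-2}x^4/S$. The critical-curve identity $\beta=\cosh^2(\beta B)$ yields $\tanh^2(\beta B)-\tfrac{1}{3}=(2\beta-3)/(3\beta)$, which is non-positive for $1<\beta\leq\tfrac{3}{2}$ and vanishes precisely at the tri-critical point. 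For $\nu=2$ this already makes the whole drift non-positive modulo the $O(b_n^{-1})$ tail from $k=4,5$. For $\nu=4$ the residual must be driven further down: the remaining $-\beta^2\cosh(\beta B)b_n^2x^2y^2/S$ from $(k=3)$ absorbs $(k=4)$ by Young's inequality ($|b_nx^4y|\leq\tfrac{1}{2}(b_n^2x^2y^2+x^6)$) since $\tfrac{\beta\tanh(\beta B)}{3}<1$ at the tri-critical point, and $(k=5)$ is non-positive, so the sum is dominated by a bounded error that vanishes in the $\limsup$.

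\textbf{Main obstacle.} The delicate part of the proof is keeping careful track of all the residuals that emerge from iterated completion-of-square manipulations, and verifying at each step that the relevant pairing constants (most importantly, $\tanh^2(\beta B)=\tfrac{1}{3}$ at the tri-critical point and $\tanh^2(\beta B)\leq\tfrac{1}{3}$ on the critical curve for $\beta\leq\tfrac{3}{2}$) ensure non-positivity. Once this bookkeeping is complete, combining the bounded quadratic term of Step~2 with the non-positive/negligible drift contribution of Step~3 and the vanishing remainders yields the claimed bound $\limsup_n\sup_{G\cap E_n}H_n\Upsilon_n\leq\tfrac{2}{\cosh(\beta B)}$.
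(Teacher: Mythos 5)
Your approach is essentially the paper's, but the paper obtains the result almost for free by observing that $\Upsilon_n$ is precisely the function $\psi_n^{\varepsilon,+}$ of Lemma~\ref{lemma:properties_fepsilon} with the special choices $\psi\equiv 0$ and $\varepsilon=\tfrac{1}{2}$ (so that $F_{n,\psi}\equiv 0$ and $4\varepsilon^2=1$), and then directly cites the uniform estimate \eqref{eqn:Hamiltonian_uniform_estimate:fixed_parameter_case:1} from the proof of Proposition~\ref{proposition:limiting_hamiltonians}: the drift bracket times $\chi_n'\geq 0$ is non-positive by the $\Xi_n$ manipulations, while the quadratic part reduces to $\tfrac{2}{\cosh(\beta B)}\cdot\tfrac{x^2+y^2}{(1+x^2+y^2)^2}(\chi_n')^2\leq\tfrac{2}{\cosh(\beta B)}$. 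You instead redo the expansion from scratch for $f=\tfrac{1}{2}\log(1+x^2+y^2)$ without noticing the specialization — logically equivalent, but longer, and you forgo the economy of reusing an already-verified estimate.

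One concrete defect in your $\nu=4$ sketch: the equal-weight Young bound $|b_nx^4y|\leq\tfrac{1}{2}(b_n^2x^2y^2+x^6)$ does not close. With $\tanh^2(\beta B)=\tfrac{1}{3}$ and $\beta=\tfrac{3}{2}$, matching the $x^6$ residual against the $k=5$ budget requires $5\tanh(\beta B)\leq\beta$, i.e.\ $\tfrac{5}{\sqrt{3}}\approx 2.89\leq 1.5$, which is false; the condition $\tfrac{\beta\tanh(\beta B)}{3}<1$ that you check only ensures the $b_n^2x^2y^2$ side is absorbed. A weighted Young $|b_nx^4y|\leq\tfrac{1}{2\alpha}b_n^2x^2y^2+\tfrac{\alpha}{2}x^6$ with $\alpha\in[\beta\tanh(\beta B)/3,\,\beta/(5\tanh(\beta B))]$ (non-empty since $\tanh^2(\beta B)\leq\tfrac{3}{5}$) would repair it — but this is exactly what the paper's second square completion \eqref{eqn:square_completion:2} does automatically, producing the residual $-2\beta^4\cosh(\beta B)(2-5\tanh^2(\beta B))x^6\leq 0$. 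Since you already point to \eqref{eqn:square_completion:1}--\eqref{eqn:Xi_fixed_parameter_case:after_manipulations} and acknowledge the bookkeeping is not finished, this is an incomplete-verification issue rather than a wrong idea; I would simply replace the Young step by the exact completion and, better yet, recognize $\Upsilon_n=\psi_n^{1/2,+}\big|_{\psi\equiv 0}$ and cite Proposition~\ref{proposition:limiting_hamiltonians} outright.
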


\begin{proof}
The proof is analogous to the verification of condition \eqref{eqn:sublim_generators_upperbound} in the proof of Proposition~\ref{proposition:limiting_hamiltonians}. Set $\psi \equiv 0$ and $\varepsilon = \frac{1}{2}$.
\end{proof}

\begin{proposition} \label{proposition:exponential_compact_containment}
Suppose we are either in the setting of Theorem \ref{thm:MD:paramagnetic:critical} and $\nu =2$ or in the setting of Theorem \ref{thm:MD:paramagnetic:tricritical} and $\nu = 4$. 

Moreover, assume that $(b_n m_n(0), b_n (q_n(0) - \tanh(\beta B)))$ is exponentially tight at speed $n b_n^{- \nu-2}$, then the process 
\[
Z_n(t) := (b_n m_n(b_n^\nu t), b_n (q_n(b_n^\nu t) - \tanh(\beta B)))
\]
satisfies the exponential compact containment condition at speed $n b_n^{- \nu-2}$. In other words, for every compact set $K \subseteq \bR^2$, every constant $a \geq 0$ and time $T \geq 0$, there exists a compact set $K' = K'(K,a,T) \subseteq \bR^2$ such that
\begin{equation*}
\limsup_{n \to \infty} \sup_{z \in K \cap E_n} n^{-1}b_n^{\nu+2} \log \PR\left[Z_n(t) \notin K' \text{ for some } t \leq T \, \middle| \, Z_n(0) = z \right] \leq - a.
\end{equation*}
\end{proposition}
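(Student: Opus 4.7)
The plan is to use the classical exponential-martingale Lyapunov-function method, with the function $\Upsilon_n$ from Lemma~\ref{lemma:compact_containment_in_n} playing the role of Lyapunov function. Since $\Upsilon_n$ is smooth, bounded, and constant outside a compact set, it belongs to the domain of the generator $A_n$ of the pure-jump Markov process $Z_n$. Setting $r_n := n b_n^{-\nu-2}$, the identity $e^{-r_n f} A_n e^{r_n f} = r_n H_n f$ --- coming from the definition~\eqref{eqn:generic:Hamiltonian} of $H_n$ --- together with the Dynkin formula imply that
\[
M_n(t) := \exp\!\left\{r_n\bigl[\Upsilon_n(Z_n(t)) - \Upsilon_n(Z_n(0))\bigr] - r_n \int_0^t H_n \Upsilon_n(Z_n(s))\,\dd s\right\}
\]
is a true martingale: for each fixed $n$ the boundedness of $\Upsilon_n$ and the boundedness of $A_n$ as an operator on bounded functions promote the local martingale to a genuine martingale.

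I would then take the candidate compact set to be the sublevel set $K' := \{(x,y) \in \bR^2 : \tfrac{1}{2}\log(1+x^2+y^2) \leq C\}$, for a constant $C > 0$ to be chosen at the end, and set $\tau_n := \inf\{t \geq 0 : Z_n(t) \notin K'\}$. Fix any relatively compact open neighborhood $G$ of $K'$. Since $\log\log b_n \to \infty$, for $n$ large the cut-off $\chi_n$ acts as the identity on $[-C-2, C+2]$, so $\Upsilon_n(z) \geq C$ for every $z \in G \setminus K'$; and because $z \mapsto \chi_n(\tfrac{1}{2}\log(1+|z|^2))$ is nondecreasing in $|z|$, the same lower bound $\Upsilon_n(z) \geq C$ persists for all $z \notin K'$. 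Moreover, jumps of $Z_n$ have size $O(b_n/n) \to 0$, so for $n$ large the point $Z_n(\tau_n)$ still lies in $G$, and in particular $\Upsilon_n(Z_n(\tau_n)) \geq C$ on $\{\tau_n \leq T\}$. Applying Lemma~\ref{lemma:compact_containment_in_n} to $G$ produces a constant $c_0 \geq 0$ independent of $n$ with $H_n \Upsilon_n(Z_n(s)) \leq c_0$ for every $s \leq \tau_n$, once $n$ is large enough.

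Optional sampling at the bounded stopping time $\tau_n \wedge T$ gives $\bE[M_n(\tau_n \wedge T)] = 1$, and restricting the expectation to $\{\tau_n \leq T\}$ yields
\[
1 \geq \bE\bigl[M_n(\tau_n)\,\mathbbm{1}_{\{\tau_n \leq T\}}\bigr] \geq \exp\bigl\{r_n\bigl(C - \Upsilon_n(z) - c_0 T\bigr)\bigr\}\, \PR\bigl[\tau_n \leq T \,\big|\, Z_n(0) = z\bigr].
\]
Since $\Upsilon_n(z) \leq D(K)$ uniformly in $n$ for $z \in K$, the choice $C \geq a + c_0 T + D(K)$, after taking logarithms and dividing by $r_n$, delivers the required upper bound $\leq -a$, so that $K'$ is the desired compact set. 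The principal technical obstacle is the careful interplay between the cut-off $\chi_n$ --- needed so that $\Upsilon_n$ is an admissible test function and $M_n$ a true martingale --- and the Lyapunov inequality supplied by Lemma~\ref{lemma:compact_containment_in_n}, but this is fully controlled by the growth assumption $b_n^{2\nu+2} n^{-1} \log\log n \to 0$, which in particular makes the jump overshoot $O(b_n/n)$ negligible.
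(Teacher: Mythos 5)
Your proof is correct and follows essentially the same route as the paper: both take $\Upsilon_n$ as Lyapunov function and feed the bound from Lemma~\ref{lemma:compact_containment_in_n} into the exponential-martingale / optional-stopping mechanism; the only difference is that you unpack this last step explicitly, whereas the paper delegates it to Lemma~\ref{lemma:compact_containment_FK} (Lemma 4.22 of Feng--Kurtz).
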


\begin{proof}
The statement follows from Lemmas~\ref{lemma:compact_containment_in_n} and~\ref{lemma:compact_containment_FK} by choosing $f_n \equiv \Upsilon_n$ on a fixed, sufficiently large, compact set of $\mathbb{R}^2$. For similar proofs see e.g. \cite[Lem.~3.2]{DFL11} or \cite[Prop.~A.15]{CoKr17}.
\end{proof}

\begin{proof}[Proof of Theorems~\ref{thm:MD:paramagnetic:critical}~and~\ref{thm:MD:paramagnetic:tricritical}]
We check the assumptions of Theorem~\ref{theorem:Abstract_LDP}. We use operators $H_\dagger$, $H_\ddagger$ as in Definition~\ref{definition:limiting_upper_lower_hamiltonians} and limiting Hamiltonian $H \subseteq C_b(\bR) \times C_b(\bR)$, with domain $C_c^\infty(\bR)$, of the form $Hf(x) = H(x,f'(x))$ where
\begin{itemize}
\item
in the setting of Theorem~\ref{thm:MD:paramagnetic:critical} with $\nu=2$:
\[
H(x,p) = \frac{2}{\cosh(\beta B)} p^2 + \frac{2}{3} \beta (2 \beta - 3) \cosh (\beta B) x^3 p;
\]

\item
in the setting of Theorem~\ref{thm:MD:paramagnetic:tricritical} with $\nu=4$:
\[
H(x,p) = 2  \sqrt{\frac{2}{3}} p^2 - \frac{9}{10} \sqrt{\frac{3}{2}} \, x^5 p.
\]
\end{itemize}
We first verify Condition~\ref{condition:H_limit}: \eqref{condition:H_limit:item_1} follows from Proposition~\ref{proposition:limiting_hamiltonians}, \eqref{condition:H_limit:item_2} is satisfied by definition and \eqref{condition:H_limit:item_3} follows from Proposition~\ref{proposition:subsuper_extensions}.

The comparison principle for $f- \lambda Hf = h$ for $h \in C_b(\bR)$ and $\lambda > 0$ has been verified in e.g. \cite[Prop.~3.5]{CoKr17}. Note that the statement of the latter proposition is valid for $ f \in C_c^2(\bR)$, but the result generalizes straightforwardly to class $C_c^\infty(\bR)$ as the penalization and containment functions used in the proof are $C^\infty$.

Finally, the exponential compact containment condition follows from Proposition \ref{proposition:exponential_compact_containment}.
\end{proof}

\section{Variations in the external parameters} \label{section:projection_varying_betaB} 

Suppose we are either in the setting of Theorem~\ref{theorem:moderate_deviations_CW_critical_curve_rescaling} with $\nu=2$ or in the setting of Theorems~\ref{theorem:moderate_deviations_CW_tri-critical_point_rescaling} and \ref{theorem:moderate_deviations_CW_tri-critical_point_rescaling:2} with $\nu=4$. The major difference of these theorems compared to Theorems \ref{thm:MD:paramagnetic:critical} and \ref{thm:MD:paramagnetic:tricritical} is the variation in the parameters $\beta$ and $B$ as the system size increases. The inverse temperature and the magnetic field are respectively $\beta_n := \beta + \kappa_n$ and $B_n := B + \theta_n$, where $\{\kappa_n\}_{n \geq 1}$ and $\{\theta_n\}_{n \geq 1}$ are real sequences converging to zero.\\ 
In this more general framework, due to an extra Taylor expansion in $\beta$ and $B$, the Hamiltonian \eqref{eqn:Hn:compact_form:generic:paramagnetic} changes into
\begin{multline}\label{def:varying_parameters_Hamiltonian}
\sum_{l \geq 0} \sum_{j \geq 0} \sum_{k=1}^5 \frac{\kappa_n^l}{l!} \frac{\theta_n^j}{j!} \frac{b_n^{\nu + 1 -k}}{k!}\ip{\partial_\kappa^l \partial_\theta^j D^k \cG_{2}(0,\tanh(\beta B)) \begin{pmatrix} x^k \\ k x^{k-1}y \end{pmatrix}}{\nabla f(x,y)}\\
+ \ip{\bG_{1}(0,\tanh(\beta B)) \nabla f(x,y)}{\nabla f(x,y)} + o (1)+ o(b_n^{\nu - 4})
\end{multline}
and 
determining the terms that contribute to the limiting operator
is trickier than before. In the linear part of \eqref{eqn:Hn:compact_form:generic:paramagnetic} the operators appearing with a factor $b_n^{k}$ introduce terms with $x^k$; whereas, now this is no longer the case. Operators with pre-factor $b_n^{k}$ may introduce terms with $x^m$ for $m \leq k$ (the power $m$ depends on the order of $\kappa_n$ and $\theta_n$ with respect to $b_n$). Therefore, we need to extend the method presented in Section \ref{section:formal_calculus_of_operators} appropriately.

\subsection{Extending the formal calculus of operators}\label{sect:formal_calculus:extended}

Let $V$ and $V_i$, with $i \in \mathbb{N}$, be the vector spaces of functions introduced at the beginning of Section~\ref{section:formal_calculus_of_operators}. We are going to define an alternative set of operators on $V$. Let $a \in \mathbb{R}$ and $g: \mathbb{R}^2 \rightarrow \mathbb{R}$ be a differentiable function. Fix $k \in \mathbb{N}$ and consider the array of operators
\begin{equation}\label{def:calQ:even_extended}
\left.
\begin{array}{l}
\mathcal{Q}_{k,m}^+[a] g(x,y) := a x^{m-1}y g_x(x,y) \\[0.2cm]
\mathcal{Q}_{k,m}^-[a] g(x,y) := a x^m g_y(x,y)
\end{array}
\quad
\right]
\text{ for even $m$ and $m \leq k$}
\end{equation}
and
\begin{equation}\label{def:calQ:odd_extended}
\left.
\begin{array}{l}
\mathcal{Q}_{k,m}^0[a] g(x,y) := a x^m g_x(x,y) \\[0.2cm]
\mathcal{Q}_{k,m}^1[a] g(x,y) := a x^{m-1} y g_y(x,y)
\end{array}
\quad\,
\right]
\text{ for odd $m$ and $m \leq k$.}
\end{equation}

We have the direct analogue of Lemma \ref{lemma:mappings_wrt_Vindex}.

\begin{lemma} \label{lemma:mappings_wrt_Vindex_extended}
For all $a \in \bR$ and $k, i \in \bN$, we have 
\[
\mathcal{Q}_{k,m}^+[a] : V_i \rightarrow V_{i+1} \text{ and } \mathcal{Q}_{k,m}^-[a] : V_i \rightarrow V_{i-1},
\text{ for even $m$,}
\]
and
\[
\mathcal{Q}_{k,m}^0[a], \mathcal{Q}_{k,m}^1[a] : V_i \rightarrow V_i, \text{ for odd $m$}.
\]
\end{lemma}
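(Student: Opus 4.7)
The lemma is a direct analogue of Lemma \ref{lemma:mappings_wrt_Vindex}, and inspection of the definitions shows that the new operators $\mathcal{Q}_{k,m}^{\bullet}[a]$ in \eqref{def:calQ:even_extended}--\eqref{def:calQ:odd_extended} depend on $k$ only through a bookkeeping index that restricts the allowed range of $m$; their action on a function $g$ coincides with that of the operators $\mathcal{Q}_m^{\bullet}[a]$ of \eqref{def:calE:even}--\eqref{def:calE:odd} (with $m$ in place of $k$). Therefore the proof is a one-line reduction to the previous lemma, or equivalently a routine direct verification which I sketch below.

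The plan is to take an arbitrary $g \in V_i$, write it as $g(x,y) = y^i g_i(x)$ with $g_i \in C_c^\infty(\mathbb{R})$, and compute the partial derivatives
\begin{equation*}
g_x(x,y) = y^i g_i'(x), \qquad g_y(x,y) = i\, y^{i-1} g_i(x),
\end{equation*}
where the second expression is interpreted as $0$ when $i = 0$. Substituting these into the four definitions gives, for even $m$,
\begin{equation*}
\mathcal{Q}_{k,m}^+[a] g(x,y) = a\, x^{m-1} y^{i+1} g_i'(x) \in V_{i+1},
\qquad
\mathcal{Q}_{k,m}^-[a] g(x,y) = i a\, x^{m} y^{i-1} g_i(x) \in V_{i-1},
\end{equation*}
and, for odd $m$,
\begin{equation*}
\mathcal{Q}_{k,m}^0[a] g(x,y) = a\, x^{m} y^{i} g_i'(x) \in V_{i},
\qquad
\mathcal{Q}_{k,m}^1[a] g(x,y) = i a\, x^{m-1} y^{i} g_i(x) \in V_{i}.
\end{equation*}
Since $g_i, g_i' \in C_c^\infty(\mathbb{R})$, the resulting functions still satisfy the $C_c^\infty$ requirement in the $x$-variable, so they indeed lie in the asserted target spaces.

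There is no real obstacle here; the only minor point to mention is the convention $V_{-1} := \{0\}$ (needed because $\mathcal{Q}_{k,m}^-[a]$ applied to $V_0$ vanishes, as $g_y \equiv 0$ in that case), which is consistent with the image $i a\, x^m y^{i-1} g_i = 0$ when $i=0$. Everything else is formal substitution, exactly as in the proof of Lemma \ref{lemma:mappings_wrt_Vindex}.
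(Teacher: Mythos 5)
Your proof is correct, and it follows the same route the paper implicitly takes: the paper states this lemma as the "direct analogue" of Lemma \ref{lemma:mappings_wrt_Vindex} (which itself is declared "immediate" and left unproved), and your argument is exactly the obvious direct verification one would write out, plus the sensible observation that $\mathcal{Q}_{k,m}^{\bullet}[a]$ acts identically to $\mathcal{Q}_m^{\bullet}[a]$ so the $k$-index is pure bookkeeping. The small remark about interpreting $V_{-1}$ as $\{0\}$ is a reasonable clarification consistent with the fact that $g_y \equiv 0$ on $V_0$.
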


Notice that also in this extended setting the operators with superscript $1$ (i.e., $Q_{k,m}^1$ with odd $m$ and $m \leq k$) have the peculiarity of admitting $V_0$ as a kernel.

\begin{assumption} \label{assumption:abstract_sequences_extended}
Assume there exist real constants $a_{k,m}^+$, $a_{k,m}^-$ if $m$ is even and \mbox{$1 \leq m \leq k$} and  $a_{1,1}^0=0$, $a_{1,1}^1$, $a_{k,m}^0$, $a_{k,m}^1$ if $m$ is odd and $1 < m \leq k$, for which, given a continuously differentiable function $g: \mathbb{R}^2 \rightarrow \mathbb{R}$, we can write
\begin{equation}\label{def:Qk's:extended}
\mathcal{Q}_{k} g = \sum_{\substack{m \leq k \\ m \text{ even}}} \left( \mathcal{Q}_{k,m}^+ g + \mathcal{Q}_{k,m}^- g \right) + \sum_{\substack{m \leq k \\ m \text{ odd}}} \left(\mathcal{Q}_{k,m}^0 g + \mathcal{Q}_{k,m}^1 g \right)
\end{equation}
with
\begin{equation*}
\left.
\begin{array}{l}
\mathcal{Q}_{k,m}^+ g (x,y) := \mathcal{Q}_{k,m}^+[a_{k,m}^+] g (x,y) \\[0.2cm]
\mathcal{Q}_{k,m}^- g (x,y) := \mathcal{Q}_{k,m}^-[a_{k,m}^-] g (x,y)
\end{array}
\quad
\right]
\text{ for even $m$ and $m \leq k$}
\end{equation*}
and
\begin{equation*}
\left.
\begin{array}{l}
\mathcal{Q}_{k,m}^0 g (x,y) := \mathcal{Q}_{k,m}^0[a_{k,m}^0] g (x,y) \\[0.2cm]
\mathcal{Q}_{k,m}^1 g (x,y) := \mathcal{Q}_{k,m}^1[a_{k,m}^1] g (x,y)
\end{array}
\quad\,
\right]
\text{ for odd $m$ and $m \leq k$.}
\end{equation*}
\end{assumption}

Observe that we recover Assumption \ref{assumption:abstract_sequences} if $a_{k,m}^{z} = 0$ whenever $k \neq m$ and set $\cQ_{k,k}^{z} := \cQ_k^{z}$ for appropriate $z \in \{+,-,1,0\}$. 

\smallskip

Using our new definitions, Lemma~\ref{lemma:kernel_operator_decomp} and the recursion relationships  \eqref{def:recursion:psi:phi} are unchanged and furthermore, the result of Proposition \ref{proposition:perturbation_abstract_Q} is still valid. The main modification is that we need to re-evalute the functions $P_0 \phi[i]$ as the $\cQ_k$ are defined by using a larger set of operators. We get the following two statements.

\begin{proposition} \label{proposition:perturbation_abstract_Q_extended}
Fix $\nu \geq 2$ an even natural number and suppose that Assumption \ref{assumption:abstract_sequences_extended} holds true for this $\nu$. Consider the operator
\begin{equation}\label{eqn:operator:calQ_extended}
\cQ^{(n)} \psi(x,y) := \sum_{k=1}^{\nu+1} b_n^{\nu+1 -k} \cQ_k \psi(x,y)
\end{equation}
and, for $\psi = \psi[0] \in V_0$, define $F_{n,\psi}(x,y) := \sum_{l=0}^\nu b_n^{-l} \psi[l](x,y)$. We have
\begin{equation*}
\cQ^{(n)} F_{n,\psi}(x,y) = \sum_{i = 1}^\nu b_n^{\nu - i} P_0 \phi[i](x) + o(1),
\end{equation*}
where $o(1)$ is meant according to Definition~\ref{def:o(1)}.
\end{proposition}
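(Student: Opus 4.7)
The plan is to reproduce the argument of Proposition \ref{proposition:perturbation_abstract_Q} essentially verbatim, observing that although the internal structure of each $\mathcal{Q}_k$ is now richer (a sum of operators $\mathcal{Q}_{k,m}^z$ over $m \leq k$ rather than a single summand with $m = k$), the only features of $\mathcal{Q}_k$ used in that earlier proof are (i) that each $\mathcal{Q}_k$ maps $V$ into $V$, (ii) that $\mathcal{Q}_1 \psi[0] = 0$ whenever $\psi[0] \in V_0$, and (iii) the kernel decomposition of Lemma \ref{lemma:kernel_operator_decomp}. All three ingredients survive in the extended setting: (i) is Lemma \ref{lemma:mappings_wrt_Vindex_extended}; (ii) holds because for $k=1$ the only contributing summand is $\mathcal{Q}_{1,1}^0 + \mathcal{Q}_{1,1}^1$, with $a_{1,1}^0 = 0$ by Assumption \ref{assumption:abstract_sequences_extended} and with $\mathcal{Q}_{1,1}^1$ vanishing on $V_0$; and (iii) is the same identity, now with the constant $a_1^1$ replaced by $a_{1,1}^1$, so that $P\psi(x,y) := -\sum_{i \geq 1} y^i \psi_i(x)/(i\,a_{1,1}^1)$ continues to satisfy $\psi + \mathcal{Q}_{1,1}^1 P\psi = P_0 \psi$.

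Concretely, I would first expand
\begin{equation*}
\mathcal{Q}^{(n)} F_{n,\psi}(x,y) = \sum_{k=1}^{\nu+1} \sum_{l=0}^\nu b_n^{\nu+1-k-l} \mathcal{Q}_k \psi[l](x,y) + o(1),
\end{equation*}
where the $o(1)$ collects the terms with $k+l-1 > \nu$. These terms are uniformly bounded on the state space $E_n$ because $E_n \subseteq \mathbb{R} \times [-2b_n, 2b_n]$, so each $b_n^{-l} \psi[l]$ remains uniformly bounded on $E_n$ (by the same argument as in Lemma \ref{lemma:convergence_of_functions_in_domain}, using that $\psi[l] \in V_{\leq l}$ has a polynomial dependence on $y$ of degree at most $l$). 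Then I would reindex via $r = k+l-1$, obtaining
\begin{equation*}
\mathcal{Q}^{(n)} F_{n,\psi}(x,y) = \sum_{r=0}^{\nu} b_n^{\nu-r} \sum_{l=0}^{r} \mathcal{Q}_{r-l+1} \psi[l](x,y) + o(1).
\end{equation*}
The $r=0$ summand is $\mathcal{Q}_1 \psi[0]$, which vanishes by observation (ii) above.

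For $r \geq 1$, I would split off the $l=r$ term and apply the recursive definitions \eqref{def:recursion:psi:phi}: the inner sum becomes
\begin{equation*}
\mathcal{Q}_1 \psi[r] + \phi[r] = \mathcal{Q}_{1,1}^1 P \phi[r] + \phi[r] = P_0 \phi[r](x),
\end{equation*}
where the first equality uses $\psi[r] = P\phi[r]$ together with $\mathcal{Q}_{1,1}^0 \equiv 0$, and the second is the kernel decomposition of the extended Lemma \ref{lemma:kernel_operator_decomp}. Summing over $r = 1, \ldots, \nu$ yields the claimed identity.

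The proof is essentially bookkeeping, and there is no genuine obstacle; the only delicate point to verify is the $o(1)$ remainder bound, which requires checking that the higher-order terms $b_n^{\nu+1-k-l} \mathcal{Q}_k \psi[l]$ with $k+l-1 > \nu$ converge to $0$ uniformly on compacts (which is immediate) and, for the extended $\LIM$ notion of Definition \ref{def:o(1)}, remain globally bounded on $E_n$ uniformly in $n$. The latter follows because every $\psi[l]$ is a polynomial in $y$ of degree at most $l$ with $C_c^\infty$-coefficients in $x$, so that on $E_n \subseteq \mathbb{R} \times [-2b_n, 2b_n]$ one has $|\psi[l](x,y)| \lesssim b_n^{l}$, and applying any of the operators $\mathcal{Q}_{k,m}^z$ preserves this polynomial structure up to a bounded shift in degree.
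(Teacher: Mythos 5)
Your proposal is correct and follows exactly the route the paper intends: the paper gives no separate proof of Proposition~\ref{proposition:perturbation_abstract_Q_extended}, simply observing that Lemma~\ref{lemma:kernel_operator_decomp}, the recursion \eqref{def:recursion:psi:phi}, and the argument of Proposition~\ref{proposition:perturbation_abstract_Q} carry over unchanged, and you have verified precisely those three ingredients (with the only textual change being $a_1^1 \mapsto a_{1,1}^1$ and $\mathcal{Q}_1 = \mathcal{Q}_{1,1}^1$ since $a_{1,1}^0 = 0$). Your check of the $o(1)$ remainder via the polynomial-in-$y$ structure of $\psi[l] \in V_{\le l}$ and the inclusion $E_n \subseteq \mathbb{R} \times [-2b_n,2b_n]$ also matches the paper's justification in Proposition~\ref{proposition:perturbation_abstract_Q} and Lemma~\ref{lemma:convergence_of_functions_in_domain}.
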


We can evaluate the functions $P_0 \phi[i]$ as we did in Lemma~\ref{lemma:evaluation_P0_phi_l}. 
Under the more general Assumption~\ref{assumption:abstract_sequences_extended}, more terms survive the infinite volume limit.
We calculate the outcomes only for the cases we will need below.

\begin{lemma} \label{lemma:evaluation_P0_phi_l_extended_specific}
Consider the setting of Proposition \ref{proposition:perturbation_abstract_Q_extended}. For $\psi = \psi[0] \in V_0$, we have $P_0 \phi[l] = 0$ if $l$ is odd and 
\begin{equation}\label{lemma:projection_on_kernel_extended}
P_0 \phi[l] = \begin{cases}
\mathcal{Q}_{3,3}^0 \psi + \mathcal{Q}_{2,2}^- P \mathcal{Q}_{2,2}^+ \psi + \mathcal{Q}_{3,1}^0 \psi  & \text{if } l =2, \\[0.3cm]
\mathcal{Q}_{5,5}^0 \psi + \mathcal{Q}^-_{2,2} P \mathcal{Q}_{4,4}^+ \psi + \mathcal{Q}_{4,4}^- P \mathcal{Q}_{2,2}^+ \psi + \mathcal{Q}_{2,2}^- P\mathcal{Q}_{3,3}^1P \mathcal{Q}^+_{2,2} \psi  \\
+ \mathcal{Q}_{5,3}^0 \psi + \mathcal{Q}^-_{2,2} P \mathcal{Q}_{4,2}^+ \psi + \mathcal{Q}_{4,2}^- P \mathcal{Q}_{2,2}^+ \psi + \mathcal{Q}_{2,2}^- P\mathcal{Q}_{3,1}^1P \mathcal{Q}^+_{2,2} \psi  \\
\:\: + \mathcal{Q}_{5,1}^0 \psi + + \mathcal{Q}_{2,2}^- P( \mathcal{Q}_{3,3}^0 + \mathcal{Q}_{2,2}^- P \mathcal{Q}_{2,2}^+ + \cQ_{3,1}^0)P \mathcal{Q}^+_{2,2} \psi & \text{if } l =4.
\end{cases}
\end{equation}
\end{lemma}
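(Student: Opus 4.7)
The plan is to reproduce the bookkeeping of Lemma \ref{lemma:evaluation_P0_phi_l} with the richer collection of operators $\cQ_{k,m}^z$ provided by Assumption \ref{assumption:abstract_sequences_extended}, keeping track of the new $m<k$ contributions. The structural backbone is unchanged: by Lemma \ref{lemma:mappings_wrt_Vindex_extended}, the parity of the second index $m$ determines whether $\cQ_{k,m}^z$ shifts the $V_i$-grading (even $m$) or preserves it (odd $m$), so the parity argument of Lemma \ref{lemma:fk_in_even_or_odd} carries over once one notes that in the concrete application the coefficients $a_{k,m}^z$ are nonzero only when $k$ and $m$ share parity. This yields the odd-$l$ case immediately, since $\phi[l] \in V_{\mathrm{odd}}$ is annihilated by $P_0$.

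For $l=2$, I compute $\phi[1] = \cQ_2 \psi$, where among the summands of $\cQ_2$ only $\cQ_{2,2}^+$ does not vanish on $V_0$, so $\psi[1] = P\cQ_{2,2}^+ \psi \in V_1$. Then $\phi[2] = \cQ_3 \psi + \cQ_2 \psi[1]$, and applying $P_0$: from $\cQ_3 \psi$ only the $V_0$-valued terms $\cQ_{3,3}^0 \psi$ and $\cQ_{3,1}^0 \psi$ survive (the $\cQ_{3,m}^1$ summands vanish on $V_0$ and the $\cQ_{3,2}^\pm$ ones live outside $V_0$), while from $\cQ_2 \psi[1]$ only $\cQ_{2,2}^- \psi[1] = \cQ_{2,2}^- P \cQ_{2,2}^+ \psi$ lands back in $V_0$. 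Summing the three contributions yields the claim.

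For $l=4$ the scheme is the same but requires iterating the recursion four times. I successively build $\psi[2] = P\cQ_{2,2}^+ P\cQ_{2,2}^+ \psi \in V_2$ and evaluate the $V_1$-component of $\phi[3] = \cQ_4\psi + \cQ_3 \psi[1] + \cQ_2 \psi[2]$, which takes the form $(\cQ_{4,4}^+ + \cQ_{4,2}^+) \psi + \cQ_3 P\cQ_{2,2}^+ \psi + \cQ_{2,2}^- P\cQ_{2,2}^+ P\cQ_{2,2}^+\psi$. The projection $P_0\phi[4]$ then splits into three groups: the direct contribution from $\cQ_5\psi$ gives the $\cQ_{5,m}^0 \psi$ terms; the contribution from $\cQ_4 \psi[1]$ gives the two-step sandwiches $(\cQ_{4,4}^- + \cQ_{4,2}^-)P\cQ_{2,2}^+\psi$; and the contribution from $\cQ_2\psi[3] = \cQ_2 P\phi[3]$ equals $\cQ_{2,2}^- P\phi[3]_{V_1}$, which, after substituting the expression for $\phi[3]_{V_1}$ above, reorganises into the remaining summands of the claim. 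The observation that the cluster $\cQ_{3,3}^0 + \cQ_{2,2}^- P\cQ_{2,2}^+ + \cQ_{3,1}^0$ appearing in the last parenthesis of the claim is precisely $P_0\phi[2]$ is what produces the compact recursive form of the statement.

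The main obstacle is purely combinatorial: at each recursion step one must enumerate the surviving operator compositions among many new $\cQ_{k,m}^z$ with $m<k$, check that intermediate $V_0$-valued pieces are annihilated by the next application of $P$ (so they do not re-enter the recursion), and verify that the $V_1$-component of $\phi[3]$ is captured in full. Once this catalogue is complete, the identity is a direct consequence of \eqref{def:recursion:psi:phi} and Lemma \ref{lemma:kernel_operator_decomp}; no analytic ideas beyond those used for Lemma \ref{lemma:evaluation_P0_phi_l} are required.
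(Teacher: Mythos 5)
Your proof is correct and follows the same recursive bookkeeping to which the paper refers (it is the direct analogue of the proof of Lemma \ref{lemma:evaluation_P0_phi_l}, now tracking the additional $m<k$ operators). You also rightly flag the implicit parity constraint $a_{k,m}^z=0$ for $k\not\equiv m\pmod 2$ --- stated explicitly in the paper only in Conjecture \ref{conjecture:form_of_drift_extended} --- without which already $\phi[1]=\mathcal{Q}_2\psi$ would acquire a nonzero $V_0$-component $\mathcal{Q}_{2,1}^0\psi$ and the odd-$l$ claim would fail.
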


Following Conjecture \ref{conjecture:form_of_drift}, we can make a similar conjecture in this extended setting as well.

\begin{conjecture} \label{conjecture:form_of_drift_extended}
Let Assumption \ref{assumption:abstract_sequences_extended} be satisfied with $\nu$ even. Assume that $a_{k,l}^z=0$ for $k \neq l \text{ mod } 2$ and all $z \in \{+,-,0,1\}$. Moreover, suppose that $P_0\phi[2l] = 0$ for all $l \in \mathbb{N}$ with $2l < \nu$. Then
{\small
\begin{multline*}
P_0 \psi[\nu](x,y) = o(1)  \\ 
+ \sum_{\substack{0 \leq \mu \leq \nu \\ \mu \text{ even}}}\left[a_{\nu+1,\mu + 1}^0 +  \sum_{n \geq 2} \sum_{\substack{i_1 + \dots + i_n = \nu + n \\  i_j \neq 1 \text{ for } j \notin \{1,n\} \\  r_1 + \dots + r_n = \mu + n \\ r_j \leq i_j \, \text{all} \, j \\ r_1, r_n \text{ even} \\ r_j \text{ odd for } j \notin \{1,n\} }} (-1)^{n-1}\frac{a_{i_1,r_1}^- \left(\prod_{j = 2}^{n-1} a_{i_j,r_j}^1 \right)a_{i_n,r_n}^+}{(a_{1,1}^1)^{n-1}} \right] x^{\mu +1} \psi_x(x).
\end{multline*}
}
\end{conjecture}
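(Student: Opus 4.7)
A remark before starting: the left-hand side of the displayed equation should be $P_0 \phi[\nu]$ rather than $P_0 \psi[\nu]$, since $\psi[l] = P \phi[l] \in \bigcup_{i \geq 1} V_i$ and therefore $P_0 \psi[l] \equiv 0$ for $l \geq 1$. Assuming this correction, my strategy is to unfold the recursion
\begin{equation*}
\phi[r] = \sum_{l=0}^{r-1} \mathcal{Q}_{r+1-l}\, \psi[l], \qquad \psi[l] = P \phi[l] \text{ for } l \geq 1,
\end{equation*}
until every leaf is $\psi[0] = \psi$. This produces a finite sum of elementary words $\mathcal{Q}_{j_1} P \mathcal{Q}_{j_2} P \cdots P \mathcal{Q}_{j_n} \psi$ with $j_k \geq 2$ and $j_1 + \cdots + j_n = \nu + n$ (the latter invariant is a trivial induction on $r$). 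Expanding each $\mathcal{Q}_{j_k}$ via Assumption~\ref{assumption:abstract_sequences_extended} into its pieces $\mathcal{Q}_{j_k, r_k}^{z_k}$, $z_k \in \{+,-,0,1\}$, gives a finer sum over ``coloured'' words that we then push through $P_0$.

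The next step is to classify the surviving coloured words by their \emph{level trajectory} in the grading $V = \bigcup_i V_i$. Reading right-to-left starting from $\psi \in V_0$, the operators $\mathcal{Q}^+$ increase the level by one, $\mathcal{Q}^-$ decrease it by one, $\mathcal{Q}^0$ and $\mathcal{Q}^1$ preserve it, and each $P$ at level $i$ multiplies by $-(i a_{1,1}^1)^{-1}$. To survive $P_0$ a word must return to level $0$ at the very end. The \emph{straight-line} trajectories $0 \to 1 \to 1 \to \cdots \to 1 \to 0$, i.e.\ $\mathcal{Q}_{i_1,r_1}^- P \mathcal{Q}_{i_2,r_2}^1 P \cdots P \mathcal{Q}_{i_{n-1},r_{n-1}}^1 P \mathcal{Q}_{i_n,r_n}^+ \psi$ with $r_1, r_n$ even and the middle $r_j$ odd, together with the isolated $n=1$ word $\mathcal{Q}_{\nu+1,\mu+1}^0 \psi$, produce exactly the terms listed in the conjecture. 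An $n$-fold iteration of Lemma~\ref{lemma:combo-P+} (provable by a straightforward induction, since all middle operators $\mathcal{Q}^1$ act on the single-$y$ component without introducing higher derivatives) yields the stated coefficient $(-1)^{n-1} a_{i_1,r_1}^- (\prod_{j=2}^{n-1} a_{i_j,r_j}^1) a_{i_n,r_n}^+ / (a_{1,1}^1)^{n-1}$, while tracking $x$-powers along the word gives $r_1 + \cdots + r_n = \mu + n$.

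The heart of the proof is then to show that every other surviving coloured word---those climbing to level $\geq 2$ before returning, and those ``almost straight-line'' words containing a $\mathcal{Q}^0$ somewhere strictly between the bounding $\mathcal{Q}^-$ and $\mathcal{Q}^+$---cancel under the hypothesis $P_0 \phi[2l] = 0$ for $2l < \nu$. The mechanism is that each such non-canonical word can be refactored so that an inner sub-block realises $P_0 \phi[2l]$ for some $2l < \nu$, which then vanishes by assumption. One can check this cancellation explicitly at $\nu = 4$: the level-$2$ excursion $\mathcal{Q}_{2,2}^- P \mathcal{Q}_{2,2}^- P \mathcal{Q}_{2,2}^+ P \mathcal{Q}_{2,2}^+ \psi$ produces $\psi_x$-coefficient $-(a_{2,2}^-)^2 (a_{2,2}^+)^2 / (a_{1,1}^1)^3 \cdot x^5$, which is exactly minus the contribution of $\mathcal{Q}_{2,2}^- P \mathcal{Q}_{3,3}^0 P \mathcal{Q}_{2,2}^+ \psi$ once the relation $a_{3,3}^0 = a_{2,2}^- a_{2,2}^+ / a_{1,1}^1$ forced by $P_0 \phi[2] = 0$ is inserted.

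The principal obstacle is formalising this cancellation combinatorially for arbitrary even $\nu$: one must exhibit a bijection between non-canonical surviving coloured words and pairs (canonical outer block, inner sub-word representing the vanishing $P_0 \phi[2l]$) that respects both $x$-powers and scalar coefficients. Book-keeping of the $x$-powers pins down the correspondence $\mu \leftrightarrow r_1 + \cdots + r_n - n$, while the higher-derivative terms $\psi_{xx}, \psi_{xxx}, \ldots$ generated by intermediate $\mathcal{Q}^0$ operators (and implicitly absorbed into the $o(1)$ remainder of the conjecture) must be indexed in parallel so that they do not interfere with the $\psi_x$ counting. All the necessary ingredients are already visible in the explicit computation of Lemma~\ref{lemma:evaluation_P0_phi_l_extended_specific}; no new analytic input seems needed, only combinatorial perseverance.
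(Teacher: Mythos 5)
The first thing to note is that the paper does not prove this statement: it is presented as a \emph{Conjecture}, and the authors write explicitly of the parallel Conjecture~\ref{conjecture:form_of_drift} that they ``neither prove nor use'' it; Conjecture~\ref{conjecture:form_of_drift_extended} is stated in the same spirit. There is therefore no proof in the paper to compare against. Your observation that the left-hand side must read $P_0\phi[\nu]$ rather than $P_0\psi[\nu]$ is correct --- since $\psi[\nu] = P\phi[\nu] \in \bigcup_{i\geq 1}V_i$ for $\nu\geq 1$, one has $P_0\psi[\nu]\equiv 0$ and the statement as printed is vacuous --- and the same typo is present in Conjecture~\ref{conjecture:form_of_drift}.

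Your skeleton --- unfold the recursion~\eqref{def:recursion:psi:phi} into operator words $\mathcal{Q}_{j_1}P\mathcal{Q}_{j_2}P\cdots P\mathcal{Q}_{j_n}\psi$ with $j_1+\cdots+j_n=\nu+n$, split each $\mathcal{Q}_{j_k}$ into the pieces $\mathcal{Q}_{j_k,r_k}^{z_k}$ of Assumption~\ref{assumption:abstract_sequences_extended}, classify coloured words by their level trajectory in $\bigcup_i V_i$, and isolate the straight-line trajectories $0\to 1\to\cdots\to 1\to 0$ as producing the displayed coefficients --- is the natural route and is consistent with the explicit expansions in Lemmas~\ref{lemma:evaluation_P0_phi_l} and~\ref{lemma:evaluation_P0_phi_l_extended_specific}. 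But, as you acknowledge, the heart of the argument is missing, and it is not a small gap. The hypothesis $P_0\phi[2l]=0$ for $2l<\nu$ controls the action of a certain composite operator \emph{on $V_0$ only}, whereas killing the non-canonical words (level-$\geq 2$ excursions, and inner $\mathcal{Q}^0$'s between the outer $\mathcal{Q}^-,\mathcal{Q}^+$) requires this composite to vanish on $V_i$ for $i\geq 1$ as well. For $\nu=4$ this does hold --- the factor $1/i$ coming from $P$ acting on $V_i$ exactly cancels the factor $i$ from $\partial_y y^i$ in the subsequent $\mathcal{Q}^-$, so $\mathcal{Q}_{3,3}^0 + \mathcal{Q}_{2,2}^- P \mathcal{Q}_{2,2}^+ + \mathcal{Q}_{3,1}^0$ vanishes on all of $V$ once it vanishes on $V_0$ --- but you do not exhibit the claimed bijection between non-canonical surviving words and pairs (canonical outer block, vanishing inner sub-word) for general $\nu$, and that bijection \emph{is} the conjecture; everything else is book-keeping. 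Finally, your remark that the higher-derivative terms $\psi_{xx},\psi_{xxx},\ldots$ are ``absorbed into the $o(1)$ remainder'' is not quite right: $P_0\phi[\nu]$ is a fixed function (independent of $n$), so it cannot carry a genuine $o(1)$; under the hypothesis these contributions must cancel \emph{exactly} (as indeed they do at $\nu=4$, via the same operator identity), and establishing this cancellation in general is part of the same open combinatorial problem you have not closed.
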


\subsection{Preliminaries for the proofs of Theorems~\ref{theorem:moderate_deviations_CW_critical_curve_rescaling}--\ref{theorem:moderate_deviations_CW_tri-critical_point_rescaling:2}} \label{section:preliminaries_explicit_expansions_variation_parameters}

As we did before, we now connect the discussion of Section~\ref{sect:formal_calculus:extended} with the proofs of Theorems \ref{theorem:moderate_deviations_CW_critical_curve_rescaling}--\ref{theorem:moderate_deviations_CW_tri-critical_point_rescaling:2} via Theorem \ref{theorem:Abstract_LDP}. Recall that our purpose is to find an operator $H \subseteq C^\infty_c(\bR) \times C^\infty_c(\bR)$ such that $H \subseteq ex-\LIM H_n$. In other words, for $f \in \cD(H)$, we need to determine $f_n \in H_n$ such that $\LIM f_n = f$ and $\LIM H_n f_n = H f$.\\
Consider the statement of Proposition \ref{proposition:compact_expression_for_H_n}. We want to find the limit of the operator $H_n$ presented there. We analyze term by term.  If $(m,q) = (0,\tanh(\beta B))$ the term in \eqref{eqn:compact_exp_G_1} vanishes. The very same proof as the one of Lemma~\ref{lemma:convergence_of_functions_in_domain} gives the next lemma implying that the term in \eqref{eqn:compact_exp_G_3} converges as a consequence of the uniform convergence of the gradients.

\begin{lemma}\label{lemma:convergence_of_functions_in_domain:extended}
	Suppose we are either in the setting of Theorem~\ref{theorem:moderate_deviations_CW_critical_curve_rescaling} with $\nu=2$ or in the setting of Theorems~\ref{theorem:moderate_deviations_CW_tri-critical_point_rescaling} and \ref{theorem:moderate_deviations_CW_tri-critical_point_rescaling:2} with  $\nu=4$. For $\psi \in C_c^\infty(\mathbb{R})$, define the approximation
	\begin{equation}\label{eqn:perturbation:LDP_statements:extended}
	F_{n,\psi}(x,y) := \sum_{l=0}^\nu b_n^{-l} \psi[l](x,y),
	\end{equation}
	where $\psi[\cdot]$ are defined recursively according to \eqref{def:recursion:psi:phi} with the $\mathcal{Q}_k$'s given by \eqref{def:Qk's:extended}. Moreover, let $R := [a,b] \times [c,d]$, with $a < b$ and $c < d$, be a rectangle in $\mathbb{R}^2$. Then, we have $F_{n,\psi} \in C_c^\infty(\bR^2)$, $\LIM F_{n,\psi} = \psi$ and 
	\begin{equation} \label{eqn:convergence_gradients_Psi:extended}
	\sup_{(x,y) \in R \cap E_n} |\nabla F_{n,\psi}(x,y) - \nabla \psi(x)| = 0
	\end{equation}
	for all rectangles $R \subseteq \bR^2$.
\end{lemma}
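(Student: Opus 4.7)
The plan is to mirror the argument of Lemma~\ref{lemma:convergence_of_functions_in_domain} verbatim, since the modifications in the extended setting (replacing the single operators $\mathcal{Q}_k^z$ by sums of operators $\mathcal{Q}_{k,m}^z$ with $m \leq k$) do not alter any of the structural properties on which that proof rests. Concretely, I would first observe that Lemma~\ref{lemma:mappings_wrt_Vindex_extended} ensures each extended component operator sends $V_i$ into $V_{j}$ with $j \in \{i-1,i,i+1\}$, so that the full $\mathcal{Q}_k$ given in \eqref{def:Qk's:extended} still maps $V_i$ into $V_{\leq i+1}$. Since the projection $P$ of Lemma~\ref{lemma:kernel_operator_decomp} is unchanged (it depends only on the constant $a_{1,1}^1$) and maps $V_{\leq r}$ into $V_{\leq r}$, a straightforward induction on $r$ — formally identical to the one in the proof of Lemma~\ref{lemma:fk_in_even_or_odd} — gives $\psi[r] \in V_{\leq r}$. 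Thus each $\psi[l]$ has the form $\psi[l](x,y) = \sum_{i=0}^{l} y^i \psi[l]_i(x)$ with $\psi[l]_i \in C_c^\infty(\mathbb{R})$, a fact inherited from $\psi \in C_c^\infty(\mathbb{R})$ via the recursive definition \eqref{def:recursion:psi:phi}.

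From this representation, smoothness of $F_{n,\psi}$ and the fact that it is constant in $x$ outside a fixed compact set are immediate. For the uniform convergence claims on an arbitrary rectangle $R = [a,b]\times[c,d]$, I would directly estimate
\begin{equation*}
\sup_{(x,y) \in R \cap E_n} \Bigl| F_{n,\psi}(x,y) - \psi(x) \Bigr|
+ \sup_{(x,y) \in R \cap E_n} \Bigl| \nabla F_{n,\psi}(x,y) - (\psi'(x),0)^\intercal \Bigr|
\le C(R,\psi)\sum_{l=1}^{\nu} b_n^{-l},
\end{equation*}
where $C(R,\psi)$ is a constant depending only on $\psi$, on the sup-norms of the functions $\psi[l]_i$ on $[a,b]$, and on $\max(|c|,|d|)^\nu$. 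Since $b_n \to \infty$, the right-hand side tends to $0$, establishing \eqref{eqn:convergence_gradients_Psi:extended} and the pointwise-uniform-on-compacts part of $\LIM F_{n,\psi} = \psi$.

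To complete $\LIM F_{n,\psi} = \psi$ in the sense of Definition~\ref{def:o(1)}, I still need uniform boundedness $\sup_n \sup_{(x,y)\in E_n}|F_{n,\psi}(x,y)| < \infty$. As in the original proof, this is where the geometry of the state space enters: $E_n \subseteq \mathbb{R} \times [-2b_n,2b_n]$, so that $|y^i| \le (2b_n)^i$ on $E_n$, and hence $|b_n^{-l} y^i \psi[l]_i(x)| \le 2^i b_n^{i-l} \|\psi[l]_i\|_\infty \le 2^l \|\psi[l]_i\|_\infty$ for $i \le l$. Summing over $l$ and $i$ gives the desired uniform bound.

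There is no genuine obstacle here: once one verifies that the $V$-index-classification of Lemma~\ref{lemma:fk_in_even_or_odd} still holds — which is automatic because the extended mapping lemma has exactly the same form — the rest is a line-by-line repetition of the argument of Lemma~\ref{lemma:convergence_of_functions_in_domain}. The only point worth double-checking is that the coefficients $a_{k,m}^z$ appearing in the extended operators remain bounded as $n \to \infty$ (they are defined via Taylor coefficients of the $G$'s at the limiting $(\beta,B)$), which is clear from the construction in Section~\ref{sect:formal_calculus:extended}.
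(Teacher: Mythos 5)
Your proposal is correct and follows the paper's intended approach: the paper does not spell out a proof for this lemma at all but simply remarks that ``the very same proof as the one of Lemma~\ref{lemma:convergence_of_functions_in_domain} gives the next lemma,'' and you have made that carry-over explicit by verifying that the extended mapping lemma (Lemma~\ref{lemma:mappings_wrt_Vindex_extended}) and the unchanged projection $P$ still yield $\psi[l]\in V_{\leq l}$ by induction, from which the compact-uniform convergence and the uniform bound on $E_n\subseteq\bR\times[-2b_n,2b_n]$ follow exactly as before. The one small point worth stating (which you implicitly handle with your remark that the $a_{k,m}^z$ stay bounded) is that in this setting the coefficients are evaluated at the limiting $(\beta,B)$ and at the limits $\kappa,\theta$, hence are genuinely $n$-independent constants, so the bound $C(R,\psi)$ is indeed uniform in $n$.
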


For the term in \eqref{eqn:compact_exp_G_2}, we use the results from Section~\ref{sect:formal_calculus:extended}. At this point the proofs of Theorem \ref{theorem:moderate_deviations_CW_critical_curve_rescaling} and \ref{theorem:moderate_deviations_CW_tri-critical_point_rescaling} differ from the proof of Theorem \ref{theorem:moderate_deviations_CW_tri-critical_point_rescaling:2} in the sense that in the first case $\kappa_n,\theta_n$ are of order $b_n^{-2}$, whereas in the latter $\kappa_n,\theta_n$ are of order $b_n^{-4}$. Therefore, the connection between the linear part in \eqref{def:varying_parameters_Hamiltonian} and the operators in Assumption \ref{assumption:abstract_sequences_extended} changes. To give an explicit example: $\cQ_{5,1}^0$ is a different operator in the two settings.

Using \eqref{lemma:projection_on_kernel_extended} of Lemma~\ref{lemma:evaluation_P0_phi_l_extended_specific}, 
we calculate the drift of the limiting Hamiltonians by considering the relevant operators in the two sections below.


\subsection{Proof of Theorems \ref{theorem:moderate_deviations_CW_critical_curve_rescaling} and \ref{theorem:moderate_deviations_CW_tri-critical_point_rescaling}}\label{subsect:proofs_thms_rescaling}

In the setting of Theorems \ref{theorem:moderate_deviations_CW_critical_curve_rescaling} and \ref{theorem:moderate_deviations_CW_tri-critical_point_rescaling}, $\kappa_n$ and $\theta_n$ are of order $b_n^{-2}$. We identify the relevant operators for Assumption \ref{assumption:abstract_sequences_extended} from the linear part in the expansion in \eqref{def:varying_parameters_Hamiltonian}. First of all, there are the operators that do not involve derivations in the $\kappa$ and $\theta$ directions. These are the operators we also considered for Theorems \ref{thm:MD:paramagnetic:critical} and \ref{thm:MD:paramagnetic:tricritical}. Turning to our extended notation, we find for $k \in \{1,\dots,5\}$ and appropriate $z \in \{+,-,0,1\}$, the operators $Q_{k,k}^{z} = Q_k^{z}$ as defined in \eqref{def:Q1}--\eqref{def:Q5}.

\smallskip

Additional operators are being introduced by the differentiations in the $\theta,\kappa$ directions. In particular, the relevant operators are
\begin{enumerate}[(a)]
	\item $Q_{3,1}^0$ and $Q_{3,1}^1$ arising from the first and second coordinate of 
	\begin{equation*}
	(\partial_{\kappa} + \partial_{\theta}) D^1 \cG_2(0,\tanh(\beta B)) \begin{pmatrix}
	x \\ y
	\end{pmatrix};
	\end{equation*}
	\item $Q_{4,2}^+$ and $Q_{4,2}^-$ arising from the first and second coordinate of 
	\begin{equation*}
	\frac{1}{2}(\partial_{\kappa} + \partial_{\theta}) D^2 \cG_2(0,\tanh(\beta B)) \begin{pmatrix}
	x^2 \\ 2 xy
	\end{pmatrix};
	\end{equation*}
	\item $Q_{5,3}^0$ arising from the first coordinate of 
	\begin{equation*}
	\frac{1}{6}(\partial_{\kappa} + \partial_{\theta}) D^3 \cG_2(0,\tanh(\beta B)) \begin{pmatrix}
	x^3 \\ 3 x^2y
	\end{pmatrix};
	\end{equation*}
	\item $Q_{5,1}^0$ arising from the first coordinate of 
	\begin{equation*}
	\left(\frac{1}{2} \partial_{\kappa}^2 + \partial_{\kappa}\partial_{\theta} + \frac{1}{2} \partial_\theta^2 \right) D^1 \cG_2(0,\tanh(\beta B)) \begin{pmatrix}
	x^2 \\ 2 xy
	\end{pmatrix}.
	\end{equation*}
\end{enumerate}

We explicitly calculate the relevant operators from the results in Lemma~\ref{lemma:DkG2_expressions}. We start with $Q_{3,1}^0$ which is used for Theorem \ref{theorem:moderate_deviations_CW_critical_curve_rescaling}. From Lemma~\ref{lemma:DkG2_expressions}(c) we get
\begin{equation}
Q_{3,1}^0 g(x,y)  = \left[ \frac{2}{\cosh(\beta B)}  \left(1 - 2 \beta B \tanh(\beta B)\right) \kappa - 4 \beta \sinh (\beta B)  \theta \right] x g_x(x,y), \label{eqn:Q31_0}
\end{equation}
after using the identity $\beta = \cosh^2(\beta B)$ for $(\beta,B)$ on the critical curve. We proceed now with the operators needed for Theorem \ref{theorem:moderate_deviations_CW_tri-critical_point_rescaling}. In this setting, $(\beta_n,B_n)$ lies always on the critical curve, i.e. $\beta_n = \cosh^2(\beta_n B_n)$ for any $n \in \mathbb{N}$, and therefore we use Lemma~\ref{lemma:DkG2_expressions}(d) to compute the $D^k \mathcal{G}_2$'s.  It yields $Q_{3,1}^0 = Q_{5,1}^0 = 0$ and for the remaining operators we find:
\begin{align}
Q_{3,1}^1 g(x,y) & = - 2 \sinh (\beta B) \left[ B \kappa + \beta \theta\right] y g_y(x,y),\label{eqn:Q31_1} \\[0.3cm]
Q_{4,2}^+ g(x,y) & = -2 \big[ \left( \sinh(\beta B) + \beta B \cosh (\beta B)\right) \kappa + \beta^2 \cosh (\beta B) \theta \big] xy g_x(x,y), \label{eqn:Q42_+} \\[0.3cm]
Q_{4,2}^- g(x,y) & = -2 \big[ \left( \sinh(\beta B) + \beta B \cosh (\beta B)\right) \kappa + \beta^2 \cosh (\beta B) \theta \big] x^2 g_y(x,y), \label{eqn:Q42_-} \\[0.3cm]
Q_{5,3}^0 g(x,y) & = - \frac{2}{3} \big[ \beta \left(2 \cosh(\beta B) + \beta B \sinh(\beta B) \right) \kappa + \beta^3 \sinh(\beta B) \theta\big] x^3 g_x(x,y).\label{eqn:Q53_0} 
\end{align}

In the next lemma we calculate the expressions resulting from the concatenations of $P$'s and $Q$'s given in \eqref{lemma:projection_on_kernel_extended}. The action of $P$ is described in Lemma~\ref{lemma:kernel_operator_decomp}. 

\begin{lemma}\label{lemma:limiting_drifts:extended}
	Let $(\beta, B)$ satisfies $\beta = \cosh^2 (\beta B)$. For $f \in V$, we have
	\begin{multline}\label{eqn:drift:critical_curve:rescaling}
	(Q_{2,2}^- P Q_{2,2}^+  + Q_{3,3}^0  + Q_{3,1}^0) f(x,y) = \frac{2}{3} \beta (2\beta-3)\cosh(\beta B) x^3 f_x(x,y) \\ 
	+ 2 \left[ \frac{1- 2 \beta B \tanh (\beta B)}{\cosh(\beta B)} \, \kappa - 2 \beta \sinh (\beta B) \, \theta\right] x f_x(x,y).
	\end{multline}
	Moreover, if we approach the tri-critical point $(\beta_{\mathrm{tc}}, B_{\mathrm{tc}})$ along the critical curve, we obtain $(Q_{2,2}^- P Q_{2,2}^+  + Q_{3,3}^0 + Q_{3,1}^0) f = 0$ and 
	%
		\begin{align}\label{eqn:drift:tri-critical_point:rescaling}
	(Q_{5,5}^0  &+ Q_{5,3}^0 + Q_{5,1}^0 + Q^-_{2,2} P Q_{4,4}^+ + Q_{4,4}^- P Q_{2,2}^+ + Q^-_{2,2} P Q_{4,2}^+ + Q_{4,2}^- P Q_{2,2}^+   \nonumber\\
	& + Q_{2,2}^- P Q_{3,3}^1P Q^+_{2,2}   + Q_{2,2}^- P Q_{3,1}^1P Q^+_{2,2} )f(x,y) \nonumber\\
	&=  - \frac{9}{10} \sqrt{\frac{3}{2}} \, x^5 f_x(x,y) + \left[ 2 \sqrt{2} \arccosh \left(\sqrt{\frac{3}{2}}\right) \kappa +\frac{9}{\sqrt{2}} \, \theta \right] x^3 f_x(x,y).
	\end{align}	
		
	%
\end{lemma}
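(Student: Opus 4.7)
The proof is a direct computation analogous to Lemma~\ref{lemma:limiting_drifts}, but now incorporating the extra operators \eqref{eqn:Q31_0}--\eqref{eqn:Q53_0} that arose from differentiation in $\kappa$ and $\theta$. The plan is to compute each composition of $P$'s and $Q$'s separately, using Lemma~\ref{lemma:kernel_operator_decomp} for the action of $P$, and then to collect coefficients of $x f_x$, $x^3 f_x$, $x^5 f_x$.

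For the first identity \eqref{eqn:drift:critical_curve:rescaling}, I would observe that the operators $Q_{k,k}^{z}$ coincide with the operators $Q_k^{z}$ appearing in Lemma~\ref{lemma:limiting_drifts}. Hence the combination $Q_{2,2}^- P Q_{2,2}^+ + Q_{3,3}^0$ is exactly the one evaluated in Lemma~\ref{lemma:limiting_drifts}(first claim), and on the critical curve it produces the cubic term $\tfrac{2}{3}\beta(2\beta-3)\cosh(\beta B) x^3 f_x$. The third operator $Q_{3,1}^0$ is already in closed form in \eqref{eqn:Q31_0} and contributes the linear-in-$x$ bracket. The two contributions do not mix, so adding them yields \eqref{eqn:drift:critical_curve:rescaling}.

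For the vanishing of $(Q_{2,2}^- P Q_{2,2}^+ + Q_{3,3}^0 + Q_{3,1}^0)f$ at the tri-critical point approached along the critical curve, I would substitute $\beta_{\mathrm{tc}}=\tfrac{3}{2}$ in the formula just obtained: the cubic term disappears because $2\beta-3=0$, and the linear-in-$x$ term has coefficient proportional to $\tfrac{1-2\beta B\tanh(\beta B)}{\cosh(\beta B)}\kappa - 2\beta\sinh(\beta B)\theta$. Evaluating the hyperbolic functions at $(\beta_{\mathrm{tc}}, B_{\mathrm{tc}})$ via $\cosh(\beta_{\mathrm{tc}} B_{\mathrm{tc}})=\sqrt{3/2}$, $\sinh(\beta_{\mathrm{tc}} B_{\mathrm{tc}})=1/\sqrt{2}$, and $\tanh(\beta_{\mathrm{tc}} B_{\mathrm{tc}})=1/\sqrt{3}$, one checks that the constraint \eqref{eqn:theta_kappa_relationship}, which expresses that $(\beta_n, B_n)$ stays on the critical curve, forces this coefficient to vanish; this is a direct algebraic verification.

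For the main identity \eqref{eqn:drift:tri-critical_point:rescaling}, I would split the nine compositions into two blocks. The block $Q_{5,5}^0 + Q^-_{2,2} P Q_{4,4}^+ + Q_{4,4}^- P Q_{2,2}^+ + Q_{2,2}^- P Q_{3,3}^1 P Q^+_{2,2}$ is exactly the combination evaluated in Lemma~\ref{lemma:limiting_drifts}(second claim) and yields the $-\tfrac{9}{10}\sqrt{3/2}\,x^5 f_x$ term at the tri-critical point. For the remaining five compositions, I would use Lemma~\ref{lemma:combo-P+} (and its obvious analogue for the new operators $Q_{k,m}^{z}$ with $m<k$, which follows from the same direct calculation using Lemma~\ref{lemma:kernel_operator_decomp}) to compute each term. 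By Lemma~\ref{lemma:mappings_wrt_Vindex_extended} every such composition outputs a function proportional to $x^r f_x$ for some odd $r$; in the present case $Q_{5,3}^0$, $Q^-_{2,2}PQ_{4,2}^+$, $Q_{4,2}^- P Q_{2,2}^+$, and $Q_{2,2}^-PQ_{3,1}^1PQ_{2,2}^+$ all produce $x^3 f_x$, while $Q_{5,1}^0=0$ because on the critical curve the relevant constant from Lemma~\ref{lemma:DkG2_expressions}(d) vanishes. Collecting the $x^3 f_x$ coefficients and substituting the numerical values of $\beta_{\mathrm{tc}}, B_{\mathrm{tc}}$ gives the bracket $2\sqrt{2}\arccosh(\sqrt{3/2})\kappa + \tfrac{9}{\sqrt{2}}\theta$ displayed in \eqref{eqn:drift:tri-critical_point:rescaling}.

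The main obstacle is not conceptual but bookkeeping: one has to track a fairly large number of constants $a_{i,j}^{z}$ and apply the identities $\cosh^2-\sinh^2=1$ and $\beta_n=\cosh^2(\beta_n B_n)$ consistently to simplify. Once the lemma is organised along the blocks above, the proof reduces to matching coefficients at two scales ($x^3$ and $x^5$) and invoking Lemma~\ref{lemma:combo-P+} repeatedly.
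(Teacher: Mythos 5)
Your proposal is correct and follows essentially the same strategy as the paper's proof: reduce to the operator expressions already computed (the $Q_{k,k}^z=Q_k^z$ block is handled by Lemma~\ref{lemma:limiting_drifts}, the new operators by Lemma~\ref{lemma:combo-P+} and its variant), then collect the $x^3 f_x$ and $x^5 f_x$ coefficients and substitute the tri-critical values. The one small deviation is cosmetic: to see that $Q_{3,1}^0$ vanishes when one approaches $(\beta_{\mathrm{tc}},B_{\mathrm{tc}})$ along the critical curve, you substitute the numerical ratio $\theta/\kappa$ from \eqref{eqn:theta_kappa_relationship}, whereas the paper simply observes that, since $\beta_n=\cosh^2(\beta_nB_n)$ for all $n$, Lemma~\ref{lemma:DkG2_expressions}(d) applies and the $(1,1)$-entry of $D^1\cG_2$ vanishes identically along the curve, killing $Q_{3,1}^0$ and $Q_{5,1}^0$ at once; your argument for $Q_{5,1}^0$ in fact reverts to this second viewpoint, so the two routes are equivalent.
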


\begin{proof}
	It suffices to prove the statement for $f$ of the form $y^i g(x)$, for some function $g \in C^2(\mathbb{R})$.  The term $Q_{3,1}^0 f$ is given in \eqref{eqn:Q31_0} and the expression for $(Q_{2,2}^- P Q_{2,2}^+  + Q_{3,3}^0)f$ in \eqref{eqn:drift:critical_curve}. Combining these two results yields \eqref{eqn:drift:critical_curve:rescaling}.\\
	Lemma~\ref{lemma:limiting_drifts} and the observation that $Q_{3,1}^0 = 0$ on the critical curve imply that $(Q_{2,2}^- P Q_{2,2}^+  + Q_{3,3}^0 + Q_{3,1}^0) f = 0$ whenever $(\beta,B) = (\beta_{\mathrm{tc}},B_{\mathrm{tc}})$ and $\beta_n = \cosh^2(\beta_n B_n)$ for any $n \in \mathbb{N}$.\\
	We are left to show \eqref{eqn:drift:tri-critical_point:rescaling}. We start by stating the relevant constants
	\begin{gather*}
	a_{1,1}^1  = - 2 \cosh(\beta B), \qquad a_{2,2}^{\pm} = - 2 \beta \sinh(\beta B), \qquad	a^1_{3,1}  = - 2 \sinh(\beta B)[B \kappa + \beta \theta], \\
	 a_{4,2}^\pm = - 2\left[\left( \sinh(\beta B) + \beta B \cosh (\beta B)\right) \kappa  + \beta^2 \cosh (\beta B) \theta\right]. 
	\end{gather*}
	 Observe that
	\begin{itemize}
		\item 
		the expression for $(Q_{5,5}^0  + Q^-_{2,2} P Q_{4,4}^+ + Q_{4,4}^- P Q_{2,2}^+  + Q_{2,2}^- P Q_{3,3}^1P Q^+_{2,2})f$ is given in \eqref{eqn:drift:tri-critical_point};
		\item
		the operator $Q_{5,1}^0 = 0$ as we are on the critical curve;
		\item 
		$Q_{5,3}^0 f$ is defined in \eqref{eqn:Q53_0}; 
		\item by direct computation, or by using a variant of Lemma \ref{lemma:combo-P+}, we get
		\begin{align*}
		(Q^-_{2,2} P Q_{4,2}^+)f(x,y) &= (Q_{4,2}^- P Q_{2,2}^+)f(x,y) \\
		&= 2 \beta \sinh (\beta B) \left[ (\tanh(\beta B) + \beta B) \kappa + \beta^2 \theta\right] x^3 f_x(x,y)
		\end{align*}
		and
		\[
		(Q_{2,2}^- P Q_{3,1}^1P Q^+_{2,2})f(x,y) = -2 \beta^2 \sinh(\beta B) \tanh^2 (\beta B) \left[ B \kappa + \beta \theta \right] x^3 f_x(x,y).
		\]
	\end{itemize}
	Adding the contributions above gives  
	\begin{multline*}
	\left(Q_{5,5}^0  + Q^-_{2,2} P Q_{4,4}^+ + Q_{4,4}^- P Q_{2,2}^+  + Q_{2,2}^- P Q_{3,3}^1P Q^+_{2,2}  \right. \\
	\left. + Q_{5,3}^0 + Q_{5,1}^0 + Q^-_{2,2} P Q_{4,2}^+	+ Q_{4,2}^- P Q_{2,2}^+  + Q_{2,2}^- P Q_{3,1}^1P Q^+_{2,2} \right)f(x,y)\\
	= - \frac{1}{15} \beta^4 \cosh(\beta B) \big[ 5 \tanh^2(\beta B) + 1 \big] x^5 f_x(x,y) \\
	+ 2 \beta \sinh(\beta B) \left( \frac{2}{3} \beta + 1\right) \left[ B \kappa + \beta \theta \right] x^3 f_x(x,y).
	\end{multline*}
	Plugging the values $\beta = \beta_{\mathrm{tc}} = \frac{3}{2}$ and $B = B_{\mathrm{tc}} = \frac{2}{3} \arccosh (\sqrt{\frac{3}{2}})$ leads to the conclusion.
\end{proof}

\begin{proof}[Proof of Theorems~\ref{theorem:moderate_deviations_CW_critical_curve_rescaling} and \ref{theorem:moderate_deviations_CW_tri-critical_point_rescaling}]
	
The proof follows the proof of Theorems~\ref{thm:MD:paramagnetic:critical} and~\ref{thm:MD:paramagnetic:tricritical}. We highlight the differences. 
	
Due to the variations in $\beta$ and $B$, additional drift terms are introduced. These are given in Lemma~\ref{lemma:limiting_drifts:extended}. Therefore, we work with the following Hamiltonians
\begin{itemize}
\item
in the setting of Theorem~\ref{theorem:moderate_deviations_CW_critical_curve_rescaling} with $\nu=2$:
\begin{multline}\label{eqn:varying_parameter_case_Hamiltonian_nu=2}
H(x,p) = \frac{2}{\cosh(\beta B)} p^2 + 2 \bigg\{ \left[ \frac{1- 2 \beta B \tanh (\beta B)}{\cosh(\beta B)} \, \kappa - 2 \beta \sinh (\beta B) \, \theta\right] x \\
+ \frac{\beta}{3}  (2\beta-3)\cosh(\beta B) x^3 \bigg\} p;
\end{multline}
\item
in the setting of Theorem~\ref{theorem:moderate_deviations_CW_tri-critical_point_rescaling} with $\nu=4$:
\begin{equation}\label{eqn:varying_parameter_case_Hamiltonian_nu=4}
H(x,p) = 2 \sqrt{\frac{2}{3}} p^2 + \left\{\left[  2\sqrt{2} \arccosh \left(\sqrt{\frac{3}{2}}\right) \kappa +\frac{9}{\sqrt{2}} \, \theta \right] x^3 - \frac{9}{10} \sqrt{\frac{3}{2}} \, x^5 \right\}p.
\end{equation}
\end{itemize}
The presence of extra drift terms, involving $\theta$ and $\kappa$, makes the verification of condition \eqref{eqn:convergence_condition_sublim_uniform_gn} in Definition~\ref{definition:subLIM_superLIM} slightly more involved. \\
Consider the sequence of functions $\psi_n^{\varepsilon,+}$ defined as \eqref{eqn:def:psi_n_varepsilon_plus_minus}, where the operators $\mathcal{Q}_k$'s used to construct $F_{n,\psi}$ are given by \eqref{def:Qk's:extended} now. Recall equation \eqref{eqn:Hn:compact_form:generic:paramagnetic}. We want to show that, on the set $E_{4,n} = \{(x,y) \in \mathbb{R}^2 \, | \, | F_{n,\psi}(x,y) + \varepsilon \log(1+x^2+y^2)| < \log \log b_n \}$, we can obtain $\sup_n H_n \psi_n^{\varepsilon,+} < \infty$, uniformly in $n$. Observe that the cut-off guarantees $H_n \psi_n^{\varepsilon,+} \equiv 0$ on $E_{4,n}^c$.\\
In particular, on $E_{4,n}$ the variables $x$ and $y$ are at most of order $\log^{1/2} b_n$ and we can get control of the remainder terms in \eqref{eqn:Hn:compact_form:generic:paramagnetic} via Lemma~\ref{lemma:remainder_control}. Therefore,
following the exact same strategy as in the proof of Proposition~\ref{proposition:limiting_hamiltonians}, we find 
\begin{multline}\label{eqn:Hamiltonian_uniform_estimate:varying_parameter_case}
H_n \psi_n^{\varepsilon,+}(x,y) \leq H\psi(x) + \frac{\varepsilon }{30(1+x^2+y^2)}\, \big(\Xi_n(x,y)+\Theta_n(x,y) \big) \\
+ 8\left( \frac{\varepsilon}{2} \|\psi'\| + \varepsilon^2 \right) + o(1) +o(b_n^{\nu-4}),
\end{multline}
with $H$ as in \eqref{eqn:varying_parameter_case_Hamiltonian_nu=2} if $\nu=2$ and as in \eqref{eqn:varying_parameter_case_Hamiltonian_nu=4} if $\nu=4$ and $(\beta, B) = (\beta_{\mathrm{tc}},B_{\mathrm{tc}})$, with $\Xi_n(x,y)$ given in \eqref{eqn:Xi_fixed_parameter_case:after_manipulations} and where
\begin{align}
\Theta_n(x,y) =&-120 (\beta\theta + B\kappa) \sinh(\beta B) b_n^{\nu-2} y^2 -60 (\beta\theta + B\kappa)^2 \cosh(\beta B) b_n^{\nu-4} y^2 \nonumber \\
&-60 \beta \left[2\kappa \cosh(\beta B)  +\beta (\beta\theta + B\kappa) \sinh(\beta B) \right] b_n^{\nu-4} x^2 y^2 \nonumber \\
&-240 \left[ \kappa \sinh(\beta B)  +\beta (\beta\theta + B\kappa) \cosh(\beta B)\right] b_n^{\nu-3} x^2 y \nonumber \\
&-40 \beta \left[ 2\kappa \cosh(\beta B)  +\beta (\beta\theta + B\kappa) \sinh(\beta B)  \right] b_n^{\nu-4} x^4 \nonumber \\
&+120 \left[ \frac{1-2\beta B \tanh (\beta B)}{\cosh(\beta B)} \, \kappa -2 \beta \sinh(\beta B) \theta \right] b_n^{\nu-2} x^2. \label{eqn:Theta_varying_parameter_case}
\end{align}
We want to see that \eqref{eqn:Hamiltonian_uniform_estimate:varying_parameter_case} admits a uniform upper bound. Since $\psi \in C_c^{\infty}(\mathbb{R})$ and $\varepsilon \in (0,1)$, it suffices to show that the function  $(1+x^2+y^2)^{-1} (\Xi_n(x,y)+\Theta_n(x,y))$ is uniformly bounded from above. 

If $\nu=2$ the result is straightforward; indeed, $\Xi_n(x,y) \leq 0$ and $(1+x^2+y^2)^{-1}\Theta_n(x,y)$ is bounded. 
We take now $\nu=4$. First of all, the term $120 [\cdots] b_n^{\nu-2} x^2$ in \eqref{eqn:Theta_varying_parameter_case} vanishes when we are on the critical curve (use \eqref{eqn:theta_kappa_relationship}) and, moreover, the term $-60 \beta [\cdots] b_n^{\nu-4} x^2 y^2$ can be controlled by $-20 \beta^2 \cosh(\beta B) b_n^{\nu-2} x^2 y^2$ in $\Xi_n(x,y)$, cf. \eqref{eqn:Xi_fixed_parameter_case:after_manipulations}. 

We then combine the three terms in \eqref{eqn:Theta_varying_parameter_case} involving $b_n^{\nu-2} y^2$, $b_n^{\nu-3} x^2 y$ and $b_n^{\nu-4} x^4$ into a quadratic term of the type
\begin{align*}
d_1(\kappa,\theta) b_n^{\nu-4} \left(b_ny + d_2(\kappa,\theta) x^2\right)^2 + d_3(\kappa,\theta) b_n^{\nu-4} x^4, 
\end{align*}
where all the signs of the coefficients $d_i$ are undetermined. Observe that we can bound the size of the first square as follows
\[
\left(b_ny + d_2(\kappa,\theta) x^2\right)^2 \leq 2 \left(b_ny + \beta \tanh(\beta B) x^2 \right)^2+ 2 \left(d_2(\kappa,\theta) - \beta \tanh(\beta B)\right)^2 x^4,
\]
obtaining
\begin{multline*}
d_1(\kappa,\theta) b_n^{\nu-4} \left(b_ny + d_2(\kappa,\theta) x^2\right)^2 + d_3(\kappa,\theta) b_n^{\nu-4} x^4 \\
\leq 2 \left\vert  d_1(\kappa,\theta) \right\vert b_n^{\nu-4} \left(b_ny + \beta \tanh(\beta B) x^2 \right)^2 \\
+ \left[ 2 d_1(\kappa,\theta) \left(d_2(\kappa,\theta) - \beta \tanh(\beta B)\right)^2 + d_3(\kappa,\theta) \right] b_n^{\nu-4} x^4,
\end{multline*}
that can be in turn controlled by 
\[
-120 \cosh(\beta B) b_n^{\nu-2}(b_ny + \beta \tanh(\beta B)x^2)^2 - 2 \beta^4 \cosh(\beta B) (2-5\tanh^2(\beta B)) b_n^{\nu-4} x^6
\]
in $\Xi_n(x,y)$. Therefore, to conclude, there exist suitable positive constants $c_1$ and $c_2$ (independent on $n$ and $\varepsilon$)  for which we have
\[
H_n \psi_n^{\varepsilon,+}(x,y) \leq c_1 + \varepsilon c_2 + 8\left( \frac{\varepsilon}{2} \|\psi'\| + \varepsilon^2 \right),
\]
giving uniformly upper-boundedness in $n$. 
\end{proof}

\subsection{Proof of Theorem~\ref{theorem:moderate_deviations_CW_tri-critical_point_rescaling:2}}

As in the previous section, we first identify the relevant operators for Assumption \ref{assumption:abstract_sequences_extended} from the linear part in the expansion in \eqref{def:varying_parameters_Hamiltonian}. In the setting of Theorem \ref{theorem:moderate_deviations_CW_tri-critical_point_rescaling:2}, $\kappa_n$ and $\theta_n$ are of order $b_n^{-4}$ and therefore, the operators arising from the $\theta$ and $\kappa$ derivatives change.

\smallskip

For $k \in \{1,\dots,5\}$ and appropriate $z \in \{+,-,0,1\}$, the operators $Q_{k,k}^{z} = Q_k^{z}$ are still as defined in \eqref{def:Q1}--\eqref{def:Q5}. The only additional operator of relevance is $Q_{5,1}^0$. It comes from the first coordinate of 
\begin{equation*}
(\partial_{\kappa} + \partial_{\theta}) D^1 \cG_2(0,\tanh(\beta B)) \begin{pmatrix}
x \\ y
\end{pmatrix},
\end{equation*}
and is explicitly given by
\begin{equation}
Q_{5,1}^0 g(x,y)  = \left[ \frac{2}{\cosh(\beta B)}  \left(1 - 2 \beta B \tanh(\beta B)\right) \kappa - 4 \beta \sinh (\beta B)  \theta \right] x g_x(x,y). \label{eqn:Q51_0_arbitrary_direction}
\end{equation}

\begin{lemma}\label{lemma:limiting_drifts:extended_arbitrary_direction_to_tc}
	Let $f \in V$.	If we approach the tri-critical point $(\beta_{\mathrm{tc}}, B_{\mathrm{tc}})$ from an arbitrary direction, we obtain $(Q_{2,2}^- P Q_{2,2}^+  + Q_{3,3}^0) f = 0$ and 
\begin{multline*}
(Q_{5,5}^0 + Q_{5,1}^0 + Q^-_{2,2} P Q_{4,4}^+ + Q_{4,4}^- P Q_{2,2}^+ + Q^-_{2,2} P Q^1_{3,3} P Q_{2,2}^+ ) f(x,y)\\
=  \left[ \frac{2}{3} \left( \sqrt{6} - 2 \sqrt{2} \arccosh \left( \sqrt{\frac{3}{2}} \right)\right)   \kappa - 3 \sqrt{2} \, \theta \right] x f_x(x,y) - \frac{9}{10} \sqrt{\frac{3}{2}} \, x^5 f_x(x,y).
\end{multline*}
\end{lemma}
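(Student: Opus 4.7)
The statement amounts to two algebraic identities at the tri-critical point, and the strategy is to reduce everything to Lemma~\ref{lemma:limiting_drifts} and then evaluate the one genuinely new operator $Q_{5,1}^0$ at $(\beta_{\mathrm{tc}},B_{\mathrm{tc}})$. Since $(\beta_{\mathrm{tc}},B_{\mathrm{tc}})$ lies on the critical curve, the identity $\beta_{\mathrm{tc}} = \cosh^2(\beta_{\mathrm{tc}} B_{\mathrm{tc}})$ holds at the base point (although, unlike in Theorem~\ref{theorem:moderate_deviations_CW_tri-critical_point_rescaling}, we do not impose $\beta_n = \cosh^2(\beta_n B_n)$ for $n\geq 1$). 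As in Section~\ref{subsect:proofs_thms_rescaling}, it suffices to consider $f$ of the form $y^i g(x)$ for some $g\in C^2(\bR)$.

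\textbf{Step 1 (vanishing of $P_0\phi[2]$).} Here the only operators feeding into $P_0\phi[2]$ from the rescaling are those produced by the $(\kappa,\theta)$-derivatives which sit at order $b_n^{-4}$ rather than $b_n^{-2}$; hence, at the level of $\mathcal{Q}_3$, no extra operator appears (the $\kappa_n,\theta_n$ contributions migrate into $\mathcal{Q}_5$ instead, cf.\ the argument preceding \eqref{eqn:Q51_0_arbitrary_direction}). In particular $Q_{3,1}^{0}=0$ in this regime, so $(Q_{2,2}^{-}PQ_{2,2}^{+}+Q_{3,3}^{0})f$ coincides with $(Q_{2}^{-}PQ_{2}^{+}+Q_{3}^{0})f$ and Lemma~\ref{lemma:limiting_drifts} applied at $(\beta,B)=(\beta_{\mathrm{tc}},B_{\mathrm{tc}})$ yields
\[
(Q_{2,2}^{-}PQ_{2,2}^{+}+Q_{3,3}^{0})f(x,y)=\tfrac{2}{3}\beta(2\beta-3)\cosh(\beta B)\,x^{3}f_{x}(x,y)=0,
\]
since $2\beta_{\mathrm{tc}}-3=0$.

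\textbf{Step 2 (computation of $P_{0}\phi[4]$).} The analogous bookkeeping shows that in the present scaling the only new operator contributing at order $\mathcal{Q}_5$ is $Q_{5,1}^{0}$, while $Q_{4,2}^{\pm}$, $Q_{3,1}^{1}$, $Q_{5,3}^{0}$ do not appear (they would require $\kappa_n,\theta_n = O(b_n^{-2})$). Thus, by Lemma~\ref{lemma:limiting_drifts} at the tri-critical point,
\[
(Q_{5,5}^{0}+Q_{2,2}^{-}PQ_{4,4}^{+}+Q_{4,4}^{-}PQ_{2,2}^{+}+Q_{2,2}^{-}PQ_{3,3}^{1}PQ_{2,2}^{+})f(x,y)=-\tfrac{9}{10}\sqrt{\tfrac{3}{2}}\,x^{5}f_{x}(x,y).
\]
It remains to add $Q_{5,1}^{0}f$ as given in \eqref{eqn:Q51_0_arbitrary_direction} and evaluate the coefficients at $\beta=\tfrac{3}{2}$, $B=\tfrac{2}{3}\arccosh(\sqrt{3/2})$, using $\cosh(\beta B)=\sqrt{3/2}$, $\sinh(\beta B)=\sqrt{1/2}$, $\tanh(\beta B)=1/\sqrt{3}$ and $\beta B=\arccosh(\sqrt{3/2})$. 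A short computation gives
\[
\tfrac{2}{\cosh(\beta B)}\bigl(1-2\beta B\tanh(\beta B)\bigr)=\tfrac{2}{3}\bigl(\sqrt{6}-2\sqrt{2}\arccosh(\sqrt{3/2})\bigr),\qquad -4\beta\sinh(\beta B)=-3\sqrt{2},
\]
so $Q_{5,1}^{0}f(x,y)=\bigl[\tfrac{2}{3}(\sqrt{6}-2\sqrt{2}\arccosh(\sqrt{3/2}))\kappa-3\sqrt{2}\,\theta\bigr] x f_{x}(x,y)$, and adding this to the previous display yields the claimed expression.

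\textbf{Main obstacle.} There is no conceptual difficulty; the only point that requires care is the accurate bookkeeping of which operators $Q_{k,m}^{z}$ arise at each order in $b_n$ once $\kappa_n,\theta_n=O(b_n^{-4})$ instead of $O(b_n^{-2})$, so that one correctly excludes the operators $Q_{4,2}^{\pm}$, $Q_{3,1}^{z}$, $Q_{5,3}^{0}$ that showed up in Lemma~\ref{lemma:limiting_drifts:extended} but do \emph{not} contribute here. Once this accounting is settled, the identities reduce to the evaluation of elementary hyperbolic functions at the tri-critical point, already carried out above.
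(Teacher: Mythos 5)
Your proof is correct, and it takes essentially the same approach the paper intends: the paper leaves this lemma's proof implicit (stating it and moving directly to the proof of Theorem~\ref{theorem:moderate_deviations_CW_tri-critical_point_rescaling:2}), but the argument is the natural analogue of Lemma~\ref{lemma:limiting_drifts:extended}, and you have carried out the bookkeeping correctly — in particular your observation that with $\kappa_n,\theta_n=O(b_n^{-4})$ the first-order $\partial_\kappa,\partial_\theta$ contributions of $D^1\cG_2$ are relocated from $\cQ_3$ to $\cQ_5$, so $Q_{3,1}^z=Q_{4,2}^\pm=Q_{5,3}^0=0$ and the only new operator is $Q_{5,1}^0$ as in \eqref{eqn:Q51_0_arbitrary_direction}. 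The numerical evaluation at $(\beta_{\mathrm{tc}},B_{\mathrm{tc}})$ using $\cosh(\beta_{\mathrm{tc}}B_{\mathrm{tc}})=\sqrt{3/2}$, $\sinh(\beta_{\mathrm{tc}}B_{\mathrm{tc}})=1/\sqrt{2}$, $\tanh(\beta_{\mathrm{tc}}B_{\mathrm{tc}})=1/\sqrt{3}$ and $\beta_{\mathrm{tc}}B_{\mathrm{tc}}=\arccosh(\sqrt{3/2})$ checks out and reproduces the claimed coefficients.
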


\begin{proof}[Proof of Theorem \ref{theorem:moderate_deviations_CW_tri-critical_point_rescaling:2}]
The proof follows the proof of Theorems~\ref{theorem:moderate_deviations_CW_critical_curve_rescaling} by using instead the Hamiltonian given by
	\begin{equation*}
	H(x,p) = 2 \sqrt{\frac{2}{3}} p^2 + \left\{ \left[ \frac{2}{3} \left( \sqrt{6} - 2 \sqrt{2} \arccosh \left( \sqrt{\frac{3}{2}} \right)\right)   \kappa - 3 \sqrt{2} \theta \right] x - \frac{9}{10} \sqrt{\frac{3}{2}} x^5\right\}p.
	\end{equation*}
\end{proof}


\appendix

\section{Appendix: path-space large deviations for a projected process} \label{appendix:large_deviations_for_projected_processes}

We turn to the derivation of the large deviation principle. We first introduce our setting.

\begin{assumption} \label{assumption:LDP_assumption}
Assume that, for each $n \in \mathbb{N}^*$, we have a Polish subset $E_n \subseteq \bR^2$ such that for each $x \in \bR^2$ there are $x_n \in E_n$ with $x_n \rightarrow x$.  Let $A_n \subseteq C_b(E_n) \times C_b(E_n)$ and existence and uniqueness holds for the $D_{E_n}(\bR^+)$ martingale problem for $(A_n,\mu)$ for each initial distribution $\mu \in \cP(E_n)$. Letting $\PR_{z}^n \in \cP(D_{E_n}(\bR^+))$ be the solution to $(A_n,\delta_z)$, the mapping $z \mapsto \PR_z^n$ is measurable for the weak topology on $\cP(D_{E_n}(\bR^+))$. Let $Z_n$ be the solution to the martingale problem for $A_n$ and set
\begin{equation*}
H_n f = \frac{1}{r_n} e^{-r_n f}A_n e^{r_n f} \qquad e^{r_n f} \in \cD(A_n),
\end{equation*}
for some sequence of speeds $(r_n)_{n \in \mathbb{N}^*}$, with $\lim_{n \uparrow \infty} r_n = \infty$.
\end{assumption}

Following the strategy of \cite{FK06}, the convergence of Hamiltonians $(H_n)_{n \in \mathbb{N}^*}$ is a major component in the proof of the large deviation principle. We postpone the discussion on how determining a limiting Hamiltonian $H$ due to the difficulties that taking the $n \uparrow \infty$ limit introduces in our particular context. We first focus on exponential tightness, an equally important aspect. 

\subsection{Compact containment condition}

Given the convergence of the Hamiltonians, to have exponential tightness it suffices to establish an exponential compact containment condition.

\begin{definition}
We say that a sequence of processes $(Z_n(t),t \geq 0)$ on $E_n \subseteq \bR^2$ satisfies the exponential compact containment condition at speed $(r_n)_{n \in \mathbb{N}^*}$, with $\lim_{n \uparrow \infty} r_n = \infty$, if for all compact sets $K\subseteq \bR^2$,  constants $a \geq 0$ and times $T > 0$, there is a compact set $K' \subseteq \bR^2$ with the property that
	\begin{equation*}
	\limsup_{n \uparrow \infty} \sup_{z \in K} \frac{1}{r_n} \log \PR\left[Z_n(t) \notin K' \text{ for some } t \leq T \, \middle| \, Z_n(0) = z\right] \leq - a.
	\end{equation*}
\end{definition}

The exponential compact containment condition can be verified by using approximate Lyapunov functions and martingale methods. This is summarized in the following lemma. Note that exponential compact containment can be obtained by taking deterministic initial conditions.

\begin{lemma}[Lemma 4.22 in \cite{FK06}] \label{lemma:compact_containment_FK}
Suppose Assumption \ref{assumption:LDP_assumption} is satisfied. Let $Z_n(t)$ be a solution of the martingale problem for $A_n$ and assume that $(Z_n(0))_{n \in \mathbb{N}^*}$ is exponentially tight with speed $(r_n)_{n \in \mathbb{N}^*}$. Consider the compact set $K = [a,b] \times [c,d]$ and let $G \subseteq \mathbb{R}^2$ be open and such that $[a,b]\times [c,d] \subseteq G$. For each $n$, suppose we have $(f_n,g_n) \in H_n$. Define
	\begin{align*}
	\beta(q,G) &:= \liminf_{n \uparrow \infty} \left( \inf_{(x,y) \in G^c} f_n(x,y) - \sup_{(x,y) \in K} f_n(x,y)\right), \\
	\gamma(G) & := \limsup_{n \uparrow \infty} \sup_{(x,y) \in G} g_n(x,y).
	\end{align*}
	Then
	\begin{multline*} 
	\limsup_{n \uparrow \infty} \frac{1}{r_n} \log \PR\left[Z_n(t) \notin G \text{ for some } t \leq T  \right] \\
	\leq \max \left\{-\beta(q,G) + T \gamma(G), \limsup_{n \uparrow \infty} \PR\left[Z_n(0) \notin [a,b]\times[c,d] \right] \right\}.
	\end{multline*}
\end{lemma}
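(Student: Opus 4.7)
The plan is to construct an exponential supermartingale out of the pair $(f_n, g_n) \in H_n$ and then apply optional stopping at the exit time from $G$. By definition of $H_n$, the condition $(f_n,g_n) \in H_n$ means that $e^{r_n f_n}$ lies in the domain $\mathcal{D}(A_n)$ and
\[
A_n e^{r_n f_n} = r_n \, g_n \, e^{r_n f_n}.
\]
Because $Z_n$ solves the martingale problem for $A_n$, a direct computation (product rule applied to $e^{r_n f_n(Z_n(t))}$ and $\exp(-r_n \int_0^t g_n(Z_n(s))\,\dd s)$) then shows that
\[
N_n(t) := \exp\!\left( r_n f_n(Z_n(t)) - r_n \int_0^t g_n(Z_n(s))\,\dd s\right)
\]
is a non-negative local martingale, and thus by Fatou a supermartingale.

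Next, let $\tau_n := \inf\{t \geq 0 : Z_n(t) \notin G\}$. Starting from a deterministic point $z \in K$ and applying optional stopping at $\tau_n \wedge T$ yields
\[
\mathbb{E}_z\!\left[N_n(\tau_n \wedge T)\right] \leq e^{r_n f_n(z)} \leq \exp\!\left(r_n \sup_K f_n\right).
\]
On the event $\{\tau_n \leq T\}$, before the stopping time the trajectory lies in $\bar G$, so $\int_0^{\tau_n} g_n(Z_n(s))\,\dd s \leq T \sup_{G} g_n$, and at the stopping time $Z_n(\tau_n) \in G^c$ (up to quasi-left-continuity, which is standard for càdlàg Markov processes), giving the lower bound
\[
N_n(\tau_n \wedge T) \, \mathbf{1}_{\{\tau_n \leq T\}} \geq \exp\!\left(r_n \inf_{G^c} f_n - r_n T \sup_G g_n\right) \mathbf{1}_{\{\tau_n \leq T\}}.
\]
Combining these two inequalities and rearranging,
\[
\PR_z[\tau_n \leq T] \leq \exp\!\left(-r_n\big(\inf_{G^c} f_n - \sup_K f_n\big) + r_n T \sup_G g_n\right).
\]
Taking logarithms, dividing by $r_n$ and letting $n \to \infty$ produces the bound $-\beta(q,G) + T \gamma(G)$.

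To finish, I would split the probability $\PR[Z_n(t) \notin G \text{ for some } t \leq T]$ according to whether $Z_n(0) \in K = [a,b] \times [c,d]$ or not:
\[
\PR[Z_n(\cdot) \text{ exits } G \text{ by }T] \leq \sup_{z \in K} \PR_z[\tau_n \leq T] + \PR[Z_n(0) \notin K].
\]
The principle $\limsup r_n^{-1} \log(a_n + b_n) \leq \max\{\limsup r_n^{-1} \log a_n, \limsup r_n^{-1} \log b_n\}$ then yields the stated bound.

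The main obstacle I would expect is technical, not conceptual: one must justify that $N_n$ is a true martingale (rather than merely a local martingale) up to the bounded stopping time $\tau_n \wedge T$, so that optional stopping applies. This is handled by the fact that $f_n, g_n \in C_b(E_n)$ by assumption, which gives a deterministic upper bound on $N_n$ over $[0,\tau_n \wedge T]$ and hence uniform integrability via dominated convergence. A subtler point is dealing with the event $\{Z_n(\tau_n) \in \partial G\}$; the inequality $f_n(Z_n(\tau_n)) \geq \inf_{G^c} f_n$ requires continuity of $f_n$ on a neighborhood of $\bar G$, which is part of the standing regularity of the domain of $H_n$.
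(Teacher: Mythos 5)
The paper does not prove this lemma; it imports it verbatim as Lemma 4.22 of Feng--Kurtz \cite{FK06} and only uses it. Your argument reproduces the standard Feng--Kurtz exponential-supermartingale derivation and is essentially correct, so there is no alternative to compare against within the paper itself.

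Two of your closing worries can be dispatched more simply than you suggest, and one deserves a slightly sharper statement. First, since $G$ is open, $\partial G \subseteq G^c$, so $f_n(Z_n(\tau_n)) \geq \inf_{G^c} f_n$ holds as soon as $Z_n(\tau_n) \in G^c$. That inclusion follows from right-continuity alone: for a c\`adl\`ag process and a closed set $G^c$, the state at the first hitting time lies in $G^c$ whenever that time is finite. Neither quasi-left-continuity nor any continuity of $f_n$ on a neighborhood of $\bar G$ is needed; the latter enters only to guarantee $e^{r_n f_n}\in\cD(A_n)$, which is assumed. Second, the bound $\int_0^{\tau_n} g_n(Z_n(s))\,\dd s \leq T \sup_G g_n$ silently uses $\sup_G g_n \geq 0$; in general one only gets $\tau_n \sup_G g_n$, and when $\sup_G g_n<0$ this exceeds $T\sup_G g_n$, so the honest conclusion is $T(\sup_G g_n)^+$. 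In the applications in this paper (Lemma~\ref{lemma:compact_containment_in_n}, Proposition~\ref{proposition:exponential_compact_containment}) one has $\gamma(G)>0$, so this is immaterial, but it is worth being precise. The remaining steps — the local martingale computation giving $N_n$, optional stopping justified by boundedness of $f_n,g_n\in C_b(E_n)$ over the bounded horizon $\tau_n\wedge T$, and the decomposition of the initial law over $\{Z_n(0)\in K\}$ and its complement followed by the $\limsup$-of-sums rule — are all sound.
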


\subsection{Operator convergence for a projected process}
%
%
%
%
%

In the papers \cite{Kr16b,CoKr17,DFL11}, one of the main steps in proving the large deviation principle was proving directly the existence of an operator $H$ such that $H \subseteq \LIM_n H_n$; in other words by verifying that, for all $(f,g) \in H$, there are $f_n \in H_n$ such that $\LIM_n f_n = f$ and $\LIM_n H_n f_n = g$ (the notion of $\LIM$ is introduced in Definition~\ref{def:LIM}). Here it is hard to follow a similar strategy.

We are dealing with functions 
\[
f_n(x,y) = f(x) + b_n^{-1} f_1(x,y) + b_n^{-2} f_2(x,y) \qquad \mbox{ (for suitably chosen $f_1$ and $f_2$)}
\]
given in a perturbative fashion and satisfying intuitively $f_n \rightarrow f$ and $H_n f_n \rightarrow Hf$ with Hamiltonian $H \subseteq C_b(\bR) \times C_b(\bR)$. In contrast to the setting of \cite{CoKr17}, even if $F_{n,f} \in C_c^{\infty}(\mathbb{R}^2)$, we can not guarantee $\sup_n \| H_n F_{n,\psi}\| < \infty$, implying we do not have $\LIM H_nf_n = Hf$. To circumvent this issue, we relax our definition of limiting operator.

In particular, we will work with two Hamiltonians $H_{\dagger}$ and $H_\ddagger$, that are limiting upper and lower bounds for the sequence of Hamiltonians $H_n$, respectively, and thus serve as natural upper and lower bounds for $H$. This extension allows us to consider unbounded functions in the domain and to argue with inequalities rather than equalities.

\begin{definition}[Definition 2.5 in \cite{FK06}]\label{def:LIM}
For $f_n \in C_b(E_n)$ and $f \in C_b(\bR^2)$, we will write $\LIM f_n = f$ if $\sup_n \vn{f_n} < \infty$ and, for all compact sets $K \subseteq \bR^2$,
\begin{equation*}
\lim_{n \uparrow \infty} \sup_{(x,y) \in K \cap E_n} \left|f_n(x,y) - f(x,y) \right| = 0.
\end{equation*}
\end{definition}

\begin{definition}[Condition 7.11 in \cite{FK06}] \label{definition:subLIM_superLIM}
Suppose that for each $n$ we have an operator $H_{n} \subseteq C_b(E_n) \times C_b(E_n)$. 
Let $(v_n)_{n \in \mathbb{N}^*}$ be a sequence of real numbers such that $v_n \uparrow \infty$.

\begin{enumerate}[(a)]
	\item The \textit{extended sub-limit}, denoted by $ex-\subLIM_n H_{n}$, is defined by the collection $(f,g) \in C_l(\bR^2)\times C_b(\bR)$ for which there exist $(f_n,g_n) \in H_{n}$ such that
	\begin{gather} 
	\LIM f_n \wedge c = f \wedge c, \qquad \forall \, c \in \bR, \label{eqn:convergence_condition_sublim_constants} \\
	\sup_n \frac{1}{v_n} \log \vn{g_n} < \infty, \qquad \sup_{n} \sup_{x \in \bR^2} g_n(x) < \infty, \label{eqn:convergence_condition_sublim_uniform_gn}
	\end{gather}
	and that, for every compact set $K \subseteq \bR^2$ and every sequence $z_n \in K$ satisfying $\lim_n z_n = z$ and $\lim_n f_n(z_n) = f(z) < \infty$, 
	\begin{equation} \label{eqn:sublim_generators_upperbound}
	\limsup_{n \uparrow \infty}g_{n}(z_n) \leq g(z).
	\end{equation}
	\item 
	The \textit{extended super-limit}, denoted by $ex-\superLIM_n H_{n}$, is defined by the collection $(f,g) \in C_u(\bR^2)\times C_b(\bR)$ for which there exist $(f_n,g_n) \in H_{n}$ such that
	\begin{gather} 
	\LIM f_n \vee c = f \vee c, \qquad \forall \, c \in \bR, \label{eqn:convergence_condition_superlim_constants} \\
	\sup_n \frac{1}{v_n} \log \vn{g_n} < \infty, \qquad \inf_{n} \inf_{x \in \bR^2} g_n(x) > - \infty, \label{eqn:convergence_condition_superlim_uniform_gn}
	\end{gather}
	and that, for every compact set $K \subseteq \bR^2$ and every sequence $z_n \in K$ satisfying $ \lim_n z_n = z$ and $\lim_n f_n(z_n) = f(z) > - \infty$,
	\begin{equation}\label{eqn:superlim_generators_lowerbound}
	\liminf_{n \uparrow \infty}g_{n}(z_n) \geq g(z).
	\end{equation}
\end{enumerate}
\end{definition}

For completeness, we also give the definition of the extended limit.

\begin{definition}\label{def:extended_limit}
Suppose that for each $n$ we have an operator $H_{n} \subseteq C_b(E_n) \times C_b(E_n)$. We write $ex-\LIM H_n$ for the set of $(f,g) \in C_b(\bR^2) \times C_b(\bR^2)$ for which there exist $(f_n,g_n) \in H_n$ such that $f = \LIM f_n$ and $g = \LIM g_n$.
\end{definition}

\begin{definition}[Viscosity solutions]
Let $H_\dagger \subseteq C_l(\bR^2) \times C_b(\bR^2)$ and $H_\ddagger \subseteq C_u(\bR^2) \times C_b(\bR^2)$ and let $\lambda > 0$ and $h \in C_b(\bR^2)$. Consider the Hamilton-Jacobi equations
\begin{align}
f - \lambda H_\dagger f & = h, \label{eqn:differential_equation_dagger} \\
f - \lambda H_\ddagger f & = h. \label{eqn:differential_equation_ddagger}
\end{align}
We say that $u$ is a \textit{(viscosity) subsolution} of equation \eqref{eqn:differential_equation_dagger} if $u$ is bounded, upper semi-continuous and if, for every $f \in \cD(H_\dagger)$ such that $\sup_x u(x) - f(x) < \infty$ and every sequence $x_n \in \mathbb{R}^2$ such that
\begin{equation*}
\lim_{n \uparrow \infty} u(x_n) - f(x_n)  = \sup_x u(x) - f(x),
\end{equation*}
we have
\begin{equation*}
\lim_{n \uparrow \infty} u(x_n) - \lambda H_\dagger f(x_n) - h(x_n) \leq 0.
\end{equation*}
We say that $v$ is a \textit{(viscosity) supersolution} of equation \eqref{eqn:differential_equation_ddagger} if $v$ is bounded, lower semi-continuous and if, for every $f \in \cD(H_\ddagger)$ such that $\inf_x v(x) - f(x) > - \infty$ and every sequence $x_n \in \mathbb{R}^2$ such that
\begin{equation*}
\lim_{n \uparrow \infty} v(x_n) - f(x_n)  = \inf_x v(x) - f(x),
\end{equation*}
we have
\begin{equation*}
\lim_{n \uparrow \infty} v(x_n) - \lambda H_\ddagger f(x_n) - h(x_n) \geq 0.
\end{equation*}
We say that $u$ is a \textit{(viscosity) solution} of equations \eqref{eqn:differential_equation_dagger} and \eqref{eqn:differential_equation_ddagger} if it is both a subsolution to \eqref{eqn:differential_equation_dagger} and a supersolution to \eqref{eqn:differential_equation_ddagger}.

We say that \eqref{eqn:differential_equation_dagger} and \eqref{eqn:differential_equation_ddagger} satisfy the \textit{comparison principle} if for every subsolution $u$ to \eqref{eqn:differential_equation_dagger} and supersolution $v$ to \eqref{eqn:differential_equation_ddagger}, we have $u \leq v$.
\end{definition}

Note that the comparison principle implies uniqueness of viscosity solutions. This in turn implies that a new Hamiltonian can be constructed based on the set of viscosity solutions.

\begin{condition} \label{condition:H_limit}
Suppose we are in the setting of Assumption \ref{assumption:LDP_assumption}. Suppose there are operators $H_\dagger \subseteq C_l(\bR^2) \times C_b(\bR^2)$, $H_\ddagger \subseteq C_u(\bR^2) \times C_b(\bR^2)$ and $H \subseteq C_b(\bR) \times C_b(\bR)$ with the following properties:
\begin{enumerate}[(a)]
\item \label{condition:H_limit:item_1} 
$H_\dagger \subseteq ex-\subLIM_n H_n$ and $H_\ddagger \subseteq ex-\superLIM_n H_n$.
\item \label{condition:H_limit:item_2} 
The domain $\cD(H)$ contains $C^\infty_c(\mathbb{R})$ and, for $f \in C_c^\infty(\bR)$, we have $Hf(x) = H(x,\nabla f(x))$.
\item \label{condition:H_limit:item_3}
For all $\lambda > 0$ and $h \in C_b(\bR)$, every subsolution to $f - \lambda H_\dagger f = h$ is a subsolution to $f - \lambda H f = h$ and every supersolution to $f - \lambda H_\ddagger f = h$ is a supersolution to $f - \lambda H f = h$.
\end{enumerate}
\end{condition}

Now we are ready to state the main result of this appendix: the large deviation principle for the projected process. We denote by $\eta_n: E_n \to \mathbb{R}$ the projection map $\eta_n(x,y)=x$.

\begin{theorem}[Large deviation principle] \label{theorem:Abstract_LDP}
Suppose we are in the setting of Assumption \ref{assumption:LDP_assumption} and Condition \ref{condition:H_limit} is satisfied. Suppose that for all $\lambda > 0$ and $h \in C_b(\mathbb{R})$ the comparison principle holds for \mbox{$f - \lambda H f = h$}.   

Let $Z_n(t)$ be the solution to the martingale problem for $A_n$. Suppose that the large deviation principle at speed $(r_n)_{n \in \mathbb{N}^*}$ holds for $\eta_n(Z_n(0))$ on $\bR$ with good rate-function $I_0$. Additionally suppose that the exponential compact containment condition holds at speed $(r_n)_{n \in \mathbb{N}^*}$ for the processes $Z_n(t)$.

Then the large deviation principle holds with speed $(r_n)_{n \in \mathbb{N}^*}$  for $(\eta_n(Z_n(t)))_{n \in \mathbb{N}^*}$ on $D_{\mathbb{R}}(\bR^+)$ with good rate function $I$. Additionally, suppose that the map $p \mapsto H(x,p)$ is convex and differentiable for every $x$ and that the map $(x,p) \mapsto \frac{\dd}{\dd p} H(x,p)$ is continuous. Then the rate function $I$ is given by
\begin{equation*}
I(\gamma) = \begin{cases}
I_0(\gamma(0)) + \int_0^\infty \cL(\gamma(s),\dot{\gamma}(s)) \dd s & \text{if } \gamma \in \cA\cC, \\
\infty & \text{otherwise},
\end{cases}
\end{equation*}
where $\cL : \mathbb{R}^2 \rightarrow \bR$ is defined by $\cL(x,v) = \sup_p \left\{pv - H(x,p)\right\}$.
\end{theorem}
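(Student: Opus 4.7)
The plan is to follow the Feng–Kurtz strategy for deriving large deviation principles from operator convergence and well-posedness of Hamilton–Jacobi equations, adapted to the projected setting. First, I would establish exponential tightness of $\{\eta_n(Z_n)\}$ on $D_{\mathbb{R}}(\mathbb{R}^+)$. Exponential tightness of the full processes $Z_n$ on $D_{\mathbb{R}^2}(\mathbb{R}^+)$ follows from the exponential compact containment hypothesis combined with a Markov exponential tightness criterion (e.g. Corollary 4.17 in \cite{FK06}): the compact containment provides the analogue of the compact-set bound, while the generator identity and boundedness of the $g_n$ on compacts from Condition \ref{condition:H_limit}\eqref{condition:H_limit:item_1} controls the modulus of continuity on an exponential scale. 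Pushing this forward through the continuous projection $\eta_n$, together with the LDP at time $0$ for $\eta_n(Z_n(0))$, yields exponential tightness of the projected processes.

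Next, the main step is to identify the limiting nonlinear semigroup. For $f \in C_b(\mathbb{R}^2)$, set
\[
V_n(t) f(z) := \frac{1}{r_n} \log \mathbb{E}\!\left[ e^{r_n f(Z_n(t))} \,\middle|\, Z_n(0) = z \right],
\]
with $r_n = nb_n^{-\nu-2}$. I would then introduce, following \cite[Chapter 7]{FK06}, the upper and lower envelope operators $V^\dagger$, $V^\ddagger$ obtained as sub/super-limits of $V_n$, and analogously the upper/lower resolvents $R^\dagger(\lambda)$, $R^\ddagger(\lambda)$. The key analytic input is that for every $h \in C_b(\mathbb{R})$ and $\lambda > 0$, the function $R^\dagger(\lambda) h$ is a viscosity subsolution of $f - \lambda H_\dagger f = h$ and $R^\ddagger(\lambda) h$ is a viscosity supersolution of $f - \lambda H_\ddagger f = h$; this derivation uses the operator-convergence bounds \eqref{eqn:convergence_condition_sublim_constants}--\eqref{eqn:sublim_generators_upperbound} and \eqref{eqn:convergence_condition_superlim_constants}--\eqref{eqn:superlim_generators_lowerbound} via the classical test-function argument on maximizing sequences. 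By Condition \ref{condition:H_limit}\eqref{condition:H_limit:item_3}, both $R^\dagger(\lambda) h$ and $R^\ddagger(\lambda) h$ become sub- resp.\ supersolutions of $f - \lambda H f = h$ on $\mathbb{R}$, and the comparison principle for this reduced equation forces $R^\dagger(\lambda) h \leq R^\ddagger(\lambda) h$. Since the reverse inequality is automatic from $V^\dagger \leq V^\ddagger$, both envelopes coincide with the unique viscosity solution $R(\lambda) h$, which depends only on the $x$-variable; this is the mechanism by which the projection is implemented.

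The convergence of resolvents, combined with exponential tightness, yields by \cite[Theorem 7.10]{FK06} the large deviation principle for $\{\eta_n(Z_n)\}$ on $D_{\mathbb{R}}(\mathbb{R}^+)$ with a good rate function $I$ expressed via the Nisio semigroup $V(t) h = \lim_{\lambda^{-1} k \to t} R(\lambda)^k h$, which on $C_b(\mathbb{R})$ is the unique viscosity semigroup generated by $H$. Finally, under the additional convexity and regularity assumptions on $p \mapsto H(x,p)$, the semigroup $V(t)$ admits the variational representation
\[
V(t) h(x) = \sup_{\gamma(0) = x}\left\{ h(\gamma(t)) - \int_0^t \mathcal{L}(\gamma(s), \dot\gamma(s))\, \mathrm{d} s \right\},
\]
with $\mathcal{L}(x,v) = \sup_p\{ pv - H(x,p) \}$, by the standard Legendre–Fenchel duality and a control-representation argument (cf.\ \cite[Section 8.2]{FK06}); the rate function is then identified with the action functional of $\mathcal{L}$ plus $I_0$ on the initial value, vanishing on non-absolutely continuous paths.

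The expected main obstacle is the rigorous matching of $R^\dagger$ and $R^\ddagger$ through the Hamiltonian $H$ living on a lower-dimensional space: the test-function arguments must accommodate the fact that $H_\dagger$ and $H_\ddagger$ have domains containing functions that genuinely depend on $y$, while the candidate comparison-principle envelope lies in $C_b(\mathbb{R})$. This is precisely what Condition \ref{condition:H_limit}\eqref{condition:H_limit:item_3} is designed to bridge, but its verification (carried out in the body of the paper via the perturbation scheme of Sections \ref{section:formal_calculus_of_operators} and \ref{sect:formal_calculus:extended}) is where the analytic work is concentrated; the abstract proof here essentially reduces to assembling these ingredients into the Feng–Kurtz machinery.
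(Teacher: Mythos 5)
Your proposal takes essentially the same route as the paper's proof, which is a direct appeal to the Feng--Kurtz framework (specifically \cite[Cor.~8.28, Thm.~8.27]{FK06} applied with $H_\dagger$, $H_\ddagger$ as defined here and $\mathbf{H}_\dagger = \mathbf{H}_\ddagger = H$); you are simply unpacking the machinery behind that citation (envelope semigroups, resolvents, comparison, Nisio semigroup, control representation). One minor directional slip: the automatic inequality between the envelope resolvents is $R^\ddagger(\lambda)h \le R^\dagger(\lambda)h$ (liminf below limsup), and it is the comparison principle that yields the reverse inequality forcing equality, not the other way around as you wrote --- but this does not change the structure of the argument.
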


\begin{proof}
The large deviation result follows by \cite[Cor.~8.28]{FK06} with $H_\dagger$ and $H_\ddagger$ as in the present paper and $\bfH_\dagger = \bfH_\ddagger = H$. The verification of the conditions for \cite[Thm.~8.27]{FK06} corresponding to a Hamiltonian of this type have been carried out in e.g. \cite[Sect.~10.3]{FK06} or \cite{CoKr17}.
\end{proof}

\subsection{Relating two sets of Hamiltonians}

For Condition \ref{condition:H_limit}, we need to relate the Hamiltonians $H_\dagger \subseteq C_l(\bR^2) \times C_b(\bR^2)$ and $H_\ddagger \subseteq C_u(\bR^2) \times C_b(\bR^2)$ to $H \subseteq C_b(\bR) \times C_b(\bR)$. 

\begin{definition}
Let $H_\dagger \subseteq C_l(\bR^2) \times C_b(\bR^2)$ and $H_\ddagger \subseteq C_u(\bR^2) \times C_b(\bR^2)$. We say that $\hat{H}_\dagger \subseteq  C_l(\bR^2) \times C_b(\bR^2)$ is a \textit{viscosity sub-extension} of $H_\dagger$ if $H_\dagger \subseteq \hat{H}_\dagger$ and if for every $\lambda >0$ and $h \in C_b(\bR^2)$ a viscosity subsolution to $f-\lambda H_\dagger f = h$ is also a viscosity subsolution to $f - \lambda \hat{H}_\dagger f = h$. Similarly, we define a \textit{viscosity super-extension} $\hat{H}_\ddagger$ of $H_\ddagger$.
\end{definition}

The following lemma allows us to obtain viscosity extensions.

\begin{lemma}[Lemma 7.6 in \cite{FK06}] \label{lemma:extension_results}
Let $H_\dagger \subseteq \hat{H}_\dagger \subseteq C_l(\bR^2) \times C_b(\bR^2)$ and $H_\ddagger \subseteq \hat{H}_\ddagger \subseteq C_u(\bR^2) \times C_b(\bR^2)$.

\smallskip

Suppose that for each $(f,g) \in \hat{H}_\dagger$ there exist $(f_n,g_n) \in H_\dagger$ such that, for every $c,d \in \bR$, we have
\begin{equation*}
\lim_{n \uparrow \infty} \vn{f_n \wedge c - f \wedge c} = 0
\end{equation*}
and 
\begin{equation*}
\limsup_{n \uparrow \infty} \sup_{z :  f(\gamma(z)) \vee f_n(\gamma(z)) \leq c} g_n(z) \vee d - g(z) \vee d \leq 0.
\end{equation*}
Then $\hat{H}_\dagger$ is a sub-extension of $H_\dagger$.

\smallskip

Suppose that for each $(f,g) \in \hat{H}_\ddagger$ there exist $(f_n,g_n) \in H_\ddagger$ such that, for every $c,d \in \bR$, we have
\begin{equation*}
\lim_{n \uparrow \infty} \vn{f_n \vee c - f \vee c} = 0
\end{equation*}
and 
\begin{equation*}
\liminf_{n \uparrow \infty} \inf_{z :  f(\gamma(z)) \wedge f_n(\gamma(z)) \geq c} g_n(z) \wedge d - g(z) \wedge d \geq 0.
\end{equation*}
Then $\hat{H}_\ddagger$ is a super-extension of $H_\ddagger$.
\end{lemma}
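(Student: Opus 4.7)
The plan is to verify the viscosity subsolution (respectively supersolution) condition directly for the extended operator; by the symmetry of the two statements it suffices to handle the sub-extension case, the super-extension case being dual up to reversing inequalities and swapping $\wedge, \vee$. Fix a bounded, upper semicontinuous $u$ that is a viscosity subsolution of $f - \lambda H_\dagger f = h$, and pick $(f,g) \in \hat{H}_\dagger$ with $\sup_x(u(x) - f(x)) < \infty$. Our goal is to produce a sequence $(z_n)$ realising this supremum along which $\limsup_n \bigl( u(z_n) - \lambda g(z_n) - h(z_n) \bigr) \leq 0$.

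Choose approximants $(f_n,g_n) \in H_\dagger$ as in the hypothesis. Since $f_n \in C_l(\bR^2)$ and $u$ is bounded, $M_n := \sup_z (u(z) - f_n(z))$ is finite; pick almost-maximising points $z_n$ with $u(z_n) - f_n(z_n) \geq M_n - n^{-1}$. Boundedness of $u$ and the fact that $f_n$ is bounded below force $f_n(z_n)$ to stay uniformly bounded from above; combined with $\vn{f_n \wedge c - f \wedge c} \to 0$, the values $f(z_n)$ are also bounded above, so for $c$ sufficiently large the truncation $f \wedge c$ coincides with $f$ along the sequence. Because $u$ is a viscosity subsolution with respect to $(f_n,g_n) \in H_\dagger$, one obtains along a subsequence
\begin{equation*}
\limsup_{n \to \infty} \bigl( u(z_n) - \lambda g_n(z_n) - h(z_n) \bigr) \leq 0.
\end{equation*}
The second hypothesis is now invoked to replace $g_n$ by $g$: for $c$ fixed as above and any $d \leq \inf g$, the bound $\limsup_n g_n(z_n) \vee d - g(z_n) \vee d \leq 0$ reduces to $g_n(z_n) \leq g(z_n) + o(1)$. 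Substituting yields $\limsup_n u(z_n) - \lambda g(z_n) - h(z_n) \leq 0$, and after extracting a limit point $z^\ast$ of $(z_n)$ one checks via upper semicontinuity and $\vn{f_n \wedge c - f \wedge c} \to 0$ that $z^\ast$ maximises $u - f$, giving exactly the required viscosity subsolution inequality for $(f,g)$.

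The main obstacle lies in Step~3, the passage to a limit point: the maximising $(z_n)$ are a priori bounded only through the upper-level sets defined by the truncation level $c$, so relative compactness in $\bR^2$ is not automatic. The clean resolution, following Feng--Kurtz, is to work with \emph{generalized} maximising sequences (not requiring convergence of $z_n$ itself, only of the relevant quantities $u(z_n)-f(z_n)$), which is why the hypothesis is phrased as a uniform $\limsup$ over the sublevel set $\{z : f(\gamma(z)) \vee f_n(\gamma(z)) \leq c\}$ rather than a pointwise statement. The remaining technical bookkeeping consists in coordinating the truncation parameters $c$ and $d$, the subsequence extractions, and the use of upper/lower semicontinuity of $u$, $g$, and $h$ so that the two convergence hypotheses assemble into the single required inequality.
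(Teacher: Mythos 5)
The paper does not actually prove this statement; it is quoted verbatim from Lemma 7.6 of \cite{FK06}, so there is no in-paper argument to compare against. Your outline does follow the standard Feng--Kurtz proof (approximate $(f,g)$ by $(f_n,g_n)\in H_\dagger$, use the truncated uniform convergence to confine the near-maximizers of $u-f_n$ to a sublevel set of $f$, then transfer the operator bound via the second hypothesis), but two steps are logically out of order or unfinished.

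First, you select $z_n$ as arbitrary $n^{-1}$-almost-maximizers of $u-f_n$ and then assert that the subsolution property yields $\limsup_n\bigl(u(z_n)-\lambda g_n(z_n)-h(z_n)\bigr)\le 0$ ``along a subsequence''. The subsolution property for the fixed pair $(f_n,g_n)$ only gives the operator inequality along maximizing \emph{sequences} (in $m$, for fixed $n$) of $u-f_n$; it says nothing about a single almost-maximizing point. The correct order is: for each $n$, take a maximizing sequence of $u-f_n$, use the subsolution inequality to extract from it one point $z_n$ that is simultaneously $n^{-1}$-almost-maximizing and satisfies $u(z_n)-\lambda g_n(z_n)-h(z_n)\le n^{-1}$, and then diagonalize in $n$. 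Second, the concluding step --- extracting a limit point $z^\ast$ and checking it maximizes $u-f$ --- is both unjustified (no compactness, as you concede) and unnecessary: the definition of subsolution is sequential, so what must be verified is that $(z_n)$ is itself a maximizing sequence for $u-f$, i.e. $u(z_n)-f(z_n)\to\sup(u-f)$. This follows from two facts you never establish, namely $\sup(u-f_n)\to\sup(u-f)$ and $f_n(z_n)-f(z_n)\to 0$, both consequences of $\vn{f_n\wedge c-f\wedge c}\to 0$ combined with the uniform upper bound on $f_n(z_n)$ and $f(z_n)$ that you already derived; once these are in place no limit point is needed. Finally, note a definitional mismatch: the paper's definition of subsolution demands the inequality along \emph{every} maximizing sequence of $u-f$, whereas your argument (and Feng--Kurtz's Definition 7.1, for which their Lemma 7.6 is proved) produces only \emph{one} such sequence; read literally, the paper's definition would require a further argument.
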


\begingroup
\renewcommand{\addcontentsline}[3]{}%

\endgroup


\begin{thebibliography}{99}
	
	\bibitem[AP91]{AdMPe91}
	J.~M.~G. {Amaro de Matos} and J.~F. Perez.
	\newblock Fluctuations in the {C}urie-{W}eiss version of the random field
	{I}sing model.
	\newblock {\em J. Stat. Phys.}, 62(3):587--608, 1991.
	
	\bibitem[APZ92]{AdMPaZa92}
	J.~M.~G. {Amaro de Matos}, A.~E. Patrick, and V.~A. Zagrebnov.
	\newblock Random infinite-volume {G}ibbs states for the {C}urie-{W}eiss random
	field {I}sing model.
	\newblock {\em J. Stat. Phys.}, 66(1-2):139--164, 1992.
	
	\bibitem[BBI09]{BiBoIo09}
	A.~Bianchi, A.~Bovier, and D.~Ioffe.
	\newblock Sharp asymptotics for metastability in the random field Curie-Weiss model.
	\newblock {\em Electron. J. Probab.}, 14(53):1541--1603, 2009.
	
	\bibitem[CDP12]{CoDaP12}
	F. Collet and P. Dai~Pra.
	\newblock The role of disorder in the dynamics of critical fluctuations of mean
	field models.
	\newblock {\em Electron. J. Probab.}, 17(26):1--40, 2012.
	
	\bibitem[CGK]{CGK}
	F. Collet, M. Gorny, and R.~C. Kraaij.
	\newblock Path-space moderate deviations for a {C}urie-{W}eiss model of self-organized criticality: the Gaussian case.
	\newblock Preprint, {\em arXiv:1801.08840}, 2018.
	
	\bibitem[CK17]{CoKr17}
	F. Collet and R.~C. Kraaij.
	\newblock Dynamical moderate deviations for the {Curie-Weiss} model.
	\newblock {\em Stoch. Proc. Appl.}, 127(9):2900--2925, 2017.
	
	\bibitem[CL71]{CL71}
	M.~G. Crandall and T.~M. Liggett.
	\newblock Generation of semi-groups of nonlinear transformations on general
	{B}anach spaces.
	\newblock {\em Am. J. Math.}, 93(2):265--298, 1971.
	
	\bibitem[Com87]{Com87}
	F.~Comets.
	\newblock Nucleation for a long range magnetic model.
	\newblock {\em Ann. Inst. H. Poincar{\'e} Probab. Statist.}, 23(2):135--178, 1987.
	
	
	\bibitem[Com89]{Co89}
	F.~Comets.
	\newblock Large deviation estimates for a conditional probability distribution.
	applications to random interaction {Gibbs} measures.
	\newblock {\em Probab. Theory Related Fields}, 80(3):407--432, 1989.
	
	\bibitem[DPdH95]{DaPdHo95}
	P. Dai~Pra and F. den Hollander.
	\newblock Mc{K}ean-{V}lasov limit for interacting random processes in random
	media.
	\newblock Technical report, Department of Mathematics, University of Nijmegen,
	1995.
	
	\bibitem[DPdH96]{DaPdHo96}
	P. Dai~Pra and F. den Hollander.
	\newblock Mc{K}ean-{V}lasov limit for interacting random processes in random
	media.
	\newblock {\em J. Stat. Phys.}, 84(3):735--772, 1996.
	
	\bibitem[DFL11]{DFL11}
	X.~Deng, J.~Feng, and Y.~Liu.
	\newblock A singular 1-{D} {H}amilton-{J}acobi equation, with application to
	large deviation of diffusions.
	\newblock {\em Commun. Math. Sci.}, 9(1), 2011.
	
	\bibitem[EL04]{EiLo04}
	P.~Eichelsbacher and M.~L{\"o}we.
	\newblock Moderate deviations for a class of mean-field models.
	\newblock {\em Markov Process. Related Fields}, 10(2):345--366, 2004.
	
	\bibitem[Ell85]{Ell85}
	R.~S. Ellis.
	\newblock {\em Entropy, large deviations, and statistical mechanics}.
	\newblock Springer-Verlag, New York, 1985.
	
	\bibitem[EN78a]{ElNe78a}
	R.~S. Ellis and C.~M. Newman.
	\newblock Limit theorems for sums of dependent random variables occurring in
	statistical mechanics.
	\newblock {\em Z. Wahrsch. Verw. Gebiete}, 44(2):117--139, 1978.
	
	\bibitem[EN78b]{ElNe78b}
	R.~S. Ellis and C.~M. Newman.
	\newblock The statistics of {C}urie-{W}eiss models.
	\newblock {\em J. Stat. Phys.}, 19(2):149--161, 1978.
	
	\bibitem[ENR80]{ElNeRo80}
	R.~S. Ellis, C.~M. Newman, and J.~S. Rosen.
	\newblock Limit theorems for sums of dependent random variables occurring in
	statistical mechanics, ii.
	\newblock {\em Z. Wahrsch. Verw. Gebiete}, 51(2):153--169, 1980.
	
	\bibitem[FK06]{FK06}
	J. Feng and T.~G. Kurtz.
	\newblock {\em Large Deviations for Stochastic Processes}.
	\newblock American Mathematical Society, 2006.
	
	\bibitem[FKR12]{FoKuRe12}
	M.~Formentin, C.~K{\"u}lske, and A.~Reichenbachs.
	\newblock Metastates in mean-field models with random external fields generated
	by markov chains.
	\newblock {\em J. Stat. Phys.}, 146(2):314--329, 2012.
	
	\bibitem[FMP00]{FoMaPi00}
	L.~R. Fontes, P.~Mathieu, and P.~Picco.
	\newblock On the averaged dynamics of the random field {C}urie-{W}eiss model.
	\newblock {\em Ann. {A}ppl. {P}robab.}, 10(4):1212--1245, 2000.
	
	\bibitem[FW98]{FW98}
	M.~I. Freidlin and A.~D. Wentzell.
	\newblock {\em Random perturbations of dynamical systems}.
	\newblock Springer-Verlag, second edition, 1998.
	
	\bibitem[IK10]{IaKu10}
	G.~Iacobelli and C.~K{\"u}lske.
	\newblock Metastates in finite-type mean-field models: visibility,
	invisibility, and random restoration of symmetry.
	\newblock {\em J. Stat. Phys.}, 140(1):27--55, 2010.
	
	\bibitem[Kal02]{Ka02}
	O. Kallenberg.
	\newblock {\em Foundations of Modern Probability}.
	\newblock Springer-Verlag, second edition, 2002.
	
	\bibitem[KLN07]{KuLN07}
	C.~K{\"u}lske and A.~Le~Ny.
	\newblock Spin-flip dynamics of the {C}urie-{W}eiss model: loss of
	{G}ibbsianness with possibly broken symmetry.
	\newblock {\em Comm. Math. Phys.}, 271(2):431--454, 2007.
	
	\bibitem[Kra16]{Kr16b}
	R. Kraaij.
	\newblock Large deviations for finite state {Markov} jump processes with
	mean-field interaction via the comparison principle for an associated
	{Hamilton}--{Jacobi} equation.
	\newblock {\em J. Stat. Phys.}, 164(2):321--345, 2016.
	
	\bibitem[K{\"u}l97]{Kue97}
	C.~K{\"u}lske.
	\newblock Metastates in disordered mean-field models: random field and
	{H}opfield models.
	\newblock {\em J. Stat. Phys.}, 88(5):1257--1293, 1997.
	
	\bibitem[LM12]{LoMe12}
	M.~L{\"o}we and R.~Meiners.
	\newblock Moderate deviations for random field {C}urie-{W}eiss models.
	\newblock {\em J. Stat. Phys.}, 149(4):701--721, 2012.
	
	\bibitem[LMT13]{LoMeTo13}
	M.~L{\"o}we, R.~Meiners, and F.~Torres.
	\newblock Large deviations principle for {C}urie--{W}eiss models with random
	fields.
	\newblock {\em J. Phys. A}, 46(12):125004, 2013.
	
	\bibitem[MP98]{MaPi98}
	P.~Mathieu and P.~Picco.
	\newblock Metastability and convergence to equilibrium for the random field
	{C}urie-{W}eiss model.
	\newblock {\em J. Stat. Phys.}, 91(3-4):679--732, 1998.
	
	\bibitem[PSV77]{PaStVa77}
	G.~C. Papanicolaou, D. Stroock, and S.~R.~S. Varadhan.
	\newblock Martingale approach to some limit theorems.
	\newblock In {\em Duke Turbulence Conference (Duke Univ., Durham, NC, 1976),
		Paper}, volume~6, 1977.
	
	\bibitem[SW85]{SaWr85}
	S.~R. Salinas and W.~F. Wreszinski.
	\newblock On the mean-field {I}sing model in a random external field.
	\newblock {\em J. Stat. Phys.}, 41(1-2):299--313, 1985.
	
\end{thebibliography}
\end{document}